\newtheoremstyle{exampstyle}
{8pt} 
{8pt} 
{\it} 
{} 
{\bfseries} 
{.} 
{.5em} 
{} 
\theoremstyle{exampstyle}
\newtheorem{theorem}{Theorem}
\newtheorem{lemma}{Lemma}
\newtheorem{corollary}{Corollary}
\newtheorem{remark}{Remark}
\newtheorem{prop}{Proposition}
\newtheorem{defn}{Definition}
\numberwithin{equation}{section}
\numberwithin{example}{section}
\numberwithin{theorem}{section}
\numberwithin{lemma}{section}
\numberwithin{corollary}{section}
\numberwithin{prop}{section}
\numberwithin{defn}{section}
\numberwithin{remark}{section}
\tikzset{
	treenode/.style = {shape=rectangle, rounded corners,
		draw, align=center,
		top color=white, bottom color=blue!20},
	root/.style     = {treenode, font=\Large, bottom color=yellow},
	env/.style      = {treenode, font=\ttfamily\normalsize},
	con/.style      = {treenode, font=\ttfamily, bottom color=green!25},
	nocon/.style    = {treenode, font=\ttfamily, bottom color=red!30},
	dummy/.style    = {circle,draw}
}
\newcommand{\linn}{N_n^{\mathrm{lin}}}
\newcommand{\eat}[1]{}
\DeclareMathOperator*{\argmin}{\arg\!\min}
\renewcommand{\bar}[1]{\overline{#1}}
\newcommand{\hk}{\mathcal{H}_K}
\newcommand{\kmac}{\hat{\eta}_{n}}
\renewcommand{\hat}[1]{\widehat{#1}}
\renewcommand{\tilde}[1]{\widetilde{#1}}
\newcommand{\E}{\mathbbm{E}}
\renewcommand{\P}{\mathbbm{P}}
\newcommand{\X}{{\mathcal{X}}}
\newcommand{\Y}{{\mathcal{Y}}}
\newcommand{\tgn}{T^{\tilde{\mathcal{G}}_n}}
\newcommand{\emgn}{\mathcal{E}(\mathcal{G}_n)}
\newcommand{\R}{\mathbb{R}}
\newcommand{\indep}{\mathop{\rotatebox[origin=c]{90}{$\models$}}}
\newcommand\independent{\protect\mathpalette{\protect\independenT}{\perp}}
\def\independenT#1#2{\mathrel{\rlap{$#1#2$}\mkern2mu{#1#2}}}
\newcommand{\mcg}{\mathcal{G}}
\newcommand{\h}{\mathcal{H}}
\newcommand{\B}{\mathcal{B}}
\newcommand{\Z}{\mathcal{Z}}
\newcommand{\tmk}{{\mathcal{M}}_K}
\newcommand*{\rom}[1]{\expandafter\@slowromancap\romannumeral #1@}
\def\argmin{\mathop{\rm argmin}}
\newcommand{\rgn}{\mathcal{G}_n^{\textsc{rank}}}
\newcommand{\rmgn}{\mathcal{E}(\mathcal{G}_n^{\textsc{rank}})}
\newcommand{\trgn}{\tilde{\mathcal{G}}_n^{\textsc{rank}}}
\newcommand{\rkmac}{\hat{\eta}_n^{\textsc{rank}}}
\newcommand{\xs}{x^*}
\newcommand{\tX}{\tilde{\mathcal{X}}}
\newcommand{\nst}{(\tilde{G},\tilde{\mu})\in\mathcal{J}_{\theta}}
\newcommand{\N}{\mathcal{N}}
\newcommand{\dC}{\mathrm{dCor}}
\newcommand{\mam}{\mathcal{M}}
\newcommand{\hS}{\tilde{S}}
\newcommand{\td}{\tilde{d}}
\newcommand{\nrk}{N_n^{\textsc{rank}}}
\newcommand{\nrp}{N_n^{\textsc{pop}}}
\newcommand{\mme}{\mathbbm{E}}
\newcommand{\klin}{\hat{\eta}_n^{\mathrm{lin}}}
\newcommand{\kd}{\mathrm{K}_{\mathrm{D}}}
\newcommand{\kg}{\mathrm{K}_{\mathrm{G}}}
\newcommand{\mst}{\mathrm{MST}}
\newcommand{\onn}{1\mathrm{NN}}
\newcommand{\tnn}{20\mathrm{NN}}
\newcommand{\rpk}{\eta_K^{\textsc{rank}}(\mu)}
\newcommand{\ml}{\mathrm{lin}}
\definecolor{LightCyan}{rgb}{0.88,1,1}
\definecolor{Gray}{gray}{0.9}
\begin{document}
	
	\renewcommand{\abstractname}{}    
	\renewcommand{\absnamepos}{empty}
	
	\begin{frontmatter}
		\title{Measuring Association on Topological Spaces Using Kernels and Geometric Graphs}
		
		\runtitle{Measuring Association on Topological Spaces}

		\begin{aug}
			\author{\fnms{Nabarun} 
				\snm{Deb}, \ead[label=e1]{nd2560@columbia.edu}}\author{\fnms{Promit} \snm{Ghosal}
			\ead[label=e2]{promit@mit.edu}}
			\and
			\author{\fnms{Bodhisattva} \snm{Sen}\thanksref{t3}\ead[label=e3]{bodhi@stat.columbia.edu}}
			\affiliation{
				Columbia University\thanksmark{a1},
				Massachusetts Institute of Technology\thanksmark{a2}, and
			Columbia University\thanksmark{a3}
			}
			\thankstext{t3}{Supported by NSF grant DMS-2015376.}
			
			\runauthor{Deb, Ghosal and Sen}
			
			\address{1255 Amsterdam Avenue \\
				New York, NY 10027\\
				\printead{e1} \\
				}
				\address{77 Massachusetts Avenue \\
				Cambridge, MA\\
				\printead{e2} \\			
			    }
		        \address{1255 Amsterdam Avenue \\
		        	New York, NY 10027\\
		        	\printead{e3} \\
		        }
		\end{aug}
		\vspace{0.2in}
		\begin{abstract}
In this paper we propose and study a class of simple, nonparametric, yet interpretable measures of association between two random variables $X$ and $Y$ taking values in general topological spaces. These nonparametric measures --- defined using the theory of reproducing kernel Hilbert spaces --- capture the strength of dependence between $X$ and $Y$ and have the property that they are 0 if and only if the variables are independent and 1 if and only if one variable is a measurable function of the other. Further, these population measures can be consistently estimated using the general framework of geometric graphs which include $k$-nearest neighbor graphs and minimum spanning trees. Moreover, a sub-class of these estimators are also shown to adapt to the intrinsic dimensionality of the underlying distribution. Some of these empirical measures can also be computed in near linear time. Under the hypothesis of independence between $X$ and $Y$, these empirical measures (properly normalized) have a standard normal limiting distribution. Thus, these measures can also be readily used to test the hypothesis of mutual independence between $X$ and $Y$. 
			In fact, as far as we are aware, these are the only procedures that possess all the above mentioned desirable properties. Furthermore, when restricting to Euclidean spaces, we can make these sample measures of association finite-sample distribution-free, under the hypothesis of independence, by using multivariate ranks defined via the theory of optimal transport. The correlation coefficient proposed in~\citet{dette2013copula},~\citet{chatterjee2019new} and~\citet{azadkia2019simple} can be seen as a special case of this general class of measures.
		\end{abstract}

		\begin{keyword}[class=MSC]
			\kwd[Primary ]{62G10, 62H20}
			\kwd[; secondary ]{60F05, 60D05}
		\end{keyword}
		
		\begin{keyword}
			\kwd{Characteristic kernels}
			\kwd{distribution-free}
			\kwd{maximum mean discrepancy}
			\kwd{minimum spanning trees}
			\kwd{multivariate ranks based on optimal transport}
			\kwd{nearest neighbor graphs}
			\kwd{reproducing kernel Hilbert spaces}
			\kwd{testing for mutual independence}
			\kwd{uniform central limit theorem}
		\end{keyword}
		
	\end{frontmatter}

	\section{Introduction}
	
	Suppose that $Z=(X,Y)\sim\mu$ where $\mu$ is supported on a subset of some topological space $\Z=\X\times \Y$ and has nondegenerate marginal distributions $\mu_{X}$ and $\mu_{Y}$, supported on $\X$ and $\Y$ respectively. Assume that we have i.i.d.~data $\{Z_i=({X}_i,{Y}_i)\}_{i = 1}^{n}$ from $\mu$. In this paper, we propose and study a class of simple yet interpretable empirical measures $T_n\equiv T_n(Z_1,\ldots ,Z_n)$ and their population counterparts, that yield a family of {\it nonparametric measures of association} between ${X}$ and ${Y}$. In addition, these empirical measures can be readily used as test statistics for testing the hypothesis of mutual independence between $X$ and $Y$.

	To explain our motivation, consider the case when $\X=\Y=\R$ and suppose that $\{Z_i\}_{i=1}^n$ are i.i.d. with a bivariate normal distribution. In this setting, the empirical Pearson's correlation coefficient  (see e.g.,~\cite{pearson1920notes}) captures the strength of association between $X$ and $Y$, i.e., it converges to a population measure which equals 0 if and only if $X$ and $Y$ are independent and 1 if and only if they are perfectly dependent (i.e., one variable is a function of the other). Moreover, any value between 0 and 1 of the correlation coefficient conveys an idea of the strength of the relationship between $X$ and $Y$. In addition, it has a simple limit distribution theory when $X$ and $Y$ are independent, and can consequently be used to test for the hypothesis of independence. Unfortunately, the Pearson's correlation coefficient ceases to have the aforementioned nice properties when the joint distribution of $X$ and $Y$ is not normal. The Spearman's rank correlation (see~\cite{spearman1904proof}) overcomes this shortcoming, but can only capture monotone relationships between $X$ and $Y$; also see Kendall's $\tau$~\cite{kendall1938new,Kendall1990}. 
	
	The above discussion raises a natural question: {\it ``Is it possible to define a simple empirical measure $T_n$ which provides a nonparametric measure of association between the variables $X$ and $Y$ under reasonable assumptions on $\X$, $\Y$ and $\mu$}?'' In this paper, we answer the above question in the affirmative. Towards that direction, it is perhaps instructive to first throw some light on the term {\it nonparametric measure of association} which we have been using informally so far. In this context, we will adhere to the criteria presented in~\citet{chatterjee2019new}, which is actually the main motivation behind our current work. Below we quote the relevant part from the abstract of~\cite{chatterjee2019new}:
	
	\noindent	``\textit{Is it possible to define a coefficient of correlation which is}: 
	\begin{enumerate}
		\item[(I)] \textit{As simple as the classical coefficients like Pearson's correlation or Spearman's correlation}, \textit{and yet} 
		
		\item[(II)] \textit{Consistently estimates some simple and interpretable measure of the degree of dependence between the variables, which is 0 if and only if the variables are independent and 1 if and only if one is a measurable function of the other, and} 
		
		\item[(III)] \textit{Has a simple
			asymptotic theory under the hypothesis of independence, like the classical coefficients?}''
	\end{enumerate}
	\noindent Although the above three properties seem quite natural and intuitive, we do not know of any nonparametric measure of association obeying (I)-(III), when $\X=\R^{d_1}$ and $\Y=\R^{d_2}$, for $d_1,d_2>1$ (in~\cite{chatterjee2019new, azadkia2019simple} the authors provide such a measure when $d_2 =1$), let alone for more general topological spaces.

	A plethora of nonparametric procedures have been proposed that can detect nonlinear dependencies between the variables $X$ and $Y$ over the last 60 years; see e.g.,~\cite{Renyi59, blum1961distribution, Rosenblatt75, friedman1983, Gabor2007, szekely2017energy, gieser1997, taskinen2005, gretton2008kernel, Oja-2010,Reshef11,Lyons13, heller2013, SS-2014-Bio, JH16, munmun2016, berrett2017nonparametric} and the references therein, however none of them satisfy (II). While these coefficients are indeed useful in practice, they have one common problem: They are all designed primarily for testing independence, and not for measuring the strength of the relationship between the variables.

	We now provide one concrete example of a measure satisfying (I)-(III) when $\X=\R^{d_1}$ and $\Y=\R^{d_2}$ for some $d_1,d_2\geq 1$. Note that this is just one example from the class of measures we will propose later. In order to motivate this measure, we begin by constructing the $k$-\emph{nearest neighbor graph} ($k$-NNG) of the data points $\{X_i\}_{i=1}^n \subset \R^{d_1}$, i.e., a graph with vertices $X_1,\ldots ,X_n$ where every vertex $X_i$ shares an edge with its $k$-nearest neighbors. 
	The $k$-NNG has the following property which makes it useful for our application --- the node pairs defining the edges represent points that tend to be `close' together (small distance or dissimilarity). 
	
	For $i = 1, \ldots, n$, let $\N_i$ denote the set of indices of the neighbors of $X_i$ in the corresponding $k$-NNG. Let $d_i$ denote the cardinality of the set $\N_i$. Consider the following statistic:
	\begin{equation}\label{eq:firststat}
	T_n:=1-\frac{\frac{1}{n}\sum_{i=1}^n d_i^{-1}\sum_{j\in \N_i} \lVert Y_i-Y_j\rVert_2}{\frac{1}{n(n-1)}\sum_{i\neq j} \lVert Y_i-Y_j\rVert_2},
	\end{equation}
	where $\|\cdot \|_2$ denotes the usual Euclidean norm. Note that $T_n$ has a simple form. We now present a result (see~\cref{pf:energyeq} for a proof) which shows that $T_n$ in~\eqref{eq:firststat} estimates a measure of the strength of dependence between $X$ and $Y$.
	\begin{prop}\label{prop:energyeq}
		Suppose $(X_1,Y_1),\ldots ,(X_n,Y_n)\overset{i.i.d.}{\sim} \mu$, a Borel probability measure with marginals $\mu_{X}$ and $\mu_Y$ (both nondegenerate). Let $\mu_{Y|x}$ denote the regular conditional distribution of $Y$ given $X=x$. Suppose $\lVert X_1-X_2\rVert_2$ has a continuous distribution, $\E \lVert Y_1\rVert_2^{2+\epsilon}<\infty$ for some $\epsilon>0$, and $k \le c n^{1-\delta}$ for some $c >0$ and $\delta \in (0,1]$. Then the following holds:
		\begin{equation}\label{eq:assocRd}
		T_n\overset{\mathbb{P}}{\longrightarrow}1-\frac{\E \lVert Y'-\tilde{Y'}\rVert_2}{\E \lVert Y_1-Y_2\rVert_2}:=T \ \qquad \mbox{as } n \to \infty.
		\end{equation}
		Here $(X',Y',\tilde{Y'})$ is generated as: draw $X'\sim \mu_X$ and then $Y'|X'\sim \mu_{Y|X'}$, $\tilde{Y'}|X'\sim \mu_{Y|X'}$ such that $Y'$ and $\tilde{Y'}$ are conditionally independent given $X'$. Moreover, $T\in [0,1]$ and $T$ equals $0$ if and only if $X$ and $Y$ are independent, and equals $1$ if and only if $Y$ is a noiseless measurable function of $X$.
	\end{prop}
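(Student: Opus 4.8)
The plan is to separate the two assertions: the convergence in~\eqref{eq:assocRd}, and the deterministic characterization of the limit $T$.

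\emph{Convergence.} The denominator in~\eqref{eq:firststat} is a second-order U-statistic in the i.i.d.\ sample $Y_1,\dots,Y_n$, so (since $\E\|Y_1\|_2<\infty$) the law of large numbers for U-statistics gives convergence in probability to $\E\|Y_1-Y_2\|_2$, which is strictly positive as $\mu_Y$ is nondegenerate. The numerator is the delicate term, and I would treat it in two stages. Write $h(x,x'):=\E\|Y-\tilde Y\|_2$ for independent $Y\sim\mu_{Y|x}$, $\tilde Y\sim\mu_{Y|x'}$. Conditioning on $X_1,\dots,X_n$ fixes the $k$-NNG and makes the $Y_i\sim\mu_{Y|X_i}$ independent, so the conditional mean of the numerator is $\frac1n\sum_{i}d_i^{-1}\sum_{j\in\N_i}h(X_i,X_j)$. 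The first stage is concentration about this conditional mean: truncating $\|Y_i-Y_j\|_2$ at a level $M$ (the tail being controlled by $\E\|Y_1\|_2^{2+\epsilon}<\infty$) and using the classical geometric fact that the in-degree of every vertex of a $k$-NNG in $\R^{d_1}$ is at most $c_{d_1}k$, so that each vertex lies in only $O(k)$ edges, an Efron--Stein/bounded-differences argument yields conditional variance $O(M^2/n)\to 0$.

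\emph{The main obstacle} is the second stage: showing $\frac1n\sum_i d_i^{-1}\sum_{j\in\N_i}h(X_i,X_j)\to\int h(x,x)\,d\mu_X(x)=\E\|Y'-\tilde Y'\|_2$. Because edges of the $k$-NNG join nearby points, I would replace $h(X_i,X_j)$ by $h(X_i,X_i)$; as $y\mapsto\|y-\cdot\|_2$ is $1$-Lipschitz, $|h(x,x')-h(x,x)|\le W_1(\mu_{Y|x},\mu_{Y|x'})$, so the error is the $W_1$-oscillation of the conditional law along short edges. Since no continuity of $x\mapsto\mu_{Y|x}$ is assumed, I would invoke Lusin's theorem to restrict to a compact set where this map is $W_1$-continuous (the complement contributing negligibly by the moment bound), use that the $k$-NN distances vanish as $n\to\infty$ (as $k=o(n)$), and conclude that the averaged replacement error tends to $0$; the ordinary LLN for $\frac1n\sum_i h(X_i,X_i)$ then finishes the stage. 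Slutsky's theorem combines numerator and denominator to give~\eqref{eq:assocRd}.

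\emph{Characterization of $T$.} Here I would exploit that the Euclidean distance is a conditionally negative definite kernel: by Schoenberg's theorem there is a Hilbert space $\h$ and a map $\phi$ with $\|y-y'\|_2=\|\phi(y)-\phi(y')\|_{\h}^2$, and the finite first moment of $Y$ makes the mean embedding $m(x):=\E_{Y\sim\mu_{Y|x}}\phi(Y)\in\h$ well-defined. Writing $h(x,x')=a(x)+a(x')-2\langle m(x),m(x')\rangle$ with $a(x):=\E_{\mu_{Y|x}}\|\phi(Y)\|_{\h}^2$, a direct computation gives
\[
\E\|Y_1-Y_2\|_2-\E\|Y'-\tilde Y'\|_2 \;=\; 2\big(\E\|m(X)\|_{\h}^2-\|\E m(X)\|_{\h}^2\big) \;=\; 2\,\E\big\|m(X)-\E m(X)\big\|_{\h}^2\;\ge\;0,
\]
so $T\ge 0$, while $T\le 1$ is immediate from $\E\|Y'-\tilde Y'\|_2\ge0$. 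Thus $T=0$ iff $m(X)$ is a.s.\ constant, which by the characteristic property of the distance kernel (valid under the first moment condition) forces $\mu_{Y|X}=\mu_Y$ a.s., i.e.\ $X$ and $Y$ are independent; the converse is clear. Finally $T=1$ iff $\E\|Y'-\tilde Y'\|_2=0$, i.e.\ for $\mu_X$-a.e.\ $x$ two conditionally independent draws from $\mu_{Y|x}$ coincide a.s., which means $\mu_{Y|x}=\delta_{f(x)}$ is a point mass; a measurable selection then identifies $f$ with $Y=f(X)$ a.s., exactly the statement that $Y$ is a noiseless measurable function of $X$.
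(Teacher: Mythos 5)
Your proposal is correct, and its skeleton (U-statistic LLN for the denominator; concentration-plus-bias for the numerator; a feature-space argument for the properties of $T$) matches the paper's, but the technical devices are genuinely different. The paper proves the more general statement for $T_\alpha$, $0<\alpha<2$, and runs everything through the Sz\'ekely--Rizzo Fourier identity (\cref{lem:equivform}): properties (P1)--(P3) of the limit are read off from the representation of $T_\alpha$ as a ratio of integrals involving $\E|\hat{f}_{Y|X}(t)-\hat{f}_Y(t)|^2$ and $1-|\hat{f}_Y(t)|^2$, with the converse of (P3) handled by the bounded invertible map $y\mapsto(\tanh(y_1),\ldots,\tanh(y_{d_2}))$ rather than a measurable-selection argument; the bias convergence $\E\lVert Y_1-Y_{N(1)}\rVert_2^\alpha\to\E\lVert Y'-\tilde{Y'}\rVert_2^\alpha$ is also proved in Fourier space, using \cref{lem:techlemmconv} (a Lusin argument) applied to conditional characteristic functions plus bounded convergence; and the concentration step is inherited verbatim from \cref{theo:consis} (unconditional Efron--Stein under (A1)--(A3), verified for the $k$-NNG in \cref{prop:vercond}). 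You instead (i) do the concentration conditionally on the $X_i$'s via truncation and bounded differences, (ii) handle the bias with the Kantorovich--Rubinstein bound $|h(x,x')-h(x,x)|\le W_1(\mu_{Y|x},\mu_{Y|x'})$ plus Lusin, and (iii) characterize $T$ via the Schoenberg embedding and the characteristic property of the distance kernel --- which is essentially the specialization of the paper's general \cref{theo:kacmas} (cf.\ \cref{rem:Euclid}) to $\alpha=1$. What each buys: the Fourier route extends painlessly to all $\alpha\in(0,2)$ and sidesteps the (real, if standard) technicalities your route must still supply --- Borel measurability of $x\mapsto\mu_{Y|x}$ as a map into Wasserstein space so that Lusin applies, the exchangeability/degree-bound argument (the paper's \cref{lem:techlemnbd}) to show that neighbors rarely fall outside the Lusin compact, and a proof that $k$-NN distances vanish (\cref{prop:vercond}); your route is more geometric and self-contained, makes the link to the paper's kernel framework transparent, but leans on the $1$-Lipschitz structure special to $\alpha=1$, which for general $\alpha$ would degrade to a H\"older bound.
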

	In the above discussion, we have attempted to view $T_n$ in light of properties (I) and (II). As it turns out, $\sqrt{n}T_n$, after suitable scaling, has a limiting standard normal law when $X$ and $Y$ are independent, as warranted in property (III); see~\cref{theo:nullclt}. This immediately yields a simple, easily computable and consistent method for testing independence between $X$ and $Y$.
	
	Among properties (I), (II) and (III), we believe that property (II) is perhaps most difficult to parse. In~\cref{sec:kmacintuit}, we provide an intuitive explanation as to why $T_n$ converges to $0$ ($1$) when $Y$ is independent of $X$ (a noiseless function of $X$). The converse directions which are a part of~\cref{prop:energyeq}, are considerably harder to prove. In fact, for the converse, the choice of the function $\lVert \cdot-\cdot\rVert_2$ is crucial; for instance the choice $\lVert\cdot-\cdot\rVert_2^2$ does not work. This raises a rather intriguing question:
	
	{\it ``Can we replace $\lVert\cdot -\cdot \rVert_2$ with a more general class of functions? Can we choose functions which do not rely on the structure of $\R^{d_2}$?"}
	
	We answer the above question using the framework of \emph{reproducing kernel Hilbert spaces} (RKHSs); see~\cref{def:rkhs}. Towards this direction, let us consider the case where $Z_i$'s take values in a general topological space $\X\times\Y$. Suppose there exists a symmetric, nonnegative definite kernel function $K(\cdot,\cdot):\Y\times\Y\to\R$ which is {\it characteristic}  (see~\cref{defn:Char}). Then we construct our {\it kernel measure of association} (abbreviated as KMAc) in a similar fashion as $T_n$ (from~\eqref{eq:firststat}) as follows:
	\begin{align}\label{eq:tempkmac}
	\kmac:=\frac{\frac{1}{n} \sum_{i=1}^n d_i^{-1}\sum_{j\in \N_i} K(Y_i,Y_j)- \frac{1}{n(n-1)}\sum_{i\neq j} K(Y_i,Y_j)}{\frac{1}{n}\sum_{i=1}^n K(Y_i,Y_i)-\frac{1}{n(n-1)}\sum_{i\neq j} K(Y_i,Y_j)}.
	\end{align}
	Note that the construction of $k$-NNGs can be carried out in rather general spaces provided there is some metric or a notion of ``similarity" between two elements in that space (see~\cite{boytsov2013learning,athitsos2004boostmap,miranda2013very,jacobs2000classification}). In fact, in~\cref{sec:kmacestimate}, we will go beyond $k$-NNGs and instead work with a more general class of \emph{geometric graphs} (see~\cref{sec:kmacestimate} for a definition), which includes the minimum spanning tree among others; see~\eqref{eq:statest} for the general version of our measure. On the other hand, the existence of {characteristic} kernels is a subject of active research in the machine learning community. Examples of such kernels are known for separable Hilbert spaces, certain non-Euclidean domains featuring texts, video/image and histogram-valued objects, etc. (see e.g.,~\cite{Lyons13,christmann2010universal,danafar2010characteristic}; also see~\cref{sec:exkergraph} for a discussion). Consequently, $\kmac$ (as in~\eqref{eq:tempkmac}) can be constructed in a very general setting. Even while working on $\R^{d_1+d_2}$, $\kmac$ provides a lot of flexibility as there are a number of characteristic kernels known in this case (see~\cref{rem:excharker}), some of which may have better properties than others, depending on the application at hand.
	
	In Theorem~\ref{theo:consis}, we show that  $\kmac$ consistently estimates a measure of dependency between $X$ and $Y$ which can be conveniently expressed as 
	\begin{equation}\label{eq:tempkac}
	\eta_{K}(\mu):= \frac{\mme[\mme [K(Y',\tilde{Y'})|X']]-\mme [K(Y_1,Y_2)]}{\mme [K(Y_1,Y_1)]-\mme[ K(Y_1,Y_2)]};
	\end{equation}
	here $Y_1,Y_2, Y', \tilde{Y'}$ are defined as in~\eqref{eq:assocRd}. A term by term comparison between~\eqref{eq:tempkac}~and~\eqref{eq:tempkmac} shows that $\kmac$ is indeed a natural estimator of $\eta_K(\mu)$. Note that $d_i^{-1}\sum_{j:(i,j)\in\emgn} K(Y_i,Y_j)$ can be \emph{informally} viewed as an empirical analogue of $\E [K(Y',\tilde{Y'}|X'=X_i)]$. Thus, compared to~\cite{chatterjee2019new,azadkia2019simple} where the authors claimed that the connection between their empirical and population measure of association was hard to motivate without getting into the technicalities of the proof, our approach makes the connection (between our empirical and population measures) more transparent.
	

	Below we summarize some of the key features of $\kmac$:
	
	\begin{enumerate}
		\item It can be computed in a broad variety of topological spaces $\X$ and $\Y$ (see~\cref{sec:exkergraph}). This is particularly useful in functional regression models (see~\cite{morris2015functional} for a survey), in real-life machine learning and human behavior recognition  (see~\cite{danafar2010characteristic}), measuring association between two stochastic processes (see~\cite{chwialkowski2014kernel}), etc.
		
		\item It has a simple, interpretable form as the classical correlation coefficients but is fully nonparametric. There is no estimation of conditional densities involved.

		\item It converges to a limit in $[0,1]$ which equals $0$ if and only if $X$ and $Y$ are independent; and to $1$ if and only if $Y$ is a noiseless measurable function of $X$ (see Theorems~\ref{theo:kacmas}~and~\ref{theo:consis}). Further, this limit is closely related to the notion of maximum mean discrepancy (see~\cref{prop:wdf}) and energy distance (see~\cref{lem:equivform} in the Appendix), which are popular and widely studied discrepancy measures between probability distributions in both the machine learning and statistics communities.
		
		\item It satisfies a moment concentration inequality around a population limit under mild assumptions on the kernel $K$ (see~\cref{cor:bdiffcon}). We further establish rates of convergence for $\kmac$ (constructed using $k$-NNG) to $\eta_K(\mu)$ which shows that $\kmac$ adapts to the {\it intrinsic dimensionality} of $X$ (see~\cref{theo:rateres}~and~\cref{cor:rateuclid}).
		
		\item When $X$ and $Y$ are independent, $\kmac$ (suitably normalized) satisfies a central limit theorem (CLT) with the standard normal limiting distribution, uniformly over a large class of graphs (see~\cref{theo:nullclt}). Thus, $\kmac$ readily yields a consistent method for testing independence between $X$ and $Y$, without resorting to permutation techniques. Moreover, the uniformity in the CLT justifies the use of a data-driven choice of $k$ in the $k$-NNG while constructing $\kmac$.
		
		
		
		\item In some cases, $\kmac$ can be computed in $\mathcal{O}(n\log{n})$ time (see~\cref{sec:compute}). We also propose a related estimator $\klin$ (see~\eqref{eq:fastkmac}) for $\eta_K(\mu)$, which is always computable in $\mathcal{O}(n\log{n})$ time, whenever a $k$-NNG is used for its construction. In addition, $\klin$ shares all the nice statistical properties of $\kmac$ (see points 1-5 above). For example,  in~\cref{prop:nullcltesscomp} we state a CLT, similar to~\cref{theo:nullclt}, that can be used to construct a consistent,  nonparametric test of independence between $X$ and $Y$ having near linear time complexity.

		\item When $\X=\R^{d_1}$ and $\Y=\R^{d_2}$, for $d_1,d_2 \ge 1$, by a suitable choice of kernel, $\kmac$ satisfies (I)-(III) under \emph{no assumptions} on the distribution of $(X,Y)$; see~\cref{prop:energyeqfree}. Also the limit of $\kmac$ (i.e., $\eta_K(\mu)$) has some additional features if the kernel $K(\cdot,\cdot)$ is chosen suitably. In particular, it exhibits invariance, equitability and continuity (see~\cite{Reshef2016,Mori2019}; also see~\cref{prop:otprop}). In fact, in many cases, $\eta_K(\mu)$ can be shown to be a continuous and monotonic function of the noise level (see~\cref{prop:spGauss}~and~\cref{sec:ice}). This justifies $\eta_K(\mu)$ as a measure of the strength of association between $X$ and $Y$ (see~\cite{dette2013copula}).
		
		\item $\kmac$ is asymmetric in $X$ and $Y$. Similar to the coefficient proposed in~\cite{chatterjee2019new,azadkia2019simple}, we are interested in understanding whether $Y$ is a measurable function of $X$, and not just if one variable is a function of the other. If one wants the second option, a natural idea would be to switch the roles of $X$ and $Y$ in $\kmac$ and take the maximum of the two resulting measures. This new measure would converge to $0$ if and only if $X$ and $Y$ are independent, and $1$ if and only if one of the variables is a function of the other. This symmetrizability feature is not available in the measure proposed in~\cite{azadkia2019simple} as it heavily relies on $Y$ being univariate.
		
	\end{enumerate}
	\subsection{Finite-sample distribution-freeness when  $\mu=\mu_X\otimes\mu_Y$} 
	Although our proposed empirical measure $\kmac$ (in~\eqref{eq:tempkmac}) extends many of the important properties of the classical correlation coefficients beyond $d_1=d_2=1$, but unlike the Spearman's correlation coefficient and Kendall's $\tau$ (and the recently proposed Chatterjee's correlation in~\cite{chatterjee2019new}) $\kmac$ is {\it not} finite-sample distribution-free when $\mu=\mu_X\otimes\mu_Y$. This leads us to the following question:
	
	{\it ``Can we find a measure of association that satisfies properties (I)-(III) and moreover, is finite-sample distribution-free when $X$ and $Y$ are independent?"}
	
	We answer the above question in the affirmative when $\X=\R^{d_1}$ and $\Y=\R^{d_2}$, with $d_1, d_2 \ge 1$, building on our work in Sections~\ref{sec:popkmac}--\ref{sec:massoceuclid}. The key observation here is that the distribution-free measures discussed above (when $d_1=d_2 = 1$) are all based on the (univariate) ``ranks'' of $Y_i$'s. In~\cref{sec:R-Q-Intro} we use the recently proposed idea of {\it multivariate ranks} based on the theory of \emph{optimal transport} (see \cite{Hallin17, del2018center, Cher17, Deb19, Shi19}) to develop measures of association that satisfy properties (I)-(III) and are also finite-sample distribution-free when $\mu=\mu_X\otimes\mu_Y$; see Sections~\ref{sec:overviewOT} and~\ref{sec:defn} for details. 
	
	
	Having defined the multivariate ranks (via optimal transport) we construct our family of distribution-free (when $\mu=\mu_X\otimes\mu_Y$) measures of association based on a very simple and classical analogy between Pearson's correlation and Spearman's correlation. Note that when $d_1=d_2=1$, Spearman's correlation is equivalent to the classical Pearson's correlation coefficient computed between the one-dimensional ranks of the $X_i$'s and the $Y_i$'s, instead of using the observations themselves. We mimic the same approach here, i.e., instead of computing $\kmac$ using the $X_i$'s and $Y_i$'s themselves, we instead use their empirical multivariate ranks. 
	
	We propose the ``rank" version of $\kmac$, namely $\rkmac$, in~\eqref{eq:rankkmac}. In~\cref{thm:Consistency}, we show that $\rkmac$ satisfies (I)-(III), is distribution-free when $\mu=\mu_X\otimes\mu_Y$, and consistently estimates a measure of dependency between $X$ and $Y$. As $\rkmac$ is based on multivariate ranks, a test for independence of $X$ and $Y$ based on $\rkmac$ will generally be more powerful against heavy-tailed alternatives and more robust to outliers and contaminations (see~\cite{Huber-2nd-E, Oja-2010, Deb19} for related discussions). Further, the corresponding test, being distribution-free, also avoids asymptotic approximations or permutation ideas for determining rejection thresholds. In~\cref{prop:Chacon}, we prove that the limit of $\rkmac$ exactly coincides with the limit of the coefficient in~\cite{azadkia2019simple} (denoted by $T_n(Y,Z)$ in their paper) when $d_2=1$. Note that, unlike $\rkmac$, the measure in~\cite{azadkia2019simple} does not have the finite sample distribution-free property.
	
	Finally,~\cref{theo:ranknullclt} proves a CLT for $\rkmac$ which is once again uniform over a large class of graphs. We would like to point out that unlike $X_i$'s and $Y_i$'s, their multivariate ranks are no longer independent among themselves which makes the CLT harder to prove. We circumvent this by proving a ``H\'ajek representation" (see~\cref{lem:hajekrep} in the Appendix; also see~\cite[Theorem 5.1]{shi2020rate}) which is a popular technique used for analyzing rank based statistics in the univariate setting. This result may be of independent interest. 
	
	
	\subsection{Related works} 
	In~\cite{dette2013copula}, the authors use the term ``measure of regression dependence" for the three properties mentioned above and show that it is possible to define such a measure satisfying (I)-(III) when $\mathcal{X}=\Y=\R$. The same population measure was rediscovered in~\cite{chatterjee2019new} where the author proposes a tuning parameter-free estimator of the same measure from empirical observations that can be computed in near linear time. Since then, the estimator in~\cite{chatterjee2019new} has attracted a lot of attention (see~\cite{shi2020power,cao2020correlations}). Further, in~\cite{azadkia2019simple}, the authors propose a similar measure when $\mathcal{X}=\R$ and $\Y=\R^{d_2}$, $d_2\geq 1$. However, all these measures crucially use the canonical ordering of $\R$ and hence do not extend to the multivariate setting (where $\mathcal{X}=\R^{d_1}$ and $\Y=\R^{d_2}$ with $d_1,d_2 \ge 2$), let alone more general topological spaces. Some multivariate measures of association satisfying (I)-(III) have been proposed in~\cite{siburg2010measure,tasena2016measures,boonmee2016measure}, following similar copula-based ideas as in~\cite{dette2013copula}; however, to the best of our knowledge, none of these papers provide a consistent empirical estimate of their proposed measures of association. 
	
	\subsection{Organization}
	The rest of our paper is organized as follows. In~\cref{sec:popkmac}, we formally introduce the population versions of our family of kernel measures of association $\eta_K(\mu)$ and study their properties. The new class of empirical kernel measures of association, i.e., KMAc ($\kmac$), is presented in full generality using the theory of geometric graphs in~\cref{sec:kmacestimate} and the flexibility in the construction of $\kmac$ is illustrated using examples of different geometric graphs in~\cref{sec:exkergraph}. In~\cref{sec:kmacclt}, we state a CLT for $\kmac$ when $X$ and $Y$ are independent, which holds uniformly over a large class of graphs.~\cref{sec:ratescon} shows that when $X$ and $Y$ are not independent, $\kmac$ converges to $\eta_K(\mu)$ at a rate which adapts to the intrinsic dimension (see~\cref{def:covernum}) of the measure $\mu_X$. Further, in~\cref{sec:linalt} we propose an estimator $\klin$ of $\eta_K(\mu)$ which is closely related to $\kmac$ but has the advantage of being computable in $\mathcal{O}(n\log{n})$ time in broad generality.~\cref{sec:massoceuclid} focuses on some additional properties of $\kmac$ and $\eta_K(\mu)$ when restricted to Euclidean spaces. In~\cref{sec:R-Q-Intro}, we introduce the multivariate rank version of $\kmac$ and describe its properties such as distribution-freeness when $\mu=\mu_X\otimes\mu_Y$, consistency, connection with correlation coefficients from~\cite{chatterjee2019new,azadkia2019simple} and asymptotic normality. 
	The supplement begins with~\cref{sec:gendis} which contains some general discussions which were deferred from the main text of the paper. All our simulation studies are featured in~\cref{sec:sim}. In~\cref{sec:pfmain}, we provide the proofs of our main results. Some technical lemmas used in our proofs are provided in~\cref{sec:techlem}, while~\cref{sec:auxres} gives some known auxiliary results from analysis, concentration of measures and CLTs.
	\section{A kernel measure of association $\eta_K$ --- the population version}\label{sec:popkmac}
	In this section, we formally present the population version of our kernel measure of association, i.e., $\eta_K$ (see~\eqref{eq:tempkac}) and show that it satisfies the desirable properties of a nonparametric measure of association (as in (II)) under certain assumptions. Let us first breakdown (II) into three explicit properties that we want the population measure $\eta_K$ to satisfy. Consider two topological spaces $\X, \Y$ equipped with Borel complete probability measures and let $\X\times \Y$ be the completion of the product space. Let $\mam(\X\times\Y)$ and $\mam(\Y)$ be the set of all Borel probability measures on $\X\times\Y$ and $\Y$ respectively. We are interested in defining a function $h:\mam(\mathcal{X}\times\Y)\to\R$ such that given a random element $(X,Y)\sim \mu\in \mam(\mathcal{X}\times\Y)$, with nondegenerate marginals $\mu_X$ and $\mu_Y$ (an assumption we make throughout the paper), the following properties hold:
	\begin{itemize}
		\item[(P1)] $h(\mu)\in [0,1]$.
		\item[(P2)] $h(\mu)=0$ if and only if $\mu=\mu_X\otimes \mu_Y$ (i.e., $X$ and $Y$ are independent).
		\item[(P3)] $h(\mu)=1$ if and only if $Y=g(X)$, $\mu$ almost everywhere (a.e.), for some measurable function $g:\mathcal{X}\to\Y$.
	\end{itemize} 
	

	%


	Let $\mu_{Y|x}$ be the regular conditional distribution of $Y$ given $X=x$ which we assume exists for all $x \in \X$. Assume that a {\it kernel} $K(\cdot,\cdot)$  --- a symmetric, nonnegative definite function on $\Y\times \Y$ --- exists on $\Y\times\Y$ and let $\hk$ denote the induced RKHS (see~\cref{def:rkhs}). Suppose that $\hk$ is separable (this can be ensured under mild conditions\footnote{For example, if $\Y$ is a separable space and $K(\cdot,\cdot)$ is continuous.}, see e.g.,~\cite[Lemma 4.33]{SVM}) and let $\langle \cdot,\cdot\rangle_{\hk}: \hk \times \hk\to \R$ and $\|\cdot \|_{\hk}$ denote the inner product and induced norm on $\hk$.

	Generate $(X',Y',\tilde{Y'})$ as follows: $X'\sim\mu_X$, $Y'|X\sim \mu_{Y|X'}$, $\tilde{Y'}|X'\sim \mu_{Y|X'}$ and $Y'\independent\tilde{Y'}|X'$. Also let $Y_1,Y_2$ be i.i.d. from $\mu_Y$. Note that $Y'$ and $\tilde{Y'}$ are dependent (via $X'$), unlike $Y_1$ and $Y_2$. Let us also recall the definition of $\eta_K$ from~\eqref{eq:tempkac}~and present it using the notation defined above:
	\begin{equation}\label{eq:kac}
	\eta_{K}(\mu) = 1-\frac{\mme\lVert K(\cdot,Y')-K(\cdot,\tilde{Y'})\rVert_{\mathcal{H}_K}^2}{\mme\lVert K(\cdot,Y_1)-K(\cdot,Y_2)\rVert_{\mathcal{H}_K}^2}.
	\end{equation}
	
	In order to ensure that $\eta_{K}(\cdot)$ is well-defined, we need certain moment assumptions. By the reproducing property of $K(\cdot,\cdot)$,
	\begin{equation}\label{eq:K-1-2}	
	\lVert K(\cdot,Y_1)-K(\cdot,Y_2)\rVert_{\mathcal{H}_{K}}^2=K(Y_1,Y_1)+K(Y_2,Y_2)-2K(Y_1,Y_2). 
	\end{equation}
	Suppose that $\mu_Y\in\tmk^1(\Y)$ where $$\tmk^{\theta}(\Y)\coloneqq \left\{\nu\in\mathcal{M}(\Y):\int_\Y K^{\theta}(y,y)\,d\nu(y)<\infty \right\}, \qquad \mbox{for } \, \theta>0.$$ 
	Then the first two terms in~\eqref{eq:K-1-2} have finite moments. The third term is also finite by an application of the Cauchy-Schwartz inequality, the reproducing property as in~\eqref{eq:hilnorm}, combined with the observation that $\tmk^1(\Y)\subset \tmk^{1/2}(\Y)$. Thus, we assume that $\mu_Y\in \tmk^1(\Y)$ in the sequel. The following result (see~\cref{pf:wdf} for a proof) presents an alternate expression of $\eta_K$.
	\begin{prop}\label{prop:wdf}
		If $\mu_Y\in\tmk^1(\Y)$, then the following relation holds:
		\begin{equation}\label{eq:MMD-kernel}\eta_K(\mu)=\frac{\mme_{\mu_X}[\mathrm{MMD}_{K}^2(\mu_{Y|X},\mu_Y)]}{\mme\lVert K(\cdot,Y)-\mme K(\cdot,Y)\rVert_{\hk}^2},
		\end{equation} where for $Q_1, Q_2 \in \tmk^{1}(\Y)$, ${\rm MMD}_K(Q_1,Q_2)$ denotes the {\it maximum mean discrepancy} (MMD) between $Q_1$ and $Q_2$ (see~\cref{defn:MMD}). Further, $\mme_{\mu_X}[\mathrm{MMD}_{K}^2(\mu_{Y|X},\mu_Y)] = \E\lVert \E[K(\cdot,Y)|X]\rVert_{\hk}^2-\E\lVert K(\cdot,Y)\rVert_{\hk}^2$.
	\end{prop}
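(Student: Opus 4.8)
The plan is to reduce every quantity to the Hilbert-space geometry of $\hk$ via the reproducing property and the kernel mean embedding, and then to read off~\eqref{eq:MMD-kernel} by matching the numerator and denominator of~\eqref{eq:kac} against expectations of squared $\hk$-norms. First I would set up two objects. For $Q\in\tmk^1(\Y)$, define the mean embedding $\mu_Q:=\E_{Y\sim Q}[K(\cdot,Y)]\in\hk$, understood as a Bochner integral; this is well-defined since $\tmk^1(\Y)\subset\tmk^{1/2}(\Y)$ yields $\E\|K(\cdot,Y)\|_{\hk}=\E\sqrt{K(Y,Y)}<\infty$, while separability of $\hk$ makes $y\mapsto K(\cdot,y)$ Bochner measurable. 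With this notation $\mathrm{MMD}_K^2(Q_1,Q_2)=\|\mu_{Q_1}-\mu_{Q_2}\|_{\hk}^2$, the embedding of $\mu_Y$ is $\E[K(\cdot,Y)]$, and the embedding of $\mu_{Y|X}$ is the $\hk$-valued conditional expectation $\E[K(\cdot,Y)\mid X]$.

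Next I would expand the numerator of~\eqref{eq:kac} using~\eqref{eq:K-1-2} applied to $(Y',\tilde{Y'})$. Since $Y'$ and $\tilde{Y'}$ are conditionally i.i.d.\ $\mu_{Y|X'}$ given $X'$, the cross term satisfies $\E[K(Y',\tilde{Y'})\mid X']=\langle\E[K(\cdot,Y')\mid X'],\E[K(\cdot,\tilde{Y'})\mid X']\rangle_{\hk}=\|\E[K(\cdot,Y)\mid X']\|_{\hk}^2$, whence $\E\|K(\cdot,Y')-K(\cdot,\tilde{Y'})\|_{\hk}^2=2\E K(Y,Y)-2\,\E_X\|\E[K(\cdot,Y)\mid X]\|_{\hk}^2$. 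Likewise, as $Y_1,Y_2$ are i.i.d.\ $\mu_Y$, the denominator is $\E\|K(\cdot,Y_1)-K(\cdot,Y_2)\|_{\hk}^2=2\E K(Y,Y)-2\|\E K(\cdot,Y)\|_{\hk}^2$. Writing $a=\E K(Y,Y)$, $b=\E_X\|\E[K(\cdot,Y)\mid X]\|_{\hk}^2$, and $c=\|\E K(\cdot,Y)\|_{\hk}^2$, substituting into~\eqref{eq:kac} and simplifying $1-\tfrac{a-b}{a-c}$ gives $\tfrac{b-c}{a-c}$. Finally, expanding the square and using the tower property $\E\,\E[K(\cdot,Y)\mid X]=\E K(\cdot,Y)$, I would identify the numerator $b-c=\E_X\|\E[K(\cdot,Y)\mid X]-\E K(\cdot,Y)\|_{\hk}^2=\mme_{\mu_X}[\mathrm{MMD}_K^2(\mu_{Y|X},\mu_Y)]$ and the denominator $a-c=\E\|K(\cdot,Y)-\E K(\cdot,Y)\|_{\hk}^2$, which is exactly~\eqref{eq:MMD-kernel}; the claimed decomposition of the numerator is the same computation, $\mme_{\mu_X}[\mathrm{MMD}_K^2(\mu_{Y|X},\mu_Y)]=\E\|\E[K(\cdot,Y)\mid X]\|_{\hk}^2-\|\E K(\cdot,Y)\|_{\hk}^2$.

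The hard part will not be the algebra, which is routine once the bookkeeping above is fixed, but justifying the vector-valued manipulations: that $\E[K(\cdot,Y)\mid X]$ exists as a genuine $\hk$-valued conditional (Bochner) expectation, that inner products commute with (conditional) expectation in the form $\E[\langle f,K(\cdot,Y)\rangle_{\hk}\mid X]=\langle f,\E[K(\cdot,Y)\mid X]\rangle_{\hk}$, and in particular the identity $\E[K(Y',\tilde{Y'})\mid X']=\|\E[K(\cdot,Y)\mid X']\|_{\hk}^2$, which is precisely where the conditional independence of $Y'$ and $\tilde{Y'}$ enters. Each of these rests on the integrability $\E\|K(\cdot,Y)\|_{\hk}<\infty$ supplied by $\mu_Y\in\tmk^1(\Y)\subset\tmk^{1/2}(\Y)$ together with separability of $\hk$; once these measurability and interchange-of-limits facts are recorded, the remainder of the proof is a direct substitution.
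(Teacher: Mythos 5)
Your proposal is correct and follows essentially the same route as the paper: expand the squared $\hk$-norms in~\eqref{eq:kac} via the reproducing property, use conditional independence and the tower property to identify $\E[K(Y',\tilde{Y'})]=\E\lVert\E[K(\cdot,Y)\mid X]\rVert_{\hk}^2$ and $\E[K(Y_1,Y_2)]=\lVert\E K(\cdot,Y)\rVert_{\hk}^2$, simplify the ratio, and recognize the numerator as the averaged squared MMD through the mean embeddings. The only difference is that you spell out the Bochner-integrability and measurability justifications more explicitly than the paper does, which is a matter of presentation rather than of approach.
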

	
	\begin{remark}\label{rem:relmmd}
		One of the most popular measures of dependence in machine learning is the Hilbert Schmidt independence criterion or HSIC (see~\cite{GrettonHSIC2005,gretton2008kernel}). In~\cite[Corollary 26]{Sejdinovic2013}, the authors proved that the HSIC of $\mu\in\mathcal{M}(\mathcal{X}\times\Y)$ is equivalent to the MMD between $\mu$ and $\mu_X\otimes\mu_Y$ (which is $0$ if and only if $\mu=\mu_X\otimes \mu_Y$). Similarly,~\cref{prop:wdf} shows that $\eta_{K}$ is also equivalent to the ``averaged" squared MMD, this time between $\mu_{Y|X}$ and $\mu_Y$ (as the denominator is only a function of the marginal of $Y$). Therefore, like HSIC, $\eta_K$ is intuitively a natural measure of dependence, but unlike HSIC, $\eta_K$ (as we will see later) does indeed characterize noiseless functional relationship.
	\end{remark}
	\begin{remark}\label{rem:Euclid}
		Consider the following kernel on $\R^d$ (for $\alpha\in (0,2)$):
		\begin{equation*}
		K(y,\tilde{y}):= {2}^{-1}\Big(\lVert y\rVert^{\alpha}_2+\lVert \tilde{y}\rVert^{\alpha}_2-\lVert y-\tilde{y}\rVert^{\alpha}_2\Big), \qquad \mbox{for all } \,y,y'\in\R^d.
		\end{equation*}
		It is the covariance function of the fractional Brownian motion with exponent $\alpha\in (0,2)$. Simple calculations show that $\eta_K$ computed using the above kernel coincides exactly with $T$ from~\eqref{eq:assocRd} for $\alpha=1$. Moreover, the kernel corresponding to $\alpha=1$ is also the distance covariance kernel, which is very popular in statistics (see~\cite{Gabor2007,Lyons13}). 
	\end{remark}
	The following result (see~\cref{pf:kacmas} for a proof) shows that $\eta_K$ is a valid measure of association. 
	\begin{theorem}\label{theo:kacmas}
		Suppose $\mu_Y\in \tmk^1(\Y)$, $\Y$ is Hausdorff\,\footnote{A topological space where for any two distinct points there exist neighborhoods of each which are disjoint from each other.} and $K(\cdot,\cdot)$ is a characteristic kernel (see~\cref{defn:Char}). Then $\eta_K(\mu)$, as defined in~\eqref{eq:kac}, satisfies $\mathrm{(P1)}$-$\mathrm{(P3)}$.
	\end{theorem}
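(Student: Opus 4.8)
The plan is to read off all three properties from an ANOVA-type variance decomposition of the RKHS embedding, of which \cref{prop:wdf} is already the key instance. Writing $\phi(y)\coloneqq K(\cdot,y)\in\hk$ for the feature map, $m(x)\coloneqq\mme[\phi(Y)\mid X=x]$ for the conditional mean embedding, and $\bar m\coloneqq\mme\phi(Y)$, I would first expand the two squared norms in~\eqref{eq:kac} via the reproducing property~\eqref{eq:K-1-2}, using that $Y'\independent\tilde{Y'}\mid X'$ with common conditional law $\mu_{Y|X'}$ and marginal law $\mu_Y$. The numerator becomes $2\,\mme\lVert\phi(Y)-m(X)\rVert_{\hk}^2$ (the expected within-$X$ variance) and the denominator $2\,\mme\lVert\phi(Y)-\bar m\rVert_{\hk}^2$ (the total variance), so that $\eta_K(\mu)=\mme\lVert m(X)-\bar m\rVert_{\hk}^2/\mme\lVert\phi(Y)-\bar m\rVert_{\hk}^2$, i.e.\ the between-variance over total-variance ratio, which recovers \cref{prop:wdf}.

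Property (P1) is then immediate from the law of total variance: within-, between-, and total variances are all nonnegative and total $=$ within $+$ between, so $\eta_K(\mu)\in[0,1]$. Strict positivity of the denominator (needed for well-definedness) follows because a characteristic $K$ makes $y\mapsto\dirac_y$ injective at the level of mean embeddings, hence $\phi$ injective on points, so nondegeneracy of $\mu_Y$ makes $\phi(Y)$ nonconstant. For (P2), $\eta_K(\mu)=0$ iff the between-variance vanishes, i.e.\ $m(X)=\bar m$ for $\mu_X$-a.e.\ $x$, equivalently $\mathrm{MMD}_K(\mu_{Y|x},\mu_Y)=0$ for $\mu_X$-a.e.\ $x$; since $K$ is characteristic (\cref{defn:Char}), this is equivalent to $\mu_{Y|x}=\mu_Y$ a.e., which is exactly $\mu=\mu_X\otimes\mu_Y$. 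The forward direction is trivial and the converse is precisely where the characteristic hypothesis enters.

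For (P3), $\eta_K(\mu)=1$ iff the within-variance vanishes, i.e.\ for $\mu_X$-a.e.\ $x$ the embedding $\phi(Y)$ is $\mu_{Y|x}$-a.s.\ equal to the constant $m(x)$; by injectivity of $\phi$ this forces $\mu_{Y|x}$ to be a point mass $\dirac_{g(x)}$, while conversely $Y=g(X)$ a.e.\ trivially kills the within-variance. The main obstacle will be the measurability bookkeeping in this step: I must argue (i) that vanishing embedding-variance of a single measure on the merely Hausdorff space $\Y$ forces a Dirac --- using injectivity of $\phi$ together with separability of $\hk$, so that ``$\phi(Y)$ is a.s.\ constant'' pins down a single point --- and (ii) that the atom $g(x)$ can be chosen to depend measurably on $x$. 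For (ii) I would recover $g$ as a measurable selection by inverting the injective map $\phi$ along the measurable conditional mean embedding $x\mapsto m(x)$, invoking a measurable-inverse argument on the separable RKHS $\hk$, thereby producing a genuine measurable $g:\X\to\Y$ with $Y=g(X)$ $\mu$-a.e.
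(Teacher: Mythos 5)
Your treatment of (P1) and (P2) matches the paper's: both rest on the ANOVA/MMD representation of \cref{prop:wdf}, with the characteristic property entering only to upgrade ``$\mathrm{MMD}_K(\mu_{Y|x},\mu_Y)=0$ for $\mu_X$-a.e.\ $x$'' to $\mu_{Y|x}=\mu_Y$. For (P3), your route to the Dirac conclusion --- zero within-variance plus injectivity of the feature map forces $\mu_{Y|x}$ to charge the at-most-one-point set $\{y:K(\cdot,y)=m(x)\}$ --- is a valid alternative to the paper's argument, which instead uses conditional independence of $(Y',\tilde{Y'})$ given $X'$ together with the Hausdorff separation property to show that the support of $\mu_{Y|x}$ is a singleton; indeed \cref{rem:onlyneg} records that injectivity of $y\mapsto K(\cdot,y)$ is all that (P3) requires.

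The genuine gap is in your step (ii), the measurability of $g$. You propose to recover $g=\phi^{-1}\circ m$ via a ``measurable-inverse argument on the separable RKHS.'' But measurable-inverse theorems (Lusin--Souslin/Kuratowski: an injective Borel map has Borel image and Borel measurable inverse) require the \emph{domain} $\Y$ to be a standard Borel (e.g., Polish) space; here $\Y$ is only assumed Hausdorff, and separability of $\hk$ --- the codomain --- does not substitute for that. So inverting the injective map $\phi$ along $x \mapsto m(x)$ is not justified in the stated generality, which is precisely the generality the theorem claims (the paper stresses that handling non-Euclidean $\X$, $\Y$ rigorously is one of its goals). The paper's own proof avoids selection and inversion entirely: having identified, for each $x'$ in a $\mu_X$-full set $\Omega'$, the unique atom $r_{x'}$ of $\mu_{Y|x'}$, it defines $g(x'):=r_{x'}$ and checks measurability directly from the identity
\begin{equation*}
g^{-1}(\mathcal{A})=\{\tilde{x}\in\Omega':\mu_{Y|\tilde{x}}(\mathcal{A})=1\},
\end{equation*}
valid for every Borel $\mathcal{A}\subseteq\Y$, whose right-hand side is measurable because $x\mapsto\mu_{Y|x}(\mathcal{A})$ is measurable by the very definition of a regular conditional distribution. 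Replacing your inversion step with this identity closes the gap; the remainder of your argument stands.
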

	\begin{remark}[Examples of characteristic kernels]\label{rem:excharker}A number of popular characteristic kernels have been studied in the literature. Some popular ones in $\R^d$ include the Gaussian kernel  $K(y,\tilde{y}) :=\exp(-\lVert y-\tilde{y}\rVert^2_2)$, the Laplace kernel  $K(y,\tilde{y}) :=\exp(-\lVert y-\tilde{y}\rVert_1)$ where $\lVert\cdot\rVert_1$ denotes the standard $L^1$ norm, and the kernel in~\cref{rem:Euclid} ($\alpha\in (0,2)$); see~\cite{fukumizu2009characteristic,danafar2010characteristic} for other examples of characteristic kernels on more general topological spaces. Sufficient conditions for a kernel to be characteristic are discussed in~\cite{fukumizu2009characteristic,sriperumbudur2008injective,sriperumbudur2010,Szabo2017}.
	\end{remark}
	\begin{remark}[Why is a characteristic kernel necessary?]\label{rem:onlyneg}
		In order to show that $\eta_{K}(\cdot)$ satisfies $\mathrm{(P3)}$, we only need the map $y\mapsto K(\cdot,y)$ to be injective, which is a much weaker requirement than the kernel being characteristic (see~\cite[Proposition 14]{Sejdinovic2013}). The characteristic requirement on $K(\cdot,\cdot)$ is only necessary while establishing $\mathrm{(P2)}$.
	\end{remark}
	
	During the final stages of preparing this paper, we came across~\cite{Ke2020} where the authors present the population version $\eta_K(\mu)$ in a slightly different form (see~\cref{prop:wdf}). However their proposed estimator (which is very different from ours) does not consistently estimate $\eta_K(\mu)$ but a ``weighted" version thereof (see~\cite[Theorem 8]{Ke2020}). This weighted version does not equal $1$ if and only if $Y$ is a measurable function of $X$. Therefore, the proposed measure in~\cite{Ke2020} does not satisfy (II) as stated in the Introduction, which is the main goal of this paper. Moreover, the estimator proposed in~\cite{Ke2020} does not extend beyond $\R^{d_1+d_2}$ as it relies on kernel bandwidth selection techniques. In addition, the analysis of the population version $\eta_K(\mu)$ in~\cite{Ke2020} ignores technical issues regarding measurability and existence of regular conditional distributions, which are crucial if $\X$ and $\Y$ are non-Euclidean. Consequently, we believe that our analysis provides a more technically rigorous understanding of $\eta_K(\mu)$.
	
	\section{KMAc --- an estimate of $\eta_K$}\label{sec:kmacestimate}
	Having defined $\eta_K$ in~\eqref{eq:kac}, the next natural question is how to estimate it given empirical observations. Towards this direction, consider the setup below. Suppose $(X_1,Y_1),\ldots ,(X_n,Y_n)$ are i.i.d. $\mu$. Also assume that $\X$ is endowed with a metric $\rho$. Note that the denominator in~\eqref{eq:kac} can be estimated easily using empirical averages (from standard U-statistics theory; see~\cite[Chapter 12]{van1998}), for instance, with the following estimator: 
	$$\frac{1}{n(n-1)}\sum_{i\neq j}\lVert K(\cdot,Y_i)-K(\cdot,Y_j)\rVert_{\hk}^2.$$
	The numerator in~\eqref{eq:kac} is trickier to estimate. The main difficulty arises because of the term:
	$$\E \lVert K(\cdot,Y')-K(\cdot,\tilde{Y'})\rVert_{\hk}^2=2\E\lVert K(\cdot,Y)\rVert_{\hk}^2-2\E\big[\E (K(Y',\tilde{Y'})|X')\big].$$
	In particular, the term $\E\big[\E (K(Y',\tilde{Y'})|X')\big]$ in the above display is the hardest to estimate. To motivate our estimator, let us consider a simple case, where $X_i$'s are categorical, i.e., take values in a finite set. A natural estimate for the aforementioned term in that case would be the following:
	\begin{equation}\label{eq:targeterm}
	\frac{1}{n}\sum_{i=1}^n \frac{1}{\#\{j:X_j=X_i\}} \sum_{j:X_j=X_i} K(Y_i,Y_j).
	\end{equation}
	However for continuously distributed $X_i$'s, the inner sum in the above display is vacuous. To circumvent this difficulty, we replace the inner sum in the above display over $\{j:X_j=X_i\}$ by a sum over $\{j:X_j\mbox{ is ``close" to }X_i\}$. We formalize this idea of ``closeness" using \emph{geometric graph functionals} (as in~\cite{bbb2019}), which we describe below.
	
	$\mcg$ is a \emph{geometric graph functional} on $\mathcal{X}$ if, given any finite subset $S \subset \mathcal{X}$, $\mcg(S)$ defines a graph with vertex set $S$ and corresponding edge set, say $\mathcal{E}(\mcg(S))$. 
	Note that the graph $\mathcal{G}(S)$ can be \emph{directed/undirected}. In this paper, we will restrict ourselves to simple graphs (i.e., those without multiple edges and self loops) with no isolated vertices. Accordingly, we will often drop the qualifier geometric and simple. 
	
	Next we define $\mathcal{G}_n\coloneqq \mathcal{G}(X_1,\ldots ,X_n)$ where $\mathcal{G}$ is some graph functional on $\mathcal{X}$. Now consider the following analogue of~\eqref{eq:targeterm}:
	\begin{equation}\label{eq:analogtarget}
	\frac{1}{n}\sum_{i=1}^n\frac{1}{d_i}\sum_{j:(i,j)\in\emgn} K(Y_i,Y_j)
	\end{equation}
	where $\emgn$ denotes the set of (directed/undirected) edges of $\mathcal{G}_n$, i.e., $(i,j)\in\emgn$ if and only if there is an edge from $i\to j$ or $j\to i$ in $\mathcal{G}_n$, and $d_i$ denotes the degree of $X_i$ in $\mathcal{G}_n$. To be specific $d_i:=\sum_{j:(i,j)\in\emgn} 1$. Paralleling~\eqref{eq:targeterm}, we would like to define graph functionals for which $(i,j)\in\emgn$ implies $X_i$ and $X_j$ are ``close". 
	
	Using~\eqref{eq:analogtarget}, we can now propose our {\it kernel measure of association}, i.e., KMAc (also see~\eqref{eq:tempkmac}) in full generality:
	\begin{align}\label{eq:statest}
	\kmac:=\frac{\frac{1}{n}\sum_{i=1}^n d_i^{-1}\sum_{j:(i,j)\in\emgn} K(Y_i,Y_j)- \frac{1}{n(n-1)}\sum_{i\neq j} K(Y_i,Y_j)}{\frac{1}{2n(n-1)}\sum_{i\neq j} \lVert K(\cdot,Y_i)-K(\cdot,Y_j)\rVert_{\hk}^2}.
	\end{align}
	
	\begin{remark}[Directed graphs] Although~\eqref{eq:statest} is well-defined for both directed and undirected graphs, in the specific case of directed graphs, an alternative to~\eqref{eq:analogtarget} would be to only consider the ``outgoing" edges in the inner sum, i.e., we replace $\{j:(i,j)\in \emgn\}$ by $\{j:i\to j\in \emgn\}$. In that case, we would replace $d_i$ by the ``out-degree", i.e., $d_i^+:=\sum_{j:i\to j\in\emgn} 1$. However, all our results go through verbatim under this alternative. So we stick to the convention $\{j:(i,j)\in\emgn\}$ for the ease of exposition.
	\end{remark}
	
	The next natural question is --- ``does $\kmac$ consistently estimate $\eta_K$" as the sample size grows to infinity? We will answer this question in the affirmative under the following assumptions on the graph functional:
	
	\begin{enumerate}
		\item[(A1)] Given the graph $\mathcal{G}_n$, let $N(1),\ldots ,N(n)$ be independent random variables where $N(i)$ is a uniformly sampled index from among the neighbors of $X_i$ in $\mathcal{G}_n$. We will assume that,
		$$\rho(X_1,X_{N(1)})\overset{\P}{\longrightarrow}0 \qquad \mbox{as }n\to\infty.$$
		By exchangeability of the $X_i$'s, the above display implicitly means that ``most" neighboring vertices are ``close" (in terms of $\rho$) stochastically.
		\item[(A2)] Assume that there exists a deterministic positive sequence $r_n\geq 1$ (may or may not be bounded), such that $$\min_{1\leq i\leq n} d_i\geq r_n\qquad \mbox{w.p. }1.$$
		Let $\mathcal{G}_{n,i}$ denote the graph obtained from $\mathcal{G}_n$ by replacing $X_i$ with an i.i.d. random element $X_i'$. Assume that there exists a deterministic positive sequence $q_n$ (may or may not be bounded), such that $$\max_{1\leq i\leq n}\max\{|\mathcal{E}(\mathcal{G}_n)\setminus \mathcal{E}(\mathcal{G}_{n,i})|,|\mathcal{E}(\mathcal{G}_{n,i})\setminus \mathcal{E}(\mathcal{G}_{n})|\}\leq q_n\qquad \mbox{w.p.}~1.$$
		The above display means that there are at most $q_n$ edges in $\emgn$ that are not in $\mathcal{G}_{n,i}$ (and vice versa).
		There are trivial choices possible for $r_n$ and $q_n$, but we will assume that there exists one choice satisfying
		\begin{equation}\label{eq:locsensit}
		\limsup\limits_{n\to\infty}\frac{q_n}{r_n}\leq C,
		\end{equation}
		for some constant $C>0$. This assumption means that the graph functional is {\it local} in the sense that if one vertex of the graph is changed, then the number of edges affected is asymptotically of the same order as the degree of the vertex removed. 
		
		\item[(A3)] The final assumption states that asymptotically all vertices of $\mathcal{G}_n$ have degrees of the same order, i.e., there exists a deterministic sequence $t_n$ (may or may not be bounded) such that:
		$$\max_{1\leq i\leq n} d_i\leq t_n\quad \mbox{w.p.}~1, \qquad \mbox{and} \qquad  \limsup_{n\to\infty}\frac{t_n}{r_n}\leq C, $$ for some constant $C>0$.
	\end{enumerate}
	Similar assumptions were also used in~\cite[cf. conditions N1 and N2]{bbb2019}. We will see examples of graph functionals satisfying (A1)-(A3) in~\cref{sec:exkergraph}. The following result (see~\cref{pf:theoconsis} for a proof) shows that under mild conditions, $\kmac$ provides a consistent estimate of $\eta_K$.
	
	\begin{theorem}\label{theo:consis}
		Suppose $\mathcal{G}_n$ satisfies (A1)-(A3), $\hk$ is separable and  $\mu_Y\in\tmk^{2+\epsilon}(\Y)$ for some fixed $\epsilon>0$. Then $$\kmac\overset{\mathbb{P}}{\longrightarrow} \eta_{K}(\mu).$$ Further, if $\mu_Y\in\tmk^{4+\epsilon}(\Y)$ for some fixed $\epsilon>0$, then $\kmac\overset{a.s.}{\longrightarrow} \eta_{K}(\mu).$
	\end{theorem}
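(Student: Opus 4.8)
The plan is to treat the numerator and denominator of $\kmac$ separately, show each converges to the corresponding piece of $\eta_K(\mu)$ written in the form~\eqref{eq:tempkac}, namely $\eta_K(\mu)=\big(\E[\E(K(Y',\tilde{Y'})\mid X')]-\E K(Y_1,Y_2)\big)/\big(\E K(Y_1,Y_1)-\E K(Y_1,Y_2)\big)$, and conclude via the continuous mapping theorem. The denominator is the easy part: by~\eqref{eq:K-1-2} and symmetrization it equals $\frac1n\sum_i K(Y_i,Y_i)-\frac{1}{n(n-1)}\sum_{i\neq j}K(Y_i,Y_j)$, an average plus a $U$-statistic of order two. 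Since $\mu_Y\in\tmk^1(\Y)$ (implied by the hypotheses) and $|K(y,\tilde y)|\le\sqrt{K(y,y)K(\tilde y,\tilde y)}$ by Cauchy–Schwarz, the $U$-statistic law of large numbers gives convergence to $\E K(Y_1,Y_1)-\E K(Y_1,Y_2)$; the same argument handles the subtracted $U$-statistic $\frac{1}{n(n-1)}\sum_{i\neq j}K(Y_i,Y_j)$ appearing in the numerator.

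The crux is the first numerator term $S_n:=\frac1n\sum_{i=1}^n d_i^{-1}\sum_{j:(i,j)\in\emgn}K(Y_i,Y_j)$, which I would show tends to $\E[\E(K(Y',\tilde{Y'})\mid X')]$. Introducing the Bochner mean embedding $m(x):=\E[K(\cdot,Y)\mid X=x]\in\hk$, the target equals $\E\lVert m(X)\rVert_{\hk}^2$, and conditioning on $\mathcal{F}:=\sigma(X_1,\ldots,X_n)$ — under which the $Y_i$ are independent with $Y_i\sim\mu_{Y|X_i}$ — gives $\E[K(Y_i,Y_j)\mid\mathcal{F}]=\langle m(X_i),m(X_j)\rangle_{\hk}$ for $i\neq j$. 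Using the uniform random neighbor $N(i)$ of (A1) and exchangeability, $\E S_n=\E\langle m(X_1),m(X_{N(1)})\rangle_{\hk}$, so it suffices to show this converges to $\E\lVert m(X_1)\rVert_{\hk}^2$. Since $\hk$ is separable and $\E\lVert m(X)\rVert_{\hk}^2<\infty$, for any $\delta>0$ I can approximate $m$ in $L^2(\mu_X)$ by a bounded, uniformly continuous $\hk$-valued map $\tilde m$ with $\E\lVert m(X)-\tilde m(X)\rVert_{\hk}^2<\delta$. The resulting error inner products are each at most $\sqrt{\E\lVert m(X)\rVert_{\hk}^2}\,\sqrt{\delta}$ by Cauchy–Schwarz in $\hk$ and in $L^2$ (both $X_1$ and $X_{N(1)}$ being marginally $\mu_X$), while (A1) yields $\rho(X_1,X_{N(1)})\overset{\P}{\longrightarrow}0$ so that $\tilde m(X_{N(1)})-\tilde m(X_1)\to0$ in probability; the integrable envelope $2\lVert\tilde m\rVert_\infty\lVert m(X_1)\rVert_{\hk}$ and uniform integrability then force $\E\langle m(X_1),\tilde m(X_{N(1)})-\tilde m(X_1)\rangle_{\hk}\to0$. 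Letting $\delta\downarrow0$ gives $\E S_n\to\E\lVert m(X)\rVert_{\hk}^2$.

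It remains to prove $S_n-\E S_n\to0$. Viewing $S_n$ as a function of the i.i.d.\ pairs $Z_i=(X_i,Y_i)$, I would apply an Efron–Stein / bounded-differences estimate: resampling $Z_i$ perturbs the graph in at most $q_n$ edges (A2) and changes $Y_i$, so it affects $O(t_n)$ neighborhood averages carrying weights $O(1/(n r_n))$. Bounding kernel entries by $\sqrt{K(Y_i,Y_i)K(Y_j,Y_j)}$ with $\E[K(Y,Y)^2]<\infty$ (this is where $\tmk^{2+\epsilon}(\Y)$ enters, the $\epsilon$ supplying uniform integrability) and invoking $\limsup q_n/r_n,\ \limsup t_n/r_n\le C$ makes the sum of squared differences of order $(q_n+t_n)/(n r_n)=O(1/n)$. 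Hence $\Var(S_n)\to0$ and $S_n\overset{\P}{\longrightarrow}\E\lVert m(X)\rVert_{\hk}^2$ by Chebyshev; combined with the denominator limit, the continuous mapping theorem gives $\kmac\overset{\P}{\longrightarrow}\eta_K(\mu)$. For the almost sure claim I would upgrade this to a summable tail bound $\P(|S_n-\E S_n|>\varepsilon)=O(n^{-2})$ using an $L^4$ version of the same bounded-difference argument — exactly what the stronger hypothesis $\tmk^{4+\epsilon}(\Y)$ buys — and conclude by Borel–Cantelli.

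I expect the main obstacle to be the mean-convergence step: passing from geometric closeness $\rho(X_1,X_{N(1)})\overset{\P}{\longrightarrow}0$ to closeness of the embeddings $m(X_{N(1)})\approx m(X_1)$ \emph{without} any continuity of $x\mapsto\mu_{Y|x}$. Separability of $\hk$ and the $L^2(\mu_X)$-approximation by continuous maps are precisely what rescue the argument. The other delicate piece is the Efron–Stein bookkeeping — correctly accounting, through (A2), for both the $q_n$ edges that change when a vertex is resampled and the $O(t_n)$ neighborhood averages thereby affected — which is where assumptions (A2)–(A3) do their essential work.
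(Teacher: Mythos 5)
Your proposal follows essentially the same route as the paper's proof in \cref{pf:theoconsis}: the same decomposition (U-statistic denominator and subtracted U-statistic handled by the law of large numbers for U-statistics, graph term $S_n$ treated separately), the same identity $\E S_n=\E\langle m(X_1),m(X_{N(1)})\rangle_{\hk}$ obtained by conditioning on $\sigma(X_1,\ldots,X_n)$ and exchangeability, a Lusin-type approximation of the conditional mean embedding combined with (A1) for the bias, and Efron--Stein (second moment for the in-probability claim, fourth moment plus Borel--Cantelli for the a.s.\ claim, which is exactly where $\tmk^{2+\epsilon}$ versus $\tmk^{4+\epsilon}$ enter). There is, however, one genuine error in your bias step: you assert that $X_{N(1)}$ is marginally distributed as $\mu_X$ and use this to bound the error inner products by $\sqrt{\E\lVert m(X)\rVert_{\hk}^2}\,\sqrt{\delta}$. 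This is false: the index $N(1)$ is selected through the geometric graph, which is a function of all of $X_1,\ldots,X_n$, so the law of $X_{N(1)}$ is a biased version of $\mu_X$ (already for a $1$-NNG on three points, $X_{N(1)}$ is whichever of $X_2,X_3$ is closer to $X_1$, which is not $\mu_X$-distributed in general). Hence $\E\lVert m(X_{N(1)})-\tilde{m}(X_{N(1)})\rVert_{\hk}^2\leq\delta$ does not follow from the $L^2(\mu_X)$ approximation property of $\tilde{m}$.

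The repair is precisely the paper's \cref{lem:techlemnbd}: by exchangeability and (A3), $\E f(X_{N(1)})\leq C\,\E f(X_1)$ for every measurable $f\geq 0$, with $C\lesssim t_n/r_n=\mathcal{O}(1)$; applying this with $f=\lVert m-\tilde{m}\rVert_{\hk}^2$ replaces your $\sqrt{\delta}$ by $\sqrt{C\delta}$, after which your argument (letting $\delta\downarrow 0$ at the end) goes through. The same lemma is also what licenses the Lusin step itself, since one must control $\P(X_{N(1)}\notin K)$ for the compact set $K$ on which the approximant is continuous. Two further claims are quantitatively off, though harmless: for unbounded kernels the Efron--Stein bound is not $\mathcal{O}(1/n)$ and the fourth-moment tail is not $\mathcal{O}(n^{-2})$, because the single-coordinate perturbation is controlled by $\max_{1\leq i\leq n} K(Y_i,Y_i)$, whose relevant moments grow like a fractional power of $n$ under $\tmk^{2+\epsilon}$ (respectively $\tmk^{4+\epsilon}$); cf.\ \cref{prop:maxmom}. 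What one actually obtains is $\Var(S_n)=o(1)$ and $\E\left(S_n-\E S_n\right)^4=o(n^{-1-\delta'})$ for some $\delta'>0$ --- still summable, so Chebyshev and Borel--Cantelli apply and your conclusions stand, but the rates you state hold only when $K(\cdot,\cdot)$ is bounded.
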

	In fact, a sub-Gaussian concentration bound can be proved for the term in~\eqref{eq:analogtarget} under additional assumptions on the kernel $K(\cdot,\cdot)$. The following result (see~\cref{pf:bdiffcon} for a proof) makes this precise.
	\begin{prop}\label{cor:bdiffcon}
		Under the same assumptions as in~\cref{theo:consis} (except assumption (A1) on $\mathcal{G}_n$) and provided $\sup_{y\in\Y} K(y,y)\leq M$ for some $M>0$, there exists a fixed positive constant $C^*$ (free of $n$ and $t$), such that for any $t>0$, the following holds:
		\begin{equation}\label{eq:bdiffcon1}
		\P\left[\bigg|\frac{1}{n}\sum_{i=1}^n \frac{1}{d_i}\sum_{j:(i,j)\in\emgn} K(Y_i,Y_j)-\E[K(Y_1,Y_{N(1)})]\bigg|\geq t\right]\leq 2\exp(-C^* nt^2),
		\end{equation}
		and consequently, 
		\begin{equation}\label{eq:bdiffcon2}
		\sqrt{n}\left(\kmac-\frac{2\E [K(Y_1,Y_{N(1)})]-2\lVert \E K(\cdot,Y_1)\rVert_{\hk}^2}{\E\lVert K(\cdot,Y_1)-\E K(\cdot,Y_1)\rVert_{\hk}^2}\right)=\mathcal{O}_{\P}(1),
		\end{equation}
		where $N(1)$ is as defined in assumption (A1).
	\end{prop}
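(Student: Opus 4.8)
The plan is to establish the sub-Gaussian tail bound \eqref{eq:bdiffcon1} by the bounded differences (McDiarmid) inequality, and then to deduce \eqref{eq:bdiffcon2} by combining it with standard $U$-statistic concentration for the remaining two pieces of $\kmac$.

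\emph{Step 1 (setup and the bounded difference).} Write the quantity in \eqref{eq:analogtarget} as $F \coloneqq n^{-1}\sum_{a=1}^{n} f_a$, where $f_a\coloneqq d_a^{-1}\sum_{b:(a,b)\in\emgn} K(Y_a,Y_b)$, and abbreviate $b\sim a$ for $(a,b)\in\emgn$. Since $\sup_{y}K(y,y)\le M$, the reproducing property together with Cauchy--Schwarz gives $|K(y,\tilde y)|\le M$ for all $y,\tilde y$, so each $f_a$ is an average of at most $d_a$ terms bounded by $M$, whence $|f_a|\le M$. I would then bound the change $|F-F^{(i)}|$ when $Z_i=(X_i,Y_i)$ is replaced by an independent copy, turning $\mathcal{G}_n$ into $\mathcal{G}_{n,i}$. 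Decompose the affected summands into three groups: (i) the term $f_i$, which changes by at most $2M$ and so contributes at most $2M/n$; (ii) the vertices $a$ that remain neighbors of $i$ but now see the swapped value, for which a single summand of the average $f_a$ changes---crucially, since $f_a$ averages at least $r_n$ terms, this alters $f_a$ by only $O(M/r_n)$, and there are at most $t_n$ such vertices, giving a contribution $O(t_n M/(n r_n))$; and (iii) the vertices incident to edges created or destroyed by the structural change, whose total degree change is at most $4q_n$ by assumption~(A2), each again perturbing the relevant average by $O(M/r_n)$, for a contribution $O(q_n M/(n r_n))$. Invoking the locality bound $\limsup q_n/r_n\le C$ from~(A2) and the degree regularity $\limsup t_n/r_n\le C$ from~(A3), all three contributions reduce to a single estimate $|F-F^{(i)}|\le c_0 M/n$ for a constant $c_0$ depending only on $C$.

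\emph{Step 2 (concentration and centering).} With $c_i\le c_0 M/n$, McDiarmid's inequality gives $\P[|F-\E F|\ge t]\le 2\exp(-2t^2/\sum_{i}c_i^2)$ and $\sum_i c_i^2\le c_0^2 M^2/n$, which is precisely \eqref{eq:bdiffcon1} with $C^*=2/(c_0^2 M^2)$, free of $n$ and $t$. To identify $\E F$, note that the i.i.d.\ structure of the pairs and the permutation symmetry of the graph functional make $(f_1,\ldots,f_n)$ exchangeable, so $\E F=\E f_1$; and conditioning on $\mathcal{G}_n$ and $(Y_1,\ldots,Y_n)$, the uniform choice of $N(1)$ among the neighbors of $X_1$ gives $\E[K(Y_1,Y_{N(1)})\mid \mathcal{G}_n,(Y_1,\ldots,Y_n)]=d_1^{-1}\sum_{b\sim 1}K(Y_1,Y_b)=f_1$, whence $\E F=\E[K(Y_1,Y_{N(1)})]$. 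Assumption~(A1) plays no role here; it is needed only to relate this expectation to the population numerator of $\eta_K$, which is not required for the present statement.

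\emph{Step 3 (the ratio).} Finally, write $\kmac=(A_n-B_n)/D_n$ with $A_n=F$, $B_n=\frac{1}{n(n-1)}\sum_{i\ne j}K(Y_i,Y_j)$ and $D_n=\frac{1}{2n(n-1)}\sum_{i\ne j}\|K(\cdot,Y_i)-K(\cdot,Y_j)\|_{\hk}^2$. The bound \eqref{eq:bdiffcon1} yields $A_n=\E[K(Y_1,Y_{N(1)})]+\mathcal{O}_{\P}(n^{-1/2})$. Since $K$ is bounded by $M$ and, by \eqref{eq:K-1-2}, $\|K(\cdot,Y_i)-K(\cdot,Y_j)\|_{\hk}^2\le 4M$, both $B_n$ and $D_n$ are $U$-statistics with bounded kernels, so standard theory (e.g.~\cite[Chapter 12]{van1998}) gives $B_n=\|\E K(\cdot,Y_1)\|_{\hk}^2+\mathcal{O}_{\P}(n^{-1/2})$ and $D_n=\E\|K(\cdot,Y_1)-\E K(\cdot,Y_1)\|_{\hk}^2+\mathcal{O}_{\P}(n^{-1/2})$, using $\E K(Y_1,Y_2)=\|\E K(\cdot,Y_1)\|_{\hk}^2$. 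Because $\mu_Y$ is nondegenerate and $K$ is characteristic, the limiting denominator is a fixed positive constant, bounded away from $0$ uniformly in $n$. A first-order (delta-method) expansion of the ratio then gives $\sqrt{n}\bigl(\kmac-(\E A_n-\E B_n)/\E D_n\bigr)=\mathcal{O}_{\P}(1)$, i.e.\ the $\sqrt n$-rate asserted in \eqref{eq:bdiffcon2}, with centering equal to the ratio of the population means of the numerator and denominator of $\kmac$. The main obstacle is the bounded-difference estimate of Step~1: one must verify that the structural perturbation of the graph disturbs each neighborhood average by only $O(1/r_n)$, so that assumptions~(A2)--(A3) force every contribution down to order $1/n$---which is exactly what produces the factor $n$ in the exponent---rather than the $O(1)$ bound that a naive count of affected edges would yield.
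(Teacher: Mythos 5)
Your proposal is correct and follows essentially the same route as the paper's own proof: the paper likewise obtains \eqref{eq:bdiffcon1} by feeding a bounded-difference estimate (its \eqref{eq:perturb}, which your three-case decomposition re-derives directly from (A2)--(A3), with the exchangeability identification $\E[\bar{T}_n]=\E[K(Y_1,Y_{N(1)})]$ that you spell out left implicit there) into McDiarmid's inequality, and then deduces \eqref{eq:bdiffcon2} from concentration of the two remaining bounded $U$-statistics together with \cref{prop:basicmom}, which is precisely your ratio/delta-method step. Two minor caveats: characteristicness of $K(\cdot,\cdot)$ is not among the assumptions here, so positivity of the limiting denominator is an implicit standing assumption rather than a consequence (the paper's proof glosses over this in exactly the same way, through the requirement $b\in\R^+$ in \cref{prop:basicmom}); and the centering your argument produces, namely $\frac{\E[K(Y_1,Y_{N(1)})]-\lVert \E K(\cdot,Y_1)\rVert_{\hk}^2}{\E\lVert K(\cdot,Y_1)-\E K(\cdot,Y_1)\rVert_{\hk}^2}$, coincides with what the paper's proof yields, the extra factor of $2$ displayed in \eqref{eq:bdiffcon2} being an inconsistency in the statement itself (it would be correct had the denominator there been $\E\lVert K(\cdot,Y_1)-K(\cdot,Y_2)\rVert_{\hk}^2$), so it is not a defect of your proof.
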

	Observe that~\cref{cor:bdiffcon} does not use assumption (A1). Moreover, the centering in~\eqref{eq:bdiffcon2} is not exactly equal to $\eta_K$. With some elementary simplifications, it is not hard to show that if $\E [K(Y_1,Y_{N(1)})]$ in the above display was replaced by $\E\big[\E (K(Y',\tilde{Y'})|X')\big]$, then the centering in~\eqref{eq:bdiffcon2} would reduce to $\eta_K$. In fact, in the proof of~\cref{theo:consis}, assumption (A1) is only used to prove that $\E [K(Y_1,Y_{N(1)})]\to \E\big[\E (K(Y',\tilde{Y'})|X')\big]$ as $n\to\infty$.

	\subsection{Examples of graph functionals and kernels for KMAc}\label{sec:exkergraph}
	In this section, we will provide examples of graph functionals and kernels which can be used to construct $\kmac$ for estimating $\eta_K$ consistently. Let us begin with two popular graph functionals that have found widespread applications in the graph-based hypothesis testing literature (see~\cite{friedman1983,bbb2019,bbbm2019,heller2012,munmun2016,berrett2017nonparametric}):
	
	\begin{enumerate}
		\item \emph{Minimum spanning tree (MST)}: A MST is a subset of edges of an edge-weighted undirected graph which connects all the vertices with the least possible sum of edge weights and contains no cycles. For instance, in a metric (say $\rho(\cdot,\cdot)$) space, given a set of points $X_1,\ldots ,X_n$, one can construct a MST for the complete graph with vertices as $X_i$'s and edge weights $\rho(X_i,X_j)$. An example of particular interest is the Euclidean MST which is defined similarly on $\R^d$ with $\rho(x,y):= \lVert x-y\rVert_2$ for all $x,y\in\R^d$. The Euclidean MST can be computed in $\mathcal{O}(n^{2-o(d)}(\log{n})^{1-o(d)})$ time complexity (see~\cite{yao1982constructing}).
		\item \emph{Nearest neighbor graph (NNG)}: A $k$-NNG of a set of points in a metric space is a graph where each point is joined with its $k$-nearest neighbors (with respect to some metric, say $\rho(\cdot,\cdot)$). One can choose the edges to be either directed or undirected. Accordingly, the Euclidean $k$-NNG is defined on $\R^d$ with $\rho(x,y):= \lVert x-y \rVert_2$ for all $x,y\in\R^d$; and it can be computed in $\mathcal{O}(kn\log{n})$ time complexity (see~\cite{friedman1977algorithm} for details).
	\end{enumerate}
	
	The following result (see~\cref{pf:vercond} for a proof) shows that the two popular examples above satisfy (A1)-(A3).
	
	\begin{prop}\label{prop:vercond}
		(i) If $X_1,\ldots ,X_n\overset{i.i.d.}{\sim} \mu_X$, an absolutely continuous distribution on $\R^d$, then the corresponding Euclidean MST on $X_1,\ldots ,X_n$ is unique and satisfies (A1)-(A3). \newline
		(ii) If $X_1,\ldots ,X_n\overset{i.i.d.}{\sim} \mu_X$ (over $\R^d$) such that $\lVert X_1-X_2\rVert_2$ has a continuous distribution, then the $k$-NNG (both directed and undirected) is uniquely defined and satisfies (A1)-(A3), provided $k=o(n/\log{n})$.
	\end{prop}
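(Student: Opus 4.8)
The goal is to verify assumptions (A1)-(A3) for the two canonical geometric graphs. I would handle (A1), (A2), and (A3) separately, relying on classical results about the structure of Euclidean MSTs and $k$-NNGs.

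For the MST (part (i)), the key structural facts are: the degree of every vertex in a Euclidean MST is bounded above by a constant $c_d$ depending only on the dimension $d$ (this is a classical geometric fact, since the edges emanating from any vertex must have angular separation bounded below, forcing a bound like $c_d \le 6$ for $d=2$ and more generally a dimension-dependent constant). This immediately gives (A3) with $t_n = c_d$ and, since every vertex in a tree has degree at least $1$, also gives $r_n = 1$ in (A2). For uniqueness, I would invoke that $\mu_X$ absolutely continuous implies all pairwise distances are distinct almost surely, so the MST is unique. For (A1), I would argue that the total edge length of the MST on $n$ i.i.d.\ points from an absolutely continuous law on $\R^d$ grows like $n^{1-1/d}$ (by the Beardwood-Halton-Hammersley theorem), so the average edge length is $O(n^{-1/d}) \to 0$; since $\rho(X_1, X_{N(1)})$ is the length of a uniformly chosen edge incident to $X_1$ and the degrees are bounded, this average edge length controls $\E[\rho(X_1, X_{N(1)})]$, giving convergence to $0$ in probability. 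For (A2), the locality condition: removing a single point $X_i$ and replacing it with $X_i'$ changes only a bounded number of edges, because the MST is a local object---the edges destroyed are those incident to $X_i$ (at most $c_d$ of them) and a bounded number of new edges must be added to reconnect; I would cite the stability/locality of the MST under single-point perturbations (again the number of affected edges is $O(1) = O(c_d)$), so $q_n = O(1)$ and $q_n/r_n$ is bounded.

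For the $k$-NNG (part (ii)), uniqueness follows from the assumption that $\lVert X_1 - X_2 \rVert_2$ has a continuous distribution, so ties among nearest neighbors occur with probability zero. For (A2) and (A3), the in-degrees and out-degrees are controlled: the out-degree is exactly $k$ (each point has $k$ nearest neighbors), so $r_n \ge k$; the crucial fact is that the in-degree of any point in a $k$-NNG is bounded by $c_d k$ for a dimension-dependent constant $c_d$ (a point can be the nearest neighbor of at most $O(1)$ points in each of finitely many cones, so at most $O(k)$ points overall), giving $t_n = c_d k$ and hence $t_n/r_n \le c_d$. For the locality condition (A2), replacing $X_i$ with $X_i'$ affects the edges out of $X_i$ (exactly $k$ of them) and the edges of points that had $X_i$ among their $k$ nearest neighbors (at most $c_d k$ of them, and the new point $X_i'$ similarly affects $O(k)$ points); thus $q_n = O(k)$ and $q_n/r_n$ is bounded. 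For (A1), I would again control $\rho(X_1, X_{N(1)})$ via the typical distance to the $k$-th nearest neighbor: under $k = o(n/\log n)$, the $k$-th nearest neighbor distance scales like $(k/n)^{1/d}$, which tends to $0$; since $N(1)$ is a uniformly chosen neighbor of $X_1$, its distance is at most the $k$-th nearest neighbor distance, so $\rho(X_1, X_{N(1)}) \overset{\P}{\to} 0$.

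The main obstacle, I expect, will be establishing (A1)---the convergence $\rho(X_1, X_{N(1)}) \overset{\P}{\to} 0$---rigorously, because it requires quantitative control on how neighboring points in these graphs are spatially close. For the MST this needs the sub-additive Euclidean functional theory (the total length scaling $n^{1-1/d}$), and for the $k$-NNG it needs concentration of the $k$-th nearest neighbor distance, which in turn uses the condition $k = o(n / \log n)$ to ensure that nearest-neighbor distances shrink uniformly. The degree bounds for (A2) and (A3) are comparatively routine geometric/combinatorial arguments (cone-covering arguments giving the dimension-dependent constants $c_d$), but care is needed to make the constants explicit and to verify the locality estimate $q_n = O(r_n)$ cleanly.
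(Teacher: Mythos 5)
Your verification of (A2)--(A3) and of uniqueness for both graphs essentially matches the paper's (bounded MST degree, the cone bound on the in-degree of the $k$-NNG giving $t_n\leq C(d)k_n$, and locality of both graphs under replacement of a single point), but both of your arguments for (A1) have genuine gaps. For the MST, the Beardwood--Halton--Hammersley asymptotics $L_n\asymp n^{1-1/d}$ for the total length requires compact support (or moment conditions) on $\mu_X$, whereas the proposition assumes only absolute continuity. For a heavy-tailed law such as the multivariate Cauchy, $\E[L_n]=\infty$ for every $n$ (already $L_n\geq \lVert X_1-X_2\rVert_2$, which is not integrable), and even almost surely the total length is dominated by a few outlier edges and is of order at least $n$, so the bound $\E[\rho(X_1,X_{N(1)})]\lesssim n^{-1}\E[L_n]$ yields nothing and the average edge length does not vanish. (A1) is nevertheless true in this generality, because it concerns a \emph{typical} edge rather than the mean edge length; this is exactly why the paper argues through Penrose--Yukich's weak law for stabilizing functionals applied to the bounded test function $\mathbf{1}(n^{1/d}|e|\geq M)$, which requires no moments.

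For the $k$-NNG, your key step --- that the distance from $X_1$ to a uniformly chosen neighbor is at most $X_1$'s $k$-th nearest-neighbor radius --- is false for the \emph{undirected} graph, which the proposition explicitly covers: a neighbor of $X_1$ may be a point $X_j$ that merely has $X_1$ among its own $k$ nearest neighbors, and such an in-neighbor can lie far outside $X_1$'s $k$-th nearest-neighbor ball. Controlling these in-edges is where the hypothesis $k=o(n/\log n)$, rather than just $k=o(n)$, is actually used: the paper bounds $\P(\lVert X_1-X_{N(1)}\rVert_2\geq\epsilon)$ by a quantity of the form $n^{-1}+(k_n\log n/n)(M/\epsilon)^{p}+\P(\lVert X_1\rVert_2\geq M)$ via the covering argument of \eqref{eq:callearly} in the proof of \cref{theo:rateres}, and then sends $n\to\infty$ before $M\to\infty$. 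Separately, the $(k/n)^{1/d}$ scaling you invoke for the $k$-th nearest-neighbor distance presupposes a density bounded away from $0$ and $\infty$; for a general absolutely continuous $\mu_X$ only the qualitative convergence (via a law-of-large-numbers argument for the binomial count of points in $B(X_1,\epsilon)$) is available, so even the directed case needs a softer argument than the one you sketch.
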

	
	~\cref{prop:vercond} shows that, in particular, the Euclidean $1$-NNG can be used to construct $\kmac$ which leads to a consistent estimate of $\eta_K$. This is particularly appealing because the Euclidean $1$-NNG can be computed in almost linear time.
	Even when the MST and/or the $k$-NNG are not unique, it is still possible that they satisfy (A1)-(A3); see e.g.,~\cite[Lemmas 10.3, 10.4, 10.9]{azadkia2019simple} where the authors directly show that for a (possibly non-unique) $1$-NNG assumptions like (A1)-(A3) hold. However, the non-uniqueness of the underlying graph makes the analysis overly complicated, and hence we do not consider this scenario in this paper (except in~\cref{prop:energyeqfree} where we give an assumptionless analogue of~\cref{theo:consis} that holds when $\X$ and $\Y$ are Euclidean).
	\begin{remark}[Other finite dimensional spaces]\label{rem:otherfin}
		Instead of $\R^d$, let us consider MSTs or NNGs on finite (say $d$) dimensional inner product vector spaces (say $V$) over $\R$. By an elementary basis to basis mapping, all such spaces are isometrically isomorphic to $\R^d$. This means that by an isomorphically isometric embedding, any MST/NNG on $V$ yields an MST/NNG on $\R^d$, and consequently~\cref{prop:vercond} continues to hold for such spaces. 
	\end{remark}
	In~\cref{sec:gengraphcond}, we have added a discussion on our assumptions and how they relate to NNGs and MSTs constructed on general topological spaces.
	
	With the above examples of graph functionals in mind, recall that the next step in computing $\kmac$ is to construct characteristic kernels on $\Y$ which is an area of active research in the machine learning community (see~\cite{fukumizu2009characteristic,danafar2010characteristic,christmann2010universal,SVM}). Below, we present some examples of $\Y$ for which characteristic kernels have been constructed, with relevant references: (a) all separable Hilbert spaces and finite dimensional Hyperbolic spaces (see~\cite{Lyons13,Lyons2014}); (b) certain groups and semigroups, e.g., histogram valued-data/data on periodic domains which have applications in robotics, geophysics, recognizing human activities in video sequences (see~\cite{fukumizu2009characteristic,danafar2010characteristic}); (c) compact subspaces of probability measures on $\R^d$ and compactly supported $L_2$-spaces, with applications in signal processing, text and image classifications, etc. (see~\cite{christmann2010universal}). Examples of characteristic kernels on $\R^d$ were discussed in Remarks~\ref{rem:excharker}~and~\ref{rem:Euclid}.
	
	\section{A CLT for $\kmac$ when $\mu=\mu_X\otimes \mu_Y$}\label{sec:kmacclt}
	Here we provide a CLT for our estimator when $\mu=\mu_X\otimes \mu_Y$. This is particularly useful when testing for independence between $X$ and $Y$. While popular measures of dependence, such as distance covariance and MMD have a complicated (infinite mixture of chi-squares) limiting null distribution which is difficult to simulate from, crucially $\kmac$ has a simple Gaussian limit that can be made pivotal by a suitable rescaling.

	Consider the setup of~\cref{sec:kmacestimate}. In particular, let $\mathcal{G}_n$ denote the graph $\mathcal{G}(X_1,\ldots ,X_n)$ for some graph functional $\mathcal{G}_n$. Also $(d_1,\ldots d_n)$ denotes the degree sequence associated with $\mathcal{G}_n$. Finally recall the definitions of $q_n$, $t_n$ and $r_n$ from assumptions (A2) and (A3). Note that $$\sqrt{n}\kmac=\frac{N_n}{D_n} \quad \qquad \mbox{where }\;\; D_n:=\frac{1}{2n(n-1)}\sum_{i\neq j}\lVert K(\cdot,Y_i)-K(\cdot,Y_j)\rVert_{\hk}^2$$ and
	$$N_n:=\sqrt{n}\left(\frac{1}{n}\sum_{i=1}^n d_i^{-1}\sum_{j:(i,j)\in\emgn} K(Y_i,Y_j)-\frac{1}{n(n-1)}\sum_{i\neq j} K(Y_i,Y_j)\right).$$ 
	Now, $D_n$ does not involve the $X_i$'s and converges to $\E[K(Y_1,Y_1)]-\E[K(Y_1,Y_2)]$ provided $\mu_Y\in\tmk^2(\Y)$ by standard U-statistics theory. Therefore, by Slutsky's theorem, it suffices to establish a pivotal limit distribution for $N_n$ after a suitable scaling. This is the subject of the following theorem (see~\cref{pf:theonullclt} for a proof).
	
	\begin{theorem}\label{theo:nullclt}
		Assume that (A3) holds and $\E[K^2(Y_1,Y_2)]<\infty$. Then,
		\begin{align}\label{eq:d'accord}
		\mathrm{Var}(N_n)=\mathcal{O}(1).
		\end{align}	
		Further, with $\theta:=(M,D,\gamma,\epsilon)\in (0,\infty)^3\times [0,1/3)$ consider the following subclass of graph functionals and measures on $\X \times \Y$ given by:
		\begin{align*}
		\mathcal{J}_{\theta} := & \Big\{(\tilde{\mathcal{G}},\tilde{\mu}): \E [K^4(Y_1,Y_2)]\leq M,\; \limsup_{n\to\infty}\max_{1\leq i\leq n} \frac{\td_i}{(\log{n})^{\gamma}}\leq D\ w.p.~ 1,\; r_n^{-1}(q_n+t_n)\leq D, \\ &  n^{\epsilon}\mathrm{Var}(N_n)\geq 1\ \forall\ n\geq M,\; \mbox{where} \ (\td_1,\ldots ,\td_n)\; \mbox{denotes the degree sequence of } \\ & \tilde{\mathcal{G}}_n:=\tilde{\mathcal{G}}(X_1,\ldots ,X_n), \; \mbox{with} \; (X_1,Y_1),\ldots , (X_n,Y_n)\overset{i.i.d.}{\sim}\tilde{\mu}, \ \tilde{\mu}=\tilde{\mu}_X\otimes \tilde{\mu}_Y \Big\}.
		\end{align*}
		Then the following result holds for every fixed $\theta\in (0,\infty)^3\times [0,1/3)$:
		\begin{align}\label{eq:mainres}
		\lim\limits_{n\to\infty}\sup_{(\tilde{\mathcal{G}},\tilde{\mu})\in \mathcal{J}_{\theta}}\sup_{z\in\R}\Bigg|\P \left(\frac{N_n}{\hS_n}\leq z\right)-\Phi(z)\Bigg|=0,
		\end{align}
		where $\Phi(\cdot)$ is the standard Gaussian cumulative distribution function and 
		$$\hS_n^2:=\tilde{a}\left(\tilde{g}_1+\tilde{g}_3-\frac{2}{n-1}\right)+\tilde{b}\left(\tilde{g}_2-2\tilde{g}_1-2\tilde{g}_3-1+\frac{4}{n-1}\right)+\tilde{c}\left(\tilde{g}_1+\tilde{g}_3-\tilde{g}_2+\frac{n-3}{n-1}\right),$$
		with
		\begin{align*}
		&\tilde{a}:=\frac{1}{n(n-1)}\sum_{(i,j)\ \mathrm{distinct}} K^2(Y_i,Y_j),\\ &\tilde{b}:=\frac{1}{n(n-1)(n-2)}\sum_{(i,j,l)\ \mathrm{distinct}}K(Y_i,Y_j)K(Y_i,Y_l),\\ &\tilde{c}:=\frac{1}{n(n-1)(n-2)(n-3)}\sum_{(i,j,l,m)\ \mathrm{distinct}} K(Y_i,Y_j)K(Y_l,Y_m),
		\end{align*}
		and
		\begin{align*}
		\tilde{g}_1:=\frac{1}{n}\sum_{i=1}^{n} \frac{1}{\td_i}, \qquad \tilde{g}_2:=\frac{1}{n}\sum_{i,j} \frac{\tgn(i,j)}{\td_i\td_j}, \qquad \tilde{g}_3:=\frac{1}{n}\sum_{(i,j)\in\emgn} \frac{1}{\td_i\td_j}.
		\end{align*}
		Here $\tgn(i,j):=\sum_{k} \mathbf{1}((i,k)\in \mathcal{E}(\tilde{\mathcal{G}}_n))\mathbf{1}((k,j)\in \mathcal{E}(\tilde{\mathcal{G}}_n))$ denotes the number of common neighbors of vertices $X_i$ and $X_j$. 		
	\end{theorem}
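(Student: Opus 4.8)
The plan is to work conditionally on the graph $\mcg_n$ (equivalently, on $X_1,\dots,X_n$): under $\tilde\mu=\tilde\mu_X\otimes\tilde\mu_Y$ the responses $Y_1,\dots,Y_n$ are i.i.d.\ from $\tilde\mu_Y$ and independent of $\mcg_n$, so $N_n$ becomes a statistic in i.i.d.\ variables with fixed (but random) edge weights. The first step is a Hoeffding decomposition of the kernel: writing $K(Y_i,Y_j)=\E[K(Y_1,Y_2)]+h_1(Y_i)+h_1(Y_j)+h_2(Y_i,Y_j)$ with $h_1(y)=\E[K(y,Y_2)]-\E[K(Y_1,Y_2)]$ and $h_2$ the completely degenerate remainder. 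Substituting into $N_n=\sqrt n(A-B)$ (with $A,B$ the two averages) and using $\sum_{j:(i,j)\in\emgn}d_i^{-1}=1$, the constant and the diagonal linear terms cancel between $A$ and $B$, leaving
\begin{equation*}
N_n=\frac1{\sqrt n}\sum_{i=1}^n(w_i-1)h_1(Y_i)+\frac1{\sqrt n}\sum_{i=1}^n\frac1{d_i}\sum_{j:(i,j)\in\emgn}h_2(Y_i,Y_j)+o_{\P}(1),
\end{equation*}
where $w_i:=\sum_{k:(i,k)\in\emgn}d_k^{-1}$ and the $o_{\P}(1)$ is the degenerate part of the global average $B$, carrying standard deviation $O(n^{-1/2})$. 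The key structural gain is that the surviving linear part is a sum of conditionally independent terms, while the degenerate part is a sum whose summand at vertex $i$ depends only on $\{Y_j:(i,j)\in\emgn\}\cup\{Y_i\}$; both therefore exhibit \emph{local dependence} given $\mcg_n$.

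For the variance bound \eqref{eq:d'accord} I would note that $\E[N_n\mid\mcg_n]=0$ and compute $\Var(N_n\mid\mcg_n)$ from the display above. By orthogonality of $h_1$ and $h_2$ and of $h_2$ across distinct pairs, the conditional variance equals $\sigma_1^2\cdot\frac1n\sum_i(w_i-1)^2$ plus a degenerate contribution whose coefficients are $O(n^{-1}r_n^{-1})$ on the $O(n\,t_n)$ edges; here $\sigma_1^2=\Var(h_1(Y))$ and $\sigma_2^2=\E[h_2^2]$ are finite since $\tilde\mu_Y\in\tmk^{2}(\Y)$ (ensured by $\E K^4\le M$). Using $d_i\in[r_n,t_n]$ with $t_n/r_n\le C$ gives $w_i\le t_n/r_n\le C$ and bounds the degenerate part by $O(t_n/r_n^2)=O(1)$, yielding $\Var(N_n)=\E[\Var(N_n\mid\mcg_n)]=\mathcal O(1)$. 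This same computation identifies $\hS_n^2$ as a U-statistic estimator of $\Var(N_n\mid\mcg_n)$: the combinations multiplying $\tilde a,\tilde b,\tilde c$ reproduce the plug-in estimators of $\E K^2$, $\E[K(Y_1,Y_2)K(Y_1,Y_3)]$ and $(\E K)^2$, while $\tilde g_1,\tilde g_3$ and $\tilde g_2=\frac1n\sum_i w_i^2$ reproduce the graph functionals in the exact variance.

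The heart of the argument is the conditional CLT. Given $\mcg_n$, I would standardize $N_n/\hS_n$ and apply a normal-approximation bound for sums of locally dependent random variables via Stein's method in its dependency-neighbourhood form (the Chen--Shao/Baldi--Rinott bounds, in the spirit of~\cite{bbb2019}). The dependency neighbourhood of vertex $i$ consists of $i$, its neighbours, and vertices sharing a neighbour with $i$, so its size is of order $(\max_i d_i)^2$, which over $\mathcal J_{\theta}$ is $O((\log n)^{2\gamma})$. The resulting Berry--Esseen remainder is of cubic type: bounded by a constant (depending only on $M$ through $\E K^4\le M$) times $(\max_i d_i)^{c}\,n^{-1/2}\,S_n^{-3}$, where $S_n^2=\Var(N_n\mid\mcg_n)$. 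Invoking $n^{\epsilon}\Var(N_n)\ge 1$, together with a concentration argument showing $S_n^2$ does not fall far below its mean, gives $S_n^{-3}\lesssim n^{3\epsilon/2}$, so the remainder is $O\big((\log n)^{c\gamma}\,n^{-1/2+3\epsilon/2}\big)$, which vanishes uniformly once $\epsilon<1/3$. Finally I would replace $S_n$ by $\hS_n$ using uniform consistency of $\tilde a,\tilde b,\tilde c$ and of $\tilde g_1,\tilde g_2,\tilde g_3$ (laws of large numbers for the U-statistics plus the degree bounds), conclude via Slutsky, and verify that every error bound depends on $(\tilde{\mcg},\tilde\mu)$ only through $\theta=(M,D,\gamma,\epsilon)$, delivering the uniformity over $\mathcal J_{\theta}$.

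The main obstacle I anticipate is precisely this Berry--Esseen step for the \emph{degenerate} quadratic component: standard locally-dependent CLTs are stated for non-degenerate sums, the variance $S_n^2$ may be as small as $n^{-\epsilon}$, and controlling the third and fourth mixed moments of the edge-indexed degenerate terms (each summand coupling $d_i$ neighbours) against $S_n^3$ is what forces the delicate interplay $\gamma,\epsilon<1/3$. The constraint $\gamma<1/3$ in particular enters the accompanying steps that transfer the variance lower bound from $\Var(N_n)$ to the realized $S_n^2$ and that replace $S_n$ by $\hS_n$, both of which require concentration of degree-weighted graph functionals whose fluctuations scale with powers of $\max_i d_i$.
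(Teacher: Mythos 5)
Your proposal follows essentially the same route as the paper's proof: the identical Hájek/Hoeffding projection reducing $N_n$ (up to an $o_{\P}(1)$ degenerate remainder of the global average) to a sum of conditionally locally dependent terms, the same dependency graph built from paths of length $\le 2$ in $\mathcal{G}_n$ fed into a Chen--Shao-type Berry--Esseen bound, the same use of $n^{\epsilon}\mathrm{Var}(N_n)\geq 1$ with $\epsilon<1/3$ together with Efron--Stein-type concentration of the conditional variance, and the same U-statistic consistency of $\tilde{a},\tilde{b},\tilde{c}$ and the degree functionals to replace the conditional standard deviation by $\hS_n$ via Slutsky, uniformly over $\mathcal{J}_{\theta}$. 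One small correction to your closing paragraph: there is no constraint $\gamma<1/3$ anywhere --- since the maximum degree is only polylogarithmic, any fixed $\gamma>0$ is absorbed by the polynomial factor $n^{(3\epsilon-1)/2}$, and the restriction $\epsilon<1/3$ alone drives the rate.
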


	\cref{theo:nullclt} establishes the $\sqrt{n}$-consistency and CLT under $\mu=\mu_{X}\otimes \mu_Y$ for a general class of graph functionals. Crucially, the limiting distribution of $N_n/\hS_n$ (as in~\cref{theo:nullclt}) is the pivotal standard Gaussian distribution. As a result, we can construct a test for \begin{align}\label{eq:teststate}\mathrm{H}_0: \mu=\mu_X\otimes\mu_Y\qquad \mbox{versus}\qquad \mathrm{H}_1: \mu \ne \mu_X\otimes\mu_Y
	\end{align} 
	as follows: Reject $\mathrm{H}_0$ if $$N_n/\hS_n\geq z_{\alpha}$$
	where $z_{\alpha}$ is the upper $\alpha$ quantile of the standard Gaussian distribution. This test will be asymptotically level $\alpha$, by~\cref{theo:nullclt}, and also consistent (i.e., $\P_{\mathrm{H}_1}(N_n/\hS_n\geq z_{\alpha})\to 1$ as $n\to\infty$), by~\cref{theo:consis}. Due to the absence of such a simple limiting distribution theory, other testing procedures such as distance covariance~\cite{Gabor2007} and HSIC~\cite{Gretton2005} resort to computationally expensive permutation based methods to determine the rejection thresholds.
	In the following remark, we comment on some of the assumptions required for proving~\cref{theo:nullclt}.
	
	\begin{remark}[On our assumptions]\label{rem:assum}
		~\cref{theo:nullclt} assumes that the maximum degree is bounded logarithmically in $n$. On $\R^d$, for example, this condition is true for a broad class of graphs which include popular choices such as the (a) MST and (b) $k$-NNG when $k$ is bounded logarithmically in $n$ (see the proof of~\cref{prop:vercond} for details, also see~\cite{Aldous1992,Jaffe2020}). The assumption $\liminf_{n\to\infty} n^{\epsilon} \mathrm{Var}(N_n)\geq 1$ for some $\epsilon\in [0,1/3)$ is perhaps the most difficult to parse. First note that we do not need this assumption to establish $\sqrt{n}$-consistency (see~\eqref{eq:d'accord}), but only to establish a CLT. To the best of our knowledge, a comprehensive analysis of CLTs for statistics involving graph functionals has been carried out in~\cite{bbb2019}. Even there, the author effectively assumes that the limiting variance of the test statistic is strictly positive. Loosely translated to our setting, this is equivalent to assuming $\liminf\limits_{n\to\infty}\mathrm{Var}(N_n)>0$ instead of our weaker assumption $\liminf_{n\to\infty} n^{\epsilon} \mathrm{Var}(N_n)\geq 1$ for some $\epsilon\in [0,1/3)$. We believe that such assumptions are required because CLTs on graph functionals involve several implicit quantities such as expected average degrees or 2-star counts which are difficult to obtain explicitly under a very general setting (the kind we consider here). Therefore it becomes difficult to negate the possibility of certain degeneracies in the limiting variance. As we illustrate in our simulation studies in~\cref{sec:valnull} (see in particular,~\cref{tab:rateproof}), in all the examples we consider, we have observed that $\liminf\limits_{n\to\infty}\mathrm{Var}(N_n)>0$.
		
	\end{remark}
	
	One of the crucial features of~\cref{theo:nullclt} is that it establishes a CLT uniformly over a class of graph functionals. As a result, the limiting distribution continues to hold even if the graph functional is chosen suitably in a data dependent way. The following remark formalizes this idea with an example.
	
	\begin{remark}[Uniform CLT]\label{rem:unifCLT}
		Suppose that a practitioner decides to use a $d$-dimensional Euclidean $k$-NNG for constructing $\kmac$, where $k$ is chosen from the set $\{1,2,\ldots ,(\log{n})^6\}$ using some data-dependent decision rule. Note that the choice of $k$ is itself random here. In this case, there exists a constant $C_d$ such that $\max_{1\leq i\leq n}d_i\leq C_d(\log{n})^{6}$ and $r_n^{-1}(q_n+t_n)\leq C_d$ for all $n\geq 2$, w.p.~$1$, provided $\lVert X_1-X_2\rVert_2$ has a continuous distribution (see the proof of~\cref{prop:vercond} for details). Therefore, with $M$and $\epsilon$ as in~\cref{theo:nullclt}, the associated $(\mathcal{G},\mu)$ satisfies the conditions presented in $\mathcal{J}_{\theta}$ with $D=C_d$ and $\gamma=6$, w.p.~$1$. As a result, even with this random choice of $k$, the CLT continues to hold. 
	\end{remark}
	
	\begin{remark}[Efficiency]\label{rem:efficiency}
		In~\cref{theo:nullclt}, we allow the maximum degree of the graph to grow to $\infty$ at a logarithmic rate. In the context of $k$-NNGs for instance, this translates to saying that the $k$ grows to infinity logarithmically in $n$. The reason behind allowing growing $k$ is because we believe that this may yield tests which have nontrivial asymptotic Pitman efficiency (unlike the case when $k=\mathcal{O}(1)$) under some specific alternatives (see~\cite[Proposition 4.3]{bbb2019}). In fact, in~\cref{sec:powcomp} we illustrate that choosing $k$ large can often lead to a gain in power when $\kmac$ is used to test for independence (see~\cref{fig:Powerplot}). Further, we think that allowing for growing $k$ could potentially lead to information theoretically efficient estimators (see~\cite[Theorem 2]{TB2019}). While a detailed analysis of these phenomena are beyond the scope of the current paper, we plan to pursue these questions in a future work.
	\end{remark}
	\section{Rates of convergence}\label{sec:ratescon}
	This section will be devoted to establishing the rate of convergence of $\kmac$ to $\eta_K(\mu)$. By~\cref{cor:bdiffcon} (in particular,~\eqref{eq:bdiffcon2}), it is clear that this rate of convergence is chiefly governed by the rate at which $\E[K(Y_1,Y_{N(1)})]$ converges to $\E\lVert \E[K(\cdot,Y)|X]\rVert_{\hk}^2$. As it turns out this rate of convergence is heavily dependent on the underlying graph functional $\mathcal{G}$. Therefore, in order to provide interpretable results, we will focus only on the \emph{$k\equiv k_n$-NNG as the choice for $\mathcal{G}$} in this section. 
	
	In order to establish rates of convergence, we will start with the following assumptions:
	\begin{enumerate}
		\item[(R1)] $\X$ is equipped with a metric $\rho(\cdot,\cdot)$ and the $k_n$-NNG is constructed with respect to $\rho(\cdot,\cdot)$. Also $\rho(X_1,X_2)$ has a continuous distribution.
		\item[(R2)] There exists an element $\xs\in\X$, and $\alpha,C_1,C_2>0$ such that $\P\left(\rho(X_1,\xs)\geq t\right)\leq C_1\exp(-C_2t^{\alpha}).$
		\item[(R3)] Suppose $g:\X\to\hk$ be defined as 
		\begin{equation}\label{eq:g}
		g(x):=\E[K(\cdot,Y)|X=x], \qquad \forall x\in\X.
		\end{equation}
		Then there exists $\beta_1 \ge 0$, $\beta_2 \in (0,1]$, $M>0$ such that for all $x_1,x_2,\tilde{x}_1,\tilde{x}_2\in\X$, the following holds:
		\begin{align*}
		&\big|\langle g(x_1),g(x_2)\rangle_{\hk}-\langle g(\tilde{x}_1),g(\tilde{x}_2)\rangle_{\hk}\big|\leq M\left(1+\rho(x_1,\xs)^{\beta_1}+\rho(x_2,\xs)^{\beta_1}\right.\\ &\qquad \qquad  \;\;\;\left.+\,\rho(\tilde{x}_1,\xs)^{\beta_1}+\rho(\tilde{x}_2,\xs)^{\beta_1} \right) \left(\rho(x_1,\tilde{x}_1)^{\beta_2}+\rho(x_2,\tilde{x}_2)^{\beta_2}\right).
		\end{align*}
	\end{enumerate} 
	Assumptions (R1) and (R2) are mainly for technical convenience. In particular, $\rho(X_1,X_2)$ being continuous (by (R1)) ensures that the $k_n$-NNG is defined uniquely. This assumption can be avoided if the $k_n$-NNG is defined using a tie-breaking scheme (discussed in~\cref{sec:energyfree}). We also believe that the metric $\rho(\cdot,\cdot)$ can be replaced with some measure of ``similarity" although this is not a direction we pursue here. Assumption (R2) can be viewed as a ``tail bound" on $X_1$. Assumption (R3) is potentially the most crucial one. It captures the sensitivity of $\E[K(\cdot,Y)|X=x]$ as $x$ varies. Here $\beta_2$ is like the ``Lipschitz'' exponent for the function $m(\cdot,\cdot):=\langle g(\cdot),g(\cdot)\rangle_{\hk}$. Similar assumptions were used in analyzing nearest neighbor based estimators in~\cite[Theorem 4.1]{azadkia2019simple}~and~\cite[Lemma 4]{dasgupta2014optimal}. We believe that without any assumptions on this sensitivity, the rate of convergence can be arbitrarily slow (a similar conjecture was also made in~\cite[Section 4]{azadkia2019simple}).
	
	With these assumptions in mind, the next step in obtaining rates of convergence is to understand the complexity of the support of $\mu_X$. Here we capture this geometry using the notion of {\it $\mu_X$-covering numbers}, as defined below.
	
	\begin{defn}[$\mu_X$-covering number]\label{def:covernum}
		Fix $\delta\in [0,1]$, $r>0$ and $\tX\subseteq \X$. Let $N\equiv N(\mu_X,\tX,r,\delta)$ be the smallest number of balls $\B_1,\ldots ,\B_N$, each of diameter\footnote{The diameter of a set $A$ equals $\sup_{x_1,x_2\in A} \rho(x_1,x_2)$.}$\leq r$ such that the center of $\B_i$ belongs to $\tX$ for each $i$ and  $\mu_X(\tX\setminus(\cup_{i=1}^N \B_i))\leq \delta$. Then $N(\mu_X,\tX,r,\delta)$ and the collection $\B_1,\ldots ,\B_N$ will be called the $\mu_X$-covering number and  cover respectively.
	\end{defn}
	This way of defining covering numbers is motivated from~\cite[Definition 3.3.1]{chen2018explaining}. In fact,~\cite[Definition 3.3.1]{chen2018explaining} can be recovered from~\cref{def:covernum} by choosing $\tX=\X$. With the above notion of complexity in mind, we are now in a position to present our rate of convergence result (see~\cref{pf:rateres} for a proof). The theorem below is presented in a very general and consequently, a rather abstract form. In the sequel to the theorem (see~\cref{sec:adapintrin}), we will show how this theorem can be used to obtain more interpretable results.
	\begin{theorem}\label{theo:rateres}
		Assume that $\E[K^{\theta}(Y_1,Y_1)]<\infty$ for $\theta>2$, $k_n=o(n/\log{n})$, $\max_{1\leq i\leq n} d_i\leq t_n$ for a deterministic sequence $\{t_n\}_{n\geq 1}$ and  $\mathrm{(R1)}$-$\mathrm{(R3)}$ are satisfied. For a large constant $C>0$ (free of $n$), define $\tX_n\equiv B(\xs,C(\log{n})^{1/\alpha}):=\{x \in \X:\rho(x,\xs)\leq C(\log{n})^{1/\alpha}\}$,  $\epsilon_n:=\inf\{\epsilon >0:(k_n\log{n}/n) N(\mu_X,\tX_n,\epsilon,Ck_n\log{n}/n)\leq 1\}$,
		\begin{eqnarray*}
			\nu_{1,n} & := & \frac{k_n\log{n}}{n}\int_{\epsilon_n}^{2C(\log{n})^{1/\alpha}} \epsilon^{2\beta_2-1}N(\mu_X,\tX_n,\epsilon,Ck_n\log{n}/n)\,d\epsilon, \\
			\nu_{2,n} & := & \inf_{\gamma>0}\left(\gamma^2+n\int_{\gamma}^{\infty} t\P(K(Y_1,Y_1)\geq t)\,dt\right).
		\end{eqnarray*}
		Let $\kmac^{\mathrm{num}}$ denote the numerator of $\kmac$ as in~\eqref{eq:statest}. Then the following holds:
		\begin{align}\label{eq:brate1}
		\E\Big[\kmac^{\mathrm{num}}-\E \lVert g(X)\rVert_{\hk}^2 +\E K(Y_1,Y_2)\Big]^2\lesssim \epsilon_n^{2\beta_2}+\nu_{1,n}+\frac{t_n^2\nu_{2,n}}{nk_n^2}+\frac{1}{n},
		\end{align}
		and consequently, we also have:
		\begin{align}\label{eq:brate2}
		|\kmac-\eta_K(\mu)|=\mathcal{O}_{\P}\left(\epsilon_n^{\beta_2}+\sqrt{\nu_{1,n}}+\frac{t_n}{k_n}\sqrt{\frac{\nu_{2,n}}{n}}+\frac{1}{\sqrt{n}}\right).
		\end{align}
	\end{theorem}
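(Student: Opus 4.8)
The plan is to establish the $L^2$ bound~\eqref{eq:brate1} on the numerator and then deduce~\eqref{eq:brate2} by Slutsky's theorem. Write $\kmac^{\mathrm{num}} = S_n - U_n$, where $S_n := \frac{1}{n}\sum_{i=1}^n d_i^{-1}\sum_{j:(i,j)\in\emgn}K(Y_i,Y_j)$ is the graph-average term and $U_n := \frac{1}{n(n-1)}\sum_{i\neq j}K(Y_i,Y_j)$. The assumption $\E[K^{\theta}(Y_1,Y_1)]<\infty$ with $\theta>2$ forces $\E[K^2(Y_1,Y_2)]<\infty$ (by the Cauchy--Schwarz inequality and the reproducing property), so $U_n$ is a square-integrable U-statistic with $\E[(U_n - \E K(Y_1,Y_2))^2]=\mathcal{O}(1/n)$; this accounts for the $1/n$ term. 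It therefore remains to control $S_n - \E\lVert g(X)\rVert_{\hk}^2$.

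Introduce $m(x,x'):=\langle g(x),g(x')\rangle_{\hk}$, so that $\E[K(Y_i,Y_j)\mid X_{1:n}]=m(X_i,X_j)$ for $i\neq j$ and $\lVert g(x)\rVert_{\hk}^2=m(x,x)$. The key is the three-way decomposition
\begin{align*}
S_n - \E\lVert g(X)\rVert_{\hk}^2
&= \big(S_n - \E[S_n\mid X_{1:n}]\big) + \Big(\E[S_n\mid X_{1:n}] - \tfrac{1}{n}\sum_{i=1}^n m(X_i,X_i)\Big)\\
&\quad + \Big(\tfrac{1}{n}\sum_{i=1}^n m(X_i,X_i) - \E\lVert g(X)\rVert_{\hk}^2\Big),
\end{align*}
whose three pieces I would bound separately in $L^2$. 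The last piece is an i.i.d.\ average with variance $\mathcal{O}(1/n)$, since $m(X,X)=\lVert g(X)\rVert_{\hk}^2\le \E[K(Y,Y)\mid X]$ has a finite second moment under $\theta>2$.

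The second (bias) piece is the heart of the argument. It equals $\frac{1}{n}\sum_i d_i^{-1}\sum_{j:(i,j)\in\emgn}(m(X_i,X_j)-m(X_i,X_i))$, and applying~(R3) with $x_1=\tilde{x}_1=X_i$, $x_2=X_j$, $\tilde{x}_2=X_i$ gives $|m(X_i,X_j)-m(X_i,X_i)|\le M(1+2\rho(X_i,\xs)^{\beta_1}+2\rho(X_j,\xs)^{\beta_1})\rho(X_i,X_j)^{\beta_2}$. On the high-probability event from~(R2) that all $X_i$ lie in $\tX_n=B(\xs,C(\log n)^{1/\alpha})$, the moment prefactor is $\mathcal{O}((\log n)^{\beta_1/\alpha})$ and is absorbed into constants, so the task reduces to bounding the expected $\beta_2$-powers of the $k_n$-NN edge distances $\rho(X_i,X_j)$. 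This is exactly where the $\mu_X$-covering numbers enter: a layer-cake decomposition over distance scales $\epsilon$, together with binomial concentration of the number of sample points in each ball of a $\mu_X$-cover, shows that a $k_n$-NN distance exceeds $\epsilon$ only when the ball of radius $\sim\epsilon$ about $X_i$ receives fewer than $\sim k_n$ points, an event whose aggregate mass is controlled by $N(\mu_X,\tX_n,\epsilon,Ck_n\log n/n)$. Integrating $\epsilon^{2\beta_2-1}$ against these covering numbers from $\epsilon_n$ upward produces the $\nu_{1,n}$ contribution, while scales below $\epsilon_n$ contribute $\epsilon_n^{2\beta_2}$. I expect this geometric step --- converting nearest-neighbor distances into covering-number integrals --- to be the main obstacle.

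For the first (conditional-fluctuation) piece I would bound $\E[\Var(S_n\mid X_{1:n})]$ directly. Conditionally on the $X_i$'s the $Y_l$'s are independent, and each $Y_l$ influences $S_n$ only through its own center term (weight $\le 1/k_n$, since $d_i\ge k_n$ for a $k_n$-NNG) and through the at most $t_n$ edges for which it is a neighbor; its total influence is thus $\mathcal{O}(t_n/(nk_n))$, and summing squared influences over the $n$ indices yields a conditional variance of order $\frac{t_n^2}{nk_n^2}\E[K^2]$. Since $K$ may be unbounded, I truncate $K(Y_i,Y_j)$ at level $\gamma$ (legitimate because $K(Y_i,Y_j)\le\sqrt{K(Y_i,Y_i)K(Y_j,Y_j)}$), bound the truncated contribution by $\gamma^2$ and the tail by $n\int_\gamma^\infty t\,\P(K(Y_1,Y_1)\ge t)\,dt$, and optimize over $\gamma$ to obtain the factor $\nu_{2,n}$, giving the term $\frac{t_n^2\nu_{2,n}}{nk_n^2}$. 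Combining the three $L^2$ bounds via $\E[(A+B+C)^2]\lesssim \E A^2+\E B^2+\E C^2$ yields~\eqref{eq:brate1}. Finally,~\eqref{eq:brate2} follows because the denominator of $\kmac$ converges in probability (standard U-statistics theory) to $\E K(Y_1,Y_1)-\E K(Y_1,Y_2)>0$, which is strictly positive by nondegeneracy of $\mu_Y$; Markov's inequality applied to~\eqref{eq:brate1} together with Slutsky's theorem then delivers the stated $\mathcal{O}_{\P}$ rate.
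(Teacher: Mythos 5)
Your overall architecture --- splitting $\kmac^{\mathrm{num}}$ into the graph term $\bar{T}_n:=\frac1n\sum_i d_i^{-1}\sum_{j:(i,j)\in\emgn}K(Y_i,Y_j)$ and the U-statistic, an Efron--Stein/influence bound with truncation yielding $t_n^2\nu_{2,n}/(nk_n^2)$, a covering-number layer-cake yielding $\epsilon_n^{2\beta_2}+\nu_{1,n}$, and a ratio/Slutsky argument for \eqref{eq:brate2} --- is essentially the paper's, which writes the mean squared error of $\bar{T}_n$ as total variance plus squared bias, bounds the \emph{total} variance by an unconditional Efron--Stein inequality perturbing the pairs $(X_i,Y_i)$ (via \eqref{eq:perturb}), and applies (R3) only to the deterministic bias. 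The genuine gap is in your second piece. You apply (R3), localize all $X_i$ to $\tX_n$, and assert that the prefactor $1+2\rho(X_i,\xs)^{\beta_1}+2\rho(X_j,\xs)^{\beta_1}=\mathcal{O}((\log n)^{\beta_1/\alpha})$ ``is absorbed into constants.'' It is not a constant: carrying it through gives a bound of order $(\log n)^{2\beta_1/\alpha}\big(\epsilon_n^{2\beta_2}+\nu_{1,n}\big)$ on that piece, which for $\beta_1>0$ is strictly weaker than \eqref{eq:brate1}, whose $\lesssim$ hides only $n$-free constants. Nor does the obvious repair work inside your decomposition: your second piece is a \emph{random} quantity, so after Jensen you must control $\E\big[P^2\,\rho(X_1,X_{N(1)})^{2\beta_2}\big]$ with $P:=1+2\rho(X_1,\xs)^{\beta_1}+2\rho(X_{N(1)},\xs)^{\beta_1}$, and separating $P$ from $\rho$ by Cauchy--Schwarz replaces the covering integral $\int\epsilon^{2\beta_2-1}N(\mu_X,\tX_n,\epsilon,\cdot)\,d\epsilon$ by $\big(\int\epsilon^{4\beta_2-1}N(\mu_X,\tX_n,\epsilon,\cdot)\,d\epsilon\big)^{1/2}$, which can strictly exceed $\nu_{1,n}$ (for instance when the intrinsic dimension of $\mu_X$ lies in $(2\beta_2,4\beta_2)$).

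The paper avoids this precisely because in the variance-plus-bias-squared decomposition, (R3) is applied to the \emph{scalar} $\big|\E[\langle g(X_1),g(X_{N(1)})\rangle_{\hk}-\lVert g(X_1)\rVert_{\hk}^2]\big|$: one Cauchy--Schwarz separates $\sqrt{\E[P^2]}$, which is $\mathcal{O}(1)$ by the stretched-exponential tail (R2) combined with \cref{lem:techlemnbd} (the lemma that transfers moments of $\rho(\cdot,\xs)$ from $X_1$ to the random neighbor $X_{N(1)}$ --- a step your sketch never invokes), from $\sqrt{\E[\rho(X_1,X_{N(1)})^{2\beta_2}]}$, and squaring the resulting bias bound gives exactly $\epsilon_n^{2\beta_2}+\nu_{1,n}$ with no logarithmic loss. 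To salvage your three-way split: bound the \emph{mean} of your second piece exactly this way, and bound its \emph{variance} not through (R3) but by the same joint $(X_i,Y_i)$ Efron--Stein perturbation, using $\mathrm{Var}\big(\E[S_n\mid X_1,\ldots,X_n]\big)\le\mathrm{Var}(S_n)$, which costs only another $t_n^2\nu_{2,n}/(nk_n^2)+1/n$. Your remaining steps --- the $\mathcal{O}(1/n)$ U-statistic and i.i.d.-average terms, the conditional influence bound with $\E[\max_iK^2(Y_i,Y_i)]\lesssim\nu_{2,n}$, the covering-number geometry (fewer than $k_n$ points in a high-mass ball), and the final ratio step (the paper invokes \cref{prop:basicmom}) --- are sound and match the paper's proof.
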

	Note that~\eqref{eq:brate1} provides a finite sample moment bound for the numerator of $\kmac$, appropriately centered. By standard U-statistics theory, a similar bound can be obtained for the denominator as well. However, we believe that under the assumptions of~\cref{theo:rateres}, these conclusions do not yield finite sample error bounds for $\kmac$ as we have a ratio of these quantities. Consequently,~\eqref{eq:brate2} provides an error bound in probability.
	
	\subsection{Adaptation to intrinsic dimensionality}\label{sec:adapintrin}
	Let us now use~\cref{theo:rateres} to show that the rate of convergence of $\kmac$ \emph{adapts to the intrinsic dimensionality of $X$}. For ease of exposition, consider $\X=\R^{d_1}$, $x^*=0 \in \R^{d_1}$ and $\rho(\cdot,\cdot)=\lVert \cdot-\cdot\rVert_2$. Also suppose $\mu_X$ is \emph{supported on a $d_0$-dimensional subset} of $\R^{d_1}$ (with $d_0\leq d_1$) in the sense that: for any ball $B(t):=\{x:\lVert x\rVert_2\leq t\}$, $$N(\mu_X,B(t),\epsilon,0)\leq C(t/\epsilon)^{d_0}$$ for some constant $C>0$; here $d_0$ can be fractional. Note that the above concept generalizes the notion of dimensionality when $\mu_X$ is supported on a $d_0$-dimensional hyperplane (manifold) in $\R^{d_1}$ with $d_0 \le d_1$. In this setting, simple computations reveal that $t_n/k_n\lesssim 1$ (by~\cite[Lemma 1]{Jaffe2020}), $\epsilon_n=\left(\frac{k_n\log{n}}{n}\right)^{1/d_0}(\log{n})^{1/\alpha}$, and
	$$\nu_{1,n}=\begin{cases}\frac{k_n}{n}(\log{n})^{1+2\beta_2/\alpha} & \mbox{if } d_0<2\beta_2, \\ \frac{k_n}{n}(\log{n})^{2+d_0/\alpha} & \mbox{if } d_0=2\beta_2, \\ \left(\frac{k_n}{n}\right)^{2\beta_2/d_0}(\log{n})^{1+2\beta_2/\alpha} & \mbox{if } d_0>2\beta_2.\end{cases}$$
	Next assume that $K(Y_1,Y_1)$ has sub-exponential tails, i.e., $\P(K(Y_1,Y_1)\geq t)\lesssim \exp(-C_3t)$ for some constant $C_3>0$. Under this assumption, $\nu_{2,n}\lesssim (\log{n})^2$. Combining all these observations, the following corollary is an immediate consequence of~\cref{theo:rateres}.
	\begin{corollary}\label{cor:rateuclid}
		Define 
		$$\nu_{n}:=\begin{cases}\frac{(\log{n})^2}{n}+\frac{k_n}{n}(\log{n})^{2\beta_2/d_0+2\beta_2/\alpha} & \mbox{if } d_0<2\beta_2, \\ \frac{(\log{n})^2}{n}+\frac{k_n}{n}(\log{n})^{2+d_0/\alpha} & \mbox{if } d_0=2\beta_2, \\ \frac{(\log{n})^2}{n}+\left(\frac{k_n}{n}\right)^{2\beta_2/d_0}(\log{n})^{2\beta_2/d_0+2\beta_2/\alpha} & \mbox{if } d_0>2\beta_2.\end{cases}$$
		Thus, under the same assumptions as in~\cref{theo:rateres}, we have: $\E\left[\kmac^{\mathrm{num}}-\E \lVert g(X)\rVert_{\hk}^2 +\E[K(Y_1,Y_2)]\right]^2\lesssim \nu_n$ and $|\kmac-\eta_K(\mu)|=\mathcal{O}_{\P}(\sqrt{\nu_n})$.
	\end{corollary}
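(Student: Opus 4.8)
The plan is to read off~\cref{cor:rateuclid} as a specialization of~\cref{theo:rateres}: under the stated geometry ($\X=\R^{d_1}$, $\xs=0$, $\rho=\lVert\cdot-\cdot\rVert_2$), the $d_0$-dimensional covering bound $N(\mu_X,B(t),\epsilon,0)\leq C(t/\epsilon)^{d_0}$, and the sub-exponential tail $\P(K(Y_1,Y_1)\geq t)\lesssim e^{-C_3 t}$, each of the four terms $\epsilon_n^{2\beta_2}$, $\nu_{1,n}$, $t_n^2\nu_{2,n}/(nk_n^2)$, $1/n$ on the right of~\eqref{eq:brate1} can be evaluated in closed form. The discussion preceding the corollary already records the outcomes; what a complete proof must supply is the justification of those evaluations and the bookkeeping that merges them into a single $\nu_n$.

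First I would compute $\epsilon_n$ by inserting the covering bound (with radius $t=C(\log n)^{1/\alpha}$ for $\tX_n$) into its defining inequality $(k_n\log n/n)N(\mu_X,\tX_n,\epsilon,Ck_n\log n/n)\leq 1$ and solving, which gives $\epsilon_n=(k_n\log n/n)^{1/d_0}(\log n)^{1/\alpha}$ and hence $\epsilon_n^{2\beta_2}=(k_n/n)^{2\beta_2/d_0}(\log n)^{2\beta_2/d_0+2\beta_2/\alpha}$. The main computation is the integral defining $\nu_{1,n}$: substituting the covering bound reduces it to $\tfrac{k_n\log n}{n}(\log n)^{d_0/\alpha}\int_{\epsilon_n}^{2C(\log n)^{1/\alpha}}\epsilon^{2\beta_2-1-d_0}\,d\epsilon$, whose value is controlled entirely by the sign of $2\beta_2-d_0$: the upper endpoint dominates when $d_0<2\beta_2$, the antiderivative is a logarithm $\lesssim\log n$ when $d_0=2\beta_2$, and the lower endpoint $\epsilon_n$ dominates when $d_0>2\beta_2$. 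For the remaining terms I would use $t_n/k_n\lesssim 1$, which holds for the $k_n$-NNG on a $d_0$-dimensional support by~\cite[Lemma 1]{Jaffe2020}, together with $\nu_{2,n}\lesssim(\log n)^2$; the latter follows by plugging the sub-exponential tail into $\nu_{2,n}=\inf_{\gamma>0}(\gamma^2+n\int_\gamma^\infty t\,\P(K(Y_1,Y_1)\geq t)\,dt)$ and choosing $\gamma\asymp\log n$, which balances $\gamma^2\asymp(\log n)^2$ against an exponentially small tail integral. Thus both of the last two terms are $\lesssim(\log n)^2/n$.

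Finally I would assemble the pieces regime by regime, replacing the four terms by a single dominating expression. The $(\log n)^2/n$ term absorbs the last two, and a short comparison of the $(k_n/n)$-powers and $\log n$-powers shows that $\epsilon_n^{2\beta_2}$ and $\nu_{1,n}$ are each bounded by the displayed $\nu_n$ in every case; for example, when $d_0<2\beta_2$ one uses $(k_n/n)^{2\beta_2/d_0}\leq k_n/n$ (since $2\beta_2/d_0>1$ and $k_n/n\to 0$) together with $1\leq 2\beta_2/d_0$ to dominate both by $\tfrac{k_n}{n}(\log n)^{2\beta_2/d_0+2\beta_2/\alpha}$. This yields the moment bound $\E[\kmac^{\mathrm{num}}-\E\lVert g(X)\rVert_{\hk}^2+\E K(Y_1,Y_2)]^2\lesssim\nu_n$ from~\eqref{eq:brate1}, and the identical substitutions into~\eqref{eq:brate2} give $|\kmac-\eta_K(\mu)|=\mathcal{O}_{\P}(\sqrt{\nu_n})$. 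I expect the only delicate points to be the endpoint analysis of the $\nu_{1,n}$ integral across the three regimes and the careful tracking of $\log n$ exponents when collapsing the four terms into one; everything else is direct substitution into the already-established general bound.
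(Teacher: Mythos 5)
Your proposal is correct and takes essentially the same route as the paper: the paper also obtains the corollary by substituting the $d_0$-dimensional covering bound, the sub-exponential tail (giving $\nu_{2,n}\lesssim(\log n)^2$ via $\gamma\asymp\log n$), and $t_n/k_n\lesssim 1$ into~\cref{theo:rateres}, then collecting the four terms regime by regime. Your endpoint analysis of the $\nu_{1,n}$ integral in the case $d_0>2\beta_2$ in fact yields the exponent $(\log n)^{2\beta_2/d_0+2\beta_2/\alpha}$ that appears in the corollary's statement, which is slightly sharper than (and corrects) the $(\log n)^{1+2\beta_2/\alpha}$ recorded in the paper's intermediate display of $\nu_{1,n}$.
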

	\cref{cor:rateuclid} is in spirit similar to~\cite[Theorem 4.1]{azadkia2019simple}, except that~\cref{cor:rateuclid} additionally shows that the rates adapt to the intrinsic dimensionality of $\mu_X$. Note that for~\cref{cor:rateuclid} to hold $\Y$ need not be Euclidean.

	\begin{remark}[On the choice of $k_n$]\label{rem:choosek}
		At first glance,~\cref{cor:rateuclid} seems to suggest that $k_n$ should always be chosen as small as possible. However a closer look reveals a different picture. In~\cref{cor:rateuclid}, the $(\log{n})^2/n$ term comes from the variance of $\kmac^{\mathrm{num}}$ whereas the other terms arise from its bias. Now suppose that $X$ and $Y$ are independent. In that case, the bias of $\kmac^{\mathrm{num}}$ is exactly $0$ no matter what $k_n$ is. This in turn implies that the bound from~\cref{cor:rateuclid} reduces to $\mathcal{O}((\log{n})^2/n)$ irrespective of $k_n$. Crucially, the constants involved in the $\mathcal{O}((\log{n})^2/n)$ bound tend to get better as $k_n$ increases (see~\cref{rem:choosekagn} for some computational evidence of this phenomenon). Therefore when $X$ and $Y$ are independent, increasing $k_n$ can lead to a tangible decrease in the asymptotic variance of $\kmac^{\mathrm{num}}$. Additionally, in~\cref{rem:efficiency}, we point out as to why we believe that there is a gain to be had by choosing $k_n$ larger when testing for independence. The choice of $k_n$ should therefore be informed by the application at hand.  
	\end{remark}
	
	\subsection{When does (R3) hold?}\label{sec:R3hold}
	Now we will present some simple and easily verifiable conditions under which assumption (R3) holds. The subsequent result is similar to~\cite[Proposition 4.2]{azadkia2019simple}. Therefore, we will leave the details of the proof to the reader.
	
	\begin{prop}\label{prop:suffcond}
		Suppose $\X=\R^{d_1}$ and $\Y=\R^{d_2}$. Assume that the conditional density of $Y|X=x$, say $f(\cdot|x)$ exists, is non-zero everywhere in its support, differentiable with respect to $x$ (for every $y$) and for all $1\leq i\leq d_1$, the function $|(\partial/\partial x_i)\log f(y|x)|$ is bounded above by a polynomial in $\lVert y\rVert_2$ and $\lVert x\rVert_2$. Next suppose that for any compact set $K\subset \R^{d_1}$ the function $m(y):=\max_{x\in K} f(y|x)$ is bounded in $y$ and decays faster than any negative power of $\lVert y\rVert_2$ as $\lVert y\rVert_2\to\infty$. Lastly assume that for any $k\geq 1$, $\E \left[K(Y_1,Y_2)\lVert Y_1\rVert_2^{2k}\lVert Y_2\rVert_2^{2k}|X_1=x_1,X_2=x_2 \right]$ is bounded above by a polynomial in $\lVert x_1\rVert_2$ and $\lVert x_2\rVert_2$. Then assumption (R3) is satisfied with some $\beta_1, M> 0$ and $\beta_2=1$.
	\end{prop}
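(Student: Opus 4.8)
The plan is to reduce everything to polynomial-growth control of the $\hk$-valued map $g$ and of the scalar field $m(x_1,x_2):=\langle g(x_1),g(x_2)\rangle_{\hk}$. First I would record the integral representation coming from the reproducing property and Fubini's theorem: since $g(x)=\int K(\cdot,y)\,f(y|x)\,dy$ as a Bochner integral in $\hk$, one has $m(x_1,x_2)=\int\int K(y_1,y_2)\,f(y_1|x_1)\,f(y_2|x_2)\,dy_1\,dy_2$. Because we are aiming for the Lipschitz exponent $\beta_2=1$, it suffices to establish two estimates with some exponents $p,q\ge 0$: a growth bound $\|g(x)\|_{\hk}\le C(1+\|x\|_2^{p})$ and a polynomial-Lipschitz bound $\|g(x)-g(\tilde x)\|_{\hk}\le C(1+\|x\|_2^{q}+\|\tilde x\|_2^{q})\,\|x-\tilde x\|_2$. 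Given these, the telescoping identity $m(x_1,x_2)-m(\tilde x_1,\tilde x_2)=\langle g(x_1)-g(\tilde x_1),g(x_2)\rangle_{\hk}+\langle g(\tilde x_1),g(x_2)-g(\tilde x_2)\rangle_{\hk}$ together with Cauchy--Schwarz in $\hk$ and Young's inequality on the resulting cross terms yields exactly (R3) with $\beta_2=1$, $\xs=0$, $\rho=\|\cdot\|_2$, and $\beta_1$ taken large enough (e.g.\ $\beta_1=p+q$).

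The hypotheses enter through the Lipschitz estimate. I would differentiate under the integral sign to obtain $\partial_{x_i}g(x)=\int K(\cdot,y)\,f(y|x)\,\partial_{x_i}\log f(y|x)\,dy$ as an element of $\hk$; the interchange is justified by dominated convergence, the dominating function being supplied by the polynomial bound on $|\partial_{x_i}\log f(y|x)|$ (second hypothesis) together with the faster-than-any-polynomial decay of $m_\mathcal{K}(y):=\max_{x\in\mathcal{K}}f(y|x)$ on compacta (third hypothesis). Writing $g(x)-g(\tilde x)=\int_0^1\nabla_x g\bigl(\tilde x+s(x-\tilde x)\bigr)\cdot(x-\tilde x)\,ds$ reduces the Lipschitz bound to controlling $\sup_{\xi\in[\tilde x,x]}\|\partial_{x_i}g(\xi)\|_{\hk}$, and $\|\partial_{x_i}g(\xi)\|_{\hk}^2$ is itself a double kernel integral of the form $\int\int K(y_1,y_2)\,h_\xi(y_1)\,h_\xi(y_2)\,dy_1\,dy_2$ with $h_\xi(y)=f(y|\xi)\,\partial_{x_i}\log f(y|\xi)$; the same representation, with $h$ replaced by $f(\cdot|x)$, handles $\|g(x)\|_{\hk}^2=m(x,x)$.

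The last and most delicate step is to show that these weighted kernel integrals grow at most polynomially in $\|x_1\|_2,\|x_2\|_2$. Here I would invoke the final hypothesis, which bounds $\E\bigl[K(Y_1,Y_2)\|Y_1\|_2^{2k}\|Y_2\|_2^{2k}\mid X_1=x_1,X_2=x_2\bigr]$ by a polynomial for every $k$; combined with the polynomial-in-$\|y\|_2$ growth of the log-derivative factors and the rapid density decay from the third hypothesis, this dominates each integral by a finite, polynomially growing quantity. The main obstacle is precisely this moment-control step, because $K$ need not be pointwise nonnegative (e.g.\ the distance kernel of~\cref{rem:Euclid}): the Cauchy--Schwarz reduction produces integrands featuring $|K(y_1,y_2)|$ rather than $K(y_1,y_2)$, so one must pass to a controllable surrogate such as $|K(y_1,y_2)|\le\tfrac12\bigl(K(y_1,y_1)+K(y_2,y_2)\bigr)$ and then verify, using the decay and moment hypotheses, that the resulting diagonal integrals $\int K(y,y)\,\|y\|_2^{2k}\,f(y|x)\,dy$ inherit the same polynomial growth. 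Once this bookkeeping is complete, assembling the two polynomial-growth estimates gives (R3) with $\beta_2=1$, the argument running in close parallel to~\cite[Proposition 4.2]{azadkia2019simple}.
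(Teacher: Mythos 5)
The paper never actually writes this proof out: it says the result is ``similar to \cite[Proposition 4.2]{azadkia2019simple}'' and explicitly leaves the details to the reader, so the comparison here is against the intended argument rather than a written one. Your architecture is exactly that intended argument: represent $g(x)=\int K(\cdot,y)f(y|x)\,dy$ as a Bochner integral so that $\langle g(x_1),g(x_2)\rangle_{\hk}=\E[K(Y_1,Y_2)\mid X_1=x_1,X_2=x_2]$, telescope the difference, use Cauchy--Schwarz in $\hk$ plus Young's inequality to absorb products of polynomial factors into a single exponent $\beta_1$, and reduce the $\hk$-Lipschitz bound on $g$ to differentiation under the integral sign and the fundamental theorem of calculus along segments, with the log-derivative hypothesis supplying the integrand. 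All of this is sound, and it is also the structure of the Azadkia--Chatterjee proof --- where, importantly, the role of $K$ is played by indicator functions, which are pointwise nonnegative, so the issue you flag at the end never arises there.

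The gap is in your final moment-control step: you have located the right obstacle (the kernel is signed), but your proposed repair does not follow from the stated hypotheses. The surrogate $|K(y_1,y_2)|\le\tfrac12\bigl(K(y_1,y_1)+K(y_2,y_2)\bigr)$ converts everything into diagonal integrals $\int K(y,y)\lVert y\rVert_2^{2k}f(y|x)\,dy$, but nothing in the proposition controls $K(y,y)$: the second hypothesis only gives faster-than-any-polynomial decay of $y\mapsto\max_{x\in\mathcal{K}}f(y|x)$, which cannot dominate super-polynomial diagonal growth, and the third hypothesis bounds only the signed off-diagonal integral with the specific weights $\lVert y_1\rVert_2^{2k}\lVert y_2\rVert_2^{2k}f(y_1|x_1)f(y_2|x_2)$; since $K$ may be negative off the diagonal, there is no monotonicity allowing you to substitute larger weights into it. Concretely, take $K(y_1,y_2)=e^{\frac34(\lVert y_1\rVert_2+\lVert y_2\rVert_2)}e^{-\lVert y_1-y_2\rVert_2^2}$ (nonnegative definite, being the product of a rank-one kernel and a Gaussian kernel) and conditional densities $f(y|x)\propto e^{-\sqrt{1+\lVert y-\theta(x)\rVert_2^2}}$ with $\theta$ bounded and smooth: all three hypotheses hold, and (R3) is in fact true here because this $K$ is pointwise nonnegative, yet $\int K(y,y)f(y|x)\,dy=\infty$, so your surrogate bound is vacuous. (A smaller instance of the same looseness: the telescoping step needs $\lVert g(x)\rVert_{\hk}^2=\E[K(Y_1,Y_2)\mid X_1=X_2=x]$ to grow polynomially, which is the $k=0$ case, not literally included in ``for any $k\geq 1$''.) The clean repairs are: (a) for pointwise nonnegative kernels --- which covers every kernel this paper actually uses, namely the bounded Gaussian and Laplace kernels and the distance kernel of \cref{rem:Euclid} with $\alpha\in(0,1]$, nonnegative by subadditivity of $t\mapsto t^{\alpha}$ --- drop the surrogate entirely and dominate the weights directly inside the hypothesized moment bound, which is legitimate because the integrand is then monotone in the weights; or (b) in general, read the third hypothesis with $|K|$ in place of $K$ and all integers $k\geq 0$, under which your bookkeeping closes verbatim. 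As written, with a signed kernel and the literal hypotheses, the step ``verify that the diagonal integrals inherit the same polynomial growth'' cannot be carried out.
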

	
	The crucial message from~\cref{prop:suffcond} is that (R3) holds provided $f(\cdot|x)$ is a smooth function of $\lVert x\rVert_2$ (the other conditions are just to ensure that $\mu_{Y|x}$ is sufficiently light-tailed). In fact, the existence of a conditional density is not required, and one can just as easily replace the conditional density with conditional probability mass function if $Y|X=x$ has a discrete distribution for $\mu_X$-a.e.~$x$ and the support of $Y|X=x$ does not depend on $x$.

	\section{Near linear time estimators of $\eta_K(\mu)$}\label{sec:linalt}
	The computational complexity of $\kmac$ depends on the graph functional we consider and the choice of the kernel $K(\cdot,\cdot)$. In Section~\ref{sec:compute}, we discuss the computational complexity of $\kmac$ and show that for certain kernels (and the $k$-NNG with $k = \mathcal{O}(1)$) $\kmac$ can be computed in $\mathcal{O}(n \log n)$ time. In Section~\ref{sec:linstat} we introduce $\klin$ --- an alternative to $\kmac$ --- which can indeed be computed in $\mathcal{O}(n \log n)$ time for {\it any} kernel $K(\cdot,\cdot)$ (and certain graph functionals).
	
	\subsection{Computation time for KMAc $\kmac$}\label{sec:compute}
	The expression of $\kmac$ (in~\eqref{eq:statest}) reveals that computing it involves computation of the following three terms:
	$$(a)\, \sum_{i=1}^n \frac{1}{d_i} \sum_{j:(i,j)\in\emgn} K(Y_i,Y_j), \qquad (b)\, \sum_{i\neq j} K(Y_i,Y_j),  \qquad  (c)\, \sum_{i=1}^n K(Y_i,Y_i).$$
	Clearly, (c) can be computed in $\mathcal{O}(n)$ time. For (a), we need to compute the graph $\mathcal{G}_n$ (suppose the corresponding time complexity is $\mathcal{O}(g_n)$) first and then compute a sum over $|\emgn|$ summands. For (b), note that $$\sum_{i\ne j} K(Y_i,Y_j) = \Big\| \sum_{i=1}^n K(Y_i,\cdot) \Big\|^2_{\h_K} -  \sum_{i=1}^n K(Y_i,Y_i).$$ The second term in the above display can be computed in $\mathcal{O}(n)$ time. By denoting the time complexity for the first term above by $v_n$, we immediately get the following proposition.
	\begin{prop}\label{prop:timecomp}
		$\kmac$, as defined in~\eqref{eq:statest}, can be computed in $\mathcal{O}(n+v_n+g_n+|\emgn|)$ time complexity.
	\end{prop}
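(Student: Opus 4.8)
The plan is a direct bookkeeping argument: I would rewrite both the numerator and the denominator of $\kmac$ in terms of the three quantities (a), (b) and (c) isolated above, and then bound the cost of computing each of these three quantities separately. The numerator of $\kmac$ in~\eqref{eq:statest} is visibly $\frac{1}{n}(a)-\frac{1}{n(n-1)}(b)$, an $\mathcal{O}(1)$ linear combination of (a) and (b). The denominator looks more expensive, but here I would invoke the reproducing property~\eqref{eq:K-1-2}, namely $\lVert K(\cdot,Y_i)-K(\cdot,Y_j)\rVert_{\hk}^2=K(Y_i,Y_i)+K(Y_j,Y_j)-2K(Y_i,Y_j)$. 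Summing over $i\neq j$ and using symmetry in $i,j$ gives
$$\sum_{i\neq j}\lVert K(\cdot,Y_i)-K(\cdot,Y_j)\rVert_{\hk}^2 = 2(n-1)\sum_{i=1}^n K(Y_i,Y_i) - 2\sum_{i\neq j}K(Y_i,Y_j),$$
which is exactly $2(n-1)$ times the quantity (c) minus twice the quantity (b). Hence the denominator too is an $\mathcal{O}(1)$ combination of (b) and (c), and the whole of $\kmac$ is a fixed rational function of (a), (b), (c). The point of this reduction is that it sidesteps evaluating the $n(n-1)$ pairwise RKHS distances directly, which would by itself cost $\mathcal{O}(n^2)$.

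It then remains to account for the cost of (a), (b) and (c). The quantity (c) is a sum of $n$ diagonal kernel evaluations, hence $\mathcal{O}(n)$. For (a), I would first build the graph $\mathcal{G}_n$ at cost $\mathcal{O}(g_n)$; from its edge list the degrees $d_i$ are read off in $\mathcal{O}(n+|\emgn|)$ time, after which evaluating $K(Y_i,Y_j)$ on each of the $|\emgn|$ edges and forming the degree-weighted sum costs a further $\mathcal{O}(|\emgn|)$, for a total of $\mathcal{O}(g_n+|\emgn|+n)$. For (b), I would use the identity $\sum_{i\neq j}K(Y_i,Y_j)=\lVert\sum_{i=1}^n K(Y_i,\cdot)\rVert_{\hk}^2-\sum_{i=1}^n K(Y_i,Y_i)$ recorded before the statement; the first term costs $v_n$ by definition, and the subtracted term is again (c), giving $\mathcal{O}(v_n+n)$.

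Adding these three bounds, together with the $\mathcal{O}(1)$ arithmetic needed to assemble the numerator, the denominator and finally their ratio, yields the claimed $\mathcal{O}(n+v_n+g_n+|\emgn|)$ complexity. There is no genuine obstacle in this proposition; it is entirely a matter of careful accounting. The only step that requires a moment's thought is the denominator: a literal reading of~\eqref{eq:statest} suggests one must compute $n(n-1)$ pairwise squared RKHS norms, and it is precisely the reduction via~\eqref{eq:K-1-2} to the already-available quantities (b) and (c) that prevents the denominator from dominating the runtime.
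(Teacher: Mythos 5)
Your proof is correct and follows essentially the same route as the paper: reduce $\kmac$ to the three quantities (a), (b), (c), use the reproducing-property identity~\eqref{eq:K-1-2} to express the denominator through (b) and (c), and handle (b) via $\sum_{i\ne j} K(Y_i,Y_j) = \big\| \sum_{i=1}^n K(Y_i,\cdot) \big\|^2_{\h_K} - \sum_{i=1}^n K(Y_i,Y_i)$ at cost $v_n + \mathcal{O}(n)$. The only difference is that you make explicit the denominator reduction that the paper leaves implicit, which is a welcome clarification but not a different argument.
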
 
	When $\X$ is ``Euclidean-like" (see~\cref{rem:otherfin}), $g_n$ and $|\emgn|$ are both $\mathcal{O}(n\log{n})$ when $k$-NNGs are used with $k=\mathcal{O}(1)$ (see~\cref{sec:exkergraph}). For finite dimensional RKHSs $\| \sum_{i=1}^n K(Y_i,\cdot) \|^2_{\h_K}$ can be computed in time $\mathcal{O}(n)$. Thus, in that case, the time complexity of $\kmac$ is $\mathcal{O}(n\log{n})$.

	However, in general, it may not be possible to compute $\| \sum_{i=1}^n K(Y_i,\cdot) \|^2_{\h_K}$ easily, and especially in linear time, unless the underlying kernel is assumed to have special structure; see~\cref{rem:morecomp} for a detailed discussion. Note that if the term (c) is computed by a naive averaging, the computational complexity of $\kmac$ becomes (at least) $\mathcal{O}(n^2)$. 
	
	\subsection{$\klin$ --- a near linear time analogue of $\kmac$}\label{sec:linstat}
	We alleviate the computational complexity in computing $\kmac$, discussed in~\cref{sec:compute}, by replacing the term $(n(n-1))^{-1}\sum_{i\neq j} K(Y_i,Y_j)$ in~\eqref{eq:statest} with $n^{-1}\sum_{i=1}^n K(Y_i,Y_{i+1})$ where $Y_{n+1}\equiv Y_1$. The new, easily computable version of $\kmac$ is then defined as 
	\begin{align}\label{eq:fastkmac}
	\klin:=\frac{\frac{1}{n}\sum_{i=1}^n d_i^{-1}\sum_{j:(i,j)\in\emgn} K(Y_i,Y_j)- \frac{1}{n}\sum_{i=1}^n K(Y_i,Y_{i+1})}{\frac{1}{n}\sum_{i=1}^n K(Y_i,Y_i)-\frac{1}{n}\sum_{i=1}^n K(Y_i,Y_{i+1})}.
	\end{align}
	By~\cref{prop:timecomp}, $\klin$ can now be \emph{computed in $\mathcal{O}(n\log{n})$ time complexity} for all kernels and for certain graph functionals, which include the $k$-NNGs. Further, the next result shows that $\klin$ satisfies all the nice properties of $\kmac$; see~\cref{rem:Compare} for another motivation of the estimator $\klin$. 
	
	\begin{prop}[Consistency and rates]\label{prop:ratepconsis}
		$\klin$ satisfies~\cref{theo:consis},~\cref{theo:rateres}~and~\cref{cor:rateuclid} under the same exact assumptions as $\kmac$.
	\end{prop}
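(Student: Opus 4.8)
The plan is to reduce the entire statement to a single, elementary comparison between $\klin$ and $\kmac$. Using the reproducing identity $\lVert K(\cdot,Y_i)-K(\cdot,Y_j)\rVert_{\hk}^2=K(Y_i,Y_i)+K(Y_j,Y_j)-2K(Y_i,Y_j)$, the denominator of $\kmac$ in~\eqref{eq:statest} simplifies to $C_n-B_n$, where
$$A_n := \frac{1}{n}\sum_{i=1}^n d_i^{-1}\sum_{j:(i,j)\in\emgn} K(Y_i,Y_j), \quad B_n := \frac{1}{n(n-1)}\sum_{i\neq j} K(Y_i,Y_j), \quad C_n := \frac{1}{n}\sum_{i=1}^n K(Y_i,Y_i).$$
Hence $\kmac=(A_n-B_n)/(C_n-B_n)$, while~\eqref{eq:fastkmac} reads $\klin=(A_n-\tilde{B}_n)/(C_n-\tilde{B}_n)$ with the cyclic average $\tilde{B}_n:=n^{-1}\sum_{i=1}^n K(Y_i,Y_{i+1})$, $Y_{n+1}=Y_1$. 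Thus $\klin$ and $\kmac$ share $A_n$ and $C_n$ and differ only in replacing the pairwise U-statistic $B_n$ by $\tilde{B}_n$; since all three target results hinge on the analysis of $A_n$ (and on $C_n$) in exactly the same way, it suffices to show that $\tilde{B}_n$ estimates $\E K(Y_1,Y_2)$ as well as $B_n$ does.

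First I would record the relevant properties of $\tilde{B}_n$. As $Y_i$ and $Y_{i+1}$ are independent draws from $\mu_Y$, each summand has mean $\E K(Y_1,Y_2)$, so $\tilde{B}_n$ is unbiased for $\E K(Y_1,Y_2)$, just like $B_n$. The summands $K(Y_i,Y_{i+1})$ form a $1$-dependent sequence --- $K(Y_i,Y_{i+1})$ and $K(Y_j,Y_{j+1})$ are uncorrelated once $|i-j|\ge 2$ (cyclically) --- so only $O(n)$ of the covariance terms are nonzero. By positive-definiteness $K^2(Y_1,Y_2)\le K(Y_1,Y_1)K(Y_2,Y_2)$, whence $\E K^2(Y_1,Y_2)\le (\E K(Y_1,Y_1))^2<\infty$, because $\mu_Y\in\tmk^1(\Y)$ is implied by every moment hypothesis in Theorems~\ref{theo:consis} and~\ref{theo:rateres}. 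It follows that $\Var(\tilde{B}_n)=O(1/n)$, matching the $O(1/n)$ variance of $B_n$; under the $\tmk^{4+\epsilon}(\Y)$ hypothesis a fourth-moment bound on the $1$-dependent sum together with Borel--Cantelli upgrades this to $\tilde{B}_n\to\E K(Y_1,Y_2)$ almost surely.

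With this in hand, consistency (Theorem~\ref{theo:consis}) is immediate: $A_n$ and $C_n$ converge to $\E\lVert g(X)\rVert_{\hk}^2$ and $\E K(Y_1,Y_1)$ exactly as in the proof for $\kmac$, and $\tilde{B}_n\to\E K(Y_1,Y_2)$ by the previous step, so the denominator $C_n-\tilde{B}_n$ converges to $\E K(Y_1,Y_1)-\E K(Y_1,Y_2)=\tfrac12\E\lVert K(\cdot,Y_1)-K(\cdot,Y_2)\rVert_{\hk}^2>0$ (strict by nondegeneracy of $\mu_Y$ and injectivity of $K$) and the continuous mapping theorem gives $\klin\to\eta_K(\mu)$, with almost sure convergence under $\tmk^{4+\epsilon}(\Y)$.

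For the rate results (Theorem~\ref{theo:rateres} and Corollary~\ref{cor:rateuclid}) I would set $c:=\E\lVert g(X)\rVert_{\hk}^2-\E K(Y_1,Y_2)$ and note that the numerator $A_n-\tilde{B}_n$ of $\klin$ obeys $(A_n-\tilde{B}_n)-c=(\kmac^{\mathrm{num}}-c)+(B_n-\tilde{B}_n)$, where $\kmac^{\mathrm{num}}=A_n-B_n$. Since $B_n-\tilde{B}_n$ has mean zero, $\E[(B_n-\tilde{B}_n)^2]\le 2\Var(B_n)+2\Var(\tilde{B}_n)=O(1/n)$, so
$$\E[(A_n-\tilde{B}_n)-c]^2\le 2\,\E[\kmac^{\mathrm{num}}-c]^2+2\,\E[(B_n-\tilde{B}_n)^2]\lesssim \epsilon_n^{2\beta_2}+\nu_{1,n}+\frac{t_n^2\nu_{2,n}}{nk_n^2}+\frac1n,$$
the extra $O(1/n)$ being absorbed into the $1/n$ term already present in~\eqref{eq:brate1}. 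Hence~\eqref{eq:brate1} holds verbatim for the numerator of $\klin$ and Corollary~\ref{cor:rateuclid} transfers identically; the in-probability bound~\eqref{eq:brate2} then follows by combining this numerator control with $C_n-\tilde{B}_n=(\E K(Y_1,Y_1)-\E K(Y_1,Y_2))+\mathcal{O}_{\P}(1/\sqrt{n})$ and the strict positivity of the limiting denominator, exactly as for $\kmac$. There is no serious obstacle here: the one genuinely new quantity is the cyclic sum $\tilde{B}_n$, a sum of dependent terms, and its $1$-dependent structure makes the required variance and almost-sure estimates routine; every remaining estimate is inherited without change because $A_n$ and $C_n$ are common to both statistics.
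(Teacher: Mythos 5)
Your proposal is correct and is exactly the argument the paper intends: the paper declares this proof ``trivial'' and leaves it to the reader, with the key ingredient (that $\tilde{B}_n=n^{-1}\sum_i K(Y_i,Y_{i+1})$ is unbiased for $\E K(Y_1,Y_2)$ and, by its $1$-dependent cyclic structure, has $O(1/n)$ variance, cf.~\eqref{eq:goodprop} in~\cref{rem:Compare}) being precisely what you verify. Your reduction to the shared terms $A_n$, $C_n$ and the substitution $B_n\mapsto\tilde{B}_n$, with the extra $O(1/n)$ absorbed into the existing $1/n$ term of~\eqref{eq:brate1}, fills in the omitted details along the same route.
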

	The proof of~\cref{prop:ratepconsis} is trivial and follows the proofs of~\cref{theo:consis},~\cref{theo:rateres}~and~\cref{cor:rateuclid}; we leave the details to the reader.
	
	The following proposition (see~\cref{pf:nullcltlesscomp} for a proof) shows that, like $\kmac$, $\klin$ too is $\sqrt{n}$-consistent when $X$ and $Y$ are independent, and moreover satisfies a uniform CLT. As in~\cref{theo:nullclt}, we will only focus on establishing a CLT for the (scaled) numerator of $\klin$. Define 
	$$\linn:=\sqrt{n}\left(\frac{1}{n}\sum_{i=1}^n d_i^{-1}\sum_{j:(i,j)\in\emgn} K(Y_i,Y_j)-\frac{1}{n}\sum_{i=1}^n K(Y_i,Y_{i+1})\right).$$
	\begin{prop}[Uniform CLT for $\linn$]\label{prop:nullcltesscomp}
		Suppose that Assumption (A3) holds and $\E K^2(Y_1,Y_2)<\infty$. Then, $\limsup_{n\to\infty}\mathrm{Var}(\linn)<\infty.$ Set $\theta:=(M,D,\gamma,\epsilon)\in (0,\infty)^3\times [0,1/3)$ and consider the same subclass of graph functionals and measures on $\X \times \Y$ as in~\cref{theo:nullclt}, i.e., $\mathcal{J}_{\theta}$ with $N_n$ replaced by $\linn$. Next, define:
		\begin{align*}
		\tilde{a}^{\mathrm{lin}}:=\frac{1}{n}\sum_{i=1}^n & K^2(Y_i,Y_{i+1}), \qquad \tilde{b}^{\mathrm{lin}}:=\frac{1}{n}\sum_{i=1}^n K(Y_i,Y_{i+1})K(Y_{i+1},Y_{i+2})\\ & \tilde{c}^{\mathrm{lin}}:=\frac{1}{n}\sum_{i=1}^n K(Y_i,Y_{i+1})K(Y_{i+2},Y_{i+3}).
		\end{align*}
		In the above display, all indices are taken modulo $n$; so $Y_1\equiv Y_{n+1}$, $Y_{2}\equiv Y_{n+2}$, $Y_3\equiv Y_{n+3}$, etc. Recall the definitions of $\tilde{g}_1$, $\tilde{g}_2$ and $\tilde{g}_3$ from~\cref{theo:nullclt} and define:
		$$\hS_{n,\mathrm{lin}}^2:=\tilde{a}^{\mathrm{lin}}(\tilde{g}_1+\tilde{g}_3+1)+\tilde{b}^{\mathrm{lin}}(\tilde{g}_2-2\tilde{g}_1-2\tilde{g}_3-3)+\tilde{c}^{\mathrm{lin}}(2+\tilde{g}_1+\tilde{g}_3-\tilde{g}_2).$$
		Then the following result holds for every fixed $\theta$:
		\begin{align}\label{eq:mainreslesscomp}
		\lim\limits_{n\to\infty}\sup_{(\tilde{\mathcal{G}},\tilde{\mu})\in \mathcal{J}_{\theta}}\sup_{z\in\R}\Bigg|\P \left(\frac{\linn}{\hS_{n,\mathrm{lin}}}\leq z\right)-\Phi(z)\Bigg|=0.
		\end{align}
	\end{prop}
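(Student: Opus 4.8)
The plan is to establish Proposition~\ref{prop:nullcltesscomp} by following the proof of \cref{theo:nullclt} almost verbatim, exploiting the fact that $\linn$ has exactly the same structure as $N_n$: it is the difference between a graph-averaged bilinear statistic in the $Y_i$'s and a ``reference'' estimator of $\E[K(Y_1,Y_2)]$. The only structural change is that the reference estimator is now the cyclic sum $\frac{1}{n}\sum_{i=1}^n K(Y_i,Y_{i+1})$ rather than the full U-statistic $\frac{1}{n(n-1)}\sum_{i\neq j}K(Y_i,Y_j)$. Crucially, under the null $\tilde{\mu}=\tilde{\mu}_X\otimes\tilde{\mu}_Y$ the graph $\tilde{\mathcal{G}}_n$ is a function of the $X_i$'s alone and is therefore independent of $(Y_1,\ldots,Y_n)$, which are i.i.d.\ from $\tilde{\mu}_Y$. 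This lets me condition on the graph and treat $\linn$ as a symmetric weighted bilinear form in i.i.d.\ variables, to which the graph-functional CLT machinery of \cite{bbb2019} (as already invoked for \cref{theo:nullclt}) applies.

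First I would establish the variance bound $\limsup_{n\to\infty}\mathrm{Var}(\linn)<\infty$. Writing $\linn/\sqrt{n}$ as the graph term minus the cyclic term, the graph term's variance is $\mathcal{O}(1/n)$ by the computation already carried out in the proof of \cref{theo:nullclt}, which only used (A3) and $\E[K^2(Y_1,Y_2)]<\infty$. The cyclic term $\frac{1}{n}\sum_i K(Y_i,Y_{i+1})$ is a sum of $n$ terms in which each $Y_i$ appears in exactly two summands, so a direct Hoeffding-type expansion gives variance $\mathcal{O}(1/n)$, again using only $\E[K^2(Y_1,Y_2)]<\infty$; the cross-covariance between the two terms is controlled by Cauchy--Schwarz. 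Multiplying by $n$ yields $\mathrm{Var}(\linn)=\mathcal{O}(1)$.

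Next I would prove the CLT itself. Conditioning on $\tilde{\mathcal{G}}_n$, the statistic $\linn$ becomes a centered bilinear form $\sum_{i<j} w_{ij}\,[K(Y_i,Y_j)-\E K(Y_1,Y_2)]$ together with diagonal and lower-order pieces, where the weights $w_{ij}$ aggregate a graph contribution proportional to $\td_i^{-1}+\td_j^{-1}$ over edges of $\tilde{\mathcal{G}}_n$ and a fixed contribution $-1/n$ over the cyclic pairs $(i,i+1)$. A Hoeffding/H\'ajek projection onto single-coordinate functions produces a leading term that is, conditionally on the graph, a sum of independent summands; the assumption $\max_{1\leq i\leq n}\td_i\lesssim(\log n)^{\gamma}$ bounds the influence of each vertex and forces the degenerate remainder to be of lower order, after which a Lindeberg/Stein CLT applies. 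The explicit normalizer $\hS_{n,\mathrm{lin}}^2$ arises as the leading-order conditional variance: the coefficients $\tilde{a}^{\mathrm{lin}},\tilde{b}^{\mathrm{lin}},\tilde{c}^{\mathrm{lin}}$ are the cyclic-sum analogues of $\tilde{a},\tilde{b},\tilde{c}$ in \cref{theo:nullclt}, estimating $\E[K^2(Y_1,Y_2)]$, $\E[K(Y_1,Y_2)K(Y_1,Y_3)]$ and $(\E K(Y_1,Y_2))^2$ respectively, while the altered integer coefficients (e.g.\ $\tilde{g}_1+\tilde{g}_3+1$ in place of $\tilde{g}_1+\tilde{g}_3-2/(n-1)$) reflect the different second-moment contribution of the cyclic reference term. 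Uniformity over $\mathcal{J}_\theta$ follows exactly as in \cref{theo:nullclt}: every constant in the Berry--Esseen/Stein bound depends on $(\tilde{\mathcal{G}},\tilde{\mu})$ only through the fourth-moment bound $M$, the locality and degree bounds encoded by $D$ and $\gamma$, and the variance lower bound $n^{\epsilon}\mathrm{Var}(\linn)\ge 1$ built into $\mathcal{J}_\theta$.

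I expect the main obstacle to be showing that $\hS_{n,\mathrm{lin}}^2$ is a ratio-consistent estimator of $\mathrm{Var}(\linn)$ uniformly over $\mathcal{J}_\theta$, and in particular that replacing the U-statistic variance estimators by their cyclic counterparts $\tilde{a}^{\mathrm{lin}},\tilde{b}^{\mathrm{lin}},\tilde{c}^{\mathrm{lin}}$ is harmless. The cyclic sums introduce short-range dependence, through shared indices in products such as $K(Y_i,Y_{i+1})K(Y_{i+1},Y_{i+2})$, whose contribution must be shown to be of lower order; and one must verify that the extra variance injected by the cyclic centering does not create a degeneracy that would violate the normalization. The assumption $n^{\epsilon}\mathrm{Var}(\linn)\ge 1$ with $\epsilon\in[0,1/3)$ is precisely what rules this out and makes $\linn/\hS_{n,\mathrm{lin}}$ pivotal.
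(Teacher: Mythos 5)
Your overall framing (condition on the graph under the null, compute the exact variance, match it to $\hS_{n,\mathrm{lin}}^2$, and reuse the machinery of \cref{theo:nullclt}) is the right one, and your argument for $\limsup_n \mathrm{Var}(\linn)<\infty$ is fine. But there is a genuine gap at the heart of your CLT step: you claim that, conditionally on $\tilde{\mathcal{G}}_n$, a Hoeffding/H\'ajek projection of the bilinear form onto single-coordinate functions leaves a degenerate remainder of lower order, after which a Lindeberg CLT for independent summands applies. That is false for the class $\mathcal{J}_\theta$, which contains bounded-degree graphs such as the $1$-NNG. For such graphs the fully degenerate component of the edge sum $\frac{1}{\sqrt{n}}\sum_i \td_i^{-1}\sum_{j\sim i}K(Y_i,Y_j)$ has conditional variance of order
\begin{equation*}
\frac{1}{n}\sum_{(i,j)\in\mathcal{E}(\tilde{\mathcal{G}}_n)}\Big(\frac{1}{\td_i}+\frac{1}{\td_j}\Big)^2\,(a-2b+c) \asymp (\tilde{g}_1+\tilde{g}_3)(a-2b+c),
\end{equation*}
which is $\Theta(1)$ when degrees are bounded and the kernel is non-degenerate; it is \emph{not} of lower order, and indeed it is precisely the term $(\tilde{g}_1+\tilde{g}_3)(\tilde{a}^{\mathrm{lin}}-2\tilde{b}^{\mathrm{lin}}+\tilde{c}^{\mathrm{lin}})$ sitting inside your own normalizer $\hS_{n,\mathrm{lin}}^2$. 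The non-degeneracy assumption $n^{\epsilon}\mathrm{Var}(\linn)\geq 1$ rules out a vanishing total variance; it does nothing to make the degenerate Hoeffding component negligible. So the proof as proposed breaks exactly at the step ``degenerate remainder is of lower order.''

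What the paper does instead is to project only the reference term: the cyclic sum is replaced (at $o_{\P}(n^{-1/2})$ cost, via the analogue of~\eqref{eq:project}) by $n^{-1/2}\sum_i g(Y_i)$, and the graph term is \emph{not} linearized but grouped into per-vertex sums $V_i=n^{-1/2}\big(\td_i^{-1}\sum_{j\sim i}K(Y_i,Y_j)-g(Y_i)\big)$, which are conditionally dependent but only \emph{locally} so. Asymptotic normality then comes from Stein's method for dependency graphs (\cref{prop:cltdep}), where--and this is the second point your write-up misses--the dependency graph must be augmented beyond paths of length $\leq 2$ in $\tilde{\mathcal{G}}_n$ to also include the cyclic edges $j=i-1,i,i+1$, since the centering $K(Y_i,Y_{i+1})$ couples consecutive indices regardless of the geometric graph. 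A further wrinkle specific to $\linn$: the variance decomposition step~\eqref{eq:anova1} used in \cref{theo:nullclt} relies on $\sum_{i\neq j}K(Y_i,Y_j)$ being measurable with respect to the unordered set $\mathcal{H}_n$ of the $Y_i$'s; the cyclic sum is not permutation invariant, and one must instead use $\E[n^{-1}\sum_i K(Y_i,Y_{i+1})\,|\,\mathcal{H}_n]=(n(n-1))^{-1}\sum_{i\neq j}K(Y_i,Y_j)$ to recover the needed orthogonality, together with the exact (not Cauchy--Schwarz-bounded) computation of the graph--cycle cross term via $T_{\pm1}$ and the bound $\sup_{i,j}\P\big((i,j)\in\mathcal{E}(\tilde{\mathcal{G}}_n)\big)\lesssim n^{-1}$.
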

	
	Therefore, $\linn$ has the same rate of convergence as $N_n$ (from~\cref{theo:nullclt}) when $X$ and $Y$ are independent, despite being much faster to compute. Moreover by~\cref{prop:ratepconsis}, $\klin$ shares the same statistical properties of $\eta_K(\mu)$ --- consistency and rate adaptivity. This is in sharp contrast to the computationally faster versions of the popular HSIC or distance covariance measures that lose out on certain theoretical aspects when compared to their original versions. For instance, the proposal in~\cite[Section 3.2, Equation 18]{Zhang2018} has a slower rate of convergence when $X$ and $Y$ are independent, whereas in~\cite{jitkrittum2017adaptive}, the standard notion of consistency is replaced by a.s.~consistency.
	
	Certainly there is a price to pay if $\klin$ is used instead of $\kmac$. Firstly, unlike $\kmac$, $\klin$ is no longer permutation invariant. Moreover the asymptotic variance (when $X$ and $Y$ are independent) for $\linn$ is in general larger than that of $N_n$. 
	
	A few other remarks on $\klin$ are now in order.
	\begin{remark}[Comparison with other correlation coefficients]\label{rem:compare}
		Combining~\cref{prop:timecomp} with the discussions in~\cref{sec:exkergraph}, we observe that $\klin$ can be computed in $\mathcal{O}(n\log{n})$ time using $k$-NNGs when $k=\mathcal{O}(1)$. This is the same time complexity as required for computing classical correlations such as Spearman's correlation and the Pearson's correlation coefficient (up to an additional logarithmic factor) and also the measures of association  in~\cite{chatterjee2019new,azadkia2019simple}. None of these other measures satisfy (II) from the Introduction on Euclidean spaces with general dimensions $d_1,d_2>1$ and cannot be computed for more general spaces. Computing $\klin$ with NNGs is also faster than distance covariance (see~\cite{Gabor2007}) or HSIC (see~\cite{gretton2008kernel}), which proceed with $\mathcal{O}(n^2)$ time complexity, and also do not guarantee (II). In fact, computing $\klin$ with Euclidean MST is also strictly faster than $\mathcal{O}(n^2)$. Finally, we would like to point out that Euclidean MSTs and NNGs can be constructed readily in \texttt{R} using packages like \texttt{emstreeR} and \texttt{RANN} respectively, which in turn implies that $\klin$ can be computed very easily in \texttt{R}.
	\end{remark}
	\begin{remark}[Linear time scaling]\label{rem:linscale}
		While~\cref{theo:nullclt} gives a pivotal distribution for $\kmac$, the computation of $\hS_n$, or more specifically, the terms $\tilde{a}$, $\tilde{b}$ and $\tilde{c}$, takes $\mathcal{O}(n^2)$ time (the other terms can be computed in near linear time if the $k$-NNG is used). In contrast,~\cref{prop:nullcltesscomp} yields a uniform CLT where the scaling, i.e., $\hS_{n,\mathrm{lin}}$, can also be computed in near linear time. 
	\end{remark}
	\begin{remark}[A permutation test]\label{rem:perm}
		When testing for independence between $X$ and $Y$, one could do a permutation test based on $\klin$. In this case, if $B$ permutations are used for determining rejection thresholds, then the computational complexity of the procedure is $\mathcal{O}(Bn\log{n})$. The corresponding test will be exactly level $\alpha$ and also consistent against fixed alternatives.
	\end{remark}
	We would like to conclude this section by noting that an alternate approach to estimating $\eta_K(\mu)$ would be to use cross-covariance operators and conditional mean embeddings directly to estimate the alternative form of $\eta_K(\mu)$ stated in~\eqref{eq:MMD-kernel}; this is presently being pursued in~\cite{Zhen2020}.

	
	
	
	\section{Measure of association on $\R^{d_1+d_2}$}\label{sec:massoceuclid}
	So far in the paper, we have dealt with the case where $\X$ and $\Y$ are quite general topological spaces. In this section, we will take a more streamlined approach and focus on the case where $\X\subseteq \R^{d_1}$ and $\Y\subseteq\R^{d_2}$, which is of great practical importance. The goal of this section is to leverage the structure of $\R^d$ and go beyond the properties (P1)-(P3) from~\cref{sec:popkmac}. 

	\subsection{Connection with Pearson's correlation}\label{sec:conspear}
	For this subsection, we will restrict ourselves to the following class of measures: 
	\begin{equation}\label{eq:propmeas}
	T_{\alpha}(\mu)\coloneqq 1-\frac{\mme\lVert Y'-\tilde{Y'}\rVert_2^{\alpha}}{\mme\lVert Y_1-Y_2\rVert_2^{\alpha}}
	\end{equation}
	where $\alpha\in (0,2]$, $\mu_Y$ has finite $\alpha$-th moments to ensure that the right hand side of~\eqref{eq:propmeas} is well-defined and $Y',\tilde{Y'},Y_1,Y_2$ are defined as in~\eqref{eq:assocRd}. Clearly, $T$ from~\eqref{eq:assocRd} is a special case of $T_{\alpha}$ (with $\alpha=1$). Easy simplifications show that $T_{\alpha}$ is the same as $\eta_K$ with $K(\cdot,\cdot)$ defined as in~\cref{rem:Euclid}. The following result (see~\cref{pf:spGauss} for a proof) demonstrates the connection between $T_{\alpha}$ and Pearson's correlation coefficient for different choices of $\alpha$.
	\begin{lemma}\label{prop:spGauss}
		Suppose $(X,Y)\sim\mu_{\rho}$ where $\mu_{\rho}$, $\rho\in [-1,1]$, is the bivariate normal distribution with mean vector $(\theta_1,\theta_2)$ and covariance matrix $\Sigma$ with $\Sigma_{11}=\sigma_1^2$, $\Sigma_{22}=\sigma_2^2$ and $\Sigma_{12}=\Sigma_{21}=\rho\sigma_1\sigma_2$. Then we have:
		\begin{itemize}
			\item[(a)] $T_{\alpha}(\mu_{\rho})$ is a continuous and strictly increasing function in $|\rho|$, where $T_{\alpha}(\mu_{\rho})=0$ if and only if $\rho=0$ and $T_{\alpha}(\mu_{\rho})=1$ if and only if $|\rho|=1$, for all $\alpha\in (0,2]$.
			\item[(b)] For $\alpha=1$, we have the following:
			\begin{enumerate}    			
				\item $T_1(\mu_{\rho})=1-\sqrt{1-\rho^2}$.
				\item $\sqrt{T_1(\mu_{\rho})}\leq|\rho|$.
				\item $\inf_{|\rho|\neq 0}\sqrt{T_1(\mu_{\rho})}/|\rho|=\lim_{|\rho|\to 0}\sqrt{T_1(\mu_{\rho})}/|\rho|=2^{-1/2}\approx 0.71$.
			\end{enumerate}
			\item[(c)] For $\alpha=2$, $T_2(\mu_{\rho})=\rho^2$.
		\end{itemize}
	\end{lemma}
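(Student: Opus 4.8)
The plan is to reduce everything to a single closed-form expression for $T_\alpha(\mu_\rho)$ and then read off parts (a)--(c) from it. Since $\mu_\rho$ is a bivariate normal law on $\R^2$, both $X$ and $Y$ are scalar, so $\lVert\cdot\rVert_2$ is just the absolute value. The first step is to identify the two distributions entering the definition~\eqref{eq:propmeas}. The conditional law of $Y$ given $X=x$ is normal with mean $m(x):=\theta_2+\rho(\sigma_2/\sigma_1)(x-\theta_1)$ and variance $\sigma_2^2(1-\rho^2)$. Because $Y'$ and $\tilde{Y'}$ are conditionally i.i.d.\ given $X'$, the difference $Y'-\tilde{Y'}$ given $X'=x$ is $N(0,2\sigma_2^2(1-\rho^2))$; crucially this conditional law does \emph{not} depend on $x$, so unconditionally $Y'-\tilde{Y'}\sim N(0,2\sigma_2^2(1-\rho^2))$. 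On the other hand $Y_1-Y_2\sim N(0,2\sigma_2^2)$.

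Next I would invoke the absolute-moment formula for a centered Gaussian: if $W\sim N(0,\tau^2)$ then $\mme|W|^\alpha=\tau^\alpha c_\alpha$ with $c_\alpha:=\mme|Z|^\alpha$, $Z\sim N(0,1)$, a finite positive constant. Applying this to both differences and forming the ratio in~\eqref{eq:propmeas}, the factors $(2\sigma_2^2)^{\alpha/2}$ and $c_\alpha$ cancel, leaving the clean identity
\begin{equation*}
T_\alpha(\mu_\rho)=1-(1-\rho^2)^{\alpha/2}, \qquad \alpha\in(0,2].
\end{equation*}
Parts (b)(1) and (c) are then immediate specializations at $\alpha=1$ and $\alpha=2$.

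For part (a), writing $u:=|\rho|\in[0,1]$ gives $T_\alpha=1-(1-u^2)^{\alpha/2}$, which is continuous on $[0,1]$; differentiating yields $\frac{d}{du}T_\alpha=\alpha u(1-u^2)^{\alpha/2-1}>0$ on $(0,1)$, establishing strict monotonicity in $|\rho|$, and the boundary characterizations $T_\alpha=0\iff u=0$ and $T_\alpha=1\iff u=1$ follow directly. For the two remaining items in (b), the key algebraic simplification is to rationalize: for $\rho\neq 0$,
\begin{equation*}
\frac{T_1(\mu_\rho)}{\rho^2}=\frac{1-\sqrt{1-\rho^2}}{\rho^2}=\frac{1}{1+\sqrt{1-\rho^2}}.
\end{equation*}
From this, (b)(2) is equivalent to $\tfrac{1}{1+\sqrt{1-\rho^2}}\le 1$, which always holds, and (b)(3) follows because the right-hand side is strictly increasing in $\rho^2$; hence $\sqrt{T_1(\mu_\rho)}/|\rho|=(1+\sqrt{1-\rho^2})^{-1/2}$ is minimized as $\rho\to 0$, with infimal (and limiting) value $(1+1)^{-1/2}=2^{-1/2}$, the infimum not being attained.

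The computation is essentially routine once the reduction is made; the only point requiring a moment's care is observing that the conditional distribution of $Y'-\tilde{Y'}$ given $X'$ is free of $X'$, so that the intractable-looking numerator $\mme\lVert Y'-\tilde{Y'}\rVert_2^\alpha$ collapses to a single Gaussian absolute moment. After that, the cancellation of the normalizing constant $c_\alpha$ and the scale $\sigma_2$ is what makes the final expression depend on $\rho$ alone, and the rationalization identity is what converts the infimum computation in (b)(3) into a transparent monotonicity statement.
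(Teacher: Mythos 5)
Your proof is correct, but it takes a genuinely different and more elementary route than the paper's. The paper proves \cref{prop:spGauss} through the characteristic-function (energy-distance) representation of $T_\alpha$ in \cref{lem:equivform}: it plugs Gaussian characteristic functions into that formula, deduces (a) from monotonicity in $|\rho|$ of the integrand $\exp(-t^2)[\exp(\rho^2t^2)-1]/|t|^{1+\alpha}$, and obtains (b)(1) by a power-series expansion followed by computing $F''(z)=2\sqrt{\pi}(1-z^2)^{-3/2}$ and integrating twice to reach $F(\rho)=2\sqrt{\pi}(1-\sqrt{1-\rho^2})$; part (c) is handled by a separate second-moment calculation. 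You instead work directly with the definition \eqref{eq:propmeas}, observing that the conditional law of $Y'-\tilde{Y'}$ given $X'$ is $N\big(0,2\sigma_2^2(1-\rho^2)\big)$ --- homoscedastic, hence free of $X'$ --- while $Y_1-Y_2\sim N(0,2\sigma_2^2)$, so the Gaussian absolute-moment scaling $\E|W|^\alpha=\tau^\alpha c_\alpha$ collapses the ratio to the closed form $T_\alpha(\mu_\rho)=1-(1-\rho^2)^{\alpha/2}$ for every $\alpha\in(0,2]$. This is stronger than what the paper writes down: it delivers (b)(1) and (c) as instant specializations; it makes (a) a one-line calculus fact valid uniformly in $\alpha$ (the paper must treat $\alpha=2$ separately, since \cref{lem:equivform} only covers $0<\alpha<2$); and your rationalization $T_1(\mu_\rho)/\rho^2=(1+\sqrt{1-\rho^2})^{-1}$ turns (b)(2)--(b)(3) into transparent monotonicity statements, matching the paper's conclusion that the infimum equals the unattained limit $2^{-1/2}$ as $|\rho|\to0$. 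What your argument trades away is generality: it leans on the special Gaussian property that the conditional variance does not depend on the conditioning point, whereas the paper's representation-based route reuses machinery (\cref{lem:equivform}) that also powers its non-Gaussian arguments elsewhere, e.g.\ in the proof of \cref{prop:energyeq}.
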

	The above proposition shows that $T_{\alpha}(\cdot)$ varies continuously and monotonically between $0$ and $1$ as $\rho$ varies in the bivariate normal setting. This is different from our discussions in the previous sections, where we only focused on conditions for which $T_{\alpha}(\cdot)$ equals $0$ or $1$, but not on the intermediate values between $0$ and $1$. It must be pointed out that unlike $T_{\alpha}(\cdot)$ for $0<\alpha<2$, $T_2(\cdot)$ does not satisfy (P2) from~\cref{sec:popkmac} in general (more on this in~\cref{rem:notalpha2} in the Appendix). We would also like to highlight part (b) of~\cref{prop:spGauss} which provides a complete characterization of $T_1(\mu_{\rho})$. In fact, it shows that $\sqrt{T_1(\mu_{\rho})}$ is ``close" to the absolute correlation, i.e., $|\rho|$, in the sense that $2^{-1/2}\leq \sqrt{T_1(\mu_{\rho})}/|\rho|\leq 1$ for all $\rho\in [-1,1]$. Part (b) of~\cref{prop:spGauss} can be compared with~\cite[Theorem 7]{Gabor2007} where a similar set of properties were established for the well-known distance correlation.
	
	\subsection{Invariance, continuity and equitability}\label{sec:ice}
	In addition to the properties already described in (P1)-(P3), two other properties have been advocated for dependence measures in~\cite{Mori2019} --- namely \emph{invariance} and \emph{continuity}. Informally, \emph{invariance} means that the measure should be unaffected under a ``suitable'' class of transformations and \emph{continuity} means that whenever a sequence of measures $\mu_n$ converges to $\mu$ (in an ``appropriate" sense), the sequence of values of the dependence measure for $\mu_n$'s should converge to that of $\mu$. We show that $\eta_K$ satisfies these properties for a class of kernels.
	
	We will restrict to kernels having the following form: $K(y,\tilde{y})=h_1(y)+h_2(\tilde{y})+h_3(y-\tilde{y})$, for $y,\tilde{y}\in\R^{d_2}$, and $h_i:\R^{d_2}\to\R$ for $i=1,2,3$ are continuous functions (the Gaussian kernel from~\cref{rem:excharker} and the class of kernels from~\cref{rem:Euclid} are both of this form). For such a $K(\cdot,\cdot)$, it is easy to check that:
	\begin{equation}\label{eq:revpopkmac}
	\eta_K(\mu)=\frac{\E [h_3(Y'-\tilde{Y'})]-\E [h_3(Y_1-Y_2)]}{h_3(0)- \E [h_3(Y_1-Y_2)]}.
	\end{equation}
	The following proposition demonstrates the invariance and continuity of $\eta_K$. The proof is simple and we leave the details to the reader.
	\begin{prop}\label{prop:otprop}
		Consider $\eta_K$ as in~\eqref{eq:revpopkmac}.
		\begin{enumerate} 
			\item \emph{Invariance}. Suppose $h_3(\cdot)=\tilde{h}(\lVert \cdot\rVert_2)$ for some measurable function $\tilde{h}:[0,\infty) \to\R$. Let $A_2$ be a $d_2\times d_2$ orthogonal matrix, $b_2$ a vector of size $d_2$, and $f_1:\R^{d_1}\to\R^{d_1}$ an invertible function. Then $\eta_K$ is invariant under the transformation $(X,Y)\mapsto (f_1(X),A_2Y+b_2)$. 
			\item \emph{Continuity}. Suppose $(X_n,Y_n)\sim \mu_n$ and $(X,Y)\sim\mu$. Generate $(X_n',Y_n',\tilde{Y}_n')$ and $(X',Y',\tilde{Y}')$ as in~\eqref{eq:assocRd} from $\mu_n$ and $\mu$ respectively. Assume $(X_n',Y_n',\tilde{Y}_n')$ converges weakly to $(X,Y',\tilde{Y}')$\footnote{The condition ``$(X_n',Y_n',\tilde{Y}_n')$ converges weakly to $(X',Y',\tilde{Y}')$" most often follows from the assumption that $\mu_n\overset{w}{\longrightarrow}\mu$. However this is not true in general (see~\cref{rem:conconv}).}, and $\limsup\limits_{n\to\infty} \E [h_3^{1+\epsilon}(Y_n'-\tilde{Y}_n')]<\infty$ for some $\epsilon>0$. Then $\eta_K(\mu_n)\to \eta_K(\mu)$.
		\end{enumerate}
	\end{prop}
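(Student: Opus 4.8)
The plan is to verify directly that each term in the representation~\eqref{eq:revpopkmac} is unchanged, since $\eta_K$ is built only from $\E[h_3(Y'-\tilde{Y'})]$, $\E[h_3(Y_1-Y_2)]$ and the constant $h_3(0)$. First I would dispose of the map $X\mapsto f_1(X)$. Because $f_1$ is a bijection, writing $W:=f_1(X)$ gives $\mu_{Y\,|\,W=w}=\mu_{Y\,|\,X=f_1^{-1}(w)}$, so generating $(W',Y',\tilde{Y'})$ as in~\eqref{eq:assocRd} from the transformed law produces a pair $(Y',\tilde{Y'})$ with exactly the same joint distribution as before, while the marginal $\mu_Y$ is untouched; hence all three terms, and therefore $\eta_K$, are unaffected. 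For the map $Y\mapsto A_2Y+b_2$, apply the deterministic map $\phi(y)=A_2y+b_2$ to both conditionally i.i.d.\ copies (which preserves the conditional-independence structure in~\eqref{eq:assocRd}), so the new copies are $\phi(Y'),\phi(\tilde{Y'})$ and $\phi(Y_1),\phi(Y_2)$. The key computation is that the shift cancels in differences and the orthogonal $A_2$ preserves norms:
\begin{equation*}
h_3\big(\phi(Y')-\phi(\tilde{Y'})\big)=\tilde{h}\big(\|A_2(Y'-\tilde{Y'})\|_2\big)=\tilde{h}\big(\|Y'-\tilde{Y'}\|_2\big)=h_3(Y'-\tilde{Y'}),
\end{equation*}
and identically for $Y_1-Y_2$; the constant $h_3(0)$ is trivially unchanged. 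Thus both numerator and denominator of~\eqref{eq:revpopkmac} are invariant.

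\textbf{Continuity.} Here the plan is to show that the numerator and denominator of $\eta_K(\mu_n)$ converge separately and then invoke continuity of division, noting that the limiting denominator $h_3(0)-\E[h_3(Y_1-Y_2)]=\tfrac12\,\E\|K(\cdot,Y_1)-K(\cdot,Y_2)\|_{\hk}^2$ is strictly positive since $\mu_Y$ is nondegenerate and $K$ is characteristic. For the numerator term, the hypothesis that $(X_n',Y_n',\tilde{Y}_n')$ converges weakly to $(X',Y',\tilde{Y'})$ gives, by the continuous mapping theorem and continuity of $h_3$, that $h_3(Y_n'-\tilde{Y}_n')\overset{d}{\to}h_3(Y'-\tilde{Y'})$; the moment bound $\limsup_n\E[h_3^{1+\epsilon}(Y_n'-\tilde{Y}_n')]<\infty$ supplies uniform integrability, and weak convergence together with uniform integrability yields $\E[h_3(Y_n'-\tilde{Y}_n')]\to\E[h_3(Y'-\tilde{Y'})]$.

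For the denominator term I would first extract the marginal convergence: projecting the weak convergence of the triple onto its $Y$-coordinate gives $\mu_{Y,n}\overset{w}{\to}\mu_Y$, whence the independent pair $(Y_{1,n},Y_{2,n})$ (i.i.d.\ $\mu_{Y,n}$) converges weakly to $(Y_1,Y_2)$ (i.i.d.\ $\mu_Y$) and $h_3(Y_{1,n}-Y_{2,n})\overset{d}{\to}h_3(Y_1-Y_2)$. The remaining, and main, obstacle is uniform integrability of $\{h_3(Y_{1,n}-Y_{2,n})\}$: the stated hypothesis controls only the \emph{dependent} difference $Y_n'-\tilde{Y}_n'$, and there is no domination between the dependent and independent pairs in general. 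I would resolve this per kernel class: for bounded kernels (e.g.\ the Gaussian kernel, where $h_3$ is bounded) uniform integrability is automatic, while for the distance-type kernels of~\cref{rem:Euclid}, where $h_3(z)=-\tfrac12\|z\|_2^{\alpha}$, the triangle inequality and the i.i.d.\ structure give $\E\|Y_{1,n}-Y_{2,n}\|_2^{p}\le 2^{p}\,\E\|Y_n'\|_2^{p}$ with $p=\alpha(1+\epsilon)$, so the required integrability follows once a matching marginal bound $\limsup_n\E\|Y_n'\|_2^{p}<\infty$ is in force (a mild condition under which $\eta_K(\mu_n)$ is well defined uniformly in $n$). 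Granting this, weak convergence plus uniform integrability gives $\E[h_3(Y_{1,n}-Y_{2,n})]\to\E[h_3(Y_1-Y_2)]$, and combining the two limits with the positivity of the denominator completes the proof.
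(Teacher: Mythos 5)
Your invariance argument and your treatment of the numerator in the continuity part are correct, and they follow exactly the route the authors intend — note the paper supplies no proof at all here (it says "the proof is simple and we leave the details to the reader"), so direct verification is the only sensible benchmark: injectivity of $f_1$ makes conditioning on $f_1(X)$ the same as conditioning on $X$, the shift $b_2$ cancels in differences, orthogonality of $A_2$ preserves $\lVert\cdot\rVert_2$, and for the numerator the continuous mapping theorem plus uniform integrability from the $(1+\epsilon)$-moment hypothesis gives convergence of expectations.

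Your flag on the denominator is not excess caution: it identifies a genuine defect in the proposition as stated, and the supplementary marginal moment assumption you add is actually necessary when $h_3$ is unbounded. Concretely, take $d_1=d_2=1$ and the kernel of \cref{rem:Euclid} with $\alpha=1$, so $h_3(z)=-|z|/2$. Let $\mu_n$ be the law of $(X_n,X_n+\xi)$ with $X_n\sim(1-n^{-1})\mathcal{N}(0,1)+n^{-1}\delta_{n^2}$ and $\xi\sim\mathcal{N}(0,1)$ independent of $X_n$, and let $\mu$ be the law of $(X,X+\xi)$ with $X\sim\mathcal{N}(0,1)$. Then $Y_n'-\tilde{Y}_n'\sim\mathcal{N}(0,2)$ for every $n$, so the stated moment hypothesis holds trivially, and $(X_n',Y_n',\tilde{Y}_n')$ converges weakly to $(X',Y',\tilde{Y}')$; yet $\E|Y_{1,n}-Y_{2,n}|=2n(1+o(1))$, so
\begin{equation*}
\eta_K(\mu_n)=\frac{-\pi^{-1/2}+\tfrac12\E|Y_{1,n}-Y_{2,n}|}{\tfrac12\E|Y_{1,n}-Y_{2,n}|}\longrightarrow 1\neq 1-2^{-1/2}=\eta_K(\mu).
\end{equation*}
This confirms your point that the bound on the dependent difference $Y_n'-\tilde{Y}_n'$ cannot control the independent pair $Y_{1,n}-Y_{2,n}$. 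Your repair — boundedness of $h_3$ for Gaussian/Laplacian kernels, and $\limsup_n\E\lVert Y_n\rVert_2^{\alpha(1+\epsilon)}<\infty$ for the distance-type kernels — is exactly the condition the authors themselves invoke in \cref{rem:conconv} to verify their hypothesis for the kernel of \cref{rem:Euclid}, so the corrected statement you prove is the one the paper evidently intends. Your closing observation, that positivity of the limiting denominator is anyway forced by nondegeneracy of $\mu_Y$ together with injectivity of $y\mapsto K(\cdot,y)$, is also correct and completes the ratio argument.
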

	
	We now move on to the notion of \emph{equitability} as proposed in~\cite{Reshef2016,Reshef11}. While \emph{equitability} lacks a rigorous mathematical definition, intuitively, it means that a equitable measure should be similar for equally noisy relationships. Let us illustrate this with a simple example based on a regression setting:
	\begin{align*}
	&\mu_1: Y_1=f_1(X_1)+\epsilon_1;  f_1:\R^{d_1}\to\R^{d_2}, X_1\sim \mu_{1,X}, \epsilon_1\sim\mathcal{N}\left(0,\sigma_1^2I_{d_2}\right), X_1\indep\epsilon_1,\\ & \mu_2: Y_2=f_2(X_2)+\epsilon_2;  f_2:\R^{d_1}\to\R^{d_2}, X_2\sim \mu_{2,X}, \epsilon_2\sim\mathcal{N}\left(0,\sigma_2^2I_{d_2}\right), X_2\indep\epsilon_2.
	\end{align*}
	Here $\mu_1$ and $\mu_2$ denote the joint distributions of $(X_1,Y_1)$ and $(X_2,Y_2)$ respectively. Let the marginals for $Y_1$ and $Y_2$ be $\mu_{1,Y}$ and $\mu_{2,Y}$ respectively. In the above setting, provided $\sigma_1=\sigma_2$, an equitable measure should give the same value for $\mu_1$ and $\mu_2$ irrespective of how different $f_1(\cdot)$ and $f_2(\cdot)$ are.
	
	Let us investigate the equitability of $\eta_K$. To adjust for the denominator of $\eta_K$, we will further assume that $\mu_{1,Y}=\mu_{2,Y}$. Based on~\eqref{eq:revpopkmac}, we only need to deal with the term $\E h_3(Y'-\tilde{Y'})$. It is easy to check that:
	$$\frac{\eta_K(\mu_1)-1}{\eta_K(\mu_2)-1}=\frac{\E [h_3(Z_1)]-h_3(0)}{\E [h_3(Z_2)]-h_3(0)}$$
	where $Z_1\sim\mathcal{N}(0,2\sigma_1^2I_{d_2})$ and $Z_2\sim\mathcal{N}(0,2\sigma_2^2I_{d_2})$. Therefore, from the above display, it is clear that whenever $\sigma_1=\sigma_2$, we have $\eta_K(\mu_1)=\eta_K(\mu_2)$. This implies that $\eta_K$ is equitable in this very simple setting. 
	
	Next suppose that $\sigma_1<\sigma_2$. Also assume that $\E [h_3(Z)]$, where $Z\sim\mathcal{N}(0,\sigma^2)$, is a strictly decreasing function of $\sigma^2$ (this is the case when $K(\cdot,\cdot)$ is the Gaussian or the Laplacian kernel from~\cref{rem:excharker}, and the class of distance kernels from~\cref{rem:Euclid}). Then, the above display implies that $\eta_K(\mu_1)>\eta_K(\mu_2)$. In other words, $\eta_K(\cdot)$ tells us that $Y_1$ and $X_1$ are more strongly associated that $Y_2$ and $X_2$, which agrees with our intuition as the relationship between $Y_1$ and $X_1$ is less noisy.

	\subsection{An assumptionless analogue of~\cref{prop:energyeq}}\label{sec:energyfree}
	The goal of this section is to provide an analogue of~\cref{theo:consis} under no assumptions on $\mu$ (supported on a subset of $\R^{d_1+d_2}$). Under this convention, recall the definition of $\kmac$ from~\eqref{eq:statest}. We will first specify a particular choice of the graph functional --- the $1$-NNG, i.e., for any vertex $X_i$, an edge is drawn between $X_i$ and its nearest neighbor, with ties broken at random. This is the same choice as used in~\cite{azadkia2019simple}. Next we need to specify a kernel. For this, we will resort to bounded kernels on $\R^{d_2}$ (see~\cref{rem:excharker} for examples). We are now in a position to present an assumption-free analogue of~\cref{theo:consis} (see~\cref{pf:energyeqfree} for a proof).
	
	\begin{prop}\label{prop:energyeqfree}
		Suppose $(X_1,Y_1),\ldots ,(X_n,Y_n)\overset{i.i.d.}{\sim} \mu$ with $\mu$, $\mu_Y$ and $\mu_X$ bearing their usual meanings. Also let $Y'$, $\tilde{Y'}$ be drawn as in~\cref{prop:energyeq}. Assume that $K(\cdot,\cdot)$ is a bounded, continuous, characteristic kernel and the graph functional used in constructing $\kmac$ is as described in the preceding paragraph. Then the following holds:
		$$\kmac\overset{a.s.}{\longrightarrow} \frac{\E[ K(Y_1,Y_2)]-\E [K(Y',\tilde{Y'})]}{\E [K(Y_1,Y_1)]-\E[ K(Y_1,Y_2)]}.$$
		The right hand side of the above display also satisfies $\mathrm{(P1)}$-$\mathrm{(P3)}$. 
	\end{prop}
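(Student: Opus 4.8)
The plan is to treat the numerator and denominator of $\kmac$ (see~\eqref{eq:statest}) separately, establish their almost sure limits, and combine them using strict positivity of the limiting denominator. The right-hand side displayed in the proposition coincides with $\eta_K(\mu)$ (see~\eqref{eq:kac}), so the last sentence follows immediately from~\cref{theo:kacmas}: indeed $\Y=\R^{d_2}$ is Hausdorff, $K(\cdot,\cdot)$ is characteristic, and its boundedness forces $\mu_Y\in\tmk^{1}(\Y)$, so all hypotheses of~\cref{theo:kacmas} hold and $\mathrm{(P1)}$-$\mathrm{(P3)}$ are inherited by the limit. Thus it suffices to identify the almost sure limit of $\kmac$ as $\eta_K(\mu)$.

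The denominator and the subtracted term in the numerator are bounded $U$-statistics. Via the identity~\eqref{eq:K-1-2}, the denominator equals $\tfrac1n\sum_i K(Y_i,Y_i)-\tfrac{1}{n(n-1)}\sum_{i\ne j}K(Y_i,Y_j)$, and since $K$ is bounded, Hoeffding's strong law for $U$-statistics applies with no moment assumptions and yields, almost surely, $\tfrac{1}{n(n-1)}\sum_{i\ne j}K(Y_i,Y_j)\to\E[K(Y_1,Y_2)]$ and denominator $\to \E[K(Y_1,Y_1)]-\E[K(Y_1,Y_2)]$. The latter equals $\E\lVert K(\cdot,Y)-\E K(\cdot,Y)\rVert_{\hk}^2$, which is strictly positive: it vanishes only if $K(\cdot,Y)$ is $\mu_Y$-a.s.\ constant, i.e.\ (since $K$ characteristic makes $y\mapsto K(\cdot,y)$ injective) only if $\mu_Y$ is degenerate, which is excluded.

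The crux is the leading term $\tfrac1n\sum_i d_i^{-1}\sum_{j:(i,j)\in\emgn}K(Y_i,Y_j)$, which I will show converges a.s.\ to $\E\lVert g(X)\rVert_{\hk}^2=\E[K(Y',\tilde{Y'})]$, where $g(x):=\E[K(\cdot,Y)\mid X=x]$. Working with the directed $1$-NNG (each $X_i$ joined to its nearest neighbor $X_{N(i)}$, ties broken at random), the out-degree is $1$ and the in-degree and locality are bounded by an $O(1)$ kissing-number constant, so $\mathrm{(A2)}$ and $\mathrm{(A3)}$ hold with constant sequences (cf.~\cite[Lemmas 10.3, 10.4, 10.9]{azadkia2019simple}). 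Hence~\cref{cor:bdiffcon}, which does \emph{not} invoke $\mathrm{(A1)}$, delivers $\P(|\,\cdot\,-\E[K(Y_1,Y_{N(1)})]|\ge t)\le 2\exp(-C^* n t^2)$; summing over $n$ and applying Borel-Cantelli shows that this term and the deterministic sequence $\E[K(Y_1,Y_{N(1)})]$ differ by $o(1)$ almost surely. It therefore remains to prove the deterministic convergence $\E[K(Y_1,Y_{N(1)})]\to\E[K(Y',\tilde{Y'})]$.

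For this final step, conditional independence of $Y_1$ and $Y_{N(1)}$ given $(X_1,X_{N(1)})$ and the reproducing property give $\E[K(Y_1,Y_{N(1)})]=\E\langle g(X_1),g(X_{N(1)})\rangle_{\hk}$, to be compared with $\E\lVert g(X_1)\rVert_{\hk}^2$. Since no regularity of $x\mapsto\mu_{Y|x}$ is assumed, $g$ need not be continuous, and this is the main obstacle; I would circumvent it by an $L^1$-approximation exploiting the bounded in-degree of the NNG. Fix $\varepsilon>0$ and choose a bounded continuous $\tilde{g}:\R^{d_1}\to\hk$ with $\E\lVert g(X_1)-\tilde{g}(X_1)\rVert_{\hk}<\varepsilon$, possible as continuous functions are dense in the Bochner space $L^1(\mu_X;\hk)$ ($\hk$ being separable). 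Boundedness $\lVert g\rVert_{\hk}\le\sqrt M$ bounds the first replacement error by $\sqrt M\,\varepsilon$; for the second, exchangeability and the fact that each index serves as a nearest neighbor at most $\kappa=\kappa(d_1)$ times give $\E\lVert g(X_{N(1)})-\tilde{g}(X_{N(1)})\rVert_{\hk}\le\kappa\,\E\lVert g(X_1)-\tilde{g}(X_1)\rVert_{\hk}\le\kappa\varepsilon$. For the continuous surrogate, $\lVert X_1-X_{N(1)}\rVert_2\to 0$ almost surely (valid for i.i.d.\ samples from \emph{any} $\mu_X$ on $\R^{d_1}$, since $\mu_X$-a.e.\ point lies in the support), so $\langle\tilde{g}(X_1),\tilde{g}(X_{N(1)})\rangle_{\hk}\to\lVert\tilde{g}(X_1)\rVert_{\hk}^2$ a.s., and bounded convergence yields $\E\langle\tilde{g}(X_1),\tilde{g}(X_{N(1)})\rangle_{\hk}\to\E\lVert\tilde{g}(X_1)\rVert_{\hk}^2$. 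Letting $\varepsilon\downarrow 0$ gives $\E[K(Y_1,Y_{N(1)})]\to\E\lVert g(X_1)\rVert_{\hk}^2$. Combining the three limits, the numerator converges a.s.\ to $\E[K(Y',\tilde{Y'})]-\E[K(Y_1,Y_2)]$, and dividing by the strictly positive denominator limit gives $\kmac\overset{a.s.}{\to}\eta_K(\mu)$, completing the proof.
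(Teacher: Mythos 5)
Your proposal is correct, but it is considerably more self-contained than the paper's own proof, which is essentially a three-line reduction: the paper notes that regular conditional distributions always exist on Euclidean spaces, that boundedness of $K$ makes the moment hypotheses of \cref{theo:consis} automatic, and that (A1)--(A3) hold for the randomized $1$-NNG by \cite[Lemmas 10.3, 10.4, 10.9]{azadkia2019simple}, and then simply invokes the almost-sure part of \cref{theo:consis}. You instead rebuild the substance of \cref{theo:consis} with different tools: for the fluctuation part you use the McDiarmid-type bound of \cref{cor:bdiffcon} plus Borel--Cantelli (where the paper's \cref{theo:consis} uses the generalized Efron--Stein inequality, \cref{prop:Genef}, with $q=4$); for the bias part $\E[K(Y_1,Y_{N(1)})]\to\E\lVert g(X)\rVert_{\hk}^2$ you replace the paper's Lusin-theorem argument (\cref{lem:techlemmconv}, via \cref{prop:lusin}) by density of continuous functions in the Bochner space $L^1(\mu_X;\hk)$, and you replace the appeal to (A1) by the elementary support/second Borel--Cantelli argument showing the nearest-neighbor distance tends to zero a.s.\ for arbitrary $\mu_X$. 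Your transfer step $\E\lVert g(X_{N(1)})-\tilde{g}(X_{N(1)})\rVert_{\hk}\le\kappa\,\E\lVert g(X_1)-\tilde{g}(X_1)\rVert_{\hk}$ is exactly the paper's \cref{lem:techlemnbd}. What your route buys is transparency about which graph properties are actually used and the avoidance of Lusin's theorem; what it costs is length, since the paper can lean on machinery it has already proved.

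Two caveats. First, your claim that the in-degree and locality of the randomized $1$-NNG are \emph{deterministically} bounded by a kissing-number constant is too strong in the fully assumption-free setting: if $\mu_X$ has atoms, coincident points with random tie-breaking can produce maximal in-degrees growing like $\log n/\log\log n$, so such bounds hold only in a probabilistic or moment sense. This is precisely why both you and the paper defer to \cite[Lemmas 10.3, 10.4, 10.9]{azadkia2019simple}; your proof inherits whatever form those lemmas take, so this is not a gap relative to the paper, but the step should not be presented as a purely geometric fact. Second, the limit you (correctly) establish is $\eta_K(\mu)=\bigl(\E[K(Y',\tilde{Y'})]-\E[K(Y_1,Y_2)]\bigr)/\bigl(\E[K(Y_1,Y_1)]-\E[K(Y_1,Y_2)]\bigr)$, whereas the display in the statement has the numerator with the opposite sign; as written the displayed quantity equals $-\eta_K(\mu)$ and could not satisfy (P1). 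Your silent identification of the displayed right-hand side with $\eta_K(\mu)$ from~\eqref{eq:kac} is the correct reading of what appears to be a sign typo in the statement.
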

	
	\section{A multivariate rank based measure of association}\label{sec:R-Q-Intro}
	In the previous sections, we presented a measure of association, i.e., $\kmac$, which provides a nonparametric analogue of the classical correlation coefficients. However, there is one interesting property of Spearman's correlation (or even Kendall's $\tau$) that we are yet to emulate --- the property of \emph{distribution-freeness} when $\mu=\mu_X\otimes \mu_Y$ (provided the distributions are continuous). In this section we develop a new class of measures of association, building on our work in Sections~\ref{sec:popkmac}--\ref{sec:massoceuclid}, which have this distribution-free property. In this section we will restrict our attention to the case when $\X=\R^{d_1}$ and $\Y=\R^{d_2}$, for $d_1,d_2 \ge 1$.

	The main tool in this development will be the theory of optimal transport (see~\cite{Hallin17, del2018center, Cher17, ghosal2019multivariate}). In particular, following the line of recent works in~\cite{Deb19,Shi19}, we will use a notion of {\it multivariate ranks} defined via optimal transport, that will aid us in constructing measures of association having the desired distribution-free property. The fact that we use ``ranks'' is motivated by the fact that when $d_1=d_2=1$ it is indeed the univariate ranks that lead to the distribution-free property of Spearman's correlation and Kendall's $\tau$.
	
	\subsection{A brief overview of optimal transport}\label{sec:overviewOT}
	
	Let $\mathcal{P}(\R^d)$ denote the space of probability measures on $\R^d$ ($d \ge 1$) and $\mathcal{P}_{ac}(\R^d)$ be the space of absolutely continuous probability measures on $\R^d$. For a function $F:\R^d\to\R^d$, we will use $F\#\mu$ to denote the push forward measure of $\mu$ under $F$, i.e., the distribution of $F(Z)$ when $Z\sim\mu$.
	
	Below we present perhaps the simplest version of the optimal transport problem (courtesy the works of Gaspard Monge in 1781, see~\cite{monge1781memoire}):
	\begin{align}\label{eq:Mongeproblem}
	\inf_{F} \int  \lVert z-F(z)\rVert^2 \,d\mu(z)\qquad \mbox{subject to}\quad F\#\mu=\nu.
	\end{align}
	A minimizer of~\eqref{eq:Mongeproblem}, if it exists, is referred to as an {\it optimal transport map}. An important result in this field, known as the {\it Brenier-McCann theorem}, takes a ``geometric" approach to the problem of optimal transport (as opposed to the analytical approach presented in~\eqref{eq:Mongeproblem}) and will be very useful to us in the sequel; see e.g.,~\cite[Theorem 2.12 and Corollary 2.30]{Villani2003}.
	
	\begin{prop}[Brenier-McCann  theorem~\cite{Mccann1995}]\label{prop:Mccan}
		Suppose that $\mu, \nu\in\mathcal{P}_{ac}(\R^d)$. Then there exists functions $R(\cdot)$ and $Q(\cdot)$ (usually referred to as ``optimal transport maps"), both of which are gradients of (extended) real-valued $d$-variate convex functions, such that: (i) $R\# \mu=\nu$, $Q\# \nu=\mu$; (ii) $R$ and $Q$ are unique ($\mu$ and $\nu$-a.e.~respectively); (iii) $R\circ Q (u)=u$ ($\nu$-a.e.~$u$) and $Q\circ R(z)=z$ ($\mu$-a.e.~$z$). Moreover, if $\mu$ and $\nu$ have finite second moments, $R(\cdot)$ is also the solution to Monge's problem in~\eqref{eq:Mongeproblem}.
	\end{prop}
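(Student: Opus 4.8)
The plan is to prove the result first under the additional hypothesis that $\mu$ and $\nu$ have finite second moments (this is Brenier's theorem) and then to remove the moment assumption by approximation (McCann's extension). For the first part, I would relax Monge's problem~\eqref{eq:Mongeproblem} to its Kantorovich form: minimize $\int \|z-w\|^2\,d\pi(z,w)$ over all couplings $\pi$ of $\mu$ and $\nu$. The set of such couplings is tight and weakly closed, and the quadratic cost is lower semicontinuous, so a standard compactness argument yields a minimizer $\pi^*$. Expanding $\|z-w\|^2=\|z\|^2-2\langle z,w\rangle+\|w\|^2$ and using that the marginals are fixed, minimizing the quadratic cost is equivalent to maximizing $\int\langle z,w\rangle\,d\pi$.

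The key structural step is to show that $\mathrm{supp}(\pi^*)$ is cyclically monotone: if its support contained finitely many points violating cyclical monotonicity, one could rearrange mass along the offending cycle to strictly decrease the cost, contradicting optimality. By Rockafellar's theorem, any cyclically monotone set lies in the subdifferential $\partial\phi$ of some proper lower semicontinuous convex function $\phi:\R^d\to\R\cup\{+\infty\}$, so $w\in\partial\phi(z)$ for $\pi^*$-a.e.\ $(z,w)$. Next I would use that a finite convex function is locally Lipschitz and hence differentiable Lebesgue-a.e.; since $\mu\ll\mathrm{Leb}$, the map $z\mapsto\nabla\phi(z)=:R(z)$ is well-defined and single-valued $\mu$-a.e. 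This forces $\pi^*=(\mathrm{id},R)\#\mu$, whence $R\#\mu=\nu$ and $\pi^*$ is concentrated on a graph, recovering a solution of the original Monge problem. Setting $Q:=\nabla\phi^*$, the gradient of the Legendre--Fenchel conjugate, the equivalence $w\in\partial\phi(z)\iff z\in\partial\phi^*(w)$ gives $Q\#\nu=\mu$; absolute continuity of $\nu$ makes $Q$ single-valued $\nu$-a.e., and the relations $R\circ Q=\mathrm{id}$ ($\nu$-a.e.) and $Q\circ R=\mathrm{id}$ ($\mu$-a.e.) follow from the inversion property of gradients of conjugate convex functions at points of differentiability. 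Uniqueness ($\mu$-a.e.) is then immediate: if $\pi_1,\pi_2$ are both optimal then so is $(\pi_1+\pi_2)/2$, which by the same cyclical-monotonicity argument is supported on a single graph $\mu$-a.e., forcing the two maps to coincide.

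The main obstacle is removing the finite-second-moment assumption, since without it the quadratic cost may be identically $+\infty$ and the variational problem above becomes vacuous. Here I would follow McCann's route: approximate $\mu$ and $\nu$ by compactly supported (hence finite-moment) measures $\mu_n\to\mu$ and $\nu_n\to\nu$, obtain the Brenier potentials $\phi_n$ from the first part, normalize them by fixing their value at a common base point, and extract a subsequence converging locally uniformly to a convex limit $\phi$. The delicate point is to pass the constraint $\nabla\phi_n\#\mu_n=\nu_n$ to the limit: convexity upgrades local uniform convergence of the potentials to $\mu$-a.e.\ convergence of their gradients, and the crucial single-valuedness of $\partial\phi$ survives precisely because $\mu\ll\mathrm{Leb}$ charges no set on which convex functions fail to be differentiable. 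This produces $R=\nabla\phi$ with $R\#\mu=\nu$ in full generality, and both the uniqueness statement and the inverse relations transfer to the limit via the same conjugation argument applied to $\phi$ and $\phi^*$. The final clause, that $R$ solves~\eqref{eq:Mongeproblem} when second moments are finite, is exactly the content of the first part, since there the minimizing coupling was shown to be induced by $R$.
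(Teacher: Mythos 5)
The paper never proves this proposition: it is imported verbatim from the optimal transport literature, with the proof deferred to McCann (1995) and to Villani (2003, Theorem 2.12 and Corollary 2.30). So your proposal can only be judged against those cited arguments, which its first half essentially reproduces: the Kantorovich relaxation, existence of an optimal plan by tightness and lower semicontinuity, cyclical monotonicity of its support via the exchange argument, Rockafellar's theorem to embed the support in $\partial\phi$ for a convex $\phi$, Lebesgue-a.e.\ differentiability of convex functions plus $\mu\ll\mathrm{Leb}$ to get a single-valued map $R=\nabla\phi$, and Legendre--Fenchel conjugacy for $Q=\nabla\phi^*$ and the inverse relations. That part is the standard Brenier proof and is correct in outline (one small omission: the proposition asserts uniqueness of $R$ among \emph{all} gradients of convex functions pushing $\mu$ to $\nu$, not just among optimal maps, so even with finite moments you need the converse step that any such gradient map induces a cyclically monotone plan and is therefore itself optimal).

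The genuine gaps are in the no-moments extension, which is precisely where McCann's theorem goes beyond Brenier's. First, your compactness argument for the potentials (normalize $\phi_n$ at a base point, extract a locally uniformly convergent subsequence, pass the gradients and the constraint $\nabla\phi_n\#\mu_n=\nu_n$ to the limit) is not automatic: without moment bounds the potentials can degenerate, local equi-Lipschitz bounds must be extracted from tightness of the $\nu_n$, and combining $\mu$-a.e.\ convergence of gradients with weak convergence of the base measures $\mu_n$ requires an Egorov-type argument. McCann's proof avoids all of this by passing to the limit in the \emph{plans} rather than the potentials: the optimal plans $\gamma_k$ for compactly supported approximations are tight, any weak limit $\gamma$ has cyclically monotone support (cyclical monotonicity is a closed, finitely-verifiable condition), and Rockafellar plus $\mu\ll\mathrm{Leb}$ then yield the map directly, with no cost integral ever evaluated. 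Second, and more seriously, uniqueness cannot ``transfer to the limit via the conjugation argument'': a limiting construction produces one solution, while uniqueness requires showing that any two gradients of convex functions pushing $\mu$ forward to $\nu$ agree $\mu$-a.e., and without finite second moments there is no optimality principle to invoke (the quadratic cost may be $+\infty$ for every coupling). This uniqueness statement is one of the main contributions of McCann (1995) and is proved there by a separate, direct argument on the set where the two gradients differ; as written, your proposal leaves it unestablished in exactly the regime the proposition is cited for.
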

	In~\cref{prop:Mccan}, by ``gradient of a convex function" we essentially mean a function from $\mathbb{R}^d$ to $\mathbb{R}^d$ which is $\mu$ (or $\nu$) a.e.~equal to the gradient of some convex function. It is instructive to note that, when $d=1$, the standard $1$-dimensional distribution function $F$ associated with a distribution $\mu$ is nondecreasing and hence the gradient of a convex function. Therefore when $d=1$, $F$ is the optimal transport map from $\mu$ to $\mathcal{U}[0,1]$ (i.e., the Uniform([0,1]) distribution) by~\cref{prop:Mccan}.
	
	\subsection{Multivariate ranks defined via optimal transport}\label{sec:defn}
	\begin{defn}[Population multivariate ranks and quantiles]\label{def:popquanrank}
		Set $\nu:=\mathcal{U}[0,1]^d$ --- the uniform distribution on $[0,1]^d$. Given $\mu\in\mathcal{P}_{ac}(\mathbb{R}^d)$, the corresponding population rank and quantile maps are defined as the optimal transport maps $R(\cdot)$ and $Q(\cdot)$ respectively as in~\cref{prop:Mccan}. These maps are unique a.e.~with respect to $\mu$ and $\nu$ respectively. 
	\end{defn}
	
	In standard statistical applications, the population rank map is not available to the practitioner. In fact, the only accessible information about $\mu$ comes in the form of empirical observations $Z_1,Z_2,\ldots ,Z_n\overset{i.i.d.}{\sim}\mu\in\mathcal{P}_{ac}(\mathbb{R}^d)$. In order to estimate the population rank map from these observations, let us denote
	\begin{equation}\label{eq:H_n^d}
	\mathbf{\mathcal{H}}_n^d:=\{h_1^d,\ldots ,h_n^d\}
	\end{equation}
	to be a set of $n$ vectors in $[0,1]^d$. We would like the points in $\mathbf{\mathcal{H}}_n^d$ to be ``uniform-like", i.e., their empirical distribution, $n^{-1}\sum_{i=1}^n \delta_{h_i^d}$ should converge weakly to $\nu$. In practice, for $d=1$, we may take $\mathbf{\mathcal{H}}_n^d$ to be the usual $\{i/n\}_{1\leq i\leq n}$ sequence and for $d\geq 2$, we may take it as a quasi-Monte Carlo sequence (such as the $d$-dimensional Halton sequence) of size $n$ (see~\cite{Hofer2009,Hofer2010} for details) or a random draw of $n$ i.i.d. random variables from $\nu$. The empirical distribution on $\mathbf{\mathcal{H}}_n^d$ will serve as a discrete approximation of $\nu$. We are now in a position to define the empirical multivariate rank which will proceed via a discrete analogue of problem~\eqref{eq:Mongeproblem}.
	
	\begin{defn}[Empirical rank map]\label{def:empquanrank}
		Let $S_n$ denote the set of all $n!$ permutations of $\{1,2,\ldots ,n\}$. Consider the following optimization problem:  
		\begin{align}\label{eq:empopt}
		\mathbf{\hat{\sigma}}_n\coloneqq \argmin_{\mathbf{\sigma}\in S_n} \sum_{i=1}^n \lVert X_{i}-h_{{\sigma(i)}}^d\rVert^2.
		\end{align}  
		Note that $\mathbf{\hat{\sigma}}_n$ is a.s.~uniquely defined (for each $n$) as $\mu\in\mathcal{P}_{ac}(\R^d)$. The empirical ranks are then defined as:
		\begin{equation}\label{eq:R_n_h}
		\hat{R}_n(X_i)=h_{\hat{\sigma}_n(i)}^d, \qquad \mathrm{for \;\;} i = 1,\ldots, n.
		\end{equation}
	\end{defn}
	The optimization problem in~\eqref{eq:empopt} is combinatorial in nature, but it can be solved exactly in polynomial time (with worst case complexity $\mathcal{O}(n^3)$) using the Hungarian algorithm (see~\cite{munkres1957,bertsekas1988} for details). For a comprehensive list of faster (approximate) algorithms, see~\cite[Section 5]{Shi19}. Moreover, when $d=1$, if we choose $\mathbf{\mathcal{H}}_n^1= \{i/n\}_{i=1}^{n}$, then the empirical ranks, i.e., $\hat{R}_n(X_i)$'s are exactly equal to the usual one-dimensional ranks.
	
	A crucial reason behind defining empirical ranks as in~\cref{def:empquanrank} is that due to the exchangeability of the $X_i$'s, the vector of ranks, i.e., $(\hat{R}_n(X_1),\ldots ,\hat{R}_n(X_n))$ is uniformly distributed over the following set:
	$$\{(h^d_{\sigma(1)},\ldots ,h^d_{\sigma(n)}):\sigma\in S_n\}.$$
	This lends a distribution-free property to the empirical ranks (see~\cite[Proposition 2.2]{Deb19} for a formal statement and proof). In other words, the distribution of $(\hat{R}_n(X_1),\ldots ,\hat{R}_n(X_n))$ is free of $\mu\in\mathcal{P}_{ac}(\R^d)$. Moreover, the empirical ranks are maximal ancillary (see e.g.,~\cite{del2018center}).
	
	
	\subsection{A distribution-free measure of association and its properties}
	
	We will construct a measure of association with the distribution-free property based on a simple and classical analogy between Pearson's and Spearman's correlation. Note that when $d_1=d_2=1$,  Spearman's correlation is equivalent to the classical Pearson's correlation coefficient computed between the one-dimensional ranks of the $X_i$'s and the $Y_i$'s, instead of the actual observations. It is this usage of one-dimensional ranks that lends Spearman's correlation the distribution-free property when $\mu=\mu_X\otimes\mu_Y$. We will mimic the same approach in this section, i.e., instead of computing $\kmac$ using the $X_i$'s and $Y_i$'s themselves, we will instead use their empirical multivariate ranks. 
	
	Let us briefly recall the setting. We have $(X_1,Y_1),\ldots ,(X_n,Y_n)\overset{i.i.d.}{\sim}\mu\in \mathcal{P}_{ac}(\R^{d_1+d_2})$ with marginals $\mu_X$ and $\mu_Y$, such that $\mu_X\in\mathcal{P}_{ac}(\R^{d_1})$ and $\mu_Y\in \mathcal{P}_{ac}(\R^{d_2})$. Let $\mathbf{\mathcal{H}}_n^{d_1}$ and $\mathbf{\mathcal{H}}_n^{d_2}$ be two sets of $n$ ``uniform-like" points in dimension $d_1$ and $d_2$ respectively. We can then use~\cref{def:empquanrank} to define the empirical multivariate rank vectors $(\hat{R}_n^X(X_1),\ldots ,\hat{R}_n^X(X_n))$ and $(\hat{R}_n^Y(Y_1),\ldots ,\hat{R}_n^Y(Y_n))$ based on $\mathbf{\mathcal{H}}_n^{d_1}$ and $\mathbf{\mathcal{H}}_n^{d_2}$ respectively. Next, given a graph functional $\mathcal{G}$, let $\rgn:=\mathcal{G}(\hat{R}_n^X(X_1),\ldots ,\hat{R}_n^X(X_n))$ and $\rmgn$ be the set of edges of $\rgn$. Also, with a slight notational abuse, we will still use $(d_1,\ldots ,d_n)$ to be the degree sequence of the vertices in $\rgn$. The rank version of $\kmac$ is then defined as follows:
	\begin{equation}\label{eq:rankkmac}
	\rkmac:=\frac{n^{-1}\sum_{i}d_i^{-1}\sum_{j:(i,j)\in\rmgn} K(\hat{R}_n^Y(Y_i),\hat{R}_n^Y(Y_j))-F_n}{(2n(n-1))^{-1}\sum_{i,j,i\neq j} \lVert K(\cdot,\hat{R}_n^Y(Y_i))-K(\cdot,\hat{R}_n^Y(Y_j))\rVert_{\hk}^2},
	\end{equation}
	where $F_n :=(n(n-1))^{-1}\sum_{i\neq j} K(\hat{R}_n^Y(Y_i),\hat{R}_n^Y(Y_j))$. 
	Note that $\rkmac$ is the same as $\kmac$ with the observations replaced with their empirical ranks. In the following theorem (see~\cref{pf:Consistency} for a proof), we show that $\rkmac$ is a measure of association which additionally has a pivotal distribution when $\mu=\mu_X\otimes\mu_Y$.
	\begin{theorem}\label{thm:Consistency}
		\emph{(a)} When $\mu=\mu_X\otimes\mu_Y$, $\rkmac$ has a pivotal distribution, i.e., the distribution does not depend on $\mu_X$ and $\mu_Y$.
		
		\emph{(b)} Assume that $K(\cdot,\cdot):\R^{d_2}\times\R^{d_2}\to\R$ is continuous and let $R^X(\cdot)$ and $R^Y(\cdot)$ denote the population rank maps for $\mu_X$ and $\mu_Y$ respectively and suppose that $r(x_1,x_2):=\mathbb{E}\big[K(R^Y(Y_1), R^Y(Y_2))|R^X(X_1)=x_1, R^X(X_2)=x_2\big]$ is uniformly $\beta$-H{\"o}lder continuous in $x_1,x_2\in [0,1]^{d_1}$ for some $\beta\in (0,1]$, i.e., given any $x_1,x_2,\tilde{x}_1,\tilde{x}_2\in [0,1]^{d_1}$, there exists a constant $C$ (free of $x_1,x_2,\tilde{x}_1,\tilde{x}_2$) such that:
		\begin{align}\label{eq:artifact}
		\big|r(x_1,x_2)-r(\tilde{x}_1,\tilde{x}_2)\big|\leq C\left(\lVert x_1-\tilde{x}_1\rVert_2^{\beta}+\lVert x_2-\tilde{x}_2\rVert_2^{\beta}\right).
		\end{align}
		Additionally we assume that $\rgn$ satisfies Assumption (A3) with some $r_n$, $t_n$ and $\frac{1}{nr_n}\sum_{e\in \rmgn}|e|^{\beta} \to 0$ as $n\to \infty$ where $|e|$ denotes the edge length of $e\in\rmgn$ and the empirical distributions on $\mathbf{\mathcal{H}}_n^{d_1}$ and $\mathbf{\mathcal{H}}_n^{d_2}$ converge weakly to $\mathcal{U}[0,1]^{d_1}$ and $\mathcal{U}[0,1]^{d_2}$ respectively. Under these assumptions, 
		$$\rkmac\overset{\mathbb{P}}{\longrightarrow} 1-\frac{\E\lVert K(\cdot,R^Y(Y'))-K(\cdot,R^Y(\tilde{Y'}))\rVert_{\hk}^2}{\E\lVert K(\cdot,R^Y(Y_1))-K(\cdot,R^Y(Y_2))\rVert_{\hk}^2}:=\rpk,$$
		where $(Y_1,Y_2,Y',\tilde{Y'})$ are defined as in~\eqref{eq:assocRd}. Further, if $K(\cdot,\cdot)$ is characteristic, then $\rpk$ satisfies $\mathrm{(P1)}$-$\mathrm{(P3)}$ from~\cref{sec:popkmac}.
	\end{theorem}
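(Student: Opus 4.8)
The plan is to read $\rkmac$ as exactly $\kmac$ of \eqref{eq:tempkmac} computed on the rank-transformed sample $\{(\hat{R}_n^X(X_i),\hat{R}_n^Y(Y_i))\}_{i=1}^n$, and to show it converges to $\eta_K(\tilde{\mu})$, where $\tilde{\mu}$ is the law of $(R^X(X),R^Y(Y))$; I then identify $\eta_K(\tilde{\mu})$ with $\rpk$. I first record that, by \eqref{eq:K-1-2}, the denominator of $\rkmac$ equals $\frac1n\sum_i K(\hat{R}_n^Y(Y_i),\hat{R}_n^Y(Y_i))-F_n$, so $\rkmac$ genuinely has the form of \eqref{eq:tempkmac}. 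For part (a), the key point is that $\rkmac$ touches the data only through the assignment permutations $\hat\sigma_n^X,\hat\sigma_n^Y$ of \cref{def:empquanrank}: the graph $\rgn$ is a deterministic functional of $(h^{d_1}_{\hat\sigma_n^X(i)})_i$ and the kernel entries are deterministic functions of $(h^{d_2}_{\hat\sigma_n^Y(i)})_i$, so $\rkmac=\Psi(\hat\sigma_n^X,\hat\sigma_n^Y)$ for a fixed map $\Psi$ determined only by the nonrandom grids. When $\mu=\mu_X\otimes\mu_Y$ the samples $(X_i)_i$ and $(Y_i)_i$ are independent, so by the distribution-freeness of optimal-transport ranks (\cite[Proposition 2.2]{Deb19}) the pair $(\hat\sigma_n^X,\hat\sigma_n^Y)$ consists of two independent uniform permutations on $S_n$, with a law free of $\mu_X,\mu_Y$; hence so is the law of $\rkmac$.

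For part (b), the denominator and $F_n$ involve only $Y$-ranks, and since $\hat{R}_n^Y$ merely relabels the fixed grid, $\frac1n\sum_i K(\hat{R}_n^Y(Y_i),\hat{R}_n^Y(Y_i))$ and $F_n$ are (given the grids) deterministic averages over $\mathcal{H}_n^{d_2}$, which converge to $\mme K(R^Y(Y),R^Y(Y))$ and $\mme K(R^Y(Y_1),R^Y(Y_2))$ by the assumed weak convergence of the grid to $\mathcal{U}[0,1]^{d_2}$ together with continuity and boundedness of $K$ on the compact $[0,1]^{d_2}$. The crux is the numerator $A_n:=\frac1n\sum_i d_i^{-1}\sum_{j:(i,j)\in\rmgn}K(\hat{R}_n^Y(Y_i),\hat{R}_n^Y(Y_j))$, which I will show tends to $\mme[r(U,U)]$ with $U\sim\mathcal{U}[0,1]^{d_1}$, noting that $\mme[r(U,U)]=\mme\big[\mme[K(R^Y(Y'),R^Y(\tilde{Y'}))\mid X']\big]$. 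I proceed in three steps. (2a) Replace the empirical $Y$-ranks inside $K$ by the population ranks $R^Y(Y_i)$, keeping the same graph; using uniform continuity of $K$ on $[0,1]^{d_2}$ and the optimal-transport Glivenko--Cantelli bound $n^{-1}\sum_i\|\hat{R}_n^Y(Y_i)-R^Y(Y_i)\|^2\to0$ (from the stability theory behind \cref{prop:Mccan}; see \cite{Deb19}), together with (A3) (each vertex lies in $\le t_n\lesssim r_n$ edges), this replacement is $o_{\mathbb{P}}(1)$; write $A_n'$ for the result. (2b) Condition on $(X_1,\dots,X_n)$: the $Y_i$ are then independent with $Y_i\sim\mu_{Y|X_i}$, so $\mme[K(R^Y(Y_i),R^Y(Y_j))\mid X_i,X_j]=r(R^X(X_i),R^X(X_j))$, and \eqref{eq:artifact} lets me replace this by $r(R^X(X_i),R^X(X_i))$ up to an error bounded by $\frac{1}{nr_n}\sum_{e\in\rmgn}\|R^X(X_{e_1})-R^X(X_{e_2})\|^\beta$, which is in turn dominated by the assumed $\frac{1}{nr_n}\sum_{e\in\rmgn}|e|^\beta\to0$ after invoking the rank Glivenko--Cantelli bound and subadditivity of $t\mapsto t^\beta$; the diagonal average $\frac1n\sum_i r(R^X(X_i),R^X(X_i))$ then tends to $\mme[r(U,U)]$ by the law of large numbers, as $r$ is bounded and the $R^X(X_i)$ are i.i.d.\ $\mathcal{U}[0,1]^{d_1}$. (2c) The conditional fluctuation $A_n'-\mme[A_n'\mid X_1,\dots,X_n]$ is $o_{\mathbb{P}}(1)$ by a bounded-differences/Efron--Stein argument as in the proof of \cref{cor:bdiffcon}: altering one $Y_i$ changes $A_n'$ by $O(n^{-1}(1+t_n/r_n))=O(n^{-1})$, so the conditional variance is $O(n^{-1})$. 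Assembling (2a)--(2c) gives $A_n\overset{\mathbb{P}}{\to}\mme[r(U,U)]$, and Slutsky's theorem yields $\rkmac\overset{\mathbb{P}}{\to}\rpk$.

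For part (c), since $R^X,R^Y$ are $\mu_X$- resp.\ $\mu_Y$-a.e.\ invertible (\cref{prop:Mccan}), we have $\rpk=\eta_K(\tilde{\mu})$, where $\tilde{\mu}$ is the law of $(R^X(X),R^Y(Y))$ with uniform, hence nondegenerate, marginals on the compact Hausdorff spaces $[0,1]^{d_1},[0,1]^{d_2}$, and $\tilde{\mu}_Y\in\tmk^1$ because $\sup_u K(u,u)<\infty$. Applying \cref{theo:kacmas} to $\tilde{\mu}$ with the characteristic kernel $K$ shows $\eta_K(\tilde{\mu})$ satisfies (P1)--(P3) relative to $\tilde{\mu}$. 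I then transport these back to $\mu$ by invertibility: $R^X(X)\perp R^Y(Y)\iff X\perp Y$, and $R^Y(Y)=g(R^X(X))$ a.e.\ $\iff Y=(R^Y)^{-1}\!\circ g\circ R^X(X)$ a.e., so (P2) and (P3) for $\tilde{\mu}$ are equivalent to (P2) and (P3) for $\mu$, while (P1) is immediate.

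The main obstacle I anticipate is steps (2a)--(2b): one must pass between the empirical ranks (which define the graph $\rgn$ and enter the kernel) and the i.i.d.\ population ranks (on which the conditioning identity $\mme[\,\cdot\mid X\,]=r(R^X,R^X)$ rests), and in particular transfer the edge-length hypothesis $\frac{1}{nr_n}\sum_{e\in\rmgn}|e|^\beta\to0$ from empirical to population ranks. This is precisely where the quantitative optimal-transport convergence $n^{-1}\sum_i\|\hat{R}_n(X_i)-R(X_i)\|^2\to0$ is indispensable, and the dependence among the empirical ranks --- absent for the raw data, and the same difficulty the authors flag for the H\'ajek-representation argument --- is what makes the numerator analysis delicate.
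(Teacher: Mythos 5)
Your proposal is correct, and its architecture coincides with the paper's own proof: part (a) is verbatim the paper's argument (the two rank vectors are independent uniform permutations of the fixed grids, by \cite[Proposition 2.2]{Deb19}); in part (b) you, like the paper, treat the denominator and $F_n$ as deterministic, permutation-invariant averages over $\mathbf{\mathcal{H}}_n^{d_2}$ converging by weak convergence of the grid, then swap empirical $Y$-ranks for population ones using \cref{prop:consisrank} together with (A3), and handle the conditional mean via the H{\"o}lder hypothesis \eqref{eq:artifact} and the edge-length condition, transferring between empirical and population rank edge lengths (via the $L^1$ rank bound and subadditivity of $t\mapsto t^\beta$) exactly as in the paper's display \eqref{eq:errorbd}; your part (c) likewise mirrors the paper's verification of (P1)--(P3) through a.e.\ invertibility of the rank/quantile maps (\cref{prop:Mccan}) and \cref{theo:kacmas}. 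The one genuine departure is the fluctuation control for the numerator. The paper runs a method-of-moments argument: it expands $\E\mathcal{Z}_n^2$ into the index-pattern terms $(\mathbf{I}),(\mathbf{II}),(\mathbf{III})$, shows the first two vanish while the third converges to the square of the limit, and concludes by Chebyshev. You instead condition on $(X_1,\dots,X_n)$ --- under which the $Y_i$ are independent and the rank graph $\rmgn$ is fixed --- and apply a bounded-differences/Efron--Stein bound as in \cref{cor:bdiffcon}: changing one $Y_i$ perturbs your $A_n'$ by $O\big(n^{-1}(1+t_n/r_n)\big)=O(n^{-1})$ under (A3) (here $K$ is bounded on the compact $[0,1]^{2d_2}$), so the conditional variance is $O(n^{-1})$. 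Both are sound; your version is more modular and avoids the $(\mathbf{I})$--$(\mathbf{III})$ bookkeeping, whereas the paper's explicit second-moment expansion is what later gets recycled, in sharpened form, in the H\'ajek-representation lemma (\cref{lem:hajekrep}) behind \cref{theo:ranknullclt}. Two cosmetic points: \cref{prop:consisrank} gives the $L^1$ (not squared) rank Glivenko--Cantelli bound, which suffices since all ranks lie in $[0,1]^{d}$; and your identification of $\rpk$ with $\eta_K$ of the law of $(R^X(X),R^Y(Y))$, rather than of $(X,R^Y(Y))$ as the paper implicitly does, is harmless because $\sigma(X)$ and $\sigma(R^X(X))$ agree up to null sets by a.e.\ invertibility.
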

	\begin{remark}\label{rem:detgraph}
		Given the set $\mathbf{\mathcal{H}}_n^{d_1}$, the set of empirical multivariate ranks, i.e., $(\hat{R}_n^X(X_1),\ldots ,\hat{R}_n^X(X_n))$ is some permutation of $\mathbf{\mathcal{H}}_n^{d_1}$. As $\mathcal{G}$ is a geometric graph functional, note that $\max_{1\leq i\leq n} d_i$ and $\min_{1\leq i\leq n} d_i$ are both deterministic quantities. So Assumption (A3) (a requirement for~\cref{thm:Consistency}) is a deterministic condition, i.e., there is no need to view it as a probabilistic constraint. For the same reason, the condition $\frac{1}{nr_n}\sum_{e\in \rmgn}|e|^{\beta} \to 0$ is also a deterministic one instead of the probabilistic condition imposed in Assumption (A1) in~\cref{sec:kmacestimate}. 
	\end{remark}
	\begin{remark}\label{rem:rateassume}
		There are two important differences between our assumptions in~\cref{theo:consis}~and~\cref{thm:Consistency}. Firstly, the assumption $\frac{1}{nr_n}\sum_{e\in \rmgn}|e|^{\beta} \to 0$ is essentially a slightly stronger reformulation of (A1) used in~\cref{theo:consis}. Secondly, the assumption~\eqref{eq:artifact} is an artifact of our proof technique and we expect~\cref{thm:Consistency} to hold in more generality.
	\end{remark}

	~\cref{thm:Consistency} shows that $\rkmac$ can be used to construct a test for independence (in addition to being a measure of association) which will be consistent and exactly distribution-free under the null hypothesis of independence.
	\subsubsection{Connection with the correlation coefficient proposed in~\cite{dette2013copula},~\cite{chatterjee2019new} and~\cite{azadkia2019simple}}\label{sec:concha}
	Here we show that $\rpk$ (which arises naturally as the limit of $\rkmac$ in~\cref{thm:Consistency}) is exactly the same as the following population correlation coefficient proposed in~\cite[Equation 1.2]{chatterjee2019new} and~\cite[Equation 2.1]{azadkia2019simple} where the authors only consider the case $d_2=1$ (also see~\cite{dette2013copula}):
	$$\xi(\mu):=\frac{\int \mathrm{Var}(\mathbb{P}(Y\geq t|X))\,d\mu_Y(t)}{\int \mathrm{Var}(\mathbf{1}(Y\geq t))\,d\mu_Y(t)}$$ for a particular choice of a characteristic kernel. The choice turns out to be the kernel from~\cref{rem:Euclid} with $\alpha=1$. Incidentally this is also the kernel used in the construction of distance covariance (see~\cite{Gabor2007}). We state the result formally in the proposition below (see~\cref{pf:Chacon} for a proof).
	\begin{prop}\label{prop:Chacon}
		When $d_2=1$ and $K(y_1,y_2)=|y_1|+|y_2|-|y_1-y_2|$, then $\rpk=\xi(\mu)$.
	\end{prop}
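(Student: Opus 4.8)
The plan is to evaluate $\rpk$ and $\xi(\mu)$ explicitly and show that each collapses to the same functional of the rank variables. First I would record two reductions. Since $d_2=1$, the population rank map $R^Y$ coincides with the one-dimensional distribution function $F_Y$ of $\mu_Y$ (see the discussion following~\cref{prop:Mccan}), so $R^Y(Y)\sim\mathcal{U}[0,1]$; in particular all rank arguments lie in $[0,1]$ and are nonnegative. The key algebraic observation is that for nonnegative reals $a,b$ one has $K(a,b)=|a|+|b|-|a-b|=2\min(a,b)$, and hence $K(a,a)=2a$. Setting $U:=R^Y(Y')$ and $\tilde U:=R^Y(\tilde{Y'})$ (each marginally $\mathcal{U}[0,1]$, conditionally i.i.d.\ given $X'$) and $V_1:=R^Y(Y_1)$, $V_2:=R^Y(Y_2)$ (i.i.d.\ $\mathcal{U}[0,1]$), I would expand the squared norms in $\rpk$ via the reproducing-property identity~\eqref{eq:K-1-2} together with the $2\min$ formula. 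The denominator becomes $\E[K(V_1,V_1)]+\E[K(V_2,V_2)]-2\E[K(V_1,V_2)]=2-4\,\E[\min(V_1,V_2)]=2/3$, using $\E[\min(V_1,V_2)]=1/3$ for i.i.d.\ uniforms, while the numerator equals $2-4\,\E[\min(U,\tilde U)]$. Thus $\rpk=1-\tfrac{3}{2}\bigl(2-4\,\E[\min(U,\tilde U)]\bigr)=6\,\E[\min(U,\tilde U)]-2$.

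Next I would compute $\xi(\mu)$. For the denominator, writing $p(t):=\P(Y\ge t)=1-F_Y(t)$ and substituting $s=F_Y(t)$ (which pushes $\mu_Y$ forward to $\mathcal{U}[0,1]$), one gets $\int\Var(\mathbf{1}(Y\ge t))\,d\mu_Y(t)=\int_0^1 s(1-s)\,ds=1/6$. For the numerator, the structural step is to rewrite $\E[g_t(X)^2]$, where $g_t(x):=\P(Y\ge t\mid X=x)$, using the conditional independence of $Y'$ and $\tilde{Y'}$ given $X'$: since $g_t(X')^2=\P(Y'\ge t\mid X')\,\P(\tilde{Y'}\ge t\mid X')=\P(Y'\ge t,\ \tilde{Y'}\ge t\mid X')$, taking expectation over $X'\sim\mu_X$ yields $\E[g_t(X)^2]=\P(\min(Y',\tilde{Y'})\ge t)$. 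Combined with $\E[g_t(X)]=1-F_Y(t)$ and $\int(1-F_Y(t))^2\,d\mu_Y(t)=1/3$, this gives $\int\Var(g_t(X))\,d\mu_Y(t)=\int\P(\min(Y',\tilde{Y'})\ge t)\,d\mu_Y(t)-1/3$.

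The crux that ties the two computations together is the identity $\int\P(W\ge t)\,d\mu_Y(t)=\E[F_Y(W)]$ for $W:=\min(Y',\tilde{Y'})$: introducing an independent $T\sim\mu_Y$ and applying Fubini gives $\int\P(W\ge t)\,d\mu_Y(t)=\P(W\ge T)=\P(T\le W)=\E[F_Y(W)]$, where absolute continuity of $\mu_Y$ makes ties negligible. Monotonicity of $F_Y$ then gives $F_Y(W)=F_Y(\min(Y',\tilde{Y'}))=\min(F_Y(Y'),F_Y(\tilde{Y'}))=\min(U,\tilde U)$, so $\int\P(\min(Y',\tilde{Y'})\ge t)\,d\mu_Y(t)=\E[\min(U,\tilde U)]$. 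Therefore $\xi(\mu)=\bigl(\E[\min(U,\tilde U)]-1/3\bigr)/(1/6)=6\,\E[\min(U,\tilde U)]-2=\rpk$, which completes the argument.

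I expect the main subtlety to be the conditional-independence rewriting of $\E[g_t(X)^2]$ as $\P(\min(Y',\tilde{Y'})\ge t)$ and the careful bookkeeping of the numerical constants; the monotone-rank identity $\min(F_Y(Y'),F_Y(\tilde{Y'}))=F_Y(\min(Y',\tilde{Y'}))$ and the Fubini exchange are routine given absolute continuity of $\mu_Y$.
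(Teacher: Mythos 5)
Your proof is correct and rests on exactly the same ingredients as the paper's: the identification $R^Y=F_Y$, the identity $K(a,b)=2\min(a,b)$ on $[0,1]$, conditional independence of $Y',\tilde{Y'}$ given $X'$ to turn $\E[(\P(Y\geq t\mid X))^2]$ into a joint survival probability, and the Fubini/change-of-variables bridge between $t$-integration against $\mu_Y$ and expectations of uniform ranks. The only difference is organizational: you evaluate both sides to the common closed form $6\,\E[\min(U,\tilde U)]-2$ using explicit uniform moments, whereas the paper transforms $\rpk$ directly into the defining expression of $\xi(\mu)$ via the representation $\min\{a,b\}=\int_0^\infty \mathbf{1}(t\leq a)\mathbf{1}(t\leq b)\,dt$ — the same identity run in the opposite direction.
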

	The above proposition shows that the family of measures of association proposed in this paper includes that of~\cite{chatterjee2019new} and extends it significantly to general $d_1,d_2\geq 1$ and also a large class of kernel functions. 
	
	\subsubsection{A rank CLT for $\rkmac$}\label{sec:rankclt}
	We present a CLT for $\rkmac$ when $\mu=\mu_X\otimes\mu_Y$. Earlier in~\cref{theo:nullclt} we presented a CLT for $\kmac$. While $\rkmac$ and $\kmac$ have a similar form, there is a crucial difference in their construction. Note that $\kmac$ is based on the $Y_i$'s (respectively $X_i$'s) which are independent among themselves, whereas $\rkmac$ is a function of $\hat{R}_n(Y_i)$'s (respectively $\hat{R}_n(X_i)$'s) which are no longer independent among themselves. As a result, a different technique is required to prove the corresponding CLT. Informally speaking, the technique used in this paper is that of a H\'ajek representation (as in~\cite[Theorem 5.1]{shi2020rate}), where we show that the empirical multivariate ranks can be replaced by their population counterparts in $\rkmac$ at a $o_{\mathbb{P}}(1/\sqrt{n})$ cost. As $\rkmac$ does not have a standard $U$-statistic representation, we do not use the explicit form of H\'ajek projections as in~\cite[Theorem 5.1]{shi2020rate}, but opt for a more hands-on method of moments based approach. The details of this result have been relegated to the Appendix (see~\cref{lem:hajekrep}) to avoid notational clutter and we only present the statement of the resulting uniform CLT below (see~\cref{pf:ranknullclt} for a proof). Using a similar argument as in~\cref{sec:kmacclt} and recalling the definition of $F_n$ after~\eqref{eq:rankkmac}, we will only study
	\begin{equation}\label{eq:recallnrk}
	\nrk:=\sqrt{n}\left(\frac{1}{n}\sum_{i=1}^n\frac{1}{d_i}^{-1}\sum_{j:(i,j)\in\rmgn} K\big(\hat{R}_n^Y(Y_i),\hat{R}_n^Y(Y_j)\big)-F_n\right).\end{equation}
	\begin{theorem}\label{theo:ranknullclt}
		If (A3) holds, $K(\cdot,\cdot)$ is continuous, $\mu=\mu_X\otimes\mu_Y$, $\mu_X\in\mathcal{P}_{ac}(\R^{d_1})$ and $\mu_Y\in\mathcal{P}_{ac}(\R^{d_2})$, then
		$\mathrm{Var}(\nrk)=\mathcal{O}(1)$. Further, with $\theta:=(D,\gamma,\epsilon)\in (0,\infty)^3$ consider the following subclass of graph functionals and measures on $\X \times \Y$ given by:
		\begin{align*}
		&\mathcal{J}_{\theta} :=\Big\{\tilde{\mathcal{G}}:  \limsup_{n\to\infty}\max_{1\leq i\leq n} \frac{\td_i}{(\log{n})^{\gamma}}\leq D,\; r_n^{-1}t_n\leq D, \; \mathrm{Var}(\nrk)\geq\epsilon\ \forall n\geq D , \;\mathrm{ where} \\ 
		&  \qquad (\td_1,\ldots ,\td_n)\; \mathrm{denotes \; the \; degree \; sequence \; of \;}  \trgn:=\tilde{\mathcal{G}}(\hat{R}_n^X(X_1),\ldots ,\hat{R}_n^X(X_n))\Big\}.
		\end{align*}
		Then the following result holds for every fixed $\theta\in (0,\infty)^3$:
		\begin{align}\label{eq:rankmainres}
		\lim\limits_{n\to\infty}\sup_{\tilde{\mathcal{G}}\in \mathcal{J}_{\theta}}\sup_{z\in\R}\Bigg|\P \left(\frac{\nrk}{\hS_n}\leq z\right)-\Phi(z)\Bigg|=0,
		\end{align}
		where $\tilde{S}_n$ is defined as in~\cref{theo:nullclt} with $\tilde{a}:=\E \left[K^2\big(R^Y(Y_1),R^Y(Y_2)\big)\right]$, $\tilde{b}:=\E \left[K\big(R^Y(Y_1),R^Y(Y_2)\big) \, K \big(R^Y(Y_1),R^Y(Y_3)\big) \right]$ and $\tilde{c}:=\left(\E \big[K\big(R^Y(Y_1), \right.$ $\left.R^Y(Y_2)\big)\big]\right)^2$, and $\tilde{g}_1$, $\tilde{g}_2$, $\tilde{g}_3$ are as in~\cref{theo:nullclt} with $\tilde{\mathcal{G}}_n$ replaced by $\trgn$.
		
	\end{theorem}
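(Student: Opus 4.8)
The plan is to exploit the distribution-freeness from~\cref{thm:Consistency}(a) to place $\nrk$ inside the i.i.d.\ framework of~\cref{theo:nullclt}, the bridge being a H\'ajek representation that swaps the empirical optimal-transport ranks for their population counterparts. Under $\mu=\mu_X\otimes\mu_Y$ with absolutely continuous marginals,~\cref{prop:Mccan} supplies population rank maps $R^X,R^Y$ pushing $\mu_X,\mu_Y$ forward to $\mathcal{U}[0,1]^{d_1},\mathcal{U}[0,1]^{d_2}$, so that $U_i^X:=R^X(X_i)$ and $U_i^Y:=R^Y(Y_i)$ are two mutually independent i.i.d.\ uniform samples. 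Two consequences are worth recording at the outset. First, since $K$ is continuous on the compact set $[0,1]^{d_2}\times[0,1]^{d_2}$ it is bounded there, so every moment hypothesis on $K$ demanded by~\cref{theo:nullclt} (such as $\E K^4\le M$) holds automatically; this is precisely why $\theta$ here omits the constant $M$. Second, as in the proof of~\cref{thm:Consistency}(a), $\nrk$ depends on the data only through two independent uniform permutations, whence the graph invariants $\tilde g_1,\tilde g_2,\tilde g_3$ are deterministic functionals of $\tilde{\mathcal{G}}(\mathbf{\mathcal{H}}_n^{d_1})$ and, together with the population constants $\tilde a,\tilde b,\tilde c$, make $\hS_n$ a deterministic normalizing sequence.

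For the bound $\mathrm{Var}(\nrk)=\mathcal{O}(1)$ I would run a direct second-moment computation mirroring~\eqref{eq:d'accord}. The calculation is lighter than in~\cref{theo:nullclt} because $\nrk$ is $\sqrt{n}$ times a centered, degree-normalized edge average of the uniformly bounded quantities $K(\hat R_n^Y(Y_i),\hat R_n^Y(Y_j))$; Assumption (A3) forces comparable, bounded-order degrees, so the pairwise covariances are supported on overlapping graph neighborhoods and sum to $\mathcal{O}(1)$.

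The heart of the argument is~\cref{lem:hajekrep}, which asserts that, uniformly over $\mathcal{J}_\theta$, $\nrk=\nrp+o_{\mmp}(1)$, where $\nrp$ is the same statistic after every empirical rank $\hat R_n^X(X_i),\hat R_n^Y(Y_i)$ is replaced by its population value $U_i^X,U_i^Y$. Because $\nrk$ is not a genuine $U$-statistic --- the weights $d_i^{-1}$ and the edge set depend nonlinearly on the ranks --- I would prove this not through an explicit H\'ajek projection but by a method of moments, bounding $\E(\nrk-\nrp)^2$ directly. The perturbation enters through two channels: the kernel values, handled by continuity of $K$ and the a.e.\ convergence of the empirical transport maps to $R^X,R^Y$; and the graph topology, where the H\"older modulus~\eqref{eq:artifact} of $r(\cdot,\cdot)$ together with the edge-length decay $\frac{1}{nr_n}\sum_{e\in\rmgn}|e|^{\beta}\to 0$ turn the $\mathcal{O}(r_n)$ edges altered per vertex into an $o(n^{-1/2})$ contribution after the $\sqrt{n}$ scaling. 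Granting the representation, $\nrp$ is exactly $N_n$ of~\cref{theo:nullclt} for the i.i.d.\ uniform sample $\{(U_i^X,U_i^Y)\}_{i=1}^n$, so that theorem delivers $\nrp/\tilde S_n^{\mathrm{emp}}\Rightarrow\Phi$ uniformly over $\mathcal{J}_\theta$, with $\tilde S_n^{\mathrm{emp}}$ built from empirical coefficients and the uniform-sample graph. A law of large numbers for bounded $U$-statistics sends those empirical coefficients to the population constants $\tilde a,\tilde b,\tilde c$, while the graph invariants computed on the i.i.d.\ uniform sample and on the grid $\mathbf{\mathcal{H}}_n^{d_1}$ share a common deterministic limit, so $\tilde S_n^{\mathrm{emp}}/\hS_n\to 1$; the nondegeneracy $\mathrm{Var}(\nrk)\ge\epsilon$ imposed in $\mathcal{J}_\theta$ keeps $\hS_n$ away from zero. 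Slutsky's theorem then carries the limit through the $o_{\mmp}(1)$ term to yield~\eqref{eq:rankmainres}.

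The decisive difficulty is the H\'ajek representation itself. In contrast to the independent-observation setting, perturbing the jointly defined optimal-transport ranks simultaneously reshuffles the graph and the degree-normalized kernel sum, and one must show the combined effect is $o_{\mmp}(n^{-1/2})$ \emph{uniformly} over the graph class $\mathcal{J}_\theta$. This is exactly where~\eqref{eq:artifact} and the edge-length condition become indispensable and where the moment bookkeeping is heaviest; once the representation is in hand, the remainder is a reduction to~\cref{theo:nullclt} together with routine Slutsky and law-of-large-numbers arguments.
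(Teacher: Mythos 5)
Your high-level skeleton (H\'ajek representation, then reduction to~\cref{theo:nullclt}, then Slutsky) matches the paper, but the way you set up the H\'ajek step contains a genuine gap. You replace \emph{both} sets of empirical ranks by their population counterparts, so your comparison statistic is built on a \emph{different graph} --- the one constructed from $R^X(X_1),\ldots,R^X(X_n)$ rather than from $\hat R_n^X(X_1),\ldots,\hat R_n^X(X_n)$. To get $\E(\nrk-\nrp)^2\to 0$ you would then have to show that the two graphs share all but $o(n)$ of their (degree-normalized) edges, because under the null the $Y$-ranks are independent of everything else, so conditionally on the $X$'s the difference of the two edge-averages is a centered quantity whose variance is of the order of the symmetric difference of the edge sets divided by $n$. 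Nothing in the hypotheses of~\cref{theo:ranknullclt} gives such edge-set stability: Proposition~\ref{prop:consisrank} only gives averaged $L^1$ convergence of $\hat R_n^X$ to $R^X$, and nearest-neighbor-type edge sets are not continuous under such perturbations. The tools you invoke to patch this ---~\eqref{eq:artifact} and the edge-length decay $\frac{1}{nr_n}\sum_{e\in\rmgn}|e|^\beta\to 0$ --- are hypotheses of~\cref{thm:Consistency}(b), \emph{not} of~\cref{theo:ranknullclt}, so a proof relying on them establishes a strictly weaker statement (and the related claim that the graph invariants $\tilde g_1,\tilde g_2,\tilde g_3$ computed on an i.i.d.\ uniform sample and on the grid $\mathbf{\mathcal{H}}_n^{d_1}$ "share a common deterministic limit" is unsupported: for arbitrary members of $\mathcal{J}_\theta$ these quantities need not converge at all).

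The paper's proof avoids all of this by never touching the graph: its $\nrp$ in~\eqref{eq:hajekpop} keeps the graph $\rmgn$ built on the \emph{empirical} $X$-ranks and replaces only the $Y$-arguments of $K$ by $R^Y(Y_i)$. This suffices because the composition $\tilde{\mathcal{G}}\circ\hat R_n^X$ is itself a legitimate geometric graph functional of $X_1,\ldots,X_n$, and under $\mu=\mu_X\otimes\mu_Y$ the values $R^Y(Y_i)$ are i.i.d.\ $\mathcal{U}[0,1]^{d_2}$ independent of the $X$'s, so~\cref{theo:nullclt} applies verbatim to $\nrp$. The H\'ajek step (\cref{lem:hajekrep}) then reduces, via the resampling/permutation representation of the null, to showing that the empirical-rank moment functionals $\hat a,\hat b,\hat c$ and the mixed ones $\underline a,\underline b,\underline c$ converge in $L^1$ to $\tilde a,\tilde b,\tilde c$, which follows from consistency of the empirical ranks and continuity (hence boundedness) of $K$ on $[0,1]^{d_2}\times[0,1]^{d_2}$ --- no H\"older condition and no edge-length condition needed. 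If you rework your argument with the paper's choice of $\nrp$, your remaining steps (variance ratio convergence plus Slutsky) go through as you describe.
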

	It is worth noting that $\tilde{S}_n^2$ can be computed explicitly as $R^Y(Y_1),R^Y(Y_2), $ $R^Y(Y_3)\overset{i.i.d.}{\sim}\mathcal{U}[0,1]^{d_2}$.
	
	\section*{Acknowledgements} The authors would like to thank Huang Zhen for his careful reading of the paper and a number of insightful suggestions; and also Bhaswar Bhattacharya, Sourav Chatterjee, Holger Dette, Arthur Gretton, Marc Hallin, and Fang Han for helpful comments.
	
	\bibliographystyle{chicago}
	\bibliography{OT,template,References}
	\newpage
	\appendix
	\section{Some general discussions}\label{sec:gendis}
	In this section, we will elaborate on some parts of the main text which were initially deferred so as not to impede the flow of the paper. 
	\subsection{Intuition behind the construction of $T_n$}\label{sec:kmacintuit}
	We would like to provide some intuition as to why $T_n$ is a natural candidate that satisfies (II). Towards this direction, let us consider $k=\mathcal{O}(1)$ and focus on the term $$\underbrace{n^{-1}\sum_{i=1}^n d_i^{-1}\sum_{j\in \mathcal{N}_i} \lVert Y_i-Y_j\rVert_2}_{S_n}$$ which is the only term in $T_n$ that involves both $X_i$'s and $Y_i$'s. It can be shown (as in the proof of~\cref{theo:consis}) that $S_n$ concentrates around its expectation. 
	
	If $X$ and $Y$ are independent, it is easy to see that for any $i,j$, $j\in \mathcal{N}_i$, $\E \lVert Y_i-Y_j\rVert_2=\E \lVert Y_1-Y_2\rVert_2 >0$; and consequently (as $\min_{1\leq i\leq n} d_i\geq k$), 
	\begin{equation}\label{eq:E_S_n}
	\E S_n \geq \frac{n}{k} \, \E \lVert Y_1-Y_2\rVert_2.
	\end{equation}
	Next, suppose $Y=g(X)$ for some measurable function $g:\R^{d_1}\to\R^d_2$. To fix ideas, further assume that $g(\cdot)$ is continuous (this can be justified using Lusin's Theorem~\cite{lusin1912proprietes}; alternatively see~\cref{prop:lusin}). Now, as $X_i$ and $X_j$, $j\in \mathcal{N}_i$, are two neighboring vertices in the $k$-NNG, it seems reasonable to expect that $\lVert X_i-X_j\rVert_2$ would be stochastically ``small". By continuity of $g(\cdot)$, the same can be said for $\lVert Y_i-Y_j\rVert_2=\lVert g(X_i)-g(X_j)\rVert_2$. Further, by~\cite[Lemma 1]{Jaffe2020}, $\max_{1\leq i\leq n}|\N_i|\leq C_{d_1}$ for some constant $C_{d_1}$. As a result, $\E\left[\sum_{j\in \mathcal{N}_i}d_i^{-1}\lVert Y_i-Y_j\rVert_2\right]=o(1)$. By summing over $i$ from $1$ to $n$, we get $\E S_n=o(n)$. Therefore, comparing with~\eqref{eq:E_S_n}, there seems to be a clear distinction in the stochastic ``size" of $S_n$ depending on whether $X$ and $Y$ are independent, or $Y$ is a noiseless function of $X$. Through a suitable scaling, it is the above difference in asymptotic behaviors that we intend to capture in the definition of $T_n$.

	\subsection{RKHS: Some preliminaries}\label{sec:rkhsprelim}
	In this subsection we formally define some concepts from the theory of RKHS that is be used repeatedly in the paper. We start with the basic definition of a RKHS.
	
	\begin{defn}[Reproducing kernel Hilbert space (RKHS)]\label{def:rkhs}
		Let $\mathcal{H}$ be a Hilbert space of real-valued functions defined on a topological space $\Y$ with inner product $\langle \cdot,\cdot\rangle_{\mathcal{H}}: \mathcal{H} \times \mathcal{H} \to \R$. A function $K:\Y\times\Y\to\R$ is called a \emph{reproducing kernel} if the following two conditions hold:
		\begin{enumerate}
			\item For all $y\in\Y$, $K(\cdot,y)\in\mathcal{H}$.
			\item For all $y\in\Y$ and $f\in\mathcal{H}$, $\langle f,K(\cdot,y)\rangle_{\mathcal{H}}=f(y)$.
		\end{enumerate} 
		If $\mathcal{H}$ admits a reproducing kernel, then it is termed as a RKHS.
	\end{defn}
	By the Moore-Aronszajn Theorem (see e.g.~\cite[Theorem 3]{Berlinet2004}) a symmetric, nonnegative definite kernel function $K(\cdot,\cdot)$ on $\Y\times \Y$ can be identified uniquely with a unique RKHS of real-valued functions on $\Y$ for which $K(\cdot,\cdot)$ is the reproducing kernel. Let us denote this RKHS by $\mathcal{H}_K$.
	
	The map $y\mapsto K(\cdot,y)$ from $\Y$ to $\hk$ is often called the \emph{feature map}. Further, the reproducing property as stated in~\cref{def:rkhs} implies that 
	\begin{equation}\label{eq:hilnorm}
	\langle K(\cdot,y),K(\cdot,\tilde{y})\rangle_{\hk}=K(y,\tilde{y}), \qquad \mbox{for all}\;y,\tilde{y} \in \Y.
	\end{equation}
	In the following we define three concepts that will be crucial in defining $\eta_K$ satisfying properties (P1)-(P3). Suppose that $Y \sim \mu_Y$ has a probability distribution on $\Y$ such that $\E[\sqrt{K(Y,Y)}] < \infty$. Let us also assume that $\hk$ is separable (this can be ensured under mild conditions\footnote{For example, if $\Y$ is a separable space and $K(\cdot,\cdot)$ is continuous.}, see e.g.,~\cite[Lemma 4.33]{SVM}).
	
	\begin{defn}[Mean embedding]\label{def:meanembed}
		Define the following class of probability measures on $\Y$: $$\tmk^{\theta}(\Y)\coloneqq \left\{\nu\in\mathcal{M}(\Y):\int_\Y K^{\theta}(y,y)\,d\nu(y)<\infty \right\}, \qquad \mbox{for } \, \theta>0.$$ Let $\mu_Y\in\tmk^{1/2}(\Y)$. Then the (kernel) \emph{mean embedding} of $\mu_Y$ into $\hk$ is given by $m_K(\mu_Y)\in \hk$ such that 
		\begin{equation}\label{eq:kmeanembed}
		\langle f,m_K(\mu_Y)\rangle_{\hk} = \int_\Y f(y)\,d\mu_Y(y), \qquad \mbox{for all } \, f\in\hk.
		\end{equation}
	\end{defn}
	In fact, one can write $m_K(\mu_Y)=\int K(\cdot,y)\,d\mu_Y(y)=\E_{\mu_Y} [K(\cdot,Y)]$. It is well-defined as a consequence of the Riesz representation theorem,~see~\cite{smola2007hilbert,akhiezer1993} (equivalently also by Bochner's theorem, see~\cite{Diestel1974,Dinculeanu2011}). The map $\mu_Y\mapsto m_K(\mu_Y)$ with domain $\tmk^{1/2}(\Y)$ can be viewed as a natural extension of the map $y\mapsto K(\cdot,y)$ with domain $\Y$.
	
	In a similar vein we can also define the (kernel) \emph{mean embeddings} of conditional distributions. Towards this direction, consider another topological space $\X$ and suppose $(X,Y)\sim \mu\in \mathcal{M}(\X\times\Y)$ where $\mu$ admits a regular conditional distribution $\mu_{Y|x}$ --- the conditional distribution of $Y$ given $X = x$; existence of regular conditional distributions can be guaranteed under mild conditions (see~\cite{Faden1985} for a survey). For $X=x$, the (kernel) \emph{conditional mean embedding} of $\mu_{Y|x}$ is defined as an element of $\hk$ in the same way as~\eqref{eq:kmeanembed} (also see~\cite[Section 4.1.1]{muandet2017kernel}). In other words, we can also write $m_K(\mu_{Y|x})=\int K(\cdot,y)d\mu_{Y|x}(y)=\E_{\mu_{Y|x}}[K(\cdot,Y)]$.

	\begin{defn}[Maximum mean discrepancy]\label{defn:MMD}
		The difference between two probability distributions $Q_1$ and $Q_2$ in $\tmk^{1}(\Y)$ can then be conveniently measured by $${\rm MMD}_K(Q_1,Q_2) := \|m_K(Q_1) - m_K(Q_2)\|_{\hk}$$ (here $m_K(Q_i)$ is the mean element of $Q_i$, for $i=1,2$) which is called the {\it maximum mean discrepancy} (MMD) between $Q_1$ and $Q_2$ (see~\cite[Definition 10]{gretton2008}). The following alternative representation of the squared MMD is also known (see e.g.,~\cite[Lemma 6]{Gretton12}, or simply use~\eqref{eq:hilnorm}):
		\begin{equation}\label{eq:MMD}
		{\rm MMD}_K^2(Q_1,Q_2) = \E[K(S,S')] + \E[K(W,W')] - 2 \E[K(S,W)],
		\end{equation}
		where $S, S',W,W'$ are independent, $S, S' \stackrel{}{\sim} Q_1$ and $W, W' \stackrel{}{\sim} Q_2$.
	\end{defn}
	
	\begin{defn}[Characteristic kernel]\label{defn:Char}
		The kernel $K(\cdot,\cdot)$ is said to be {\it characteristic} if and only if the map $Q\mapsto m_K(Q)$ is one-to-one on the domain $\tmk^1(\Y)$, i.e., $$m_K(Q_1) = m_K(Q_2) \quad  \implies \quad Q_1=Q_2, \quad \mbox{for all }\;Q_1,Q_2\in \tmk^1(\Y).$$
	\end{defn}
	Note that the last condition is equivalent to $\langle m_K(Q_1),f\rangle_{\hk} = \langle m_K(Q_2),f\rangle_{\hk}$ for all $f \in \hk$, i.e., $\E_{S \sim Q_1}[f(S)] = \E_{W \sim Q_2}[f(W)]$, for all $f \in \hk$. A characteristic kernel implicitly implies that the associated RKHS is rich enough.

	\subsection{NNGs and MSTs beyond Euclidean spaces}\label{sec:gengraphcond}
	
	The proof of~\cref{prop:vercond} reveals that (A1) continues to hold for general metric spaces, with metric $\rho(\cdot,\cdot)$, provided $\rho(X_1,X_2)$ has a continuous distribution. In fact, it can also be shown that (A2) continues to hold for NNGs whenever (A3) holds. Therefore, the crucial step is to verify (A3). In a similar vein, it is known that the MST also satisfies (A1) under some technical assumptions on the underlying metric space (for related results, see~\cite[Theorem 1.1]{gottlieb2020non},~\cite[Lemma 6]{talwar2004bypassing},~\cite[Proposition 12]{Sanjeev1998}). Once again, (A2) can be verified if (A3) holds and consequently, establishing (A3) is of prime importance. Recall that (A3) intuitively assumes that the maximum degree over the minimum degree of the associated MST/NNG is bounded (in $n$). Some results on upper bounds on the maximum degree of an MST, for points on normed spaces, can be found in~\cite{robins1994maximum,Swanepoel2018}.
	
	
	\noindent From a methodological standpoint, note that calculating $\kmac$ is not confined to metric spaces only. The computation of (approximate) NNGs on non-metric spaces has attracted a lot of attention over the years, with metrics being replaced by certain semimetrics, ``similarity" functions or ``divergence" measures (see~\cite{boytsov2013learning,athitsos2004boostmap,miranda2013very,jacobs2000classification,Gottlieb2017}). The example of semimetric spaces seems to be of particular interest in the machine learning literature, where a standard approach towards analyzing data taking values in some abstract topological space, is by defining ``interesting" kernels on the space and studying the corresponding RKHS (see~\cref{def:rkhs}) instead. Under certain technical assumptions, these kernels can be used to construct semimetrics (see~\cite[Corollary 16]{Sejdinovic2013}) which can then be used to obtain NNGs.
	
	\subsection{Some general remarks}\label{sec:genrem}
	\begin{remark}[Assumptions in~\cref{prop:otprop}]\label{rem:conconv}
		Suppose $Z,Z_1,Z_2\overset{i.i.d.}{\sim}\mathcal{N}(0,1)$. Let $(X_n,Y_n):=(Z/n,Z)$. Then $(X_n,Y_n)$ converges weakly to $(X,Y)\overset{d}{=}(0,Z)$, whereas $(X_n',Y_n',\tilde{Y}_n')$ (as stated in~\cref{prop:otprop}-(2)) converges weakly to $(0,Z,Z)$ which is not the same as the distribution of $(X',Y',\tilde{Y}')\overset{d}{=}(0,Z_1,Z_2)$. Therefore $(X_n,Y_n)\overset{w}{\longrightarrow}(X,Y)$ is not enough to guarantee the assumption made in~\cref{prop:otprop}-(2). Additionally, the assumption $\limsup_{n\to\infty} \E h_3^{1+\epsilon}(Y_n'-\tilde{Y}_n')<\infty$ for some $\epsilon>0$ is easily verifiable in a number of cases. For example, when the underlying kernel is Gaussian or Laplacian (see~\cref{rem:excharker}), $h_3(\cdot)$ is uniformly bounded and there is nothing to check. If the underlying kernel is the one described in~\cref{rem:Euclid}, then the condition holds provided $\limsup_{n\to\infty}\E\lVert Y_n\rVert_2^{\alpha+\gamma}<\infty$ for some $\gamma>0$.
	\end{remark}
	
	\begin{remark}[More on the computation of $\kmac$]\label{rem:morecomp}
		When we are dealing with a finite dimensional RKHS, e.g., $K(y,\tilde y) := \varphi(y)^\top \varphi(\tilde y)$, for some `feature map' $\varphi(\cdot) \in \R^m$, then it is indeed possible to make $v_n = \mathcal{O}(n)$ as $\left\| \sum_{i=1}^n K(Y_i,\cdot) \right\|^2_{\h_K} = \| \varphi(\cdot)^\top (\sum_{i=1}^n  \varphi(Y_i)) \|^2_{\h_K} = \|\sum_{i=1}^n  \varphi(Y_i)\|^2_{\R^m}$. 
		However, for infinite dimensional RKHSs, we do not know of such generally applicable linear time algorithms. For certain kernels, there might be special tricks that can be used to facilitate fast computation; see e.g.,~\cite{gray2001} (and \url{http://www.cs.cmu.edu/~agray/nbody.html}). 
		
		Alternatively, under certain conditions (i.e., Mercer's theorem and extensions), we
		can write 
		\begin{equation}\label{eq:Mercers}
		K(y,\tilde y) := \sum_{k=1}^\infty \lambda_k e_k(y) e_k(\tilde y), \qquad \forall \; y,\tilde y \in \Y
		\end{equation}
		where the series converges absolutely for each $(y, \tilde y) \in \Y \times \Y$ (and uniformly on compact subsets of $\Y \times \Y$), $\lambda_1 \ge \lambda_2 \ge \cdots \ge 0$ are the eigenvalues and $\{e_k\}_{k\ge 1}$ are the corresponding $L_\tau^2$-normalized eigenfunctions of $K(\cdot,\cdot)$ with respect to a probability measure $\tau$ on $\Y$ (i.e., $\int_\Y e_i(y) e_j(y) d \tau(y) = \delta_{i,j}$, where $\delta_{i,j}$ is the Kronecker delta function). Then, $\{\sqrt{\lambda_k} e_k\}_{k\ge 1}$ forms an orthonormal basis of $\h_K$ and for any $f \in \h_K$ such that $$f = \sum_{k=1, \lambda_k >0}^\infty a_k e_k$$ we have 
		\begin{equation}\label{eq:Expand}
		\|f\|_{\h_K} = \sum_{k=1, \lambda_k >0}^\infty \frac{a_k^2}{\lambda_k} < \infty.
		\end{equation}
		
		Now consider $$g(\cdot) := \sum_{i=1}^n K(Y_i,\cdot) \in \h_K.$$ Then, by~\eqref{eq:Mercers}, $g(\cdot) = \sum_{i=1}^n \sum_{k=1}^\infty \lambda_k e_k(Y_i) e_k(\cdot) = \sum_{k=1}^\infty a_k  e_k(\cdot)$, where
		\begin{equation}\label{eq:f_k}
		a_k = \sum_{i=1}^n \lambda_k e_k(Y_i), \quad \mbox{for } k \ge 1.
		\end{equation}
		Thus, using~\eqref{eq:Expand} we have 
		\begin{equation}\label{eq:g_H_K}
		\Big\| \sum_{i=1}^n K(Y_i,\cdot) \Big\|^2_{\h_K} = \sum_{i,j} K(Y_i,Y_j) = \|g\|_{\h_K}^2 = \sum_{k=1, \lambda_k >0}^\infty \frac{a_k^2}{\lambda_k}.
		\end{equation}
		
		For example, for the Gaussian kernel on $\R$ we have $K(y,\tilde y) := \exp \left(-\frac{1}{2 \sigma^2}(y -\tilde y)^2\right)$. Letting $d \tau(y) = \frac{1}{\sqrt{2 \pi}} e^{-y^2/2} dy$, we can show that $$\lambda_k \propto b^k \;\; (\mbox{for } b <1) \; \qquad \mbox{and} \qquad e_k(y) \propto \exp(-(c-a)y^2) H_k(y \sqrt{2c}),$$ where $a, b, c$ are functions of $\sigma$, and $H_k$ is the $k$-th order Hermite polynomial (i.e., $H_k(y) = (-1)^k \exp(y^2) \frac{d^k}{d y^k} \exp(-y^2)$); see e.g.,~\cite[Section 4.3]{RW-2006}). Thus, we can compute $a_k$ using~\eqref{eq:f_k} now, and consequently compute $\Big\| \sum_{i=1}^n K(Y_i,\cdot) \Big\|^2_{\h_K}$ using~\eqref{eq:g_H_K}.
		
	\end{remark}
	
	\begin{remark}[Motivation for $\klin$]\label{rem:Compare} 
		At first glance, the construction of $\klin$ may seem artificial. However, note that in the definition of $\kmac$ in~\eqref{eq:statest}, the term $(n(n-1))^{-1}\sum_{i\neq j} K(Y_i,Y_j)$ was used because it is a ``natural" estimator for $\E [K(Y_1,Y_2)]$, a quantity which features in the numerator of the population version $\eta_K(\mu)$ (see~\cref{prop:wdf}). The replacement of $(n(n-1))^{-1}\sum_{i\neq j} K(Y_i,Y_j)$ in $\klin$ also has similar ``good" properties.	 In particular, ${n}^{-1}\sum_{i=1}^n K(Y_i,Y_{i+1})$ is also unbiased for $\E K(Y_1,Y_2)$ and
		\begin{align}\label{eq:goodprop}
		n\mathrm{Var}\left(\frac{1}{n}\sum_{i=1}^n K(Y_i,Y_{i+1})\right)\overset{n\to\infty}{\longrightarrow} a+2b-3c
		\end{align}
		where $a:=\E K^2(Y_1,Y_2)$, $b:=\E[K(Y_1,Y_2)K(Y_1,Y_3)]$ and $c:=\E^2 [K(Y_1,Y_2)]$. Therefore, $n^{-1}\sum_{i=1}^n K(Y_i,Y_{i+1})$  is also a $\sqrt{n}$-consistent estimator for $\E[K(Y_1,Y_2)]$ (under appropriate moment assumptions).
		
		We would also like to point out that $\E[K(Y_1,Y_2)]$ is only a function of $\mu_Y$ (and has no dependence on $X$) which makes it much easier to estimate than $\E \lVert \E[K(\cdot,Y)|X]\rVert_{\hk}^2$ --- the other term appearing in the numerator of $\eta_K(\mu)$ (see~\cref{prop:wdf}) --- which we have already found a  near  linear time estimator for, by using~\eqref{eq:analogtarget}. This leads us to conjecture that replacing $(n(n-1))^{-1}\sum_{i\neq j} K(Y_i,Y_j)$ with $n^{-1}\sum_{i=1}^n K(Y_i,Y_{i+1})$ to get $\klin$ from $\kmac$, so as to get faster computability, is a fair alternative. 
	\end{remark}

	\section{Simulation studies}\label{sec:sim}
	In this section, we will illustrate through simulations, the different properties of $\kmac$ and $\klin$ under different choices of kernels $K(\cdot,\cdot)$ and graph functionals $\mathcal{G}$. Our two primary examples of kernels are the distance kernel and the Gaussian kernel, which we recall below:
	\begin{enumerate}
		\item[(A)] $\kd(y_1,y_2):=(1/2)(\lVert y_1\rVert_2+\lVert y_2\rVert_2-\lVert y_1-y_2\rVert_2)$, and
		\item[(B)] $\kg(y_1,y_2):=\exp(-\lVert y_1-y_2\rVert_2^2)$.
	\end{enumerate}
	For graph functionals, we will either use the Euclidean MST (EMST) or $k$-NNG as described in~\cref{sec:exkergraph}. We will also use a shorthand $(*,*,*)$ to describe the choice of the measure of association, the kernel and gthe raph functional respectively. So, for instance, $(\kmac,\kg,\mst)$ will imply that $\kmac$ has been constructed using the Gaussian kernel and the EMST; similarly $(\klin,\kd,\onn)$ will imply that $\klin$ has been constructed using the distance kernel and the $1$-NNG.
	
	\subsection{A general measure of association}\label{sec:genassoc}
	In this sub-section, we will present simulation evidence to demonstrate that $\kmac$ and $\klin$ are very general measures that capture the strength of dependence between $X$ and $Y$. In other words, they are generally capable of distinguishing between an exact functional relationship between $X$ and $Y$, as opposed to noisier relationships between the same variables; thereby making them very powerful measures of association. No other dependence measure that we know of has this property. Our illustration will feature the distance correlation (dCor) as a benchmark as it is standardized between $0$ and $1$ and is potentially the most popular measure of dependence in the statistics community.
	
	Let us begin with two simple simulation settings. As a general rule for this section, $n$ will denote the sample size and we will stick to $d_1=d_2=2$. Also the $4$-dimensional vector $(X,Y)$ will be generated by first drawing $(X^{(1)},Y^{(1)}),(X^{(2)},Y^{(2)})$ from some bivariate distribution $\mu$ and then setting $X:=(X^{(1)},X^{(2)})$ and $Y:=(Y^{(1)},Y^{(2)})$. Therefore we will only specify $\mu$, i.e., the distribution of $(X^{(1)},Y^{(1)})$ in the sequel.
	\begin{enumerate}
		\item \emph{Sinusoidal}: Let $X^{(1)}\sim \mathcal{U}[-1,1]$ and $Y^{(1)}:=\cos(8\pi X^{(1)})+\lambda\epsilon$, where $\epsilon\sim\mathcal{N}(0,1)$ is independent of $X^{(1)}$ and $\lambda$ varies in $[0,2.5]$. Set $n=2000$. This setting has been taken from~\cite[Setting 4]{chatterjee2019new}.
		\item \emph{Linear}: Let $(X^{(1)},Y^{(1)})$ be a bivariate Gaussian random vector with correlation $\rho$ varying in $[0,1]$ and both marginals having mean $0$ and variance $1$. Set $n=2000$. Note that in this setting, the most intuitive measure of dependence is the correlation parameter $\rho$.
	\end{enumerate}
	\begin{figure}[h] 
		\centering
		\begin{subfigure}[b]{0.48\textwidth}
			\includegraphics[width=\textwidth,height=6.5cm]{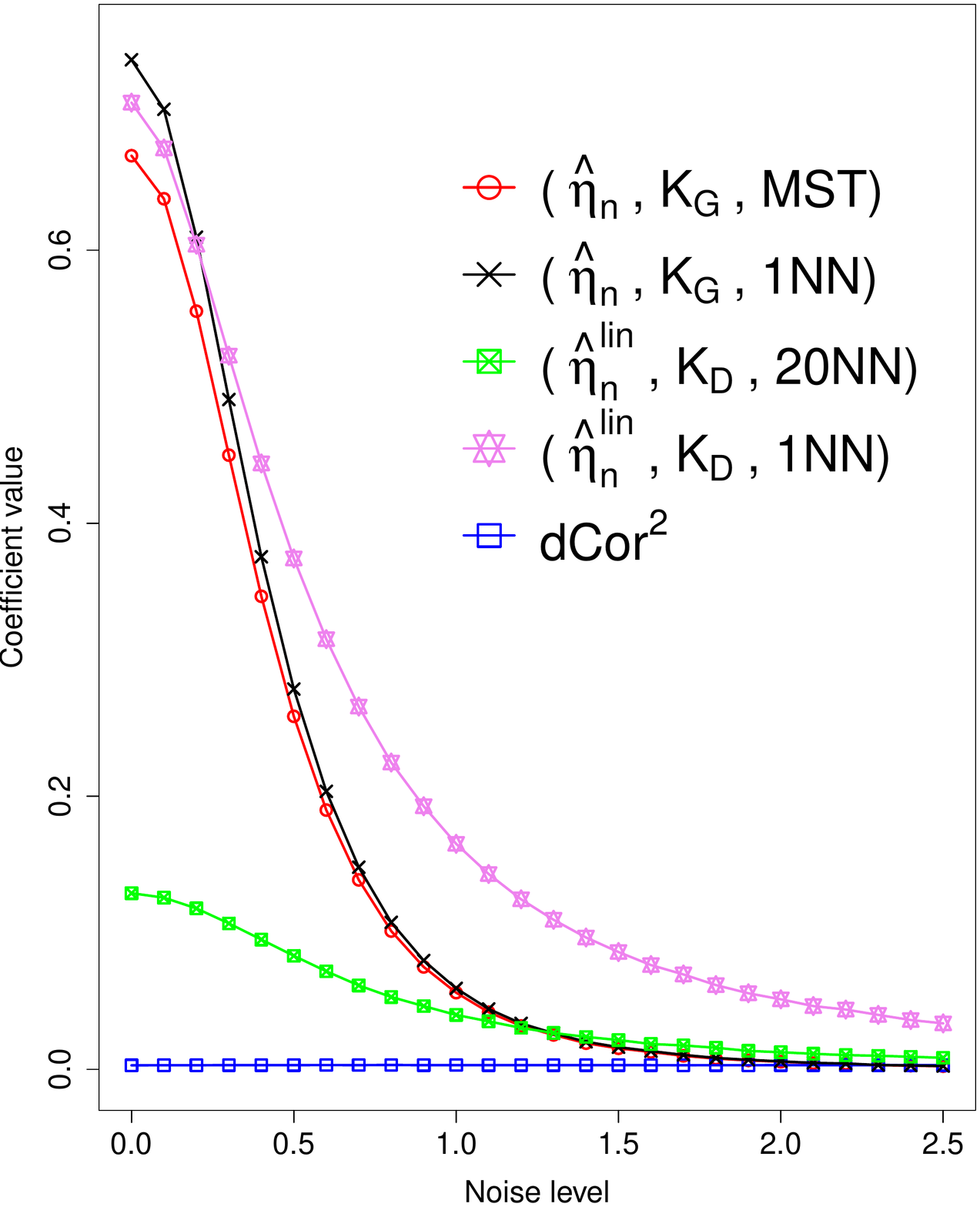}
			\caption{Sinusoidal setting}
			\label{fig:Coeffplot1}
		\end{subfigure}
		\begin{subfigure}[b]{0.48\textwidth}
			\includegraphics[width=\textwidth,height=6.5cm]{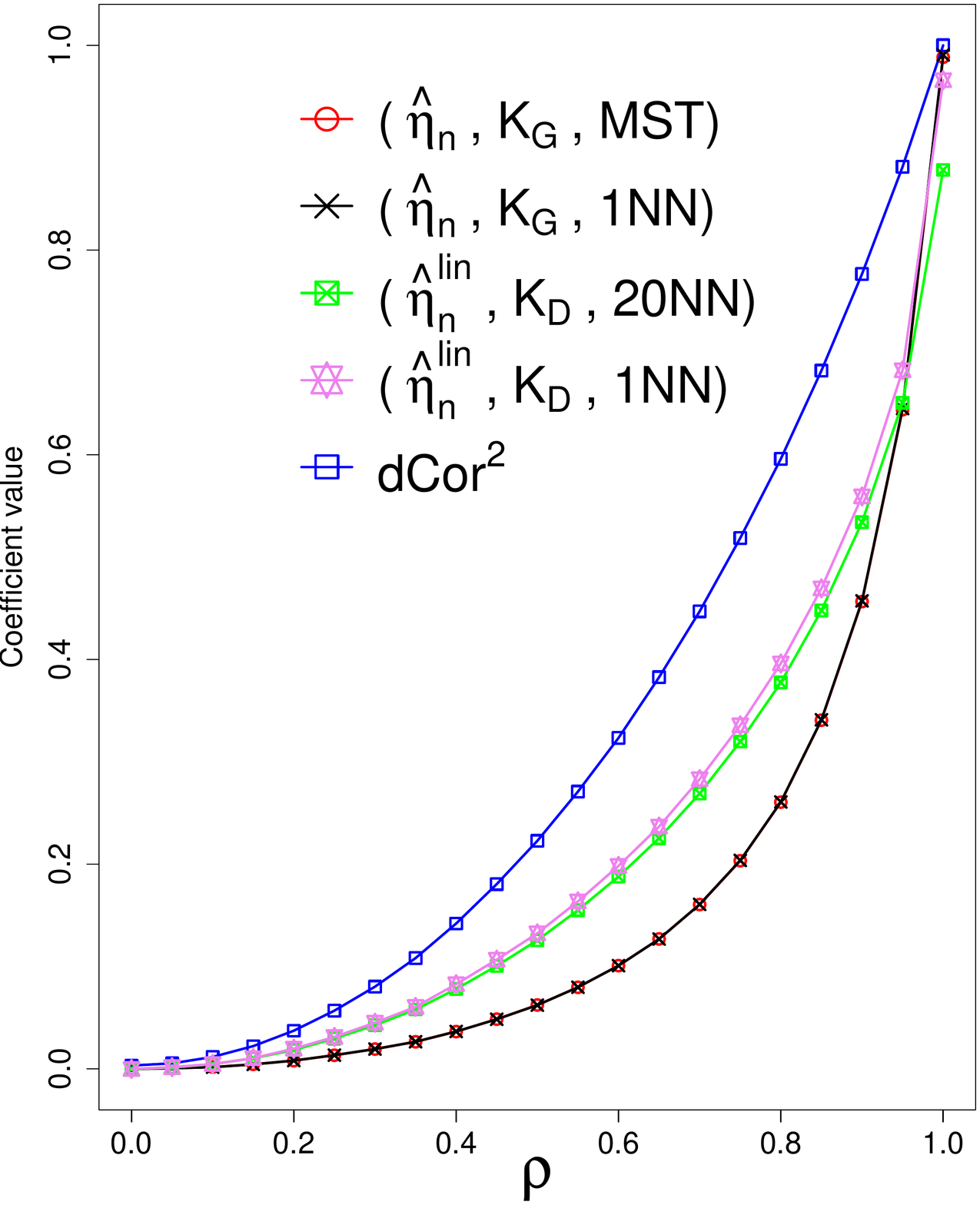}
			\caption{Linear setting}
			\label{fig:Coeffplot2}
		\end{subfigure}
		\caption{Each subplot depicts the value of $\mathrm{dCor}^2$ and $\kmac$, $\klin$ for different kernels and graph functionals.}
		\label{fig:Coeffplot}
	\end{figure}
	In~\cref{fig:Coeffplot1}, we observe that $\mathrm{dCor}^2$ is unable to distinguish between noise variance $0$ (perfect functional relationship) and noise variance $6.25$. In fact, if we zoom in on the values of $\mathrm{dCor}^2$, they don't seem to be monotonically decreasing with the noise level. On the other hand, both $\kmac$ and $\klin$ decrease sharply as we go from noise variance $0$ to $6.25$ which clearly shows a decline in the degree of dependence between the two variables. Note that $(\klin,\kd,\tnn)$ does not show as rapid a decline as our other proposed measures. This is in accordance with~\cref{theo:rateres} which shows that $\kmac$ has a larger bias if we increase the number of nearest neighbors.
	
	\noindent For~\cref{fig:Coeffplot2}, the ideal measure of dependence should have been $\rho^2$. We can clearly see that the curve of $\dC^2$ is closest to $\rho^2$. In fact, distance correlation is well suited to detecting such linear relationships as was argued in~\cite{Gabor2007}. Even in this case though, all our proposed measures are clearly able to distinguish between varying levels of correlation, and vary smoothly and monotonically between $0$ and $1$. 
	\subsection{Validity of asymptotic theory when $X$ and $Y$ are independent}\label{sec:valnull}
	In this sub-section, we will provide numerical evidence in support of~\cref{theo:nullclt}~and~\cref{prop:nullcltesscomp}. We will use the following two simulation settings, where $X$ and $Y$ are independent. In one setting, $\mu_X$ is absolutely continuous whereas in the other case, it is not. This has been chosen to highlight the generality of our results.
	\begin{enumerate}
		\item[(i)] $\tilde{X}=(\tilde{X}_1,\tilde{X}_2,\tilde{X}_3,\tilde{X}_4)\sim \mathcal{U}[0,1]^4$ and $X\overset{d}{=}(\tilde{X},\tilde{X}_1+\tilde{X}_2)$ is a $5$-dimensional vector, $Y\overset{d}{=} (Y_1,Y_2,Y_3,Y_4)$ with $Y_i\overset{i.i.d.}{\sim}\mathrm{Exp}(1)$ for $i=1,2,3,4$ is a $4$-dimensional vector. The sample size ($n$) is $2000$. Clearly the marginal distribution of $X$ in this case, does not admit a Lebesgue density.
		\item[(ii)] The same as above with $X\overset{d}{=}\tilde{X}$. The sample size ($n$) is $2000$. The marginal density of $X$ in this case does admit a Lebesgue density.
	\end{enumerate}

	\begin{figure}[h] 
		\centering
		\begin{subfigure}[b]{0.48\textwidth}
			\includegraphics[width=\textwidth,height=6.5cm]{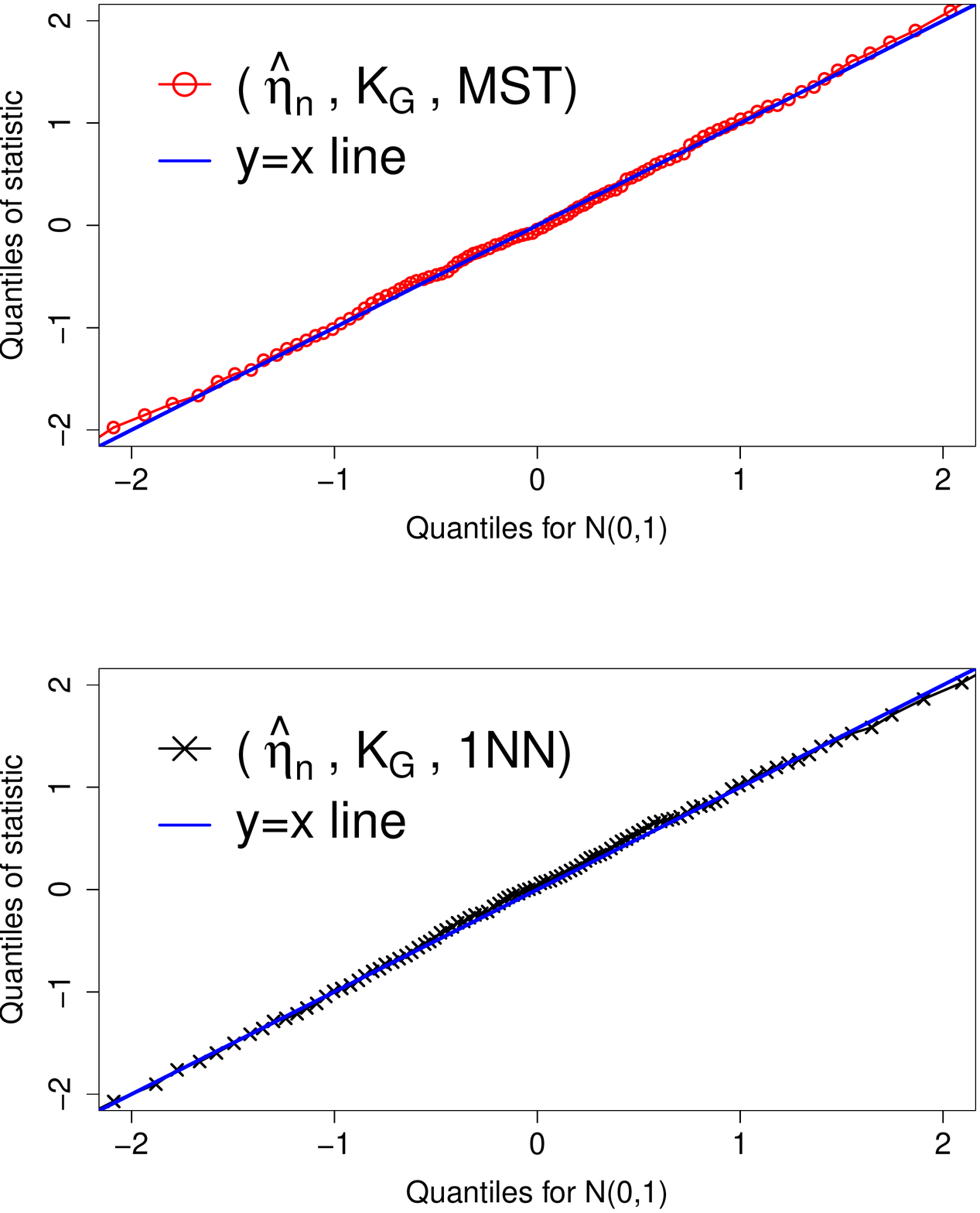}
			\caption{Setting (i)}
			\label{fig:CLTqqplot1}
		\end{subfigure}
		\begin{subfigure}[b]{0.48\textwidth}
			\includegraphics[width=\textwidth,height=6.5cm]{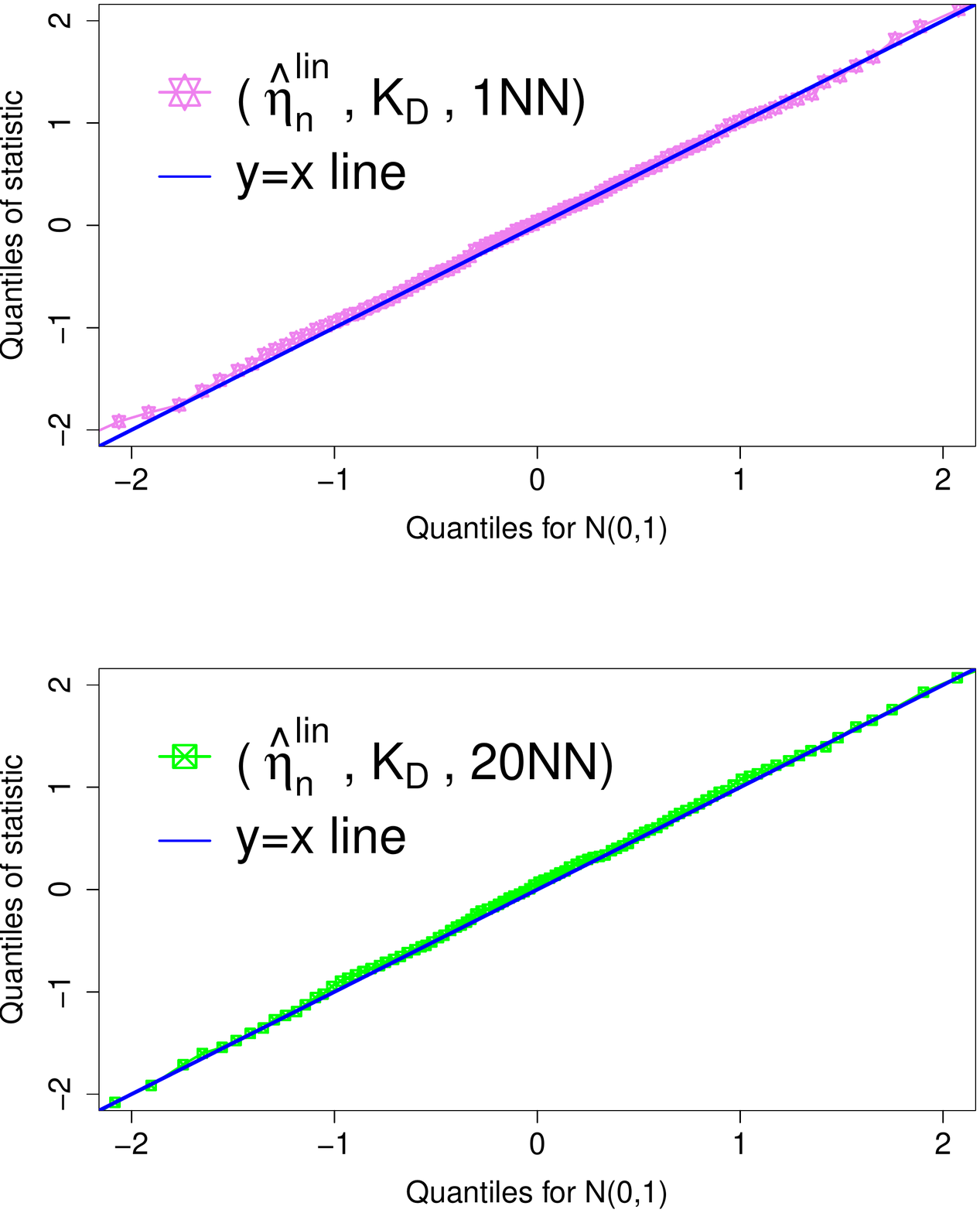}
			\caption{Setting (ii)}
			\label{fig:CLTqqplot2}
		\end{subfigure}
		\caption{In the left panel, we plot the quantiles for $N_n/\tilde{S}_{n}$ (see~\cref{theo:nullclt}) versus those of a standard Gaussian random variable where the underlying graph is the MST (top) and $1$-NNG (bottom), with the kernel being $\kg(\cdot,\cdot)$ for both plots. In the right panel, we plot the quantiles for $\linn/\tilde{S}_{n,\mathrm{lin}}$ (see~\cref{prop:nullcltesscomp}) versus those of a standard Gaussian random variable where the underlying graph is the $1$-NNG (top) and $20$-NNG (bottom), with the kernel being $\kd(\cdot,\cdot)$. In all the plots we take $n=2000$.}
		\label{fig:CLTqqplot}
	\end{figure}
	In~\cref{fig:CLTqqplot}, we present quantile-quantile plots between $N_n/\tilde{S}_n$ and the standard Gaussian distribution (see~\cref{theo:nullclt}); also $\linn/\tilde{S}_{n,\mathrm{lin}}$ and the standard Gaussian distribution (see~\cref{prop:nullcltesscomp}). All the quantile-quantile plots are in close agreement with the $y=x$ line, thereby providing a strong evidence in support of the claimed asymptotic normality. We also carried out a Kolmogorov-Smirnov test for normality in all the above cases. The $p$-values as we move counter-clockwise from the top left were $0.792,0.336,0.291$ and $0.511$ respectively.
	
	\begin{remark}[Choice of $k_n$]\label{rem:choosekagn}
		An interesting observation that came out of this computational study is the fact that the limiting asymptotic variance for $(\klin,\kd,\onn)$ in setting (ii) was approximately $1.367$ whereas the same for $(\klin,\kd,\tnn)$ was $0.528$. In fact, we observed this reduction in variance across other simulation settings and kernel choices as well. This supports~\cref{rem:choosek} where we claimed that when $X$ and $Y$ are independent, increasing the number of nearest neighbors can lead to greater efficiency.
	\end{remark}
	
	\noindent Another aspect of~\cref{theo:nullclt}~and~\cref{prop:nullcltesscomp} which should be verified is the tacit assumption that $\liminf_{n\to\infty}\mathrm{Var}(N_n)>0$ and $\liminf_{n\to\infty}\mathrm{Var}(\linn)>0$. We will verify this using the classical ``log-log" plot which we describe below.
	
	Note $N_n=\sqrt{n}\kmac^{\mathrm{num}}$. We chose a grid of sample sizes, i.e., $n$ varying between $2^8$ and $2^{11}$. For each sample size, we approximated the standard deviation of $\kmac^{\mathrm{num}}$. Then we obtained the slope of the least squares regression line between the logarithms of the standard deviations versus logarithms of sample sizes. The same was repeated with the numerator of $\klin$. If our conjecture is correct, then this slope should be close to $-0.5$. We present our findings in~\cref{tab:rateproof}. The table provides strong evidence in favor of our assumption.
	\begin{table}[ht]
		\caption{Slopes from log-log plots with their $95\%$ confidence intervals}
		\begin{center}
			\begin{tabular}{|c|c|c|c|}
				\hline
				Setting & Configuration & Slope & Confidence Interval\\
				\hline
				\multirow{2}{*}{(i)}& $(\kmac,\kg,\mst)$ & $-0.496$ & $(-0.513,-0.479)$\\
				\cline{2-4}
				& $(\kmac,\kg,\onn)$ & $-0.497$ & $(-0.522,-0.473)$ \\
				\hline
				\multirow{2}{*}{(ii)}& $(\klin,\kd ,\onn)$ & $-0.498$ & $(-0.522,-0.473)$ \\
				\cline{2-4}
				& $(\klin,\kd,\tnn)$ & $-0.503$ & $(-0.523,-0.483)$ \\
				\hline
			\end{tabular}
		\end{center}
		\label{tab:rateproof}
	\end{table}
	
	\subsection{Power comparisons}\label{sec:powcomp}
	In this section, we consider the null hypothesis of independence between $X$ and $Y$ (see~\eqref{eq:teststate}) and compare the power functions of the statistical tests obtained using $\kmac$, $\klin$ implemented via the $k$-NNG, with two popular tests namely $\mathrm{dCor}$ (implemented in the \texttt{energy} package in \texttt{R}) and $\mathrm{HSIC}$ (implemented in the \texttt{dHSIC} package in \texttt{R} with the standard Gaussian kernel, i.e., $\kg(\cdot,\cdot)$). All the tests have been calibrated using a permutation procedure with $1000$ random permutations. A sample size of $n=300$ is used and the power of each test is estimated with $1000$ independent replicates. Moreover, we work with $d_1=d_2=2$ and revert back to the general rule discussed in~\cref{sec:genassoc}. Recall that the $4$-dimensional vector $(X,Y)$ was generated by first drawing $(X^{(1)},Y^{(1)}),(X^{(2)},Y^{(2)})\overset{i.i.d.}{\sim}\mu$ where $\mu$ is a bivariate distribution and then setting $X:=(X^{(1)},X^{(2)})$, $Y:=(Y^{(1)},Y^{(2)})$.  Throughout the sequel, $X^{(1)}\sim\mathcal{U}[-1,1]$ (unless specified otherwise), $\epsilon\sim\mathcal{N}(0,1)$ is drawn independent of $X^{(1)}$ and $\lambda$ is to be interpreted as a parameter varying between $0$ and $1$ which controls the noise level in the relationship between $X$ and $Y$. All our simulation settings are motivated from those in~\cite{chatterjee2019new}.
	\begin{enumerate}
		\item[(a)] \emph{Linear}: $Y^{(1)}=0.5X^{(1)}+3\lambda\epsilon$.
		\item[(b)] \emph{Sinusoidal}: $Y^{(1)}=\cos{(8\pi X^{(1)})}+3\lambda\epsilon$.
		\item[(c)] \emph{W-shaped}: $Y^{(1)}=|X^{(1)}+0.5|\mathbf{1}(X^{(1)}\leq 0)+|X^{(1)}-0.5|\mathbf{1}(X^{(1)}> 0)+0.75\lambda\epsilon$.
		\item[(d)] \emph{Step function}: $Y^{(1)}=f(X^{(1)})+10\lambda\epsilon$ where $f(\cdot)$ is a step function taking values $-3,2,4$ and $3$ in the intervals $[-1,-.05)$, $[-0.05,0)$, $[0,0.05)$ and $[0.05,1]$.
		\item[(e)] \emph{Semicircular}: As an exception, here we choose $X^{(1)}\sim\mathcal{U}(0,1)$, $Y^{(1)}=Z\sqrt{1-(X^{(1)})^2}+0.9\lambda\epsilon$ where $Z$ takes values $\pm 1$ with equal probability and is independent of both $X^{(1)}$ and $\epsilon$.
		\item[(f)] \emph{Heterogeneous}: $Y^{(1)}=3\left(\sigma(X^{(1)})(1-\lambda)+\lambda\right)\epsilon$ where $\sigma(x):=1$ if $|x|\leq 0.5$ and $0$ otherwise.
	\end{enumerate}
	\begin{figure}[h] 
		\centering
		\begin{subfigure}[b]{0.3\textwidth}
			\includegraphics[width=\textwidth,height=5.5cm]{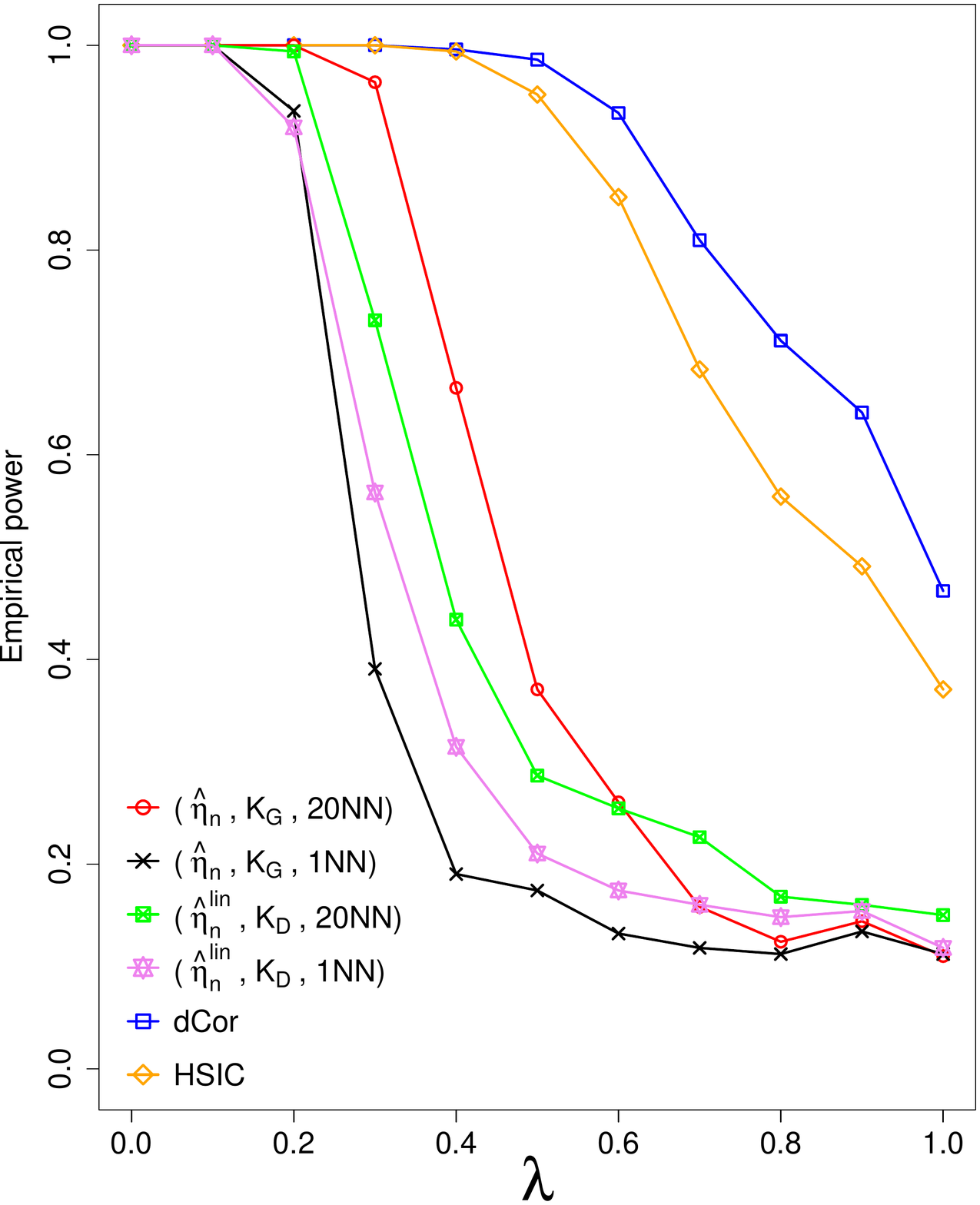}
			\caption{Linear}
			\label{fig:Powerplot1}
		\end{subfigure}
		\begin{subfigure}[b]{0.3\textwidth}
			\includegraphics[width=\textwidth,height=5.5cm]{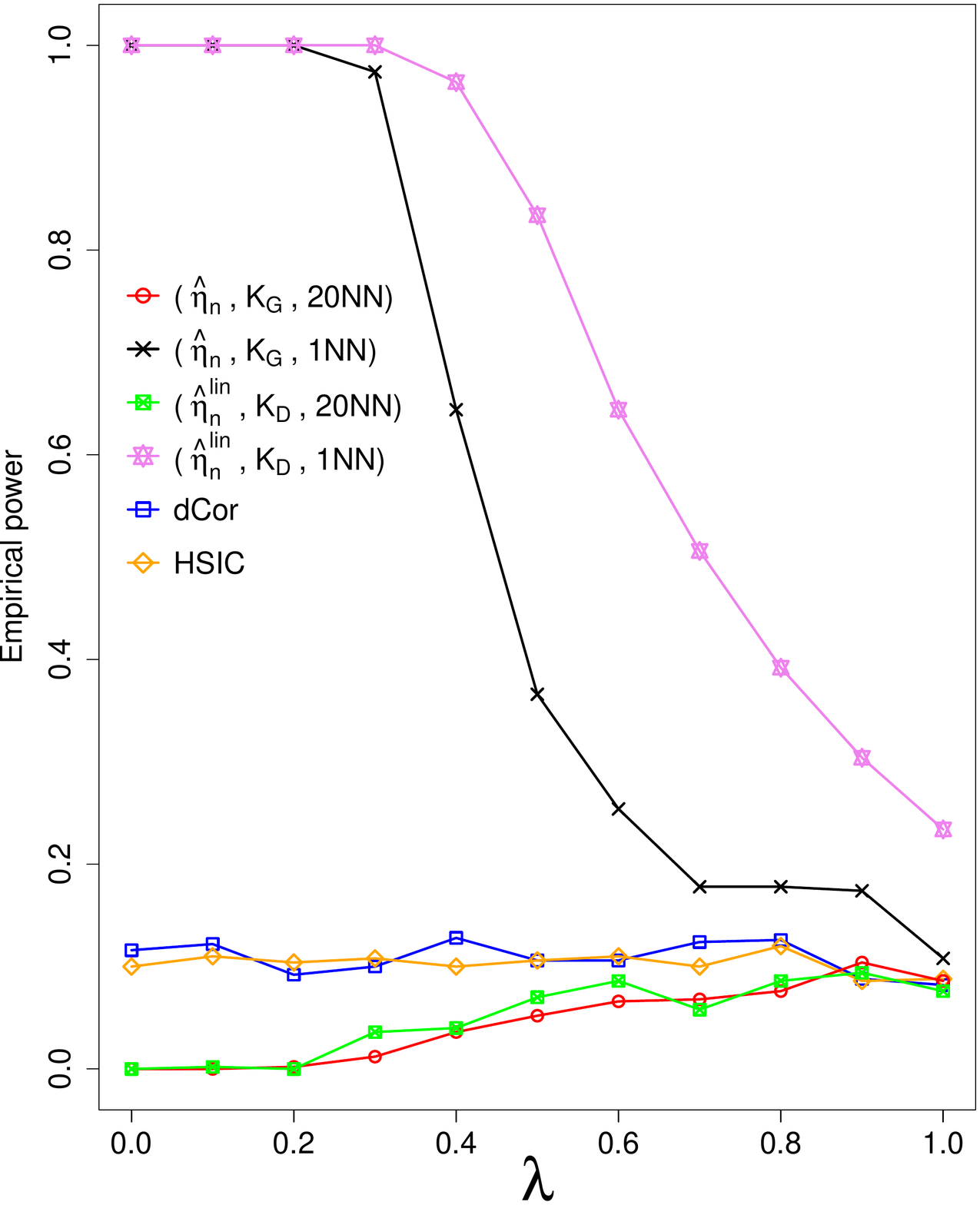}
			\caption{Sinusoidal}
			\label{fig:Powerplot2}
		\end{subfigure}
		\begin{subfigure}[b]{0.3\textwidth}
			\includegraphics[width=\textwidth,height=5.5cm]{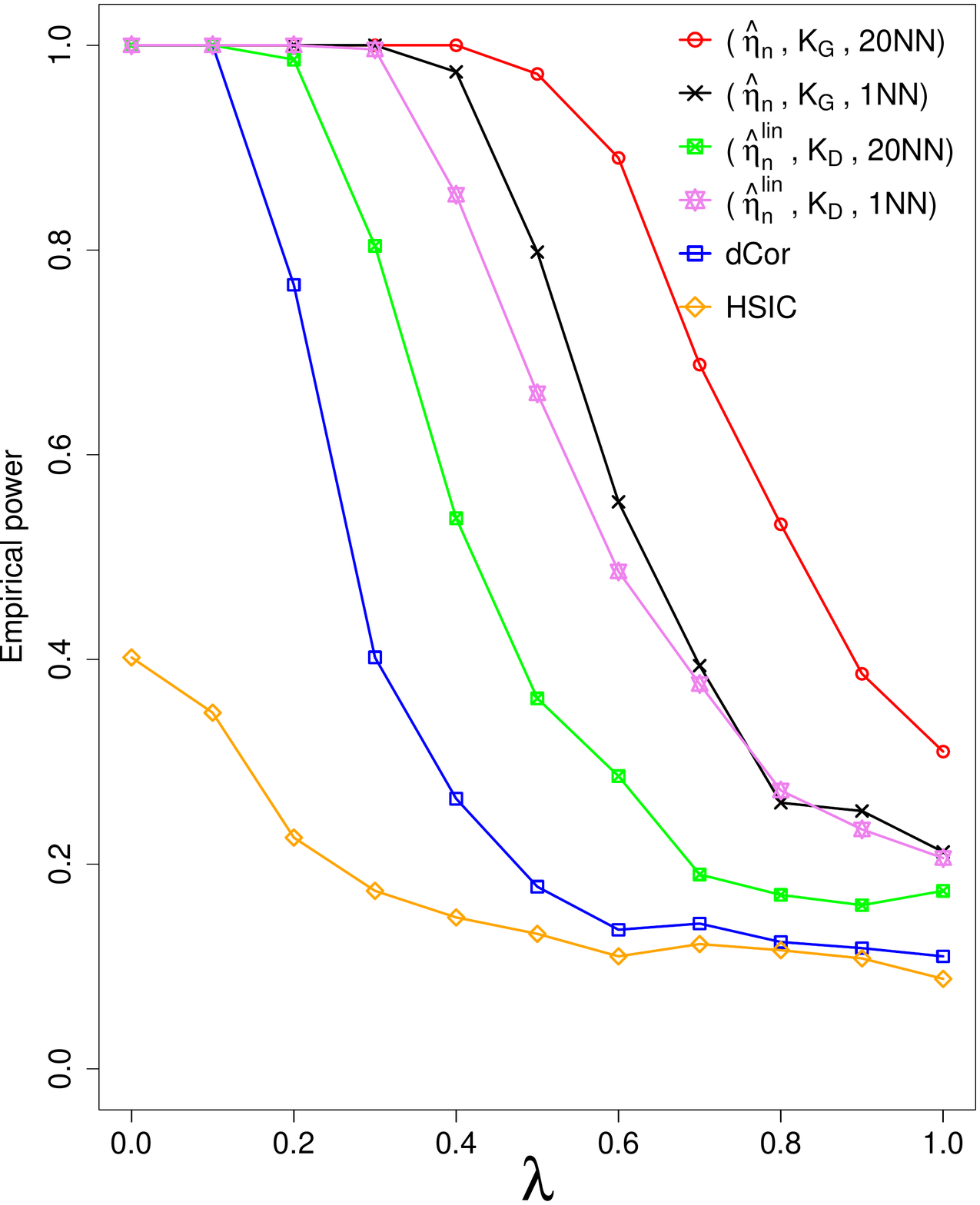}
			\caption{W-shaped}
			\label{fig:Powerplot3}
		\end{subfigure}
		\begin{subfigure}[b]{0.3\textwidth}
			\includegraphics[width=\textwidth,height=5.5cm]{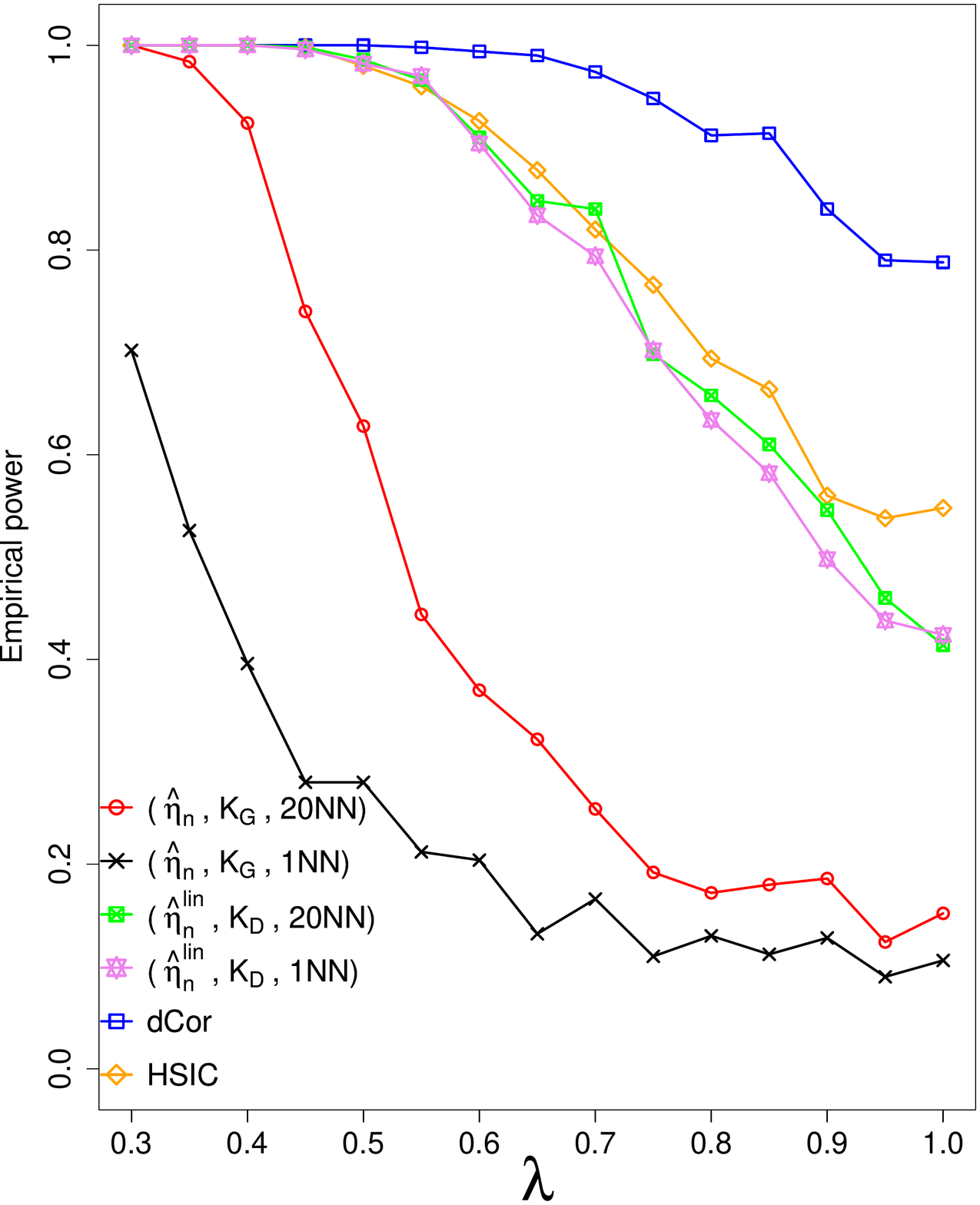}
			\caption{Step function}
			\label{fig:Powerplot4}
		\end{subfigure}
		\begin{subfigure}[b]{0.3\textwidth}
			\includegraphics[width=\textwidth,height=5.5cm]{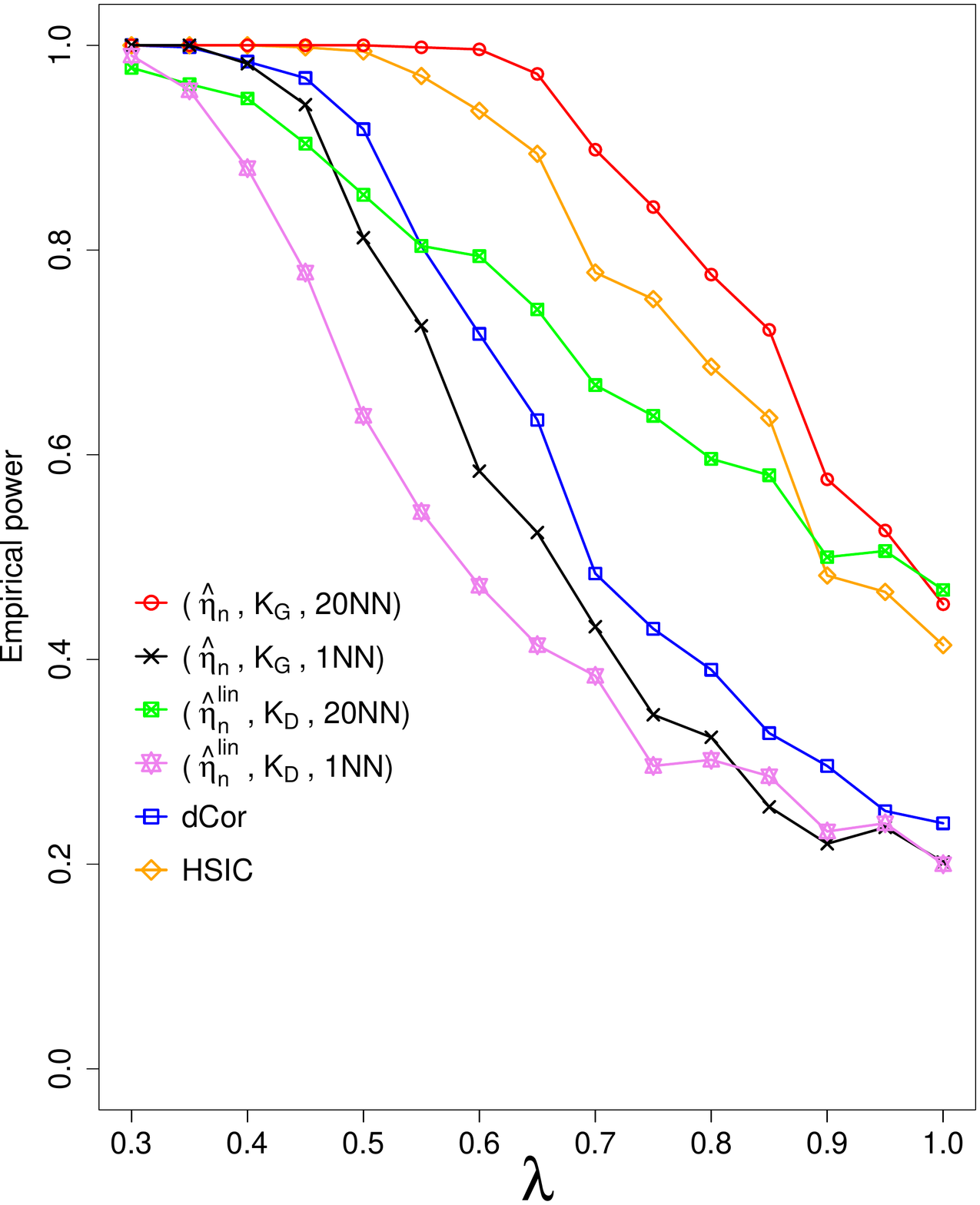}
			\caption{Semicircular}
			\label{fig:Powerplot5}
		\end{subfigure}
		\begin{subfigure}[b]{0.3\textwidth}
			\includegraphics[width=\textwidth,height=5.5cm]{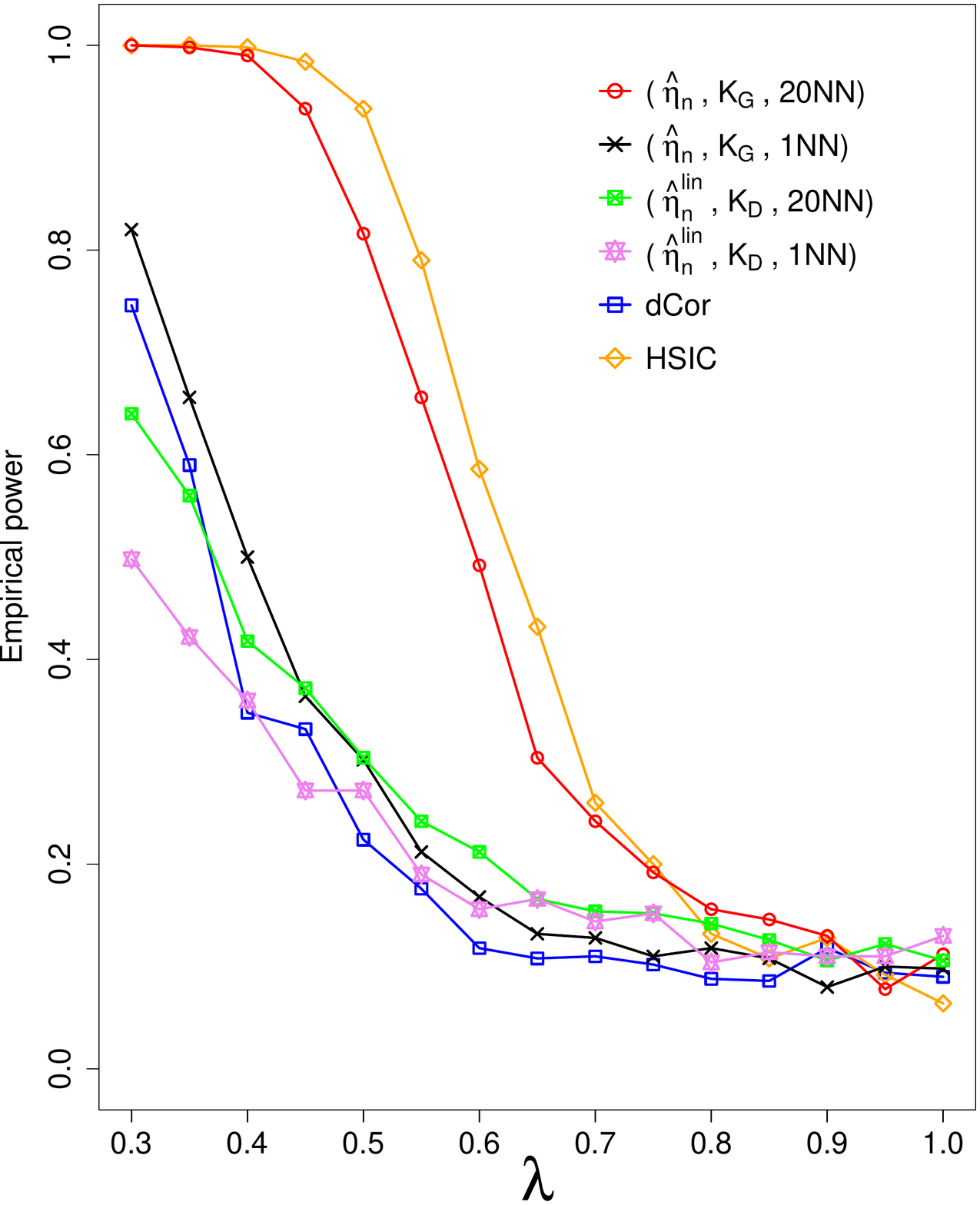}
			\caption{Heterogeneous}
			\label{fig:Powerplot6}
		\end{subfigure}
		\caption{A comparison of power functions of different tests of mutual independence between $X$ and $Y$. The title of each plot captures the shape of the underlying data cloud. Here $\lambda$ increases from left to right. In each plot, a sample size of $300$ was used and the power was estimated from a $1000$ random replications.}
		\label{fig:Powerplot}
	\end{figure}
	~\cref{fig:Powerplot} shows that tests based on both $\kmac$ and $\klin$ are competitive when compared to $\mathrm{dCor}$ and $\mathrm{HSIC}$. The general message seems to be that in the presence of a wiggly relationship (small changes in $X$ leading to large changes in $Y$, as in the sinusoidal, W-shaped or semi-circular setting) our methods tend to outperform $\mathrm{dCor}$ and $\mathrm{dHSIC}$, whereas in the presence of smoother relationships (small changes in $X$ leading to small changes in $Y$, as in the linear, step function or heterogeneous setting), our methods are less powerful than $\mathrm{dCor}$ and $\mathrm{dHSIC}$. This is in line with the observations made in~\cite{chatterjee2019new}. However, rather crucially, our simulations provide a more subtle insight into the sensitivity of our tests on the number of nearest neighbors used. 
	
	Consider the linear setting $(a)$. While our methods get outperformed by $\mathrm{dCor}$ and $\mathrm{HSIC}$, note that both $\kmac$ and $\klin$ with $20$ nearest neighbors perform better than their $1$ nearest neighbor counterparts. We believe that whenever the underlying relationship between $X$ and $Y$ is smooth, choosing $k$ large does not introduce too much bias but significantly reduces the variance leading to more powerful tests. On the flip side, whenever the relationship between $X$ and $Y$ is wiggly as in the sinusoidal one, choosing $k$ large introduces a lot of bias and consequently the $20$-NNG based tests have less power. In the sinusoidal setting $(b)$, both $\mathrm{dCor}$ and $\mathrm{dHSIC}$ are entirely powerless while our $1$-NNG based tests are quite powerful. This shows that there is an underlying trade-off while choosing the value of $k$, and a universal choice of $k=1$ as has been used in~\cite{azadkia2019simple}, is not necessarily recommended. In fact, the gain incurred by choosing $k$ large is not confined to just the linear setting. In settings $(c)$-$(f)$, the power curves for $\kmac$ with $20$-NNG are better than their $1$-NNG counterparts. In setting $(f)$ in particular, the $(\kmac,\kd,\tnn)$ power curve almost matches the $\mathrm{HSIC}$ power curve which performs the best of the lot in this setting. For $\klin$ too, the $20$-NNG power curves for the settings $(a)$, $(d)$-$(f)$ are better than the $1$-NNG ones. We believe that this observation highlights the importance of having such a flexible and general framework as we have considered in this paper.
	
	\begin{remark}[On the computational complexity of $\klin$] \label{rem:moreklin}
		~\cref{fig:Powerplot} shows that $\klin$ performs reasonably well compared to $\mathrm{dCor}$,  $\mathrm{dHSIC}$ and $\kmac$. Further, it crucially has $\mathcal{O}(n\log{n})$ time complexity compared to $\mathcal{O}(n^2)$ for the other methods under consideration. In fact, while implementing $\klin$, we observed that it is at least $50$ times faster to compute than the other methods for large sample sizes (in excess of $15000$). We believe that this makes $\klin$ a very useful general measure of the strength of dependence between two variables $X$ and $Y$.
	\end{remark}

	\section{Proofs of main results}\label{sec:pfmain}

	\subsection{Proof of~\cref{prop:wdf}}\label{pf:wdf}
	
	By using~\eqref{eq:hilnorm}, we get:
	\begin{align}\label{eq:altform1}
	\eta_{K}(\mu)&=1-\frac{\mme\lVert K(\cdot,Y')\rVert_{\hk}^2+\mme\lVert K(\cdot,\tilde{Y'})\rVert_{\hk}^2-2\mme\langle K(\cdot,Y'),K(\cdot,\tilde{Y'})\rangle_{\hk}}{\mme\lVert K(\cdot,Y_1)\rVert_{\hk}^2+\mme\lVert K(\cdot,Y_2)\rVert_{\hk}^2-2\mme\langle K(\cdot,Y_1),K(\cdot,Y_2)\rangle_{\hk}}\nonumber \\ &=\frac{\mme\langle K(\cdot,Y'),K(\cdot,\tilde{Y'})\rangle_{\hk}-\mme\langle K(\cdot,Y_1),K(\cdot,Y_2)\rangle_{\hk}}{\mme\lVert K(\cdot,Y_1)\rVert_{\hk}^2-\mme\langle K(\cdot,Y_1),K(\cdot,Y_2)\rangle_{\hk}}.
	\end{align} 
	Next, note that by the tower property, we have:
	$$\mme\langle K(\cdot,Y'),K(\cdot,\tilde{Y'})\rangle_{\hk}=\mme\left[\mme\left(\langle K(\cdot,Y'),K(\cdot,\tilde{Y'})\rangle_{\hk}|X'\right)\right]=\mme\lVert \mme\left[K(\cdot,Y)|X\right]\rVert_{\hk}^2.$$
	Similarly we have $\mme\langle K(\cdot,Y_1),K(\cdot,Y_2)\rangle_{\hk}=\lVert \mme K(\cdot,Y)\rVert_{\hk}^2$. Plugging these observations in~\eqref{eq:altform1}, we have:
	\begin{align*}
	\eta_{K}(\mu)&=\frac{\mme\lVert \mme\left[K(\cdot,Y)|X\right]\rVert_{\hk}^2-\lVert \mme K(\cdot,Y)\rVert_{\hk}^2}{\mme\lVert K(\cdot,Y)\rVert_{\hk}^2-\lVert \mme K(\cdot,Y)\rVert_{\hk}^2}\\ &=\frac{\mme\lVert \mme\left[K(\cdot,Y)|X\right]-\mme K(\cdot,Y)\rVert_{\hk}^2}{\mme\lVert K(\cdot,Y)-\mme K(\cdot,Y)\rVert_{\hk}^2}.
	\end{align*}
	In order to establish the second equality, note that by~\cref{def:meanembed}, we have $\mme[K(\cdot,Y)|X]=m_{K}(\mu_{Y|X})$ and $\mme[K(\cdot,Y)]=m_{K}(\mu_Y)$. Therefore, the numerator in the above display equals $\mme\lVert m_{K}(\mu_{Y|X})-m_{K}(\mu_Y)\rVert_{\hk}^2=\mme_{\mu_X}[\mathrm{MMD}^2_{K}(\mu_{Y|X},\mu_Y)]$. This completes the proof.
	\qed
	
	\subsection{Proof of~\cref{theo:kacmas}}\label{pf:kacmas}
	By~\eqref{eq:kac}, $\eta_{K}(\mu)$ is bounded above by $1$. The alternate representation of $\eta_{K}(\mu)$, as in~\cref{prop:wdf}, shows that it is nonnegative. This proves (P1).
	
	Recall that as $K(\cdot,\cdot)$ is characteristic and $\mu_Y$, $\mu_{Y|x}$ (for $\mu_X$-a.e.~$x$) are both elements of $\tmk^1(\mathcal{X})$. Therefore, by~\cref{prop:wdf}, $\mme[\mathrm{MMD}_{K}^2(\mu_{Y|X},\mu_Y)]=0$ if and only if $\mu_{Y|x}=\mu_Y$ for $\mu_X$-a.e.~$x$. Now, assume that $\mu=\mu_X\otimes \mu_Y$. This implies $\mu_{Y|x}=\mu_Y$ for $\mu_X$-a.e.~$x$, which in turn implies $\mme[\mathrm{MMD}_{K}^2(\mu_{Y|X},\mu_Y)]=0$, and consequently $\eta_{K}(\mu)=0$. Next, assume that $\eta_{K}(\mu)=0$ which implies $\mme[\mathrm{MMD}_{K}^2(\mu_{Y|X},\mu_Y)]=0$. As a result, $\mu_{Y|x}=\mu_Y$ for $\mu_X$-a.e.~$x$ and consequently $\mu=\mu_X\otimes \mu_Y$. This proves (P2).
	
	Next, suppose that $Y=g(X)$ for some measurable function $g:\mathcal{X}\to\Y$. This implies that both $Y'$ and $\tilde{Y'}$ are almost surely (a.s.) equal to $h(X')$. Plugging this in~\eqref{eq:kac} immediately yields that $\eta_{K}(\mu)=1$. 
	
	For the other direction, suppose $\eta_{K}(\mu)=1$. This implies $K(\cdot,Y')=K(\cdot,\tilde{Y'})$ a.s., and consequently $Y'=\tilde{Y'}$ a.e.~with respect to the joint distribution of $(X',Y',\tilde{Y'})$ (see~\cite[Proposition 14]{Sejdinovic2013}). This implies that there exists a subset $\Omega'\subseteq \mbox{supp}(\mu_X)$ such that for all $x'\in\Omega'$, the conditional distribution $(Y',\tilde{Y'})|X'=x'$ is degenerate, and $\mu_X(\Omega')=1$. We will show that for all $x'\in\Omega'$, the support of $\mu_{Y|x'}$ is a singleton. Let us proceed by contradiction. If the support of $\mu_{Y|x'}$ contains at least two points, then by the Hausdorff nature of $\Y$, we can find two disjoint Borel sets $\mathcal{A}_1^{x'}$ and $\mathcal{A}_2^{x'}$ such that $\mu_{Y|x'}(\mathcal{A}_1^{x'})>0$ and $\mu_{Y|x'}(\mathcal{A}_2^{x'})>0$. This would imply,
	$$0=\mu_{Y|x'}(\mathcal{A}_1^{x'}\cap \mathcal{A}_2^{x'})=\mu_{Y|x'}(\mathcal{A}_1^{x'})\times \mu_{Y|x'}(\mathcal{A}_2^{x'})>0$$
	which in turn gives a contradiction. Therefore, the support of $\mu_{Y|x'}$ is a singleton for every $x'\in \Omega'$; let us call the singleton element in the support $r_{x'}$. It remains to show that the map $g:x'\mapsto r_{x'}$ is measurable. Given any Borel set $\mathcal{A}\subset \Y$, we will first show the following:
	\begin{equation}\label{eq:perasfin}
	g^{-1}(\mathcal{A})=\{\tilde{x}\in\Omega':\mu_{Y|\tilde{x}}(\mathcal{A})=1\}.
	\end{equation}
	Towards this direction, assume that $v\in g^{-1}(\mathcal{A})$. This implies $g(v)=r_v\in\mathcal{A}$. Therefore the support of $\mu_{Y|v}$ is contained in $\mathcal{A}$ and so $\mu_{Y|v}(\mathcal{A})=1$. This shows that the left hand side of~\eqref{eq:perasfin} is contained in the right hand side. The other direction follows similarly. Finally, by definition of regular conditional distribution, $\mu_{Y|v}(\mathcal{A})$ is a measurable function in $v$ for every $\mathcal{A}$. Therefore, the right hand side of~\eqref{eq:perasfin} is a measurable set and so is the left. This completes the proof.
	\qed
	
	\subsection{Proof of~\cref{theo:consis}}\label{pf:theoconsis}
	Firstly by the strong law of large numbers for U-statistics, we immediately have the following consequences: 
	\begin{align*}
	&\frac{1}{n(n-1)}\sum_{i\neq j}K(Y_i,Y_j)\overset{a.s}{\longrightarrow}\mme[K(Y_1,Y_2)],\quad\mbox{and}\\
	&\frac{1}{2n(n-1)}\sum_{i\neq j} \lVert K(\cdot,Y_i)-K(\cdot,Y_j)\rVert_{\hk}^2\overset{a.s}{\longrightarrow}\mme[K(Y_1,Y_1)]-\mme[K(Y_1,Y_2)],
	\end{align*}
	provided $\mu_Y\in\tmk^2(\Y)$. Therefore it suffices to prove the convergence in probability or a.s. convergence of $\bar{T}_n$ defined below:
	\begin{align}\label{eq:mainterm}
	\bar{T}_n\equiv \bar{T}_n((X_1,Y_1),\ldots ,(X_n,Y_n))\coloneqq \frac{1}{n}\sum_{i=1}^n \sum_{j:(i,j)\in\emgn} \frac{K(Y_i,Y_j)}{d_i}.  
	\end{align}
	In particular, observe that it is sufficient to prove the following:
	\begin{enumerate}
		\item[(i)]  $\bar{T}_n-\mme[\bar{T}_n]\overset{\P}{\longrightarrow} 0$ (or $\bar{T}_n-\mme[\bar{T}_n]\overset{a.s.}{\longrightarrow} 0$).
		\item[(ii)] $\E[\bar{T}_n]\to \E\lVert g(X)\rVert_{\hk}^2$ where $g(X):=\E[K(\cdot,Y)|X].$
	\end{enumerate}
	To prove (i), let $(\tilde{X}_1,\tilde{Y}_1), \ldots , (\tilde{X}_n,\tilde{Y}_n)$ be $n$ i.i.d. samples from $\mu$ which are also drawn independently to $(X_1,Y_1),\ldots ,(X_n,Y_n)$. Set $\mathbf{X}^{i}\coloneqq (X_1,\ldots ,X_{i-1},\tilde{X}_i,X_{i+1},\ldots ,X_n)$, $\mathbf{Y}^{i}\coloneqq (Y_1,\ldots ,Y_{i-1},\tilde{Y}_i,Y_{i+1},\ldots ,Y_n)$ and $\bar{T}_N^i\coloneqq \bar{T}_n(\mathbf{X}^{i},\mathbf{Y}^{i})$. We will use $\mathcal{G}_n^i$ to denote the graph functional associated with $\mathbf{X}^{i}$. Recall the definitions of $r_n$, $q_n$, $t_n$ from (A2) and in the same spirit, define $\mathcal{C}_n^i:=(\emgn\cap\mathcal{E}(\mathcal{G}_n^i))\setminus \{(j,k):j=i\;\mbox{or}\;k=i\}$. Therefore $\mathcal{C}_{n}^i$ denotes the edges common to $\mathcal{G}_n$ and $\mathcal{G}_n^i$ which do not have $X_i$ or $\tilde{X}_i$ as one of their vertices. Let $V_n^i$ denote the set of vertices in $\mathcal{G}_n$ which have at least one edge outside $\emgn\cap\mathcal{C}_{n}^i$. Define $\tilde{V}_n^i$ similarly with $\emgn$ replaced by $\mathcal{E}(\mathcal{G}_n^i)$. In other words, $V_n^i$ denotes the set of vertices which have at least one edge in $(\mathcal{C}_{n}^i)^c$ (same holds for $\tilde{V}_{n}^i$). By our assumption, there are at most $q_n+t_n$ edges in $\mathcal{C}_{n}^i$. Therefore, note that $\max_{1\leq i\leq n}\max\{|V_n^i|,|\tilde{V}_n^i|\}\leq 2s_n$ a.s., where $s_n:=q_n+t_n$. Let $\tilde{d}_1^i,\ldots ,\tilde{d}_n^i$ be the degree sequence of $X_1,\ldots ,X_{i-1},\tilde{X}_i,X_{i+1},\ldots ,X_n$ in $\mathcal{E}(\mathcal{G}_n^i)$. Also suppose $Y^i_j=Y_j$ for $j\neq i$ and $Y^i_j=\tilde{Y}_j$ for $i=j$. Then we have:
	\begin{align*}
	\bar{T}_n-\bar{T}_n^1&=\frac{1}{n}\sum_{i\in V_n^1} \sum_{j:(i,j)\in \emgn\setminus\mathcal{C}_n^1} \frac{K(Y_i,Y_j)}{d_i}+\frac{1}{n}\sum_{i\in (V_n^1)^c} \sum_{j:(i,j)\in \emgn\setminus\mathcal{C}_n^1} \frac{K(Y_i,Y_j)}{d_i}\\ &-\frac{1}{n}\sum_{i\in \tilde{V}_n^1} \sum_{j:(i,j)\in \mathcal{E}(\mathcal{G}_n^1)\setminus\mathcal{C}_n^1} \frac{K(Y_i^1,Y_j^1)}{\tilde{d}_i^1}-\frac{1}{n}\sum_{i\in (\tilde{V}_n^1)^c} \sum_{j:(i,j)\in \mathcal{E}(\mathcal{G}_n^1)\setminus\mathcal{C}_n^1} \frac{K(Y_i^1,Y_j^1)}{\tilde{d}_i^1}
	\end{align*}
	Now, by construction, for all $i\in (V_n^1)^c$, the vertices $X_i$ have the same neighbors in $\mathcal{G}_n$ and $\mathcal{G}_n^i$. The same holds for all $i\in (\tilde{V}_n^1)^c$. As a result the summands above with $i\in (V_n^1)^c$ and $i\in (\tilde{V}_n^1)^c$ cancel out. We are thus left with:
	\begin{align}\label{eq:perturb}
	\big|\bar{T}_n-\bar{T}_n^1\big|&=\Bigg|\frac{1}{n}\sum_{i\in V_n^1} \sum_{j:(i,j)\in \emgn\setminus\mathcal{C}_n^1} \frac{K(Y_i,Y_j)}{d_i}\Bigg|+\Bigg|\frac{1}{n}\sum_{i\in \tilde{V}_n^1} \sum_{j:(i,j)\in \mathcal{E}(\mathcal{G}_n^1)\setminus\mathcal{C}_n^1} \frac{K(Y_i^1,Y_j^1)}{\tilde{d}_i^1}\Bigg|\nonumber \\ &\leq \frac{2s_n}{nr_n}\left(\max_{1\leq i\leq n}\lVert K(\cdot,Y_i)\rVert_{\hk}^2+\max_{1\leq i\leq n}\lVert K(\cdot,Y_i^1)\rVert_{\hk}^2\right).
	\end{align}
	By the Efron-Stein inequality (see~\cite{Boucheron2005}; also see~\cref{prop:Genef}) with $q=2$, we have:
	\begin{align*}
	\E\big(\bar{T}_n-\E[\bar{T}_n]\big)^2&\leq \sum_{i=1}^n \E\big(\bar{T}_n-\bar{T}_n^i\big)^2\nonumber \\& \leq \frac{16s_n^2}{nr_n^2}\E\left(\max_{1\leq i\leq n}\lVert K(\cdot,Y_i)\rVert_{\hk}^4\right)\overset{(a)}{\leq} \frac{16s_n^2}{nr_n^2}\cdot o\left(n^{\frac{2}{2+\epsilon/2}}\right)\overset{n\to\infty}{\longrightarrow}0
	\end{align*}
	where (a) follows from~\cref{prop:maxmom} and Assumption (A1). By Markov's inequality, we immediately have:
	\begin{equation}\label{eq:convprob}
	\bar{T}_n-\E[\bar{T}_n]\overset{\mathbb{P}}{\longrightarrow}0.
	\end{equation}
	Next, we will use~\cref{prop:Genef} (see~\cite[Theorem 2]{Loeb2004}) with $q=4$ coupled with Jensen's inequality to get:
	\begin{align*}
	\sum_{n=1}^{\infty}\E\big(\bar{T}_n-\E[\bar{T}_n]\big)^4&\leq \sum_{n=1}^{\infty} \left[\sum_{i=1}^n \E\big(\bar{T}_n-\bar{T}_n^i\big)^2\right]^2\nonumber \\& \leq \sum_{n=1}^{\infty}\frac{256s_n^4}{n^2r_n^4}\E\left(\max_{1\leq i\leq n}\lVert K(\cdot,Y_i)\rVert_{\hk}^8\right)\overset{(b)}{\leq} \sum_{n=1}^{\infty}\frac{256s_n^4}{n^2r_n^4}\cdot o\left(n^{\frac{4}{4+\epsilon/2}}\right)<\infty
	\end{align*}
	where (b) once again follows from~\cref{prop:maxmom} and assumption (A1). By combining Markov's inequality with the Borel-Cantelli lemma, we get:
	\begin{equation}\label{eq:convas}
	\bar{T}_n-\E[\bar{T}_n]\overset{a.s.}{\longrightarrow}0.
	\end{equation}
	It therefore remains to show (ii), i.e., $\E[\bar{T}_n]$ converges to $\E\lVert \E[K(\cdot,Y)|X]\rVert_{\hk}^2$. Recall the definition of $g(X)$ from (ii). Observe that, by exchangeability,
	\begin{align*}
	& \bigg|\E[\bar{T}_n]-\E\lVert g(X_1)\rVert_{\hk}^2\bigg| \\ &=\bigg|\E\Big[{d_1}^{-1}\sum_{j=1}^n \langle K(\cdot,Y_1),K(\cdot,Y_j)\rangle_{\hk}\mathbbm{1}((1,j)\in\emgn)\Big]-\E\lVert g(X_1)\rVert_{\hk}^2\bigg|\\&\leq \E\Big[{d_1}^{-1}\sum_{j=1}^n \big|\langle g(X_1),g(X_j)\rangle_{\hk}-\lVert g(X_1)\rVert_{\hk}^2\big|\mathbbm{1}((1,j)\in\emgn)\Big]\\ &\leq \E\left[\lVert g(X_1)\rVert_{\hk}\lVert g(X_{N(1)})-g(X_1)\rVert_{\hk}\right]\\ &\leq \sqrt{\E\lVert g(X_1)\rVert_{\hk}^2}\sqrt{\E \lVert g(X_{N(1)})-g(X_1)\rVert_{\hk}^2}.
	\end{align*}
	Note that $\E\lVert g(X_1)\rVert_{\hk}^2<\infty$. By~\cref{lem:techlemmconv}, $\lVert g(X_{N(1)})-g(X_1)\rVert_{\hk}^2\overset{\mathbb{P}}{\longrightarrow}0$. By~\cref{lem:techlemnbd}, $\E\lVert g(X_{N(1)})-g(X_1)\rVert_{\hk}^4<\infty$. Therefore, $\lVert g(X_{N(1)})-g(X_1)\rVert_{\hk}^2$ is a uniformly integrable sequence of random variables and consequently $\E \lVert g(X_{N(1)})-g(X_1)\rVert_{\hk}^2\overset{n\to\infty}{\longrightarrow}0$. This completes the proof. \qed
	
	\subsection{Proof of~\cref{cor:bdiffcon}}\label{pf:bdiffcon}
	
	Recall the definition of $\bar{T}_n$ from~\eqref{eq:mainterm}. Fix an arbitrary $t>0$. By combining~\eqref{eq:perturb}, the boundedness assumption on $K(\cdot,\cdot)$ with McDiarmid's bounded differences inequality (see~\cite[Theorem 6.5]{Boucheron2013}), we get:
	$$\P\big[\big|\bar{T}_n-\E\bar{T}_n\big|\geq t\big]\leq 2\exp(-C^*nt^2)$$
	where $C^*:=16M^2\left(\limsup_{n\to\infty} s_n/r_n\right)^2$, $s_n=q_n+t_n$ as defined before. This establishes~\eqref{eq:bdiffcon1}.
	
	By Lyapunov's inequality (see~\cite{Brown2000}), it suffices to establish~\eqref{eq:bdiffcon2} for $k\in \mathbb{N}$. By standard concentration inequalities for U-statistics, see for example,~\cite[Theorem 2]{Arcones1995}, we have:
	$$\P\left[\bigg|\frac{1}{n(n-1)}\sum_{i\neq j} K(Y_i,Y_j)-\E K(Y_1,Y_2)\bigg|\geq t\right]\leq C_1\exp(-C_2nt^2)$$ and 
	\begin{align*}
	& \P\left[\bigg|\frac{1}{2n(n-1)}\sum_{i\neq j} \lVert K(\cdot,Y_i)-K(\cdot,Y_j)\rVert_{\hk}^2-(\E K(Y_1,Y_1)-\E K(Y_1,Y_2))\bigg|\geq t\right] & \\
	& \leq C_1\exp(-C_2nt^2)
	\end{align*}
	where $C_1$ and $C_2$ are positive constants (not depending on $t$). The proof can then be completed by invoking~\cref{prop:basicmom} with $A_n:=\bar{T}_n-(n(n-1))^{-1}\sum_{i\neq j} K(Y_i,Y_j)$, $a_n:=\E K(Y_1,Y_{N(1)})-\lVert \E K(\cdot,Y)\rVert_{\hk}^2$, $B_n:=(2n(n-1))^{-1}\sum_{i\neq j} \lVert K(\cdot,Y_i)-K(\cdot,Y_j)\rVert_{\hk}^2$ and $b:=(1/2)\E\lVert K(\cdot,Y_1)-K(\cdot,Y_2)\rVert_{\hk}^2$.
	\qed
	
	\subsection{Proof of~\cref{prop:vercond}}\label{pf:vercond}
	\emph{Part (i).} The proof is a consequence of combining different existing results from the theory of \emph{stabilizing graphs} and \emph{stabilizing graph functionals} (see~\cite{Penrose2003} for details). Let $\mathcal{G}_n$ denote the minimum spanning tree of $X_1,\ldots ,X_n$. As $\mu_X$ is absolutely continuous, $\mathcal{G}_n$ is stabilizing (see~\cite[Lemma 2.1]{Penrose2003}). For any $e\in \emgn$, let $|e|$ denote the edge weight of $e$. Note that $\# \emgn = n-1$. For any $M>0$, by the exchangeability of $X_i$'s, the following holds:
	$$\P(n^{1/d}\lVert X_1-X_{N(1)}\rVert_2\geq M)=\frac{1}{n-1}\sum_{e\in\emgn} \E\left[\mathbf{1}(n^{1/d}|e|\geq M)\right].$$
	As $\mathbf{1}(n^{1/d}|e|\geq M)$ is uniformly bounded by $1$, we can use~\cite[Theorem 2.3, part (i)]{Penrose2003} (also see~\cite{Penrose1996}) along with the above display to get $n^{1/d}\lVert X_1-X_{N(1)}\rVert_2=\mathcal{O}_p(1)$. This implies $\lVert X_1-X_{N(1)}\rVert_2\overset{\P}{\to}0$ and establishes (A1). Further, by~\cite[Lemma 4]{Aldous1992}, there exists a constant $B(d)$ such that $\max_{1\leq i\leq n} d_i\leq B(d)$ with probability $1$. By choosing $r_n=1$ and $t_n=B(d)$ establishes (A3). Finally, note that changing one point with another only alters the minimum spanning tree in the neighborhood of the two points (see e.g.,~\cite[Lemma 2.1]{Steele1987}), as is the case with many other stabilizing graphs as presented in~\cite{Penrose2003}. Therefore, by choosing $q_n=2B(d)$ establishes (A2).
	
	\emph{Part (ii).} The proof is the same for the directed/undirected cases. When $k=\mathcal{O}(1)$, the result follows once again from the theory of stabilizing graphs (see~\cite[Theorem 2.4]{Penrose2003}), as we proved in part (i). Unfortunately, when $k$ grows with $n$, the corresponding nearest neighbor graph is no longer stabilizing, as has been pointed out in~\cite[Section 4.3]{bbb2019}. So we provide a different proof. Also we will write $k\equiv k_n$ in the proof, to make the dependence on $n$ explicit. Recall that we have assumed $k_n\leq Cn^{1-\delta}$ for some $C\geq 1$ and some $\delta\in (0,1]$.
	
	As every vertex has at least $k_n$ neighbors, we can choose $r_n=k_n$. By~\cite[Lemma 1]{Jaffe2020}, there exists a constant $C(d)$ such that $\max_{1\leq i\leq n} d_i\leq C(d)k_n$ w.p.~1. Therefore, (A3) holds with $t_n=C(d)k_n$. It is easy to check once again that altering between two points only changes the NNG in the neighborhood of the points, which implies (A2) holds by choosing $q_n=2C(d)k_n$. For establishing (A1), we will use the same argument as has been used in the proof of~\cref{theo:rateres}. Towards that direction, pick an arbitrary $M>0$. By repeating the same argument as that of~\eqref{eq:callearly}, we get the following set of inequalities: 
	\begin{align*}
	&\;\;\;\P(\lVert X_1-X_{N(1)}\rVert_2\geq \epsilon)\\ &\lesssim \P(\lVert X_1-X_{N(1)}\rVert_2\geq \epsilon, \lVert X_1\rVert_2\leq M,\lVert X_{N(1)}\rVert_2\leq M)+\P(\lVert X_1\rVert_2\geq M)\nonumber \\ &\lesssim n^{-1}+\frac{k_n\log{n}}{n}\cdot \frac{M^p}{\epsilon^p}+\P(\lVert X_1\rVert_2\geq M).
	\end{align*}
	In the above sequence of displays, all the hidden (by $\lesssim$) constants are further free of $M$.
	Finally, by taking $n\to\infty$ followed by taking $M\to\infty$ (the order of taking limits is important here) establishes (A1).
	\subsection{Proof of~\cref{theo:nullclt}}\label{pf:theonullclt}
	
	Throughout this proof, we will use the $\lesssim$ symbol to hide constants which are uniform over $\mathcal{J}_{\theta}$ for $\theta$ fixed. Also we will use $\mathcal{F}_n$ to denote the $\sigma$-field generated by $(X_1,\ldots ,X_n)$. To begin, note that,
	\begin{equation}\label{eq:conmeanzero}
	\E [N_n|\mathcal{F}_n]=\E[K(Y_1,Y_2)]\cdot \left(\left(\frac{1}{n}\sum_{i=1}^n \frac{1}{\td_i}\sum_{j:(i,j)\in\emgn}1\right)-1\right)=0.
	\end{equation}
	Set $a:=\E K^2(Y_1,Y_2)$, $b:=\E[K(Y_1,Y_2)K(Y_1,Y_3)]$,~$c:=\E [K(Y_1,Y_2) K(Y_3,Y_4)]$ and write, 
	{\small    	\begin{align}\label{eq:vareq}
		&\mathrm{Var} (N_n)=\E [N_n^2]=\underbrace{\frac{1}{n}\E\left(\sum_{i=1}^n \frac{1}{\td_i} \sum_{j:(i,j)\in \emgn} K(Y_i,Y_j)\right)^2}_{\Gamma_1}+\underbrace{\frac{1}{n(n-1)^2}\E\left(\sum_{i\neq j} K(Y_i,Y_j)\right)^2}_{\Gamma_3}\nonumber \\&-2\cdot\underbrace{\frac{1}{n(n-1)}\E\left[\left(\sum_{i=1}^n \td_i^{-1} \sum_{j:(i,j)\in \emgn} K(Y_i,Y_j)\right)\left(\sum_{i\neq j} K(Y_i,Y_j)\right)\right]}_{\Gamma_2},
		\end{align}}
	where $\Gamma_1,\Gamma_2,\Gamma_3$ have been simplified below:
	{\small    	\begin{align}\label{eq:maint1}
		\Gamma_1&=\frac{1}{n}\E\Bigg[a\left(\sum_{i=1}^n \frac{1}{\td_i}+\sum_{i,j:(i,j)\in\emgn} \frac{1}{\td_i\td_j}\right)+b\Bigg(\sum_{\substack{(i,j,t)\ \mathrm{ distinct}:\\ (i,j),(i,t)\in\emgn}}\frac{1}{\td_i^2}+2\sum_{\substack{(i,j,s)\ \mathrm{ distinct}:\\ (i,j),(i,s)\in\emgn}}\frac{1}{\td_i\td_s}\nonumber \\ 
		&\qquad \qquad +\sum_{\substack{(i,j,s)\ \mathrm{ distinct}:\\ (i,j),(i,s)\in\emgn}}\frac{1}{\td_j\td_s}\Bigg)+c\left(\sum_{\substack{(i,j,s,t)\ \mathrm{ distinct}:\\ (i,j),(s,t)\in\emgn}}\frac{1}{\td_i\td_s}\right)\Bigg]\nonumber \\ &=a\E\left[\frac{1}{\td_1}+\frac{1}{\td_1}\sum_{j:(1,j)\in\emgn}\frac{1}{\td_j}\right]+\frac{b}{n}\E\Bigg[\sum_{i,t}\frac{1}{\td_i^2}\cdot \mathbf{1}((i,t)\in\emgn)(\td_i-\mathbf{1}((i,t)\in\emgn))\nonumber\\
		&\qquad +2\sum_{i,t}\frac{1}{\td_i\td_t}\mathbf{1}((i,t)\in\emgn)(\td_i-\mathbf{1}((i,t)\in\emgn))+\sum_{i,j,s} \frac{1}{\td_i\td_s}\cdot\mathbf{1}((i,j),(j,s)\in\emgn)\nonumber \\ 
		& \qquad -\sum_{i=1}^n \frac{1}{\tilde{d}_i}\Bigg]+\frac{c}{n}\E\left[\sum_{i,s}\frac{1}{\td_i\td_s}\sum_{\substack{j,t:j\neq i,s\\ t\neq i,s,\ j\neq t}}\mathbf{1}((i,j),(s,t)\in\emgn)-\sum_{\substack{(i,j,t)\ \mathrm{ distinct}:\\ (i,j),(i,t)\in\emgn}}\frac{1}{\td_i^2}\right]\nonumber \\ 
		&=a\E\left[\frac{1}{\td_1}+\frac{1}{\td_1}\sum_{j:(1,j)\in\emgn}\frac{1}{\td_j}\right]+b\Bigg[3+\frac{1}{\td_1}\sum_{j}\frac{\tgn(1,j)}{\td_j}-\frac{2}{\td_1}-\frac{2}{\td_1}\sum_{j:(1,j)\in\emgn}\frac{1}{\td_j}\Bigg]\nonumber \\ & \qquad \qquad +c\E\left[n-3+\frac{1}{\td_1}+\frac{1}{\td_1}\sum_{j:(1,j)\in\emgn}\frac{1}{\td_j}-\frac{1}{\td_1}\sum_{j}\frac{\tgn(1,j)}{\td_j}\right].
		\end{align}}
	Similar calculations show that $\Gamma_2$ and $\Gamma_3$ simplify as follows:
	\begin{align}\label{eq:maint2}
	\Gamma_2=\frac{2a}{n-1}+4b\cdot\frac{n-2}{n-1}+c\cdot \frac{(n-2)(n-3)}{n-1},
	\end{align}
	\begin{align}\label{eq:maint3}
	\Gamma_3=\frac{2a}{n-1}+4b\cdot\frac{n-2}{n-1}+c\cdot \frac{(n-2)(n-3)}{n-1}.
	\end{align}
	Plugging~\eqref{eq:maint1},~\eqref{eq:maint2}~and~\eqref{eq:maint3} in~\eqref{eq:vareq}, we get:
	{\small \begin{align}\label{eq:mainvareq}
		\mathrm{Var}(N_n)&=\E\left[\frac{1}{\td_1}+\frac{1}{\td_1}\sum_{j:(1,j)\in\emgn} \frac{1}{\td_j}\right](a-2b+c)\nonumber \\& \quad \quad +\left[\E\left(\frac{1}{\td_1}\sum_{j}\frac{\tgn(1,j)}{\td_j}\right)-1\right](b-c) -\frac{2(a-2b+c)}{n-1}.
		\end{align}}
	Note that by using Assumption (A3), the following bounds follow:
	\begin{equation}\label{eq:boundcount}
	\frac{1}{\tilde{d}_1}\sum_{l\neq 1}\frac{\tgn(1,l)}{\tilde{d}_l}\leq \frac{t_n^2}{r_n^2}\lesssim 1\; , \; \frac{1}{\tilde{d}_1}\sum_{(1,l)\in\emgn}\frac{1}{\tilde{d}_l}\lesssim \frac{t_n}{r_n^2}\lesssim 1\; ,\; \frac{1}{\tilde{d}_1}\leq \frac{1}{r_n}\leq 1,
	\end{equation}
	which implies $\limsup\limits_{n\to\infty}\mathrm{Var}(N_n)<\infty$ and establishes~\eqref{eq:d'accord}.
	
	\noindent In order to prove~\eqref{eq:mainres}, we introduce some notation first. Towards this direction, define: 
	$$\mathcal{V}_n:=\frac{1}{\sqrt{n}(n-1)}\sum_{i\neq j} K(Y_i,Y_j),\qquad  g(Y_i):=2\E[K(Y,Y_i)|Y_i]-\E [K(Y_1,Y_2)]$$
	where $Y\sim \mu_{Y}$ and is independent of $Y_1,\ldots ,Y_n$. By some standard U-statistics projection theory (see for example,~\cite[Theorem 12.3]{van1998}), the following holds:
	{\small \begin{equation}\label{eq:project}
		\limsup\limits_{n\to\infty} \sup\limits_{(\mathcal{G},\mu)\in \mathcal{J}_{\theta}}n\E\left(\frac{1}{\sqrt{n}(n-1)}\sum_{i\neq j} K(Y_i,Y_j)-\frac{1}{\sqrt{n}}\sum_{i=1}^n g(Y_i)\right)^2\lesssim 1.
		\end{equation}}
	In fact, the bound in~\cite[Theorem 12.3]{van1998} is not explicitly stated in the form as above. We have therefore added a formal proof in~\cref{lem:projerrorctrl} for completion. Next define, $$V_i:=\frac{1}{\sqrt{n}}\left(\frac{1}{\tilde{d}_i}\sum_{j:(i,j)\in\emgn} K(Y_i,Y_j)-g(Y_i)\right), \qquad \tilde{N}_n:=\sum_{i=1}^n V_i,$$
	and note that,
	\begin{equation}\label{eq:rewrite}
	N_n=\tilde{N}_n-\underbrace{\frac{1}{\sqrt{n}(n-1)}\sum_{i\neq j} K(Y_i,Y_j)+\frac{1}{\sqrt{n}}\sum_{i=1}^n g(Y_i)}_{\tilde{\mathcal{N}}_n}.
	\end{equation}
	Observe that,
	\begin{align}\label{eq:meanzero}
	\E[V_i|\mathcal{F}_n]=\E [K(Y_1,Y_2)]\left[\frac{1}{\td_i}\sum_{j:(i,j)\in\emgn} 1-1\right]=0. 
	\end{align}
	for all $i=1,2,\ldots ,n$. The next step in obtaining normality is to observe that although the $V_i$'s are not independent, their dependence is, in a way local. This notion can be formalized by the construction of a dependency graph (see~\cite{Baldi1989,Rinot2003} for details) which we illustrate below.
	
	Let us construct a graph, say $\mathcal{D}(\mathcal{G}_n)$ depending on $\mathcal{G}_n$ as follows: given $i\neq j$, we say that there is an edge between $V_i$ and $V_j$ in $\mathcal{D}(\mathcal{G}_n)$ iff there is a path of length $\leq 2$ joining $X_i$ and $X_j$ in $\mathcal{G}_n$. Note that whenever $V_i$ and $V_j$ are not connected in $\mathcal{D}(\mathcal{G}_n)$, there are no common neighbors between the corresponding $X_i$ and $X_j$ in $\mathcal{G}_n$ which further implies that $V_i$ and $V_j$ are independent conditioned on $\mathcal{F}_n$. As a result, $\mathcal{D}(\mathcal{G}_n)$ is a dependency graph  with maximum degree $\lesssim (\log{n})^{2\gamma}$. We are now in a position to apply~\cref{prop:cltdep} (see~\cite[Theorem 2.7]{Chen2004}), which yields:
	\begin{align}\label{eq:berbd}
	\sup_{z\in\R} \Bigg|\P \left(\frac{\tilde{N}_n}{\sqrt{\mathrm{Var}(\tilde{N}_n|\mathcal{F}_n)}}\leq z\big| \mathcal{F}_n\right)-\Phi(z)\Bigg|\lesssim \min\left\{\frac{(\log{n})^{20\gamma}\sqrt{n}\E[\sum_{i=1}^n |V_i|^3|\mathcal{F}_n]}{(n^{1/3}\mathrm{Var}(\tilde{N}_n|\mathcal{F}_n))^{3/2}},2\right\}
	\end{align}
	almost surely. By an application of the standard power mean inequality, we get:
	\begin{align}\label{eq:prelbd}
	\sqrt{n}\E\left[\sum_{i=1}^n |V_i|^3|\mathcal{F}_n\right]\lesssim \frac{\sqrt{n}}{n\sqrt{n}}\E|K(Y_1,Y_2)|^3\sum_{i=1}^n \frac{1}{\td_i}\sum_{j:(i,j)\in\emgn} 1\leq \E|K(Y_1,Y_2)|^3.
	\end{align}
	By combining~\eqref{eq:berbd}~and~\eqref{eq:prelbd} with the tower property, we get:
	{\small     	\begin{align}\label{eq:bound1}
		&\sup\limits_{(\mathcal{G},\mu)\in \mathcal{J}_{\theta}}\sup_{z\in\R} \Bigg|\P \left(\frac{\tilde{N}_n}{\sqrt{\mathrm{Var}(\tilde{N}_n|\mathcal{F}_n)}}\leq z\right)-\Phi(z)\Bigg|\lesssim \sup\limits_{(\mathcal{G},\mu)\in \mathcal{J}_{\theta}} \E\left[\min\left\{\frac{(\log{n})^{20\gamma}}{(n^{1/3}\mathrm{Var}(\tilde{N}_n|\mathcal{F}_n))^{3/2}},2\right\}\right]\nonumber \\&\lesssim (\log{n})^{20\gamma}n^{\frac{3\epsilon-1}{2}}+\sup\limits_{(\mathcal{G},\mu)\in \mathcal{J}_{\theta}}\P\left(n^{\epsilon}\mathrm{Var}(\tilde{N}_n|\mathcal{F}_n)\leq \frac{1}{2}\right).
		\end{align}}
	In order to show that the right hand side of~\eqref{eq:bound1} converges to $0$, it suffices to show (by Markov's inequality) the following:
	\begin{equation}\label{eq:claimtoprove}
	\sup\limits_{(\mathcal{G},\mu)\in \mathcal{J}_{\theta}} n^{\epsilon}\E\Big|\mathrm{Var}(\tilde{N}_n|\mathcal{F}_n)-\mbox{Var}(N_n)\Big|\to 0.
	\end{equation} 
	Towards this direction, let $\mathcal{H}_n$ denote the $\sigma$-algebra generated by the unordered set $(Y_1,\ldots ,Y_n)$. It is easy to check that $\E[N_n|\mathcal{F}_n]=\E[\tilde{N}_n|\mathcal{F}_n]=\E[N_n|\mathcal{F}_n,\mathcal{H}_n]=0$. As $\tilde{\mathcal{N}}_n$ is measurable with respect to $\mathcal{H}_n$ (recall~\eqref{eq:rewrite}), we get:
	\begin{eqnarray}
	\mbox{Var}(\tilde{N}_n|\mathcal{F}_n) & =& \mbox{Var}(N_n|\mathcal{F}_n)+\mbox{Var}(\tilde{\mathcal{N}}_n)+2\E\left[\E[N_n\tilde{\mathcal{N}}_n|\mathcal{F}_n,\mathcal{H}_n]|\mathcal{F}_n\right] \nonumber \\ & = & \mbox{Var}(N_n|\mathcal{F}_n)+\mbox{Var}(\tilde{\mathcal{N}}_n) \label{eq:anova1}
	\end{eqnarray}
	and similarly,
	\begin{equation}\label{eq:anova2}
	\mbox{Var}(\tilde{N}_n)=\mbox{Var}(N_n)+\mbox{Var}(\tilde{\mathcal{N}}_n).
	\end{equation}
	Also, in a similar vein as~\eqref{eq:mainvareq}, we have the following:
	\begin{equation}\label{eq:convareq}
	\mbox{Var}(N_n|\mathcal{F}_n)=(\tilde{g}_1+\tilde{g}_3)(a-2b+c)+(\tilde{g}_2-1)(b-c)-\frac{2(a-2b+c)}{n-1}.
	\end{equation}
	By using~\eqref{eq:project},~\eqref{eq:anova1},~\eqref{eq:anova2},~\eqref{eq:convareq}~and~\eqref{eq:mainvareq}, we get:
	\begin{align}\label{eq:proveclaim}
	&\sup\limits_{(\mathcal{G},\mu)\in \mathcal{J}_{\theta}} n^{\epsilon}\E\Big|\mbox{Var}(N_n|\mathcal{F}_n)-\mbox{Var}(N_n)\Big|\nonumber \\ &\lesssim o(1)+\left(\sup\limits_{(\mathcal{G},\mu)\in \mathcal{J}_{\theta}} n^{2\epsilon}\E\left(\mbox{Var}(N_n|\mathcal{F}_n)-\mbox{Var}(N_n)\right)^2\right)^{1/2}\nonumber \\ &\lesssim o(1)+\frac{1}{\sqrt{n}}+\Bigg[n^{2\epsilon}\sup\limits_{(\mathcal{G},\mu)\in \mathcal{J}_{\theta}}\Bigg\{\E\left(\tilde{g}_1-\E\left[\frac{1}{\td_1}\right]\right)^2 +\E\left(\tilde{g}_2-\E\left[\frac{1}{\td_1}\sum_{l\neq 1}\frac{\tgn(1,l)}{\td_l}\right]\right)^2\nonumber \\ & \qquad \qquad + \E\left(\tilde{g}_3-\E\left[\frac{1}{\td_1}\sum_{(1,l)\in\emgn}\frac{1}{\tilde{d}_l}\right]\right)^2\Bigg\}\Bigg]^{1/2}.
	\end{align}
	Note that each term within the braces of~\eqref{eq:proveclaim} is $\lesssim n^{-1}(\log{n})^{4\gamma}$ uniformly in $\mathcal{J}_{\theta}$ by using~\cref{prop:Genef} coupled with a similar calculation as in~\eqref{eq:perturb}. We omit the details for brevity. However, note that this observation implies that the right hand side of~\eqref{eq:proveclaim} converges to $0$ uniformly in $\mathcal{J}_{\theta}$ and consequently proves~\eqref{eq:claimtoprove}. By~\eqref{eq:bound1}, this further implies that:
	\begin{align}\label{eq:supconv}
	\limsup\limits_{n\to\infty}\sup\limits_{(\mathcal{G},\mu)\in \mathcal{J}_{\theta}}\sup_{z\in\R} \Bigg|\P \left(\frac{\tilde{N}_n}{\sqrt{\mbox{Var}(\tilde{N}_n|\mathcal{F}_n)}}\leq z\right)-\Phi(z)\Bigg| = 0.
	\end{align}
	Next we show that the $\mbox{Var}(\tilde{N}_n|\mathcal{F}_n)$ term in~\eqref{eq:supconv} can be replaced with $\hS_n^2$. In particular, it suffices to show that there exists a sequence $\epsilon_n\to 0$ as $n\to\infty$ such that:
	\begin{equation}\label{eq:needtoprove}
	\sup\limits_{(\mathcal{G},\mu)\in \mathcal{J}_{\theta}}\P\left(\Bigg|\frac{\hS_n^2}{\mbox{Var}(\tilde{N}_n|\mathcal{F}_n)}-1\Bigg|\geq \epsilon_n\right)\overset{n\to\infty}{\longrightarrow}0.
	\end{equation}
	We will prove the above with $\epsilon_n=n^{-1/6}$. Towards this direction, note that:
	\begin{align}\label{eq:slutskystep}
	&\;\;\;\sup\limits_{(\mathcal{G},\mu)\in \mathcal{J}_{\theta}}\P\left(\Bigg|\frac{\hS_n^2}{\mbox{Var}(\tilde{N}_n|\mathcal{F}_n)}-1\Bigg|\geq n^{-1/6}\right)\nonumber \\ &\lesssim \sqrt{n^{2\epsilon+1/3}\sup\limits_{(\mathcal{G},\mu)\in \mathcal{J}_{\theta}}\E\left[(\hS_n^2-\mbox{Var}(N_n|\mathcal{F}_n))^2\right]}+n^{\epsilon+\frac{1}{6}}\mbox{Var}(\tilde{\mathcal{N}}_n) \\ & \qquad \qquad +\sup\limits_{(\mathcal{G},\mu)\in \mathcal{J}_{\theta}}\P(n^{\epsilon}\mbox{Var}(\tilde{N}_n|\mathcal{F}_n)\leq 1/2)\nonumber \\ &\overset{(i)}{\lesssim} \sqrt{n^{2\epsilon+\frac{1}{3}}\sup\limits_{(\mathcal{G},\mu)\in \mathcal{J}_{\theta}}\max\left\{\frac{1}{n},\E(\tilde{a}-a)^2,\E(\tilde{b}-b)^2,\E(\tilde{c}-c)^2\right\}} \\ & \qquad \qquad +o(1)+\sup\limits_{(\mathcal{G},\mu)\in \mathcal{J}_{\theta}}\P(n^{\epsilon}\mbox{Var}(\tilde{N}_n|\mathcal{F}_n)\leq 1/2)\nonumber \\ &\lesssim n^{\epsilon-1/3}+o(1)+\sup\limits_{(\mathcal{G},\mu)\in \mathcal{J}_{\theta}}\P(n^{\epsilon}\mbox{Var}(\tilde{N}_n|\mathcal{F}_n)\leq 1/2)
	\end{align}
	where the last line follows from the fact that $\max\left\{\E(\tilde{a}-a)^2,\E(\tilde{b}-b)^2,\E(\tilde{c}-c)^2\right\}\lesssim n^{-1}$ uniformly over $\mathcal{J}_{\theta}$, which once again, is a consequence of standard U-statistics theory (see~\cite[Theorem 12.3]{van1998}) on observing that $\tilde{a}$, $\tilde{b}$ and $\tilde{c}$ are U-statistics which are unbiased estimates of $a$, $b$ and $c$ respectively. Here, $(i)$ follows from~\eqref{eq:project}. Further, the third term on the right hand side of~\eqref{eq:slutskystep} is the same as the second term in~\eqref{eq:bound1}. We have already proved that this term converges to $0$ using~\eqref{eq:claimtoprove}~and~\eqref{eq:proveclaim}. Combining this observation with~\eqref{eq:supconv},~\eqref{eq:rewrite}~and~\eqref{eq:project} completes the proof.
	\qed
	
	\subsection{Proof of~\cref{prop:nullcltesscomp}}\label{pf:nullcltlesscomp}
	The proof is very similar to that of~\cref{theo:nullclt}. We will only highlight the important differences here. Throughout this proof $(i,j)\in\mathcal{E}(\tilde{\mathcal{G}}_n)$ will be simplified as $i\sim j$, and all indices are to be interpreted modulo $n$ as stated in the statement of the proposition. The first important difference is that between $\hS_n$ and $\hS_{n,\mathrm{lin}}^2$. To understand this, we will first calculate $\mathrm{Var}(\linn)$. Note that $\E \linn=0$ following the same calculation as in~\eqref{eq:conmeanzero}. Next we break down $\E(\linn)^2=\Gamma_1^{\ml}-2\Gamma_2^{\ml}+\Gamma_3^{\ml}$ in the same way as was done in~\eqref{eq:vareq} with $\E N_n^2$ being expressed as $\Gamma_1-2\Gamma_2+\Gamma_3$. Note that $\Gamma_1=\Gamma_1^{\ml}$. Also $\Gamma_3^{\ml}$ is only a function of $\mu_Y$ which makes it easier to simplify, and it turns out to be:
	\begin{equation}\label{eq:t3}
	\Gamma_3^{\ml}=a+2b+c(n-3).
	\end{equation}
	For $\Gamma_2^{\ml}$, we will first define the following two quantities which arise naturally,
	\begin{align*}
	T_{+1}:=\frac{1}{n}\sum_{i=1}^n \frac{1}{\tilde{d}_i}\mathbf{1}(i\sim (i+1)),\quad T_{-1}:=\frac{1}{n}\sum_{i=1}^n \frac{1}{\tilde{d}_i}\mathbf{1}(i\sim (i-1))
	\end{align*}
	As $\sup_{(\tilde{G},\tilde{\mu})\in\mathcal{J}_{\theta}} \sup_{i,j}\P(i\sim j)\lesssim n^{-1}$, the following property of $T_{+1}$ and $T_{-1}$ follows trivially,
	\begin{align}\label{eq:smallcount}
	\sup_{\nst} \max\{\E T_{+1}^2,\E T_{+1},\E T_{-1}^2,\E T_{-1}\}\lesssim n^{-1}.
	\end{align}
	Recall the definitions of $a$, $b$ and $c$ from the proof of~\cref{theo:nullclt}. After some tedious simplification, the expression for $\Gamma_2^{\ml}$ turns out to be:
	\begin{equation}\label{eq:t2}
	\Gamma_2^{\ml}=a(\E T_{+1}+\E T_{-1})+b(4-2\E T_{+1}-2\E T_{-1})+c(n-4+\E T_{+1}+\E T_{-1}).
	\end{equation}
	By using~\eqref{eq:t2},~\eqref{eq:t3},~\eqref{eq:maint1},~\eqref{eq:smallcount} and setting $g_1:=\E[\tilde{d}_1^{-1}]$, $g_2:=\E[\tilde{d}_1^{-1}\sum_{j\sim 1} \tilde{d}_j^{-1}]$ and $g_3:=\E[\tilde{d}_1^{-1}\sum_{j} \tgn(1,j)/\tilde{d}_j]$, we get:
	\begin{equation}\label{eq:linvar}
	\sup_{\nst}\Big|\mathrm{Var}(\linn)-a(g_1+g_3+1)-b(g_2-2g_1-2g_3-3)-c(2+g_1+g_3-g_2)\Big|\lesssim n^{-1}.
	\end{equation}
	Therefore, $\mathrm{Var}(\linn)=\mathcal{O}(1)$ as in~\cref{theo:nullclt}.
	
	The next step involves constructing the dependency graph to establish a CLT using~\cite[Theorem 2.7]{Baldi1989} (also se~\cref{prop:cltdep}). Observe that:
	$$\linn=\frac{1}{n}\sum_{i=1}^n \left(\frac{1}{\tilde{d}_i}\sum_{j:j\sim i} (K(Y_i,Y_j)-K(Y_i,Y_{i+1}))\right).$$
	Let us construct a dependency graph, say $\mathcal{D}(\tilde{\mathcal{G}}_n)$ depending on $\tilde{\mathcal{G}}_n$ as follows: given $i\neq j$, we say that there is an edge between two vertices $V_i$ and $V_j$ in $\mathcal{D}(\tilde{\mathcal{G}}_n)$ iff there is a path of length $\leq 2$ joining $X_i$ and $X_j$ in $\mathcal{G}_n$ or if $j=i-1,i,i+1$. Using this dependency graph, one can repeat the same set of calculations as in the proof of~\cref{theo:nullclt} right up to~\eqref{eq:anova1} where a small observation needs to be made. 
	
	Recall that $\mathcal{F}_n$ was defined as the $\sigma$-field generated by $\{X_1,\ldots ,X_n\}$ and $\mathcal{H}_n$ was defined as the $\sigma$-field generated by the unordered set $\{Y_1,\ldots ,Y_n\}$. In~\cref{theo:nullclt}, we used that $\E[\sum_{i\neq j} K(Y_i,Y_j)|\mathcal{H}_n]=\sum_{i\neq j} K(Y_i,Y_j)$ which follows because $\sum_{i\neq j} K(Y_i,Y_j)$ is permutation invariant, which $\sum_{i=1}^n K(Y_i,Y_{i+1})$ is unfortunately not. However, it is easy to check that $\E[n^{-1}\sum_{i=1}^n K(Y_i,Y_{i+1})|\mathcal{H}_n]=(n(n-1))^{-1}\sum_{i\neq j} K(Y_i,Y_j)$ which is all we needed anyway. Therefore, one can proceed from~\eqref{eq:anova1} to get:
	\begin{align}\label{eq:newconvareq}
	\mathrm{Var}(\linn|\mathcal{F}_n)&=a(\tilde{g}_1+\tilde{g}_3+1-2T_{-1}-2T_{+1})+b(\tilde{g}_2-2\tilde{g}_1-2\tilde{g}_3+4T_{-1}+4T_{+1})\nonumber \\&+c(2+\tilde{g}_1+\tilde{g}_3-\tilde{g}_2-2T_{-1}-2T_{+1}).
	\end{align}
	Using~\eqref{eq:newconvareq}, the rest of the proof follows verbatim from the proof of~\cref{theo:nullclt} if we can show that $\sup_{\nst}\max\{\E(\tilde{a}^{\ml}-a)^2,\E(\tilde{b}^{\ml}-b)^2,\E(\tilde{c}^{\ml}-c)^2\}\lesssim n^{-1}$. This follows from simple moment computations or one could use the Efron-Stein inequality, see~\cite{Boucheron2005} or equivalently~\cref{prop:Genef} with $q=2$. Similar moment computations have been carried out multiple times in this paper (e.g., see~\cref{lem:projerrorctrl}) and we omit the details for brevity.\qed
	\subsection{Proof of~\cref{theo:rateres}}\label{pf:rateres}
	In this proof, we will use $C$ to denote absolute constants which might change from one line to another. Recall the definition of $\bar{T}_n$ from~\eqref{eq:mainterm}. Also note that when $\mathcal{G}$ is the $k_n$-NNG, then one can chose $q_n$ (from assumption (A2)) as $4t_n$. As a result, by~\eqref{eq:perturb}, we have:
	\begin{align}\label{eq:variancebound}
	\mathrm{Var}(\bar{T}_n)\lesssim \frac{t_n^2}{nk_n^2}\E\left(\max_{1\leq i\leq n} \lVert K(\cdot,Y_i)\rVert_{\hk}^4 \right)\lesssim \frac{t_n^2}{nk_n^2}\inf_{\gamma>0}\left(\gamma^2+n\int_{\gamma}^{\infty} t\P(K(Y_1,Y_1)\geq t)\,dt\right)
	\end{align}
	where the last line follows from a simple union bound. Next we control the bias, i.e., $\E\bar{T}_n-\E\lVert \E[K(\cdot,Y)|X]\rVert_{\hk}^2$. Recall the definition of $g(\cdot)$ from assumption (R3). Observe that:
	\begin{align}\label{eq:biasbound}
	&|\E\bar{T}_n-\E\lVert \E[K(\cdot,Y)|X]\rVert_{\hk}^2|=|\E\left[\langle g(X_1),g(X_{N(1)})\rangle_{\hk}-\lVert g(X_1)\rVert_{\hk}^2\right]|\nonumber \\ &\;\;\;\;\lesssim \E\left[\left(1+\rho(X_1,\xs)^{\beta_1}+\rho(X_{N(1)},\xs)^{\beta_1}\right)\rho(X_1,X_{N(1)})^{\beta_2}\right]\nonumber \\ &\;\;\;\;\lesssim \sqrt{\E\left[1+\rho(X_1,\xs)^{2\beta_1}+\rho(X_{N(1)},\xs)^{2\beta_1}\right]}\sqrt{\E\rho(X_1,X_{N(1)})^{2\beta_2}}
	\end{align}
	where the first line uses the lipschitz type assumption on $g(\cdot,\cdot)$ (see (R3)) and and the last line follows from the Cauchy-Schwartz inequality. As $\rho(X_1,X_{N(1)})\leq \rho(X_1,\xs)+\rho(X_{N(1)},\xs)$, both terms of the right hand side of the above display are $\mathcal{O}(1)$ using assumption (R2) and~\cref{lem:techlemnbd}.
	
	Next, by another application of Cauchy-Schwartz inequality and~\cref{lem:techlemnbd}, for any $C>0$, we have:
	\begin{align*}
	\E\rho(X_1,X_{N(1)})^{2\beta_2}&\lesssim \E\left[\rho(X_1,X_{N(1)})^{2\beta_2}\mathbf{1}\left(\max\{\rho(X_1,\xs),\rho(X_{N(1)},\xs)\}\leq C(\log{n})^{1/\alpha}\right)\right]\\ &+\sqrt{\P(\rho(X_1,\xs)\geq C(\log{n})^{1/\alpha})+\P(\rho(X_{N(1)},\xs)\geq C(\log{n})^{1/\alpha})}.
	\end{align*}
	The second term on the right hand side of the above display is bounded by $\lesssim n^{-2}$ by choosing $C>0$ large enough, by using Assumption (R2). In order to bound the first term on the right hand side above, let us define $\delta_{1,n}:=Ck_n\log{n}/n:=\delta_{2,n}$. Next, fix $\epsilon\in (\epsilon_n,C(\log{n})^{1/\alpha})$. Let $N\equiv N(\mu_X,\tX_n,\epsilon,\delta_{1,n})$ and $\B_1,\ldots ,\B_N$ denote the $\mu_X$-covering number and $\mu_X$-cover respectively. Further, let $\mathcal{A}$ denote the sub-collection of $\B_i$'s such that $\mu_X(\B_i)\leq \delta_{2,n}$ for all $\B_i\in\mathcal{A}$. Set $\mathcal{B}:=\cup_{i=1}^N \B_i$. Next, observe the following sequence of inequalities:
	\begin{align}\label{eq:callearly}
	&\;\;\;\;\P(\rho(X_1,X_{N(1)})\geq\epsilon,\max\{\rho(X_1,\xs),\rho(X_{N(1)},\xs)\}\leq C(\log{n})^{1/\alpha})\nonumber \\ &\lesssim \P\left(\rho(X_1,X_{N(1)})\geq \epsilon,X_1,X_{N(1)}\in \tX_n\cap \B\right)+2\delta_{1,n}\nonumber \\ &\overset{(a)}{\lesssim} \P\left(\rho(X_1,X_{N(1)})\geq \epsilon,X_1,X_{N(1)}\in \tX_n\cap \B, X_1,X_{N(1)}\in \cup_{\B_i\notin \mathcal{A}}\B_i\right)+2\delta_{1,n}\nonumber \\ &\;\;+2\delta_{2,n}N(\mu_X,\tX_n,\epsilon,\delta_{1,n})\nonumber \\ &\overset{(b)}{\lesssim} n^{k_n+1}(1-\delta_{2,n})^{n-k_n}+4\delta_{2,n}N(\mu_X,\tX_n,\epsilon,\delta_{1,n})\lesssim n^{-2}+4\delta_{2,n}N(\mu_X,\tX_n,\epsilon,\delta_{1,n}).
	\end{align}
	For (a), we use the fact that $$\P(X_1\in \cup_{\B_i\in\mathcal{A}}\mathcal{B}_i)\leq \sum_{\B_i\in\mathcal{A}} \P(X_i\in\B_i)\leq \delta_{2n}|\mathcal{A}|.$$
	In order to establish (b), note that:
	\begin{align*}
	&\;\;\;\P\left(\rho(X_1,X_{N(1)})\geq \epsilon,X_1,X_{N(1)}\in \tX_n\cap \B, X_1,X_{N(1)}\in \cup_{\B_i\notin \mathcal{A}}\B_i\right)\\ &\leq \P\left(\exists i,j_1,\ldots ,j_{n-k_n-1}\ \mathrm{all}\ \mathrm{distinct}: X_i\in \cup_{\B_i\notin \mathcal{A}}\B_i,\  \min_{1\leq l\leq n-k_n-1} \rho(X_i,X_{j_l})\geq\epsilon\right)\\ &\leq n^{k_n+1}\left(1-\inf_{\B_i\notin \mathcal{A}}\mu_X(\B_i)\right)^{n-k_n}.
	\end{align*}
	The last line of the above display uses standard combinatorial arguments which imply that the number of ways of choosing $n-k_n$ points out of $n$ points is bounded above by $n^{k_n}$, followed by the union bound. The last line of (b) also uses the fact that $C$ is large and $k_n=o(n/\log{n})$. Now by expressing expectations as tail integrals, we get:
	\begin{align*}
	&\E\rho(X_1,X_{N(1)})^{2\beta_2}\mathbf{1}(\max\{\rho(X_1,\xs),\rho(X_{N(1)},\xs)\}\leq C(\log{n})^{1/\alpha})\\ &=2\beta_2\int_0^{2C(\log{n})^{1/\alpha}} \epsilon^{2\beta_2-1} \P(\rho(X_1,X_{N(1)})\geq\epsilon,\max\{\rho(X_1,\xs),\rho(X_{N(1)},\xs)\}\leq C(\log{n})^{1/\alpha})\\ &\lesssim \epsilon_n^{2\beta_2}+\beta_2\delta_{2,n}\int_{\epsilon_n}^{2C(\log{n})^{1/\alpha}} \epsilon^{2\beta_2-1}N(\mu_X,\tX_n,\epsilon,\delta_{1,n})\,d\epsilon.
	\end{align*}
	Combining the above display with~\eqref{eq:biasbound}, we get:
	\begin{align}\label{eq:finbiasbd}
	\big|\E\bar{T}_n-\E\lVert\E[K(\cdot,Y)|X]\rVert_{\hk}^2\big|\lesssim \epsilon_n^{\beta_2}+\sqrt{\nu_{1,n}}
	\end{align}
	where $\nu_{1,n}$ is as defined in~\cref{theo:rateres}. Note that the other terms (apart from $\bar{T}_n$) in $\kmac$ are standard U-statistics which concentrate around their mean at a $\mathcal{O}_{\P}(n^{-1/2})$ rate under the assumptions of~\cref{theo:rateres} (see~\cite[Theorem 12.3]{vaart1996}). Consequently, by combining~\eqref{eq:variancebound},~\eqref{eq:finbiasbd} with~\cref{prop:basicmom} completes the proof.
	
	\subsection{Proof of~\cref{prop:spGauss}}\label{pf:spGauss}
	We will be using~\cref{lem:equivform} repeatedly in this proof.
	
	\noindent {\bf Proof of (a)}: Note that $T_{\alpha}(\cdot)$ is location and scale invariant. Therefore, without loss of generality, we can assume that $\theta_1=\theta_2=0$ and $\sigma_1^2=\sigma_2^2=1$. By plugging in the characteristic functions of Gaussian distributions, we get:
	$$\int_{\R}\frac{\mme|\hat{f}_{Y|X}(t)-\hat{f}_{Y}(t)|^2}{|t|^{1+\alpha}}\,dt=\int_{\R}\frac{\exp(-t^2)[\exp(\rho^2t^2)-1]}{|t|^{1+\alpha}}\,dt.$$
	The integrand is a strictly increasing function of $\rho$, which implies that the left hand side is a strictly increasing function of $\rho$ as well. Finally note that the denominator in~\eqref{eq:mainrep} is free of $\rho$. An application of~\cref{lem:equivform} then completes the proof for $0<\alpha<2$. The same result for $\alpha=2$ will follow from part (c). The fact that $T_{\alpha}(\mu_{\rho})$ satisfies (P1), (P2) and (P3) follows from the observation that $K(\cdot,\cdot)$ as in~\cref{rem:Euclid} is characteristic, followed by an application of~\cref{theo:kacmas}.
	
	\noindent {\bf Proof of (b)}: When $\alpha=1$, we observe that the numerator of $T_1(\mu_{\rho})$ simplifies as:
	\begin{equation}\label{eq:spGauss1}
	\int_{\R}\frac{\mme|\hat{f}_{Y|X}(t)-\hat{f}_{Y}(t)|^2}{t^2}\,dt=\int_{\R} \frac{\exp(-t^2)}{t^2}\left(\sum_{k\geq 1} \frac{(\rho^2t^2)^k}{k!}\right)\,dt =\rho^2 G(\rho),
	\end{equation}
	where $G(\rho)\coloneqq \int_{\R}\exp(-t^2)\left(\sum_{k=1}^{\infty} \frac{(t^2\rho^2)^{k-1}}{k!}\right)\,dt$. Clearly $G(\cdot)$ is a nondecreasing function in $|\rho|$ and as a result,
	$$\int_{\R}\frac{\mme|\hat{f}_{Y|X}(t)-\hat{f}_{Y}(t)|^2}{t^2}\,dt\leq \rho^2 G(1)=\rho^2\int_{\R} \frac{1-\exp(-t^2)}{t^2}\,dt=\rho^2\int_{\R}\frac{1-|\hat{f}_{Y}(t)|^2}{t^2}\,dt.$$
	The right hand side of the above display involves the denominator of $T_1(\mu_{\rho})$. This completes the proof of part (2).
	
	Next, let us define a function $F:\R\to\R$ such that $F(\rho)$ equals the left hand side of~\eqref{eq:spGauss1}. Note that $T_1(\mu_{\rho})=F(\rho)/F(1)$, and $F(0)=F'(0)=0$. Therefore $F(\rho)=\int_0^{\rho}\int_0^x F''(z)\,dz\,dx$. Observe that, by an application of the dominated convergence theorem, we have:
	\begin{align*}
	F''(z)&=\frac{d^2}{dz^2}\int_{\R}\frac{\exp(-t^2)(\exp(z^2t^2)-1)}{t^2}\,dt\\&=\int_{\R} (4z^2t^2+2)\exp(-t^2(1-z^2))\,dt=\frac{2\sqrt{\pi}}{(1-z^2)^{3/2}}.
	\end{align*}
	Using this value of $F''(z)$, we get:
	$$F(\rho)=\int_0^{\rho}\int_0^x \frac{2\sqrt{\pi}}{(1-z^2)^{3/2}}=\int_0^{\rho^2} \frac{\sqrt{\pi}}{\sqrt{1-x}}=2\sqrt{\pi}(1-\sqrt{1-\rho^2}).$$
	Therefore $F(1)=2\sqrt{\pi}$ and $T_1(\mu_{\rho})=1-\sqrt{1-\rho^2}$.
	
	In the previous discussion, we showed that $G(\rho)=T_1(\mu_{\rho})/\rho^2$ is a monotonic function in $|\rho|$. Therefore,
	$$\inf_{|\rho|\neq 0}\frac{\sqrt{T_1(\mu_{\rho})}}{|\rho|}=\lim_{|\rho|\to 0}\frac{\sqrt{1-\sqrt{1-\rho^2}}}{|\rho|}=\lim_{|\rho|\to 0}\frac{1}{|\rho|}\sqrt{\frac{\rho^2}{2}+o(\rho^2)}=2^{-1/2}.$$
	
	\noindent {\bf Proof of (c)}: First, let us obtain a more general expression for $T_2$. Note that:
	\begin{align*}
	T_2=\frac{\E \lVert Y_1-Y_2\rVert_2^2-\E \lVert Y'-\tilde{Y'}\rVert_2^2}{\E \lVert Y_1-Y_2\rVert_2^2}=\frac{\E\langle Y',\tilde{Y'}\rangle -\E \langle Y_1,Y_2\rangle}{\E\lVert Y_1\rVert_2^2-\E \langle Y_1,Y_2\rangle}.
	\end{align*}
	For the multivariate Gaussian case, first we can standardize so that both the marginals have mean $0$ and variance $1$. In this setting, $\E Y_1Y_2=0$, $\E Y_1^2=1$ and $\E Y'\tilde{Y'}=\rho^2$. Plugging these in the above display completes the proof. \qed
	
	\begin{remark}\label{rem:notalpha2}
		Although $T_2$ satisfies $\mathrm{(P1)}$-$\mathrm{(P3)}$ from~\cref{sec:popkmac} in the bivariate normal case, the same is not true in general for distributions on $\R^{d_1+d_2}$. $T_2$ will always satisfy $\mathrm{(P1)}$ and $\mathrm{(P3)}$ (see the proof of~\cref{prop:energyeq} for details), but not necessarily $\mathrm{(P2)}$. To see this, note that for bivariate random variables, $T_2$ can be simplified as follows:
		$$T_2=\frac{\mathrm{Var}(\E[Y|X])}{\mathrm{Var}(Y)}.$$
		So for instance, if $X\sim \mathcal{U}[0,1]$ and $Y|X=x\sim \mathcal{U}([-x,x])$, then the corresponding $T_2$ is always $0$, although $X$ and $Y$ are clearly dependent. 
	\end{remark}

	\subsection{Proof of~\cref{prop:energyeq}}\label{pf:energyeq}
	We will prove a more general result with $T_{\alpha}$  (defined as in~\eqref{eq:propmeas}), for $0<\alpha<2$ instead of $T\equiv T_1$. We therefore redefine the corresponding $T_n$ (from~\eqref{eq:firststat}) as:
	$$T_n:=1-\frac{n^{-1}\sum_{i=1}^n d_i^{-1}\sum_{j:(i,j)\in \emgn} \lVert Y_i-Y_j\rVert_2^{\alpha}}{(n(n-1))^{-1}\sum_{i\neq j}\lVert Y_i-Y_j\rVert_2^{\alpha}}.$$
	Now $T_n$ as defined above is not equal to $\kmac$ with $K(\cdot,\cdot)$ as in~\cref{rem:Euclid}. It can be shown that $T_n-\kmac\overset{\mathbb{P}}{\longrightarrow}0$. However, we will take an alternate route and use a characteristic function representation (as in~\cref{lem:equivform}) to directly prove $T_{\alpha}\to \eta_K(\mu)$. We will work under the assumption that $\E \lVert Y\rVert_2^{2\alpha+\epsilon}<\infty$ for some $\epsilon>0$ (cf. the assumption in~\cref{prop:energyeq} for $\alpha=1$).
	
	In the first part, we show that $T_{\alpha}$ satisfies (P1)-(P3) from~\cref{sec:popkmac}. Note that a regular conditional probability always exists on $\R^d$ for Borel probability measures (see~\cite[Theorem 2.1.15 and Exercise 5.1.16]{Durrett2019}).
	
	Recall the notation from~\cref{lem:equivform}. Note that $|\hat{f}_{Y|X}(t)|^2\leq 1$ for all $t\in\R^{d_2}$ a.s. A direct application of~\cref{lem:equivform} then proves (P1).
	
	Next, if $\mu=\mu_{X}\otimes\mu_{Y}$, then $\mu_{Y|X}=\mu_{Y}$ for $\mu_X$-a.e.~$X$. As a result $\mme\lVert Y'-\tilde{Y'}\rVert_2^{\alpha}=\mme\lVert Y_1-Y_2\rVert_2^{\alpha}$ and consequently, $T_{\alpha}(\mu)=0$. Now suppose that $T_{\alpha}(\mu)=0$. By~\cref{lem:equivform}, we have $\hat{f}_{Y|X}(t)=\hat{f}_{Y}(t)$ for $\mu_X$-a.e.~$X$ and Lebesgue almost every $t\in\R^{d_2}$. This implies $\mu=\mu_{X}\otimes\mu_{Y}$, consequently establishing (P2).
	
	If $Y=g(X)$ $\mu$-a.e., for some measurable function $g:\R^{d_1}\to\R^{d_2}$, then $Y'=\tilde{Y'}=g(X')$ a.s., which implies $T_{\alpha}(\mu)=1$. For the other direction, assume $T_{\alpha}(\mu)=1$. This implies $Y'=\tilde{Y'}$ a.s. Define a function $f:\R^{d_2}\to\R^{d_2}$ such that, for $y=(y_1,y_2,\ldots ,y_{d_2})\in \R^{d_2}$, $f(y)=(\tanh(y_1),\ldots ,\tanh(y_{d_2}))$. Note that $f(\cdot)$ is bounded and invertible. Therefore,
	$$0=\mme\lVert f(Y')-f(\tilde{Y}')\rVert^2=\mme \mathrm{Var}(f(Y')|X').$$
	The above display implies $f(Y')=\mme[f(Y')|X']$ for $\mu_X$-a.e.~$X'$, which further implies $Y'=f^{-1}(\mme[f(Y')|X'])$ a.s. This proves (P3).
	
	In the second part, we want to show that $T_n\overset{\mathbb{P}}{\longrightarrow} T_{\alpha}$. Following our argument in~\cref{theo:consis}, we only need to show the following: $$\E\lVert Y_1-Y_{N(1)}\rVert_2^{\alpha}\overset{n\to\infty}{\longrightarrow}\E \lVert Y'-\tilde{Y'}\rVert_2^{\alpha}.$$
	The proofs of the other parts go through verbatim as in the proof of~\cref{theo:consis}. In order to establish the above, note that, by~\eqref{eq:refbefore}, we get:
	\begin{align*}
	\E\lVert Y_1-Y_{N(1)}\rVert_2^{\alpha}&=\frac{1}{C(d_2,\alpha)}\E\left[\int_{\R^{d_2}}\frac{1-\E(\exp(it^{\top}Y_1)|X_1)\bar{\E(\exp(it^{\top}Y_{N(1)})|X_{N(1)})}}{\lVert t\rVert_2^{d+\alpha}}\,dt\right]\\ &=\frac{1}{C(d_2,\alpha)}\int_{\R^{d_2}}\frac{1-\E g_{X_1}(t)\bar{g_{X_{N(1)}}(t)}}{\lVert t\rVert_2^{d+\alpha}}\,dt,
	\end{align*}
	where $g_x(t):=\E[\exp(it^{\top}Y|X=x)]$ for $x\in\R^{d_1}$, $t\in\R^{d_2}$. By~\cref{lem:techlemmconv}, $g_{X_{N(1)}}(t)\overset{\P}{\longrightarrow} g_{X_1}(t)$ for each $t\in\R^{d_2}$. As $|g_x(t)|\leq 1$ uniformly over $t\in\R^{d_2}$ and $x\in\R^{d_1}$, by the bounded convergence theorem $\E g_{X_{N(1)}}(t)\to \E g_{X_1}(t)$ as $n\to\infty$.
	Using this observation, the following convergence,
	$$\frac{1}{C(d_2,\alpha)}\int_{\R^{d_2}}\frac{1-\E g_{X_1}(t)\bar{g_{X_{N(1)}}(t)}}{\lVert t\rVert_2^{d+\alpha}}\,dt\overset{n\to\infty}{\longrightarrow}\frac{1}{C(d_2,\alpha)}\int_{\R^{d_2}}\frac{1-\E |g_{X_1}(t)|^2}{\lVert t\rVert_2^{d+\alpha}}\,dt$$
	follows by using standard arguments involving characteristic functions as in~\cite[Equations 2.20-2.24]{Gabor2007}~or~\cite[Section F.1, steps 1-3]{Deb19}. Next note that the right side of the display above equals $\E\lVert Y'-\tilde{Y'}\rVert_2^{\alpha}$ by~\cite[Lemma 1]{Gabor2013}, thereby completing the proof.

	\subsection{Proof of~\cref{prop:energyeqfree}}\label{pf:energyeqfree}
	The proof of this result directly follows from the proof of~\cref{theo:consis} and~\cite[Theorem 2.2]{azadkia2019simple}. First note that a regular conditional probability always exists on $\R^d$ for Borel probability measures (see~\cite[Theorem 2.1.15 and Exercise 5.1.16]{Durrett2019}). Also as $K(\cdot,\cdot)$ is bounded, all the moment assumptions from~\cref{theo:consis} are satisfied by default. The only other thing to check is whether assumptions (A1)-(A3) hold for the 1-NNG under no assumptions on $\mu_X$ provided a random tie-breaking mechanism is used (as discussed in the prequel to~\cref{prop:energyeqfree}). All these $3$ properties were established for the 1-NNG in~\cite[Lemmas 10.3, 10.4 and 10.9]{azadkia2019simple}. This gives the result. 
	
	\subsection{Proof of~\cref{thm:Consistency}}\label{pf:Consistency}
	
	First we state an important result about the consistency of the empirical multivariate ranks  (as in~\cref{def:empquanrank}), i.e., $\hat{R}_n(\cdot)$ yields a consistent estimate of $R(\cdot)$ (see~\cite{Deb19} for a proof).
	\begin{prop}\label{prop:consisrank}
		Suppose $\mu\in\mathcal{P}_{ac}(\R^d)$ and $n^{-1}\sum_{i=1}^n \delta_{h_i^d}\overset{w}{\longrightarrow}\nu$. Then $n^{-1}\sum_{i=1}^n \lVert \hat{R}_n(X_i)-R(X_i)\rVert_2\overset{a.s.}{\longrightarrow}0$.
	\end{prop}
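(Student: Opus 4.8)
The plan is to realize both $\hat{R}_n$ and $R$ as optimal transport couplings and to push the consistency through stability of optimal plans under weak convergence of the marginals. Write $\mu_n := n^{-1}\sum_{i=1}^n \delta_{X_i}$ for the empirical measure of the sample and $\hat{\nu}_n := n^{-1}\sum_{i=1}^n \delta_{h_i^d}$ for the discrete reference. First I would note that, since $\sum_i \|X_i\|_2^2$ and $\sum_i \|h_i^d\|_2^2$ are both constant over $\sigma \in S_n$, the minimizer $\hat{\sigma}_n$ in \cref{def:empquanrank} equals $\argmax_{\sigma\in S_n}\sum_{i=1}^n\langle X_i, h_{\sigma(i)}^d\rangle$; consequently the empirical coupling $\hat{\pi}_n := n^{-1}\sum_i \delta_{(X_i,\hat{R}_n(X_i))}$ is an optimal coupling of $(\mu_n,\hat{\nu}_n)$ for the quadratic cost, and this reformulation removes the need for any moment assumption on $\mu$. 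By Varadarajan's theorem the empirical measure of i.i.d.\ samples converges weakly almost surely, so $\mu_n\overset{w}{\longrightarrow}\mu$ a.s., while $\hat{\nu}_n\overset{w}{\longrightarrow}\nu$ by hypothesis.

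Next I would invoke stability of optimal transport. The sequence $\{\hat{\pi}_n\}$ has weakly convergent marginals, hence is tight, and any subsequential weak limit $\pi$ is a coupling of $(\mu,\nu)$. Since the quadratic cost is continuous and cyclical monotonicity of the support is closed under weak limits, $\pi$ is itself optimal. As $\mu\in\mathcal{P}_{ac}(\R^d)$, \cref{prop:Mccan} yields uniqueness of the optimal coupling and identifies it with the graph of the population rank map $R$ from \cref{def:popquanrank}, i.e.\ $\pi = (\mathrm{id}\times R)_\#\mu =: \pi_R$. Being subsequence-independent, this gives $\hat{\pi}_n\overset{w}{\longrightarrow}\pi_R$.

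The third step is an identification (gluing) argument on the triples $\gamma_n := n^{-1}\sum_i\delta_{(X_i,\hat{R}_n(X_i),R(X_i))}$ on $\R^d\times[0,1]^d\times[0,1]^d$. Because $R=\nabla\phi$ for a convex $\phi$, and convex functions are differentiable off a Lebesgue-null set with gradient continuous at every differentiability point, $R$ is continuous $\mu$-a.e.; the extended continuous mapping theorem then gives that the $(1,3)$-marginal $(\mathrm{id}\times R)_\#\mu_n$ converges weakly to $\pi_R$, while the $(1,2)$-marginal converges to $\pi_R$ by the previous paragraph. Since $\{\gamma_n\}$ is tight, any subsequential limit $\gamma^*$ has both its $(1,2)$- and $(1,3)$-marginals equal to $\pi_R$, which is concentrated on the graph of $R$; hence under $\gamma^*$ the second and third coordinates both equal $R$ of the first coordinate a.s., forcing $\gamma^* = (\mathrm{id}\times R\times R)_\#\mu =: \gamma$, and therefore $\gamma_n\overset{w}{\longrightarrow}\gamma$.

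To conclude, the map $(x,u,v)\mapsto\|u-v\|_2$ is bounded (by $\sqrt{d}$, as $u,v\in[0,1]^d$) and continuous, so $n^{-1}\sum_i\|\hat{R}_n(X_i)-R(X_i)\|_2 = \int\|u-v\|_2\,d\gamma_n \to \int\|R(x)-R(x)\|_2\,d\mu = 0$; boundedness of the ranks removes any truncation issue, giving the almost sure convergence. I expect the main obstacle to be the stability step: preserving optimality under the weak limit is delicate because the quadratic cost is unbounded and the source ranges over all of $\R^d$ while only the target $[0,1]^d$ is compact. The correlation reformulation together with compactness of the target makes this tractable, but care is needed to rule out source mass escaping to infinity — this is precisely where absolute continuity of $\mu$ and the fixed compactly supported reference $\nu$ are used decisively.
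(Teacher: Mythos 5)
Your proposal is correct, and it is essentially the argument behind the result: the paper itself does not prove \cref{prop:consisrank} but defers to~\cite{Deb19}, where the proof proceeds exactly along your route --- the optimal assignment yields an empirical coupling with cyclically monotone support, Varadarajan's theorem gives $\mu_n\overset{w}{\longrightarrow}\mu$ a.s., closedness of cyclical monotonicity under weak limits plus McCann's uniqueness theorem (valid without moment assumptions, which disposes of the unbounded-cost worry you correctly flag) identifies every subsequential limit with $(\mathrm{id}\times R)_{\#}\mu$, and a gluing argument on the triples $(X_i,\hat{R}_n(X_i),R(X_i))$, using $\mu$-a.e.\ continuity of $R$ and boundedness of the ranks, converts this into a.s.\ convergence of the average. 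Your handling of the delicate points (the correlation reformulation, compactness of the target, and a.e.\ continuity of gradients of convex functions) matches the cited proof, so there is nothing to repair.
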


	\emph{Proof of (i)}: By~\cite[Proposition 2.2 (ii)]{Deb19}, $(\hat{R}_n^X(X_1),\ldots ,\hat{R}_n^X(X_n))$ (or $(\hat{R}_n^Y(Y_1),\ldots ,\hat{R}_n^Y(Y_n))$) is distributed uniformly over all $n!$ permutations of the set $\mathbf{\mathcal{H}}_n^{d_1}$ (or $\mathbf{\mathcal{H}}_n^{d_2}$). When $\mu=\mu_X\otimes\mu_Y$, clearly $(\hat{R}_n^X(X_1),\ldots ,\hat{R}_n^X(X_n))$ and $(\hat{R}_n^Y(Y_1),\ldots ,\hat{R}_n^Y(Y_n))$ are independent and the joint distribution is pivotal. As $\rkmac$ is a function of $(\hat{R}_n^X(X_1),\ldots ,\hat{R}_n^X(X_n),\hat{R}_n^Y(Y_1),\ldots ,\hat{R}_n^Y(Y_n))$, it also has a pivotal distribution.
	
	\emph{Proof of (ii)}: First let us prove that $\rpk$ satisfies (P1)-(P3) from~\cref{sec:popkmac}. As $R^Y(\cdot)$ is uniformly bounded coordinate-wise by $1$ and $K(\cdot,\cdot)$ is characteristic, by~\cref{theo:consis}, $\rpk\in [0,1]$ which yields (P1). To prove (P2), suppose that $X$ and $Y$ are independent, then $R^Y(Y)$ and $X$ are independent and so $\rpk=0$ (by~\cref{theo:consis}). And if $\rpk=0$, then $R^Y(Y)$ and $R^X(X)$ are independent (by~\cref{theo:consis}) and so are $Y$ and $X$ (see the proof of~\cite[Lemma 3.1(b)]{Deb19}). For (P3), if $Y$ is a measurable function of $X$, then $R^Y(Y)$ is a measurable function of $X$ and so $\rpk=1$ (by~\cref{theo:consis}). Finally, if $\rpk=1$, then $R^Y(Y)$ is a measurable function of $X$ (by~\cref{theo:consis}), i.e., there exists a measurable function $h:\R^{d_1}\to\R^{d_2}$ such that $R^Y(Y)=h(X)$. Let $Q^Y(\cdot)$ be the quantile map from~\cref{def:popquanrank}. By~\cref{prop:Mccan}, we get $Y=Q^Y(h(X))$, $\mu$-a.e., and thus $Y$ is a measurable function of $X$. This completes the proof.
	
	Next we will show that $\rkmac\overset{\P}{\longrightarrow}\rpk$. As $n^{-1}\sum_{i=1}^n \delta_{h_i^{d_2}}$ converges weakly to $\mathcal{U}[0,1]^{d_2}$, the following conclusions are easy consequences of the Portmanteau Theorem:
	\begin{align}\label{eq:Promitexp}
	&\frac{1}{n}\sum_{i=1}^n K(\hat{R}_n^Y(Y_i),\hat{R}_n^Y(Y_i))\overset{n\to\infty}{\longrightarrow}\E K(R^Y(Y_1),R^Y(Y_1)),\nonumber  \\&
	\frac{1}{n(n-1)}\sum_{i\neq j, 1}^n K(\hat{R}_n^Y(Y_i),\hat{R}_n^Y(Y_j))\overset{n\to\infty}{\longrightarrow}\E K(R^Y(Y_1),R^Y(Y_2)).
	\end{align}
	In the above displays, both terms on the right hand side are deterministic because both of them are permutation-invariant and $(\hat{R}_n(Y_1),\ldots ,\hat{R}_n(Y_n))$ is some permutation of the fixed set $\mathcal{H}_n^{d_2}$. Consequently the above convergence is a deterministic result.
	By an application of~\cref{prop:consisrank} and Assumption (A3), it further suffices to show that:
	$$\mathcal{Z}_n:=\frac{1}{n}\sum_{i}d^{-1}_i\sum_{j:(i,j)\in \rmgn}K(R^Y(Y_i), R^Y(Y_j))\overset{\mathbb{P}}{\longrightarrow}\E K(R^Y(Y'),R^Y(\tilde{Y'})).$$
	
	In order to establish the above, by an application of Chebyshev's inequality, it suffices to show that:
	\begin{enumerate}
		\item[(a)] $\E \mathcal{Z}_n^2\overset{n\to\infty}{\longrightarrow} \left[\E K(R^Y(Y'),R^Y(\tilde{Y'}))\right]^2.$
		\item[(b)] $\E \mathcal{Z}_n\overset{n\to\infty}{\longrightarrow} \E K(R^Y(Y'),R^Y(\tilde{Y'})).$
	\end{enumerate}
	
	We first prove (a). For the sake of simplicity, we write 
	\begin{align*}
	\mathbb{E}\mathcal{Z}_n^2 & = (\mathbf{I})+ (\mathbf{II}) + (\mathbf{III})
	\end{align*}
	where 
	\begin{align*}
	(\mathbf{I}) &:= \frac{1}{n^2}\E\left[\sum_{i}\sum_{j:(i,j)\in \rmgn}\frac{1}{d_i}\left(\frac{1}{d_i}+\frac{1}{d_j}\right)\mathbb{E}\Big[\big(K(R^Y(Y_i), R^Y(Y_j))\big)^2\Big|R^X(X_i),R^X(X_j)]\right] \\
	(\mathbf{II}) &:= \frac{1}{n^2}\E\Bigg[\sum_{i}\sum_{j\neq k: (i,j),(i,k)\in \rmgn}\left(\frac{1}{d_i}+\frac{1}{d_j}\right)\left(\frac{1}{d_i}+\frac{1}{d_k}\right)\\& \times\mathbb{E}\Big[K(R^Y(Y_i), R^Y(Y_j))K(R^Y(Y_i), R^Y(Y_k))|R^X(X_i),R^X(X_j),R^X(X_k)\Big]\Bigg]\\
	(\mathbf{III}) &:=\frac{1}{n^2} \E\Bigg[\sum_{i\neq j} \sum_{k\neq \ell: (k,i), (\ell,j)\in\rmgn}\frac{1}{d_id_j} \mathbb{E}\Big[K(R^Y(Y_i),R^Y(Y_k))K(R^Y(Y_j), R^Y(Y_\ell))|\\ &R^X(X_i),R^X(X_j),R^X(X_k),R^X(X_l)\Big]\Bigg] .  
	\end{align*}
	We now claim and prove that 
	\begin{align}\label{eq:ppconv}
	(\mathbf{III}) \to \Big(\mathbb{E}\Big(\mathbb{E}\Big[K\big(R^Y(\tilde{Y}_1),R^Y(\tilde{Y}_2)\big)|X\Big]\Big)\Big)^2
	\end{align}
	where $\tilde{Y}_1$ and $\tilde{Y}_2$ are independent samples from the conditional distribution $Y|X$. 
	as $n$ goes to $\infty$. For this, we define 
	\begin{align*}
	(\widetilde{\mathbf{III}}) := \frac{1}{n^2}\sum_{i\neq j} \sum_{k\neq \ell: (k,i),(\ell,j)\in\rmgn} &\frac{1}{d_i}\frac{1}{d_j}\mathbb{E}\Big[\mathbb{E}\Big[K\big(R^Y(Y_i),R^Y(Y^{\prime}_i)\big)|R^X(X_i)\Big]\\&\times \mathbb{E}\Big[K\big(R^Y(Y_j),R^Y(Y^{\prime}_j)\big)|R^X(X_j)\Big]\Big]
	\end{align*}
	where $Y^{\prime}_i$ (resp. $Y^{\prime}_j$) is a sample from the conditional distribution $Y|X_i$ (resp. $Y|X_j$) independent of $Y_i$ and $Y_j$.  
	A simple application of the fact that $K(\cdot,\cdot)$ is continuous a.e., $R^Y(\cdot)$ is bounded coordinate-wise, shows that:
	\begin{align}
	|(\mathbf{III}) -(\widetilde{\mathbf{III}})|&\lesssim C\frac{t_n}{nr_n}\sum_{i}\frac{1}{d_i}\mathbb{E}\Big[\sum_{k:(k,i)\in \rmgn}\big|  \mathbb{E}\big[K(R^Y(Y_i),R^Y(Y_k))\big|R^X(X_i),R^X(X_k)\big]\nonumber \\ &- \mathbb{E}\big[K(R^Y(Y_i),R^Y(Y^{\prime}_i))\big|R^X(X_i)\big]\big|\Big].  \label{eq:Error}
	\end{align}
	We show that the right hand side of the above inequality converges to $0$ as $n \to \infty$. By our assumption, $\mathbb{E}\big[K\big(R^Y(Y_1), R^Y(Y_2)\big)|R^X(X_1)=x_1,R^X(X_2)=x_2\big]$ is uniformly $\beta$-H{\"o}lder continuous w.r.t. $x_1,x_2$. Applying this to \eqref{eq:Error} shows 
	\begin{align}
	\limsup_{n\to \infty}\text{r.h.s. of \eqref{eq:Error}} &\lesssim \limsup_{n\to \infty}\frac{t_n}{nr_n}\sum_{i=1}^{n}\frac{1}{d_i}\mathbb{E}\Big[\sum_{k:(k,i)\in \rmgn} \big\|R^X(X_i)- R^X(X_k)\big\|^{\beta} \Big]\nonumber \\
	&\leq \limsup_{n\to \infty} \frac{t_n}{nr_n} \mathbb{E}\Big[\sum_{i=1}\frac{1}{d_i}\sum_{k:(k,i)\in\rmgn}\big\|\hat{R}_n^X(X_i)-\hat{R}_n^X(X_k)\big\|^{\beta}\Big] . \label{eq:errorbd}
	\end{align} 
	where the last inequality follows once again from~\cref{prop:consisrank}. Notice that $\sum_{i=1}\sum_{k:(k,i)\in\rmgn}\big\|\hat{R}_n^X(X_i)-\hat{R}_n^X(X_k)\big\|^{\beta}=\sum_{e\in\rmgn} |e|^{\beta}$ where $|e|$ denotes the length of the edge $e$. The right hand side of the above display converge to $0$ by our assumption.   
	This shows $|(\mathbf{III})-(\widetilde{\mathbf{III}})|$ converges to $0$ as $n$ tends to $\infty$. On other hand, we define 
	\begin{align*}
	(\widetilde{\mathbf{I}}) &:= \frac{1}{n^2} \E\Bigg[\sum_{i=1}^{n} \sum_{j:(j,i)\in \rmgn} \Bigg(\frac{1}{d^2_i}\Big[\big(K(R^Y(Y_i), R^Y(Y^{\prime}_i))\big)^2\Big]\\& \qquad \qquad + \frac{1}{d_id_j}\mathbb{E}\Big[K(R^Y(Y_i), R^Y(Y^{\prime}_i))K(R^Y(Y_j), R^Y(Y^{\prime}_j))\Big]\Bigg)\Bigg]\\
	(\widetilde{\mathbf{II}}) &:=\frac{1}{n^2}\E\Bigg[\sum_{i=1}^{n} \sum_{(j,i),(k,i)\in\rmgn}\Bigg[ \frac{1}{d_jd_k}K(R^Y(Y_k), R^Y(Y^{\prime}_k))K(R^Y(Y_j), R^Y(Y^{\prime}_j))\\ & \quad +\frac{1}{d_i^2}K^2(R^Y(Y_i),R^Y(Y_i'))+\frac{2}{d_id_j}K(R^Y(Y_i),R^Y(Y_i'))K(R^Y(Y_j),R^Y(Y_j'))\Bigg]\Bigg].
	\end{align*}
	Note that 
	\begin{align}
	(\widetilde{\mathbf{I}}) +(\widetilde{\mathbf{II}}) + (\widetilde{\mathbf{III}}) &= \frac{1}{n^2} \mathbb{E}\Big[\Big(\sum_{i=1}^n K(R^Y(Y_i), R^Y(Y^{\prime}_i))\Big)^2\Big]\nonumber \\& \to \Big(\mathbb{E}\big[K(R^Y(Y_1), R^Y(Y^{\prime}_1))\big]\Big)^2 \label{eq:sumconv}
	\end{align}
	where the last convergence follows from the strong law of large numbers and the dominated convergence theorem. Owing to the fact that Assumption (A3) holds and $K(\cdot,\cdot)$ is continuous a.e., it is easy to check that $(\mathbf{I}),(\mathbf{II}),(\widetilde{\mathbf{I}}),(\widetilde{\mathbf{II}})$ are converging to $0$ as $n \to \infty$. Combining this with \eqref{eq:ppconv} and \eqref{eq:sumconv} completes the proof of (a). 
	
	Now we move on to (b). By the towering property of the conditional expectation, we have 
	\begin{align}
	\E \mathcal{Z}_n&=\mathbb{E}\Big[\frac{1}{n}\sum_{i}d^{-1}_i\sum_{j:(i,j)\in \mathcal{E}(\mathcal{G}_n)}K(R^Y(Y_i), R^Y(Y_j))\Big]\nonumber \\
	& = \mathbb{E}\Big[\frac{1}{n}\sum_{i}d^{-1}_i\sum_{j:(i,j)\in \mathcal{E}(\mathcal{G}_n)}\mathbb{E}\big[K(R^Y(Y_i), R^Y(Y_j))|R^X(X_i),R^X(X_j)\big]\Big].\label{eq:Tower}
	\end{align}
	By the $\beta$-H{\"o}lder continuity of $\mathbb{E}\big[K(R^Y(Y_i), R^Y(Y_j))\big|R^X(X_i)=x, R^X(X_j)=y\big]$ as a function of $x$ and $y$, there exists $C>0$ such that 
	\begin{align*}
	\Big|&\mathbb{E}\big[K(R^Y(Y_i), R^Y(Y_j))\big|R^X(X_i), R^X(X_j)\big]\\& -\mathbb{E}\big[K(R^Y(Y_i), R^Y(Y^{\prime}_j))\big|R^X(X_i)\big]\Big|\lesssim \|R^X(X_i)- R^X(X_j)\|^{\beta}
	\end{align*}
	where $Y_i,Y^{\prime}_i$ are two independent samples from the conditional distribution of $Y$ given $X_i$. Applying the above inequality, we get 
	\begin{align*}
	\Big|&\text{r.h.s. of \eqref{eq:Tower}}- \frac{1}{n}\mathbb{E}\big[\sum_{i=1}^{n}K(R^Y(Y_i), R^Y(Y^{\prime}_i))\big]\Big|\\& \lesssim \mathbb{E}\Big[\frac{1}{n}\sum_{i=1}^{n}\sum_{j:(i,j)\in \mathcal{E}(\mathcal{G}_n)}\frac{1}{d_i}\|R^X(X_i)- R^X(X_j)\|^{\beta}\Big].
	\end{align*} 
	The proof of (b) can now be completed using the same steps as those in the proof of (a) starting from~\eqref{eq:errorbd}.    	
	
	\subsection{Proof of~\cref{prop:Chacon}}\label{pf:Chacon}
	
	The crucial observation in this proof is that $K(y_1,y_2)=|y_1|+|y_2|-|y_1-y_2|=\min\{y_1,y_2\}$ for $y_1,y_2\in [0,\infty)$. Now, when $d_2=1$, $R^Y(\cdot)$ is simply the cumulative distribution function of $Y$; we will call it $F_Y$ to stick with conventional notation. Next note that, $\rpk$ can be simplified as:
	\begin{equation}\label{eq:temprpk}
	\rpk=\frac{\E\min\{F_Y(Y'),F_Y(\tilde{Y'})\}-\E\min\{F_Y(Y_1),F_Y(Y_2)\}}{1/2-\E\min\{F_Y(Y_1),F_Y(Y_2)\}}.
	\end{equation}
	As $\min\{a,b\}=\int_0^{\infty} \mathbf{1}(t\leq a)\mathbf{1}(t\leq b)\,dt$ for $a,b\in\R$, an application of the dominated convergence theorem yields: $$\E\min\{F_Y(Y'),F_Y(\tilde{Y'})\}=\int \E [\left(\P(F_Y(Y)\geq t|X)\right)^2]\,dt=\int \E[\left(\P(Y\geq t|X)\right)^2]\,d\mu_Y(t).$$ and similarly, $$\E\min\{F_Y(Y_1),F_Y(Y_2)\}=\int \left(\P(Y\geq t)\right)^2\,d\mu_Y(t).$$ Plugging the above expressions into the expression of $\rpk$ in~\eqref{eq:temprpk} gives: $$\rpk =\frac{\int\left(\E[\left(\P(Y\geq t|X)\right)^2]-\left(\P(Y\geq t)\right)^2\right)\,d\mu_Y(t)}{\int\left(\P(Y\geq t)-\left(\P(Y\geq t)\right)^2\right)\,d\mu_Y(t)}=\xi(\mu).$$ This completes the proof.
	\qed

	\subsection{Proof of~\cref{theo:ranknullclt}}\label{pf:ranknullclt}
	Define $C_n:=(n(n-1))^{-1}\sum_{i\neq j} K(\hat{R}_n^Y(Y_i),\hat{R}_n^Y(Y_j))$ and recall that $C_n$ is a deterministic quantity as was explained in the comment after~\eqref{eq:Promitexp}. Also $\E K(\hat{R}_n^Y(Y_1),\hat{R}_n^Y(Y_2))=C_n$. Let $\mathcal{F}_n:=\sigma(X_1,\ldots ,X_n)$, i.e., the $\sigma$-field generated by $(X_1,\ldots ,X_n)$. Consequently, note that:
	\begin{align}\label{eq:rcenter}
	\E[\nrk|\mathcal{F}_n]&=\sqrt{n}\left(\frac{1}{n}\sum_{i=1}^n \frac{1}{\tilde{d}_i}\sum_{j:(j,i)\in\rmgn}\E [K(\hat{R}_n^Y(Y_i),\hat{R}_n^Y(Y_j))]-C_n\right)\nonumber \\&=\sqrt{n}C_n\left(\frac{1}{n}\sum_{i=1}^n \frac{1}{\tilde{d}_i}\sum_{j:(j,i)\in\rmgn} 1-1\right)=0.
	\end{align}
	By the same calculation as in~\eqref{eq:mainvareq}, we get:
	\begin{align}\label{eq:rankmainvareq}
	\mbox{Var}(\nrk|\mathcal{F}_n)=\left(\tilde{g}_1+\tilde{g}_3-\frac{2}{n-1}\right)(\hat{a}-2\hat{b}+\hat{c})+(\tilde{g}_2-1)(\hat{b}-\hat{c}),
	\end{align}
	where 
	\begin{align*}
	&\hat{a}:=\frac{1}{n(n-1)}\sum_{(i,j)\ \mathrm{distinct}} K^2(\hat{R}_n^Y(Y_i),\hat{R}_n^Y(Y_j))\\ &\hat{b}:=\frac{1}{n(n-1)(n-2)}\sum_{(i,j,l)\ \mathrm{distinct}}K(\hat{R}_n^Y(Y_i),\hat{R}_n^Y(Y_j))K(\hat{R}_n^Y(Y_i),\hat{R}_n^Y(Y_l))\\ &\hat{c}:=\frac{1}{n(n-1)(n-2)(n-3)}\sum_{(i,j,l,m)\ \mathrm{distinct}} K(\hat{R}_n^Y(Y_i),\hat{R}_n^Y(Y_j))K(\hat{R}_n^Y(Y_l),\hat{R}_n^Y(Y_m)).
	\end{align*}
	Now $\hat{a},\hat{b},\hat{c}$ are clearly $\mathcal{O}(1)$ and $\tilde{g}_1,\tilde{g}_2,\tilde{g}_3$ are $\mathcal{O}(1)$ by using assumption (A3) (same as in~\eqref{eq:boundcount}). Next observe that the right hand side of~\eqref{eq:rankmainvareq} is a deterministic quantity. Therefore by combining~\eqref{eq:rcenter}~and~\eqref{eq:rankmainvareq}, we get $\mbox{Var}(\nrk)=\mathcal{O}(1)$.
	
	In order to establish the CLT, define
	\begin{equation}\label{eq:hajekpop}
	\nrp:=\sqrt{n}\left(\frac{1}{n}\sum_{i=1}^n \frac{1}{\tilde{d}_i}\sum_{j:(j,i)\in\rmgn} K(R^Y(Y_i),R^Y(Y_j))-\frac{1}{n(n-1)}\sum_{i\neq j} K(R^Y(Y_i),R^Y(Y_j))\right).
	\end{equation}
	By~\cref{theo:nullclt}~and~\cref{lem:hajekrep}, it suffices to show that 
	$$\sup_{\tilde{\mathcal{G}}\in\mathcal{J}_{\theta}}\Bigg|\frac{\mbox{Var}(\nrp)}{\mbox{Var}(\nrk)}-1\Bigg|\overset{n\to\infty}{\longrightarrow}0.$$
	As in~\eqref{eq:mainvareq}, we have:
	\begin{equation}\label{eq:popvar}
	\mbox{Var}(\nrp)=\E(\nrp)^2=\left(\tilde{g}_1+\tilde{g}_3-\frac{2}{n-1}\right)(\tilde{a}-2\tilde{b}+\tilde{c})+(\tilde{g}_2-1)(\tilde{b}-\tilde{c}).
	\end{equation}
	Therefore, 
	\begin{align*}
	&\sup_{\tilde{\mathcal{G}}\in\mathcal{J}_{\theta}}\Bigg|\frac{\mbox{Var}(\nrp)}{\mbox{Var}(\nrk)}-1\Bigg|\lesssim \sup_{\tilde{\mathcal{G}}\in\mathcal{J}_{\theta}}\Bigg[\left(\tilde{g}_1+\tilde{g}_3-\frac{2}{n-1}\right)(\tilde{a}-\hat{a}-2\tilde{b}+2\hat{b}+\tilde{c}-\hat{c})\\ &\qquad \qquad +(\tilde{g}_2-1)(\tilde{b}-\hat{b}-\tilde{c}+\hat{c})\Bigg]\overset{n\to\infty}{\longrightarrow}0.
	\end{align*}
	This completes the proof. \qed
	
	\section{Some Technical Lemmas}\label{sec:techlem}
	\begin{lemma}[H\'ajek representation]\label{lem:hajekrep}
		Suppose $\mu=\mu_X\otimes\mu_Y$, $\mu_X\in\mathcal{P}_{ac}(\R^{d_1})$ and $\mu_Y\in\mathcal{P}_{ac}(\R^{d_2})$. Recall the definition of $\nrp$ from~\eqref{eq:hajekpop} and that of $\nrk$ from~\eqref{eq:recallnrk}. Consider the following subclass of graph functionals given by:
		\begin{align*}
		&\mathcal{I}_{\theta} :=\{\tilde{\mathcal{G}}: r_n^{-1}t_n\leq \theta\ \mbox{and } \;   \trgn:=\tilde{\mathcal{G}}(\hat{R}_n^X(X_1),\ldots ,\hat{R}_n^X(X_n))\},
		\end{align*}
		for $\theta\in (0,\infty)$. Then the following convergence holds, for fixed $\theta$:
		$$\sup_{\tilde{\mathcal{G}}\in\mathcal{I}_{\theta}}\E[\left(\nrk-\nrp\right)^2]\overset{n\to\infty}{\longrightarrow}0.$$
	\end{lemma}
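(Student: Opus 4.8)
The plan is to control the $L^2$-distance between the rank statistic $\nrk$ and its population-rank analogue $\nrp$ by replacing the empirical ranks $\hat{R}_n^Y(Y_i)$ with the population ranks $R^Y(Y_i)$ inside the two terms defining $\nrk$, and to show that the error incurred is $o_{\P}(1)$ in $L^2$, uniformly over $\tilde{\mathcal{G}}\in\mathcal{I}_\theta$. First I would write $\nrk-\nrp$ as a sum of two pieces: the difference coming from the graph-averaged term $n^{-1}\sum_i \td_i^{-1}\sum_{j:(j,i)\in\rmgn} [K(\hat{R}_n^Y(Y_i),\hat{R}_n^Y(Y_j))-K(R^Y(Y_i),R^Y(Y_j))]$, scaled by $\sqrt{n}$, and the difference coming from the U-statistic centering term $(n(n-1))^{-1}\sum_{i\neq j}[K(\hat{R}_n^Y(Y_i),\hat{R}_n^Y(Y_j))-K(R^Y(Y_i),R^Y(Y_j))]$, also scaled by $\sqrt{n}$. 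For each piece I would aim to bound the squared expectation.

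The key tool is the consistency of empirical ranks from~\cref{prop:consisrank}, which gives $n^{-1}\sum_{i=1}^n \lVert \hat{R}_n^Y(Y_i)-R^Y(Y_i)\rVert_2 \overset{a.s.}{\longrightarrow} 0$, together with continuity of $K(\cdot,\cdot)$. The subtlety is that continuity alone yields convergence but not a rate, so to get the $\sqrt{n}$-scaling under control I would exploit boundedness: since $\hat{R}_n^Y(Y_i),R^Y(Y_i)\in[0,1]^{d_2}$, the arguments of $K$ live in a compact set on which $K$ is uniformly continuous, so $K$ is bounded and its modulus of continuity $\omega$ can be used. I would then approximate $K$ uniformly on $[0,1]^{d_2}\times[0,1]^{d_2}$ by a Lipschitz (or smooth) kernel $K_\delta$ with $\sup|K-K_\delta|\le \delta$, handle the residual $K-K_\delta$ by the boundedness bound using the degree-regularity (A3) so that the graph-term coefficients $\td_i^{-1}\sum_{j:(j,i)}1=1$ keep the prefactors $\mathcal{O}(1)$, and handle the Lipschitz part $K_\delta$ via~\cref{prop:consisrank}. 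The scaling by $\sqrt{n}$ is absorbed because the Lipschitz bound produces a factor $n^{-1}\sum_i\lVert \hat{R}_n^Y(Y_i)-R^Y(Y_i)\rVert_2$ which, combined with a second moment computation exploiting the near-independence / exchangeability structure and (A3) (so that each vertex contributes $\mathcal{O}(1)$ neighbours relative to its degree), gives the extra $n^{-1/2}$ decay needed to beat the $\sqrt{n}$. The uniformity over $\mathcal{I}_\theta$ follows because the only graph-dependent quantities entering the bounds are $t_n/r_n\le\theta$ and the degree normalizations, which are deterministic given $\mathbf{\mathcal{H}}_n^{d_1}$.

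Concretely, after the Lipschitz reduction I would bound
\begin{align*}
\E\Big[\Big(\tfrac{1}{\sqrt{n}}\sum_i \td_i^{-1}\!\!\sum_{j:(j,i)\in\rmgn}\!\!\big(K_\delta(\hat{R}_n^Y(Y_i),\hat{R}_n^Y(Y_j))-K_\delta(R^Y(Y_i),R^Y(Y_j))\big)\Big)^2\Big]
\end{align*}
by expanding the square, using that cross terms factorize or telescope under exchangeability, and using the Lipschitz constant of $K_\delta$ times $\E\lVert \hat{R}_n^Y(Y)-R^Y(Y)\rVert_2^2$ together with $r_n^{-1}t_n\le\theta$; the dominant contribution is of order $\theta^2\, n\,\big(\E n^{-1}\sum_i\lVert \hat{R}_n^Y(Y_i)-R^Y(Y_i)\rVert_2\big)^2\cdot(\text{Lip}(K_\delta))^2$, which tends to $0$ once we know $\E\,n^{-1}\sum_i\lVert \hat{R}_n^Y(Y_i)-R^Y(Y_i)\rVert_2 = o(n^{-1/2})$. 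This last rate is the main obstacle: \cref{prop:consisrank} by itself only gives $o(1)$ convergence of the average rank error, not the $o(n^{-1/2})$ rate that the $\sqrt{n}$-scaling demands. I would therefore invoke the known $L^2$ rate of convergence for empirical optimal-transport ranks (as in~\cite{Deb19,shi2020rate}), which under $\mu_Y\in\mathcal{P}_{ac}(\R^{d_2})$ provides a polynomial rate for $\E n^{-1}\sum_i\lVert \hat{R}_n^Y(Y_i)-R^Y(Y_i)\rVert_2^2$ fast enough (together with the freedom to send $\delta\to0$ slowly and trade off against the rate) to make the product vanish. The centering-term piece is strictly easier, being a bounded U-statistic difference of the same rank replacements, and is handled identically. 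Assembling the two $L^2$ bounds and taking suprema over $\mathcal{I}_\theta$ completes the proof.
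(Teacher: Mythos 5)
Your proposal has two genuine gaps, and each one by itself is fatal to the strategy of bounding the two pieces of $\nrk-\nrp$ separately.

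First, the decomposition itself destroys a cancellation that the lemma needs. Under the null, your graph-term piece $\sqrt{n}\,\big[n^{-1}\sum_i \td_i^{-1}\sum_{j:(j,i)\in\trmgn}\big(K(\hat{R}_n^Y(Y_i),\hat{R}_n^Y(Y_j))-K(R^Y(Y_i),R^Y(Y_j))\big)\big]$ has expectation $\sqrt{n}\,\big(F_n-\E K(R^Y(Y_1),R^Y(Y_2))\big)$, where $F_n$ is the (deterministic) average of $K$ over ordered pairs of distinct points of $\mathcal{H}_n^{d_2}$. The lemma assumes only that the empirical measure on $\mathcal{H}_n^{d_2}$ converges weakly to $\mathcal{U}[0,1]^{d_2}$ --- there is no rate --- so this deterministic bias, inflated by $\sqrt{n}$, can diverge. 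Inside $\nrk$ the same bias appears in the centering term and cancels exactly (this is precisely why $\E[\nrk\,|\,\mathcal{F}_n]=0$), but once you split into two pieces, each piece can have unbounded second moment. Second, and independently: your Lipschitz reduction needs $\E\big[n^{-1}\sum_i\lVert\hat{R}_n^Y(Y_i)-R^Y(Y_i)\rVert_2\big]=o(n^{-1/2})$, and no such rate exists in any dimension. Already for $d_2=1$, where $\hat{R}_n^Y$ is the rescaled usual rank and $R^Y=F$, the quantity $\sqrt{n}\cdot n^{-1}\sum_i|\hat{F}_n(Y_i)-F(Y_i)|$ converges in distribution to $\int_0^1|B(u)|\,du>0$ (a Brownian-bridge functional), so the average rank error is of \emph{exact} order $n^{-1/2}$; for $d_2\geq 2$ empirical optimal-transport ranks converge strictly more slowly, not faster. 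The obstacle you flag as ``the main obstacle'' is therefore not a missing citation but an impossibility for this route.

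The paper's proof sidesteps rank-error rates entirely. Using the resampling trick --- under $\mu=\mu_X\otimes\mu_Y$ the data have the same law as $(X_i,Y_{\sigma(i)})$ for a uniform random permutation $\sigma$ independent of everything --- it computes $\E[(\nrk)^2]$, $\E[(\nrp)^2]$ and the cross moment $\E[\nrk\nrp]$ \emph{exactly}. All three have the identical closed form: uniformly bounded graph coefficients $\tilde{g}_1,\tilde{g}_2,\tilde{g}_3$ (bounded via $t_n/r_n\leq\theta$, and the only graph-dependent factors) multiplying graph-free quadratic statistics, namely $(\hat{a},\hat{b},\hat{c})$ for empirical ranks, $(\tilde{a},\tilde{b},\tilde{c})$ for population ranks, and mixed quantities $(\underline{a},\underline{b},\underline{c})$ for the cross term. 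Consequently $\E[(\nrk-\nrp)^2]$ telescopes into bounded coefficients times second-order differences such as $\hat{a}-2\underline{a}+\tilde{a}$, and each of these converges to $0$ in $L^1$ using only \cref{prop:consisrank}, continuity of $K$ on the compact domain, and bounded convergence --- no rate is required, because the exact moment computation has already absorbed the $\sqrt{n}$ scaling. If you want a direct argument, you must treat $\nrk-\nrp$ as a whole (preserving the bias cancellation) and exploit the exchangeability structure, rather than bounding rank-replacement errors term by term.
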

	\begin{proof}
		To prove the above, let us first introduce the notion of the \emph{resampling distribution}. Note that when $\tilde{\mu}=\tilde{\mu}_X\otimes\tilde{\mu}_Y$, the joint distribution of $(X_1,Y_1),\ldots ,(X_n,Y_n)$ is the same as the joint distribution of $(X_1,Y_{\sigma(1)}),\ldots ,(X_n,Y_{\sigma(n)})$ where $\sigma$ is a random permutation of the set $\{1,2,\ldots,n\}$ which is drawn independent of the $X_i$'s and $Y_i$'s. 
		
		The expressions for $\E(\nrk)^2$ and $\E(\nrp)^2$ has already been presented in~\eqref{eq:rankmainvareq}~and~\eqref{eq:popvar} respectively. Therefore, in the sequel, we will only focus on the term $\E[\nrk\nrp]$ where we will use the \emph{resampling distribution} as discussed above. Towards this direction, let us define:
		\begin{align*}
		&\underline{a}:=\frac{1}{n(n-1)}\sum_{(i,j)\ \mathrm{distinct}} K(\hat{R}_n^Y(Y_i),\hat{R}_n^Y(Y_j))K(R^Y(Y_i),R^Y(Y_j))\\ &\underline{b}:=\frac{1}{n(n-1)(n-2)}\sum_{(i,j,l)\ \mathrm{distinct}}K(\hat{R}_n^Y(Y_i),\hat{R}_n^Y(Y_j))K(R^Y(Y_i),R^Y(Y_l))\\ &\underline{c}:=\frac{1}{n(n-1)(n-2)(n-3)}\sum_{(i,j,l,m)\ \mathrm{distinct}} K(\hat{R}_n^Y(Y_i),\hat{R}_n^Y(Y_j))K(R^Y(Y_l),R^Y(Y_m)).
		\end{align*}
		Also to simplify notation, we will use the symbol $j\sim i$ for $(j,i)\in\rmgn$ and let $\hat{Z}_i:=\hat{R}_n^Y(Y_i)$ and $Z_i:=R^Y(Y_i)$. With the above notation, observe that $\E[\nrk\nrp]$ can be simplified as follows: 
		\begin{align*}
		&\E\Bigg[\left(\frac{1}{n}\sum_{i=1}^n \frac{1}{\tilde{d}_i}\sum_{j\sim i} K(\hat{R}_n^Y(Y_{\sigma(i)}),\hat{R}_n^Y(Y_{\sigma(j)}))\right)\left(\frac{1}{n}\sum_{i=1}^n \frac{1}{\tilde{d}_i}\sum_{j\sim i} K(R^Y(Y_{\sigma(i)}),R^Y(Y_{\sigma(j)}))\right)\\ &-\left(\frac{1}{n}\sum_{i=1}^n \frac{1}{\tilde{d}_i}\sum_{j\sim i} K(\hat{R}_n^Y(Y_{\sigma(i)}),\hat{R}_n^Y(Y_{\sigma(j)}))\right)\left(\frac{1}{n(n-1)}\sum_{i\neq j} K(R^Y(Y_i),R^Y(Y_j))\right)\\ &-\left(\frac{1}{n}\sum_{i=1}^n \frac{1}{\tilde{d}_i}\sum_{j\sim i} K(R^Y(Y_{\sigma(i)}),R^Y(Y_{\sigma(j)}))\right)\left(\frac{1}{n(n-1)}\sum_{i\neq j} K(\hat{R}_n^Y(Y_i),\hat{R}_n^Y(Y_j))\right)\\ &+\left(\frac{1}{n(n-1)}\sum_{i\neq j}K(\hat{R}_n^Y(Y_{\sigma(i)}),\hat{R}_n^Y(Y_{\sigma(j)}))\right)\left(\frac{1}{n(n-1)}\sum_{i\neq j} K(R^Y(Y_{\sigma(i)}),R^Y(Y_{\sigma(j)}))\right)\Bigg]\\ &\overset{(*)}{=}(\tilde{g}_1+\tilde{g}_3)\E\left[K(\hat{Z}_{\sigma(1)},\hat{Z}_{\sigma(2)})K(Z_{\sigma(1)},Z_{\sigma(2)})\right]+(3+\tilde{g}_2-2\tilde{g}_1-2\tilde{g}_3)\E\Big[K(\hat{Z}_{\sigma(1)},\hat{Z}_{\sigma(2)})\\ &K(Z_{\sigma(1)},Z_{\sigma(3)})\Big]+(n-3+\tilde{g}_1+\tilde{g}_2+\tilde{g}_3)\E\left[K(\hat{Z}_{\sigma(1)},\hat{Z}_{\sigma(2)})K(Z_{\sigma(3)},Z_{\sigma(4)})\right]-\frac{2}{n-1}\\ &\E\left[K(\hat{Z}_{\sigma(1)},\hat{Z}_{\sigma(2)})K(Z_{\sigma(1)},Z_{\sigma(2)})\right]-\frac{4(n-2)}{n-1}\cdot\E\left[K(\hat{Z}_{\sigma(1)},\hat{Z}_{\sigma(2)})K(Z_{\sigma(1)},Z_{\sigma(3)})\right]\\ &-\frac{(n-2)(n-3)}{n-1}\cdot\E\left[K(\hat{Z}_{\sigma(1)},\hat{Z}_{\sigma(2)})K(Z_{\sigma(3)},Z_{\sigma(4)})\right]\\ &=\left(\tilde{g}_1+\tilde{g}_3-\frac{2}{n-1}\right)(\underline{a}-2\underline{b}+\underline{c})+(\tilde{g}_2-1)(\underline{b}-\underline{c}).
		\end{align*}
		Here $(*)$ follows from~\eqref{eq:maint1},~\eqref{eq:maint2}~and~~\eqref{eq:maint3}.
		By~\cref{prop:Mccan}, $\hat{a},\underline{a}$ converge to $\tilde{a}$ in $L^1$; same holds for $\hat{b},\underline{b},\tilde{b}$ and $\hat{c},\underline{c},\tilde{c}$. Also $\hat{a},\underline{a},\tilde{a},\hat{b},\underline{b},\tilde{b},\hat{c},\underline{c},\tilde{c}$ do not depend on the graph functional $\mathcal{G}$. Therefore, the following holds:
		\begin{align*}
		&\sup_{\tilde{\mathcal{G}}\in\mathcal{I}_{\theta}}\E\left(\nrk-\nrp\right)^2=\sup_{\tilde{\mathcal{G}}\in\mathcal{I}_{\theta}}\Bigg[\left(\tilde{g}_1+\tilde{g}_3-\frac{2}{n-1}\right)\{\hat{a}-2\underline{a}+\tilde{a}+4\underline{b}-2\tilde{b}-2\hat{b}\\ &\qquad \qquad  +\hat{c}-2\underline{c}+\tilde{c} \} +(\tilde{g}_2-1)(\hat{b}-2\underline{b}+\tilde{b}-\hat{c}+2\underline{c}-\tilde{c})\Bigg]\overset{n\to\infty}{\longrightarrow}0.
		\end{align*}
		This completes the proof.
	\end{proof}
	\begin{lemma}\label{lem:techlemnbd}
		Let $f:\X\to [0,\infty)$ be any arbitrary measurable function. Recall the construction of the random variables $N(i)$, $1\leq i\leq n$ from assumption (A1). If $\mathcal{G}_n$ satisfies (A3), then there exists a constant $C>0$ (depending on $f$) such that the following holds:
		$$\E f(X_{N(1)})\leq C\E f(X_1).$$
	\end{lemma}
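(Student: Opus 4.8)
The plan is to condition on the design points, rewrite the target expectation as an edge sum, and then use a double-counting swap controlled by the degree bounds in (A2)--(A3).

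First I would condition on the $\sigma$-field generated by $(X_1,\ldots,X_n)$. Since, by the construction in (A1), $N(1)$ is a uniformly chosen neighbor of $X_1$ in $\mathcal{G}_n$, conditionally we have
\[
\E\big[f(X_{N(1)})\mid X_1,\ldots,X_n\big]=\frac{1}{d_1}\sum_{j:(1,j)\in\emgn} f(X_j),
\]
so that $\E f(X_{N(1)})=\E\big[d_1^{-1}\sum_{j:(1,j)\in\emgn} f(X_j)\big]$. Because $\mathcal{G}$ is a geometric graph functional (its output is a graph whose edges are attached to the underlying points), the triples $(X_i,d_i,\{j:(i,j)\in\emgn\})$ are exchangeable in $i$; hence every summand $\E[d_i^{-1}\sum_{j:(i,j)\in\emgn}f(X_j)]$ equals $\E f(X_{N(1)})$, and summing over $i$ gives
\[
n\,\E f(X_{N(1)})=\E\Big[\sum_{i=1}^n \frac{1}{d_i}\sum_{j:(i,j)\in\emgn} f(X_j)\Big].
\]

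Next I would swap the order of summation, grouping by the vertex carrying $f$, namely $\sum_{i}d_i^{-1}\sum_{j:(i,j)\in\emgn}f(X_j)=\sum_{j}f(X_j)\sum_{i:(i,j)\in\emgn}d_i^{-1}$. For the inner sum, (A2) gives $d_i\ge r_n$ while (A3) gives $d_j\le t_n$ almost surely, so vertex $j$ has at most $t_n$ neighbors and $\sum_{i:(i,j)\in\emgn}d_i^{-1}\le t_n/r_n$. Crucially using $f\ge 0$, this produces the pointwise bound
\[
\sum_{i=1}^n \frac{1}{d_i}\sum_{j:(i,j)\in\emgn} f(X_j)\le \frac{t_n}{r_n}\sum_{j=1}^n f(X_j)\qquad\text{w.p.\ }1.
\]
Taking expectations and using that the $X_j$ are identically distributed, $n\,\E f(X_{N(1)})\le (t_n/r_n)\,n\,\E f(X_1)$, i.e.\ $\E f(X_{N(1)})\le (t_n/r_n)\,\E f(X_1)$.

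Finally I would fix the constant. By (A3), $\limsup_n t_n/r_n\le C_0<\infty$ and each ratio $t_n/r_n$ is finite for fixed $n$, so $C:=\sup_n t_n/r_n<\infty$, which is free of both $f$ and $n$; this yields the claim. The only step needing care is the uniformity of $C$ in $n$, but since it follows at once from the finite limsup together with finiteness at each fixed $n$, I do not expect a genuine obstacle here; the essential content is simply the summation swap combined with the nonnegativity of $f$ and the degree ratio bound.
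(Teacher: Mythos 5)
Your proof is correct and is essentially the paper's own argument: both condition on the design points to write $\E f(X_{N(1)})$ as an edge sum, use exchangeability over the vertex index, swap the double sum to group by the vertex carrying $f$, and bound the resulting weight by $t_n/r_n$ using the minimum-degree bound from (A2) and the maximum-degree bound from (A3). Your explicit observation that $C:=\sup_n t_n/r_n<\infty$ follows from the finite limsup together with finiteness at each fixed $n$ is a minor point of care that the paper leaves implicit.
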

	\begin{proof}
		The key argument in this proof is the symmetry in the construction of $\mathcal{G}_n$ and the exchangeability of the $X_i$'s. Recall the definition of $t_n$ from assumption (A3). Observe that:
		\begin{align*}
		\E f(X_{N(1)})&=\frac{1}{n}\sum_{i=1}^n \E f(X_{N(i)})\\ &=\frac{1}{n}\sum_{i=1}^n \E\left[\frac{1}{d_i}\sum_{j=1}^n f(X_j)\mathbbm{1}((i,j)\in\emgn)\right]\\ &\leq \frac{1}{nr_n}\E\left[\sum_{j=1}^n f(X_j)\sum_{i=1}^n \mathbbm{1}((i,j)\in\emgn)\right]\\ &\leq \frac{t_n}{r_n}\cdot\frac{1}{n}\E\left[\sum_{i=1}^n f(X_j)\right]\leq C\E f(X_1)
		\end{align*}
		where the last line follows from Assumption (A3). This completes the proof.
	\end{proof}
	
	\begin{lemma}\label{lem:techlemmconv}
		Let $f:\X\to\mathcal{H}$ be a measurable function and assume that $\mathcal{H}$ is a second countable Hilbert space. Then provided $\mathcal{G}_n$ satisfies (A1) and (A3), we have $\lVert f(X_1)-f(X_{N(1)})\rVert_{\mathcal{H}}\overset{\P}{\longrightarrow}0$. 
	\end{lemma}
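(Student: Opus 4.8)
The plan is to reduce the statement, via Lusin's theorem, to the uniform continuity of $f$ on a large compact subset of $\X$, and then to combine this with assumption (A1). The one genuinely nonroutine point is that $f$ is merely measurable, so the pointwise closeness of $X_1$ and $X_{N(1)}$ provided by (A1) does not by itself control $\|f(X_1)-f(X_{N(1)})\|_{\mathcal{H}}$. This is exactly where the second-countability (equivalently, separability) of $\mathcal{H}$ is used: it is the hypothesis under which Lusin's theorem applies to the $\mathcal{H}$-valued map $f$.

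First I would record the law of the neighbor. Applying \cref{lem:techlemnbd} with $f=\mathbf 1_B$ for an arbitrary Borel set $B\subseteq\X$ shows that the law $\mu_N$ of $X_{N(1)}$ satisfies $\mu_N(B)\le C\,\mu_X(B)$ for a fixed constant $C>0$; in particular $\mu_N\ll\mu_X$, so every set of small $\mu_X$-measure also has small $\mu_N$-measure. This allows me to confine $X_{N(1)}$ using the same sets that confine $X_1$. Note that \cref{lem:techlemnbd} itself only requires (A3), which is available here.

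Next, fix $\delta,\epsilon>0$. Under the paper's standing assumptions $\mu_X$ is a tight (Radon) Borel measure on the metric space $\X$, so Lusin's theorem --- valid precisely because $\mathcal{H}$ is second countable --- produces a compact set $K\subseteq\X$ with $\mu_X(K^c)<\epsilon$ such that $f|_K$ is continuous. Since $K$ is compact, $f|_K$ is in fact uniformly continuous, so there exists $\eta>0$ with the property that $x,y\in K$ and $\rho(x,y)<\eta$ force $\|f(x)-f(y)\|_{\mathcal{H}}<\delta$.

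Finally I would assemble the bound. On the event $\{X_1\in K,\ X_{N(1)}\in K,\ \rho(X_1,X_{N(1)})<\eta\}$ we have $\|f(X_1)-f(X_{N(1)})\|_{\mathcal{H}}<\delta$ by the previous step, so the event $\{\|f(X_1)-f(X_{N(1)})\|_{\mathcal{H}}\ge\delta\}$ is contained in the union of $\{X_1\notin K\}$, $\{X_{N(1)}\notin K\}$ and $\{\rho(X_1,X_{N(1)})\ge\eta\}$. Hence
\[
\P\big(\|f(X_1)-f(X_{N(1)})\|_{\mathcal{H}}\ge\delta\big)\le \mu_X(K^c)+\mu_N(K^c)+\P\big(\rho(X_1,X_{N(1)})\ge\eta\big)\le (1+C)\epsilon+\P\big(\rho(X_1,X_{N(1)})\ge\eta\big).
\]
By (A1) the last probability tends to $0$ as $n\to\infty$, so $\limsup_{n}\P\big(\|f(X_1)-f(X_{N(1)})\|_{\mathcal{H}}\ge\delta\big)\le (1+C)\epsilon$; letting $\epsilon\downarrow0$ yields the claimed convergence in probability. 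The main obstacle is conceptual rather than computational: correctly invoking Lusin's theorem in the $\mathcal{H}$-valued setting and, simultaneously, confining \emph{both} $X_1$ and $X_{N(1)}$ to the compact set $K$, which is what makes \cref{lem:techlemnbd} indispensable here.
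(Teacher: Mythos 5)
Your proof is correct and follows essentially the same route as the paper's: Lusin's theorem (\cref{prop:lusin}) to obtain a compact $K$ with $\mu_X(K^c)<\epsilon$ on which $f$ is continuous, \cref{lem:techlemnbd} applied to the indicator of $K^c$ to bound $\P(X_{N(1)}\notin K)\leq C\epsilon$, and Assumption (A1) to kill the remaining event, followed by letting $\epsilon\downarrow 0$. The only difference is cosmetic: where the paper invokes the continuous mapping theorem together with (A1) on the event $\{X_1\in K,\,X_{N(1)}\in K\}$, you spell this step out via uniform continuity of $f|_K$, which is, if anything, a slightly more careful rendering of the same argument.
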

	\begin{proof}
		Fix an arbitrary $\epsilon>0$. By~\cref{prop:lusin} (see~\cite{lusin1912proprietes}), there exists a compact set $K$ such that $f$ restricted to $K$ is a continuous function and $\mu_X(K^c)<\epsilon$. Further, note that for any $\delta>0$, we have:
		\begin{align*}
		&\;\;\;\P\left(\lVert f(X_1)-f(X_{N(1)})\rVert_{\mathcal{H}}>\delta\right)\\ &\leq \P\left(\lVert f(X_1)-f(X_{N(1)})\rVert_{\mathcal{H}}>\delta, X_1\in K, X_{N(1)}\in K\right)+\P(X_1\in K^c)+\P(X_{N(1)}\in K^c).
		\end{align*}
		The first term on the right hand side of the above display converges to $0$ as $n\to\infty$ by combining the continuous mapping theorem with Assumption (A1). Moreover, by construction, $\P(X_1\in K^c)\leq \epsilon$. Also,~\cref{lem:techlemnbd} implies that $\P(X_{N(1)}\in K^c)\leq C\P(X_1\in K^c)\leq C\epsilon$. Therefore,
		$$\limsup \P\left(\lVert f(X_1)-f(X_{N(1)})\rVert_{\mathcal{H}}>\delta\right)\leq (C+1)\epsilon.$$
		This completes the proof as $\epsilon>0$ is arbitrary.
	\end{proof}
	
	\begin{lemma}\label{lem:projerrorctrl}
		Recall the notation from~\cref{theo:nullclt}. Assume that $\E K^2(Y_1,Y_2)<\infty$. Set $\theta:=\E K(Y_1,Y_2)$ and $h(Y_i):=2\E[K(Y,Y_i)|Y_i]$ for $i=1,2,\ldots ,n$. Then there exists a universal constant $D>0$ such that:
		\begin{equation}\label{eq:simplify}
		n\E\left[\frac{1}{\sqrt{n}(n-1)}\sum_{i\neq j} K(Y_i,Y_j)-\frac{1}{\sqrt{n}}\sum_{i=1}^n (h(Y_i)-\theta)\right]^2\leq D\left(\E K^2(Y_1,Y_2)+\theta^2\right).
		\end{equation}
	\end{lemma}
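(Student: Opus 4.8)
The plan is to recognize the bracketed quantity as the remainder in the Hájek projection of the (symmetric) U-statistic $U_n:=\frac{1}{n(n-1)}\sum_{i\neq j}K(Y_i,Y_j)$, noting that $\frac{1}{\sqrt{n}(n-1)}\sum_{i\neq j}K(Y_i,Y_j)=\sqrt{n}\,U_n$. First I would set up the Hoeffding decomposition: write $g_1(y):=\E[K(y,Y_2)]-\theta$ and $\psi(y,y'):=K(y,y')-\theta-g_1(y)-g_1(y')$, so that $\psi$ is a symmetric, completely degenerate kernel, i.e.\ $\E[\psi(Y_1,Y_2)\mid Y_1]=0$ almost surely. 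Both $g_1$ and $\psi$ lie in $L^2$ under the standing assumption $\E K^2(Y_1,Y_2)<\infty$, which is all that is needed for the subsequent computations to be finite.

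Next I would substitute this decomposition to identify the remainder \emph{exactly}. Since $\E[K(Y,Y_i)\mid Y_i]=g_1(Y_i)+\theta$, we get $h(Y_i)=2g_1(Y_i)+2\theta$, hence $h(Y_i)-\theta=2g_1(Y_i)+\theta$. Expanding $\sqrt{n}\,U_n=\sqrt{n}\,\theta+\frac{2}{\sqrt{n}}\sum_i g_1(Y_i)+\frac{1}{\sqrt{n}(n-1)}\sum_{i\neq j}\psi(Y_i,Y_j)$ and comparing term by term with $\frac{1}{\sqrt{n}}\sum_i(h(Y_i)-\theta)=\sqrt{n}\,\theta+\frac{2}{\sqrt{n}}\sum_i g_1(Y_i)$, the constant and linear parts cancel, leaving
\[
\frac{1}{\sqrt{n}(n-1)}\sum_{i\neq j}K(Y_i,Y_j)-\frac{1}{\sqrt{n}}\sum_{i=1}^n\bigl(h(Y_i)-\theta\bigr)=\frac{1}{\sqrt{n}(n-1)}\sum_{i\neq j}\psi(Y_i,Y_j).
\]
This exact identity is the crux; everything else reduces to a second-moment computation for a degenerate kernel.

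Finally I would bound the second moment. Multiplying the lemma's bracket by $n$ gives $\frac{1}{(n-1)^2}\E\bigl(\sum_{i\neq j}\psi(Y_i,Y_j)\bigr)^2$. Expanding the square and invoking complete degeneracy, each cross term $\E[\psi(Y_i,Y_j)\psi(Y_k,Y_l)]$ vanishes unless $\{i,j\}=\{k,l\}$: pairs sharing exactly one index vanish after conditioning on the common variable, and disjoint pairs vanish by independence together with $\E\psi=0$. Of the $n(n-1)$ ordered pairs, each contributes $2\E[\psi(Y_1,Y_2)^2]$ by symmetry, so the sum equals $2n(n-1)\,\E[\psi(Y_1,Y_2)^2]$ and the whole quantity equals $\frac{2n}{n-1}\E[\psi(Y_1,Y_2)^2]\le 4\,\E[\psi(Y_1,Y_2)^2]$ for $n\ge 2$. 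By orthogonality of the Hoeffding components, $\E[\psi^2]\le \E[(K(Y_1,Y_2)-\theta)^2]=\E K^2(Y_1,Y_2)-\theta^2\le \E K^2(Y_1,Y_2)$, which yields the claim with the universal constant $D=4$.

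The only genuinely delicate point is the bookkeeping in the second step: verifying the shift $h(Y_i)-\theta=2g_1(Y_i)+\theta$ (the leftover $+\theta$ summing to $\sqrt{n}\,\theta$, exactly matched by $\E[\sqrt{n}\,U_n]$), so that the projection remainder is \emph{precisely} the degenerate term rather than merely asymptotically close to it. Once that identity is established, the degenerate U-statistic variance bound is entirely routine.
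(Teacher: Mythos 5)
Your proposal is correct and is in substance the same argument the paper gives: both are second-moment computations of the H\'ajek projection error, resting on the identical conditioning/orthogonality facts ($\E[\psi(Y_1,Y_2)\mid Y_1]=0$ almost surely, and mean-zero independence for disjoint index sets). The only difference is bookkeeping --- the paper expands the squared difference directly into three moment terms (the squared U-statistic term, the squared linear term, and the cross term) and adds them up, whereas you first establish the exact identity that the bracketed quantity equals the completely degenerate Hoeffding component $\frac{1}{\sqrt{n}(n-1)}\sum_{i\neq j}\psi(Y_i,Y_j)$ and then compute a single variance; your packaging is cleaner and yields the explicit constant $D=4$.
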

	\begin{proof}
		Note that the left hand side of the above display is equivalent to
		$$n\E\left[\frac{1}{\sqrt{n}(n-1)}\sum_{i\neq j} (K(Y_i,Y_j)-\theta)-\frac{2}{\sqrt{n}}\sum_{i=1}^n (\E[K(Y,Y_i)|Y_i]-\theta)\right]^2.$$
		In this proof, we will use $\lesssim$ to hide universal constants. Observe that:
		\begin{equation}\label{eq:chterm1}
		\E\left[\frac{2}{\sqrt{n}}\sum_{i=1}^n (\E[K(Y,Y_i)|Y_i]-\theta)\right]^2=4\E\left(\E[K(Y,Y_1)|Y_1]-\theta\right)^2.
		\end{equation}
		\begin{equation}\label{eq:chterm2}
		\E\left[\frac{1}{\sqrt{n}(n-1)}\sum_{i\neq j} (K(Y_i,Y_j)-\theta)\right]^2=\frac{1}{n-1}\E( K(Y_1,Y_2)-\theta)^2+\frac{4(n-2)}{n-1}\E(\E[K(Y,Y_1)|Y_1]-\theta)^2.
		\end{equation}
		\begin{equation}\label{eq:chterm3}
		\frac{4}{n(n-1)}\E\left[\left(\sum_{i\neq j} (K(Y_i,Y_j)-\theta)\right)\left(\sum_{l=1}^n (\E[K(Y,Y_l)|Y_l]-\theta)\right)\right]=-8\E(\E[K(Y,Y_1)|Y_1]-\theta)^2.
		\end{equation}
		Using~\eqref{eq:chterm1},~\eqref{eq:chterm2},~\eqref{eq:chterm3}~and~\eqref{eq:simplify}, completes the proof.
	\end{proof}
	\begin{lemma}\label{lem:equivform}
		Suppose $(X,Y)\sim\mu$. Further, given any $t\in\R^{d_2}$, set $\hat{f}_{Y|X}(t)\coloneqq \mme[\exp(it^{\top}Y)|X]$ and $\hat{f}_{Y}(t)\coloneqq \mme[\exp(it^{\top}Y)]$. Then we have:
		\begin{equation}\label{eq:mainrep}
		T_{\alpha}(\mu)= \left(\int_{\R^{d_2}}\frac{\mme|\hat{f}_{Y|X}(t)-\hat{f}_{Y}(t)|^2}{\lVert t\rVert^{d_2+\alpha}_2}\,dt\right) \Big/ \left(\int_{\R^{d_2}}\frac{1-|\hat{f}_{Y}(t)|^2}{\lVert t\rVert^{d_2+\alpha}_2}\,dt\right).
		\end{equation}
	\end{lemma}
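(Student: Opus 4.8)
The plan is to reduce everything to the classical Fourier representation of the $\alpha$-power of the Euclidean norm, valid for $0<\alpha<2$ (the boundary case $\alpha=2$ is excluded here and is instead covered by part (c) of \cref{prop:spGauss}). Recall the identity
\[
\lVert y\rVert_2^\alpha = \frac{1}{C(d_2,\alpha)}\int_{\R^{d_2}}\frac{1-\cos(t^\top y)}{\lVert t\rVert_2^{d_2+\alpha}}\,dt,\qquad y\in\R^{d_2},
\]
for the explicit positive constant $C(d_2,\alpha)$ appearing in \cite[Lemma 1]{Gabor2013}; the integral converges because the integrand behaves like $\lVert t\rVert_2^{2-d_2-\alpha}$ near the origin (integrable since $\alpha<2$) and like $\lVert t\rVert_2^{-d_2-\alpha}$ at infinity (integrable since $\alpha>0$).

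First I would apply this identity to the denominator. With $Y_1,Y_2\overset{\mathrm{i.i.d.}}{\sim}\mu_Y$, the elementary computation $\mme[\cos(t^\top(Y_1-Y_2))]=\mathrm{Re}\big(\hat{f}_Y(t)\overline{\hat{f}_Y(t)}\big)=|\hat{f}_Y(t)|^2$ together with Tonelli's theorem (the integrand is nonnegative) yields
\[
\mme\lVert Y_1-Y_2\rVert_2^\alpha=\frac{1}{C(d_2,\alpha)}\int_{\R^{d_2}}\frac{1-|\hat{f}_Y(t)|^2}{\lVert t\rVert_2^{d_2+\alpha}}\,dt.
\]
Since $\mu_Y$ has finite $\alpha$-th moment the left-hand side is finite, so this interchange is legitimate. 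Next I would treat the numerator $\mme\lVert Y'-\tilde{Y'}\rVert_2^\alpha$ by conditioning on $X'=x$: as $Y',\tilde{Y'}$ are conditionally i.i.d.\ from $\mu_{Y|x}$ with conditional characteristic function $\hat{f}_{Y|X=x}(t)$, the same computation gives $\mme[1-\cos(t^\top(Y'-\tilde{Y'}))\mid X'=x]=1-|\hat{f}_{Y|X=x}(t)|^2$. Taking expectation over $X'\sim\mu_X$ and again exchanging expectation with the integral produces
\[
\mme\lVert Y'-\tilde{Y'}\rVert_2^\alpha=\frac{1}{C(d_2,\alpha)}\int_{\R^{d_2}}\frac{\mme\big[1-|\hat{f}_{Y|X}(t)|^2\big]}{\lVert t\rVert_2^{d_2+\alpha}}\,dt.
\]

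The crux is then a one-line variance identity for complex-valued random variables. Subtracting the two displays, the numerator of $1-\mme\lVert Y'-\tilde{Y'}\rVert_2^\alpha\big/\mme\lVert Y_1-Y_2\rVert_2^\alpha$ carries the integrand $\mme\big[|\hat{f}_{Y|X}(t)|^2\big]-|\hat{f}_Y(t)|^2$. By the tower property $\mme[\hat{f}_{Y|X}(t)]=\hat{f}_Y(t)$, so regarding $\hat{f}_{Y|X}(t)$ as a complex random variable in $X$ with mean $\hat{f}_Y(t)$ and applying $\mme|Z-\mme Z|^2=\mme|Z|^2-|\mme Z|^2$ gives $\mme\big[|\hat{f}_{Y|X}(t)|^2\big]-|\hat{f}_Y(t)|^2=\mme|\hat{f}_{Y|X}(t)-\hat{f}_Y(t)|^2$. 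Substituting this in and dividing through by $\mme\lVert Y_1-Y_2\rVert_2^\alpha$ cancels the factor $1/C(d_2,\alpha)$ and yields exactly \eqref{eq:mainrep}.

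I expect the only genuine obstacle to be bookkeeping: justifying the two Fubini/Tonelli interchanges and the convergence of the $\alpha$-dependent singular integrals. Because every integrand in question is nonnegative (so Tonelli applies unconditionally) and the assumed finiteness of the $\alpha$-th moment of $\mu_Y$ secures finiteness of the relevant quantities, this is routine rather than delicate; everything else is algebra.
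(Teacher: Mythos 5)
Your proof is correct and follows essentially the same route as the paper's: both rest on the Fourier identity $\int_{\R^{d_2}}(1-\cos\langle t,x\rangle)\lVert t\rVert_2^{-(d_2+\alpha)}\,dt=C(d_2,\alpha)\lVert x\rVert_2^{\alpha}$ from \cite[Lemma 1]{Gabor2013}, applied to $\mme\lVert Y'-\tilde{Y'}\rVert_2^{\alpha}$ (after conditioning on $X'$) and to $\mme\lVert Y_1-Y_2\rVert_2^{\alpha}$, followed by substitution into the definition of $T_{\alpha}$. Your write-up is if anything slightly more complete, since you make explicit the complex-variance identity $\mme|\hat{f}_{Y|X}(t)|^2-|\hat{f}_Y(t)|^2=\mme|\hat{f}_{Y|X}(t)-\hat{f}_Y(t)|^2$ (which the paper leaves implicit in its final "plug in" step) and you justify the interchange via Tonelli on nonnegative integrands rather than invoking dominated convergence.
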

	\begin{proof}
		The proof of this lemma is based on~\cite[Lemma 1]{Gabor2013} which states that there exists a constant $C(d,\alpha)$ such that for all $x\in\R^d$, the following identity holds:
		\begin{equation}\label{eq:refbefore}
		\int_{\R^d} \frac{1-\cos\langle t,x\rangle}{\lVert t\rVert^{d+\alpha}_2}\,dt=C(d,\alpha)\lVert x \rVert^{\alpha}_2
		\end{equation}
		if $0<\alpha<2$. The above display combined with the dominated convergence theorem implies.
		$$C(d_2,\alpha)\, \mme\lVert Y'-\tilde{Y'}\rVert^{\alpha}_2= \int_{\R^{d_2}}\frac{1-\mme\cos\langle t,Y'-\tilde{Y'}\rangle}{\lVert t\rVert^{d_2+\alpha}_2}\,dt= \int_{\R^{d_2}}\frac{1-\mme|\hat{f}_{Y|X}(t)|^2}{\lVert t\rVert^{d_2+\alpha}_2}\,dt.$$
		Similar calculations as above also yield,
		$$\mme\lVert Y_1-Y_2\rVert^{\alpha}_2=\frac{1}{C(d_2,\alpha)}\int_{\R^{d_2}}\frac{1-|\hat{f}_{Y}(t)|^2}{\lVert t\rVert^{d_2+\alpha}_2}\,dt.$$
		By plugging the above displays in the definition of $T_{\alpha}$ in~\eqref{eq:propmeas} completes the proof.
	\end{proof}

	\section{Auxiliary Results}\label{sec:auxres}
	This section contains some results which we have been used earlier in the manuscript. Some of these results are well-known and we have added references for their proofs. Some others are elementary probability exercises and we leave their details to the reader.
	\begin{prop}[A generalized Efron-Stein inequality, {see~\cite[Theorem 2]{Boucheron2005}}]\label{prop:Genef}
		Suppose $W_1,W_2,\ldots ,W_n$ are independent random variables taking values in some normed linear space $\mathcal{W}$ and $F:\mathcal{W}^n\to\R$ be a measurable function. Further assume $\tilde{W}_i$'s are independent copies of $W_i$'s. Let $S\coloneqq F(W_1,\ldots ,W_n)$ and $S_i\coloneqq F(W_1,\ldots ,W_{i-1},\tilde{W}_i,W_{i+1},\ldots ,W_n)$. Finally define $\lVert W\rVert_q\coloneqq \E(|W|^q)^{1/q}$. Then for all integers $q\geq 2$, there exists a constant $\kappa_q$ (depending only on $q$) such that the following holds:
		$$\lVert S-\E[S]\rVert_q\leq \kappa_q\Bigg\lVert \sqrt{\E\left[\sum_{i=1}^n (S-S_i)^2|(W_1,W_2,\ldots,W_n)\right]}\Bigg\rVert_q.$$
		The special case $q=2$ yields the Efron-Stein inequality (see~\cite{Efron1981}).
	\end{prop}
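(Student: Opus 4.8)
The plan is to reduce the moment bound to a martingale square-function estimate and then exploit the independence of the coordinates through a leave-one-out (jackknife) comparison. Write $Z := S - \E S = F(W_1,\dots,W_n) - \E F$ and let $\mathcal{F}_i := \sigma(W_1,\dots,W_i)$ with $\mathcal{F}_0$ trivial, so that $Z = \sum_{i=1}^n \Delta_i$ with Doob martingale differences $\Delta_i := \E[Z\mid\mathcal{F}_i] - \E[Z\mid\mathcal{F}_{i-1}]$. First I would record the identity $\Delta_i = \E[\,Z - \E^{(i)}Z \mid \mathcal{F}_i\,]$, where $\E^{(i)}$ denotes integration over the single variable $W_i$; this holds because $\E^{(i)}Z$ does not depend on $W_i$, so that $\E[\E^{(i)}Z\mid\mathcal{F}_i] = \E[\E^{(i)}Z\mid\mathcal{F}_{i-1}] = \E[Z\mid\mathcal{F}_{i-1}]$. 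By conditional Jensen this yields the pointwise bound $\Delta_i^2 \le \E[(Z-\E^{(i)}Z)^2 \mid \mathcal{F}_i]$.

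Next I would pass from the jackknife residual $(Z-\E^{(i)}Z)^2$ to the resampled difference appearing in the statement. Since $\tilde W_i$ is an independent copy of $W_i$, integrating $S_i$ over $\tilde W_i$ reproduces $\E^{(i)}Z$, and expanding the square gives $V_i := \E_{\tilde W_i}[(S-S_i)^2] = (Z - \E^{(i)}Z)^2 + \mathrm{Var}^{(i)}(Z) \ge (Z-\E^{(i)}Z)^2$, where $\mathrm{Var}^{(i)}$ is the conditional variance over $W_i$ alone. Summing over $i$ identifies $\sum_i V_i$ with $V := \E[\sum_i (S-S_i)^2 \mid (W_1,\dots,W_n)]$, the quantity under the square root on the right-hand side. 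Combining with the previous step, the square function of the Doob martingale is dominated in the sense $\sum_i \Delta_i^2 \le \sum_i \E[V_i\mid\mathcal{F}_i]$ with $\sum_i V_i = V$.

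The moment bound would then follow from a martingale moment inequality: by Burkholder's inequality (equivalently the Marcinkiewicz--Zygmund / Burkholder--Rosenthal inequality) one has $\|Z\|_q \le C_q \,\|(\sum_i \Delta_i^2)^{1/2}\|_q$ for $q\ge 2$, and inserting the bound above gives $\|S-\E S\|_q \le C_q \,\|(\sum_i \E[V_i\mid\mathcal{F}_i])^{1/2}\|_q$. The case $q=2$ is immediate and recovers Efron--Stein with $\kappa_2 = 1/\sqrt2$, because there the conditional expectations disappear under the outer $\E$: indeed $\|\sqrt V\|_2^2 = \sum_i \E[(S-S_i)^2]$ and $\mathrm{Var}(S) = \sum_i \E\Delta_i^2 \le \tfrac12\sum_i\E[(S-S_i)^2]$.

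The main obstacle is precisely this last interchange. For $q>2$ one must compare $\|(\sum_i \E[V_i\mid\mathcal{F}_i])^{1/2}\|_q = \|\sum_i \E[V_i\mid\mathcal{F}_i]\|_{q/2}^{1/2}$ with the target $\|\sqrt V\|_q = \|\sum_i V_i\|_{q/2}^{1/2}$; replacing each predictable projection $\E[V_i\mid\mathcal{F}_i]$ by $V_i$ itself at the level of $q/2$-th moments is not a dimension-free identity, and this is exactly what forces $\kappa_q$ to grow with $q$. The cleanest resolution, following Boucheron, Bousquet, Lugosi and Massart, is to control the defect $\sum_i(\E[V_i\mid\mathcal{F}_i]-V_i)$ recursively by reapplying a moment inequality at the smaller exponent $q/2$ and absorbing the resulting terms, which produces an explicit constant $\kappa_q$. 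I would carry out this induction on $q$ as the only genuinely delicate ingredient; all remaining steps are the elementary identities of the first two paragraphs.
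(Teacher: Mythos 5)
The paper never proves this proposition: it is imported verbatim from \cite[Theorem 2]{Boucheron2005} in the auxiliary-results section, and the argument there is an entropy-method recursion on the moment order, yielding a constant of order $\sqrt{q}$. Your martingale route is therefore a genuinely different proof, and its core is correct as written: the identity $\Delta_i=\E[Z-\E^{(i)}Z\mid\mathcal{F}_i]$, the Jensen bound $\Delta_i^2\le\E[V_i\mid\mathcal{F}_i]$, the Burkholder step, and the $q=2$ computation recovering Efron--Stein with $\kappa_2=1/\sqrt{2}$ all hold. The one step you leave open --- replacing $\sum_i\E[V_i\mid\mathcal{F}_i]$ by $\sum_i V_i$ at the level of $L^{q/2}$ norms --- does not actually need the BBLM-style induction you defer to; it is a classical fact about filtrations, provable by duality. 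For any $p>1$, nonnegative random variables $V_i$, and any filtration $(\mathcal{F}_i)$,
\begin{equation*}
\Big\lVert\sum_{i=1}^n\E[V_i\mid\mathcal{F}_i]\Big\rVert_p\;\le\; p\,\Big\lVert\sum_{i=1}^n V_i\Big\rVert_p,
\end{equation*}
because for $g\ge 0$ with $\lVert g\rVert_{p/(p-1)}\le 1$ one has $\E\big[g\sum_i\E[V_i\mid\mathcal{F}_i]\big]=\sum_i\E\big[V_i\,\E[g\mid\mathcal{F}_i]\big]\le\E\big[\big(\sum_i V_i\big)\max_i\E[g\mid\mathcal{F}_i]\big]$, and Doob's maximal inequality applied to the nonnegative martingale $\E[g\mid\mathcal{F}_i]$ bounds $\lVert\max_i\E[g\mid\mathcal{F}_i]\rVert_{p/(p-1)}$ by $p\lVert g\rVert_{p/(p-1)}$. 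Taking $p=q/2$ (the case $q=2$ being trivial) gives $\lVert(\sum_i\E[V_i\mid\mathcal{F}_i])^{1/2}\rVert_q\le\sqrt{q/2}\,\lVert\sqrt{V}\rVert_q$, and hence $\kappa_q\le(q-1)\sqrt{q/2}$ using Burkholder's classical constant $q-1$. The trade-off between the two proofs: yours is self-contained given standard martingale theory, but the constant grows polynomially in $q$, whereas the entropy recursion of \cite{Boucheron2005} achieves the near-optimal $O(\sqrt{q})$ growth --- a distinction that matters when such moment bounds are converted into exponential tail bounds, but is immaterial both for the proposition as stated (any finite $\kappa_q$ suffices) and for this paper, which only ever invokes the result with $q=2$ and $q=4$.
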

	
	\begin{prop}[Bounding moments of the maximum of random variables]\label{prop:maxmom}
		Suppose $W_1,W_2,\ldots ,W_n$ are i.i.d. random variables taking values in $\R$. Set $M_n\coloneqq \max_{1\leq i\leq n} |W_i|$. If $\E[|W_1|^{p+\epsilon}]<\infty$ for some $p\geq 1$ and $\epsilon>0$, then $\E[M_n^p/n]\overset{n\to\infty}{\longrightarrow} 0$.
	\end{prop}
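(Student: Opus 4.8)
The plan is to control $\E[M_n^p]$ through the standard tail-integral representation of a moment and then split the resulting integral at an $n$-dependent cutoff, using the crude bound $\P(M_n>t)\le 1$ below the cutoff and the extra $\epsilon$ moment above it. Since $M_n\ge 0$, I would first write
\[
\E[M_n^p]=\int_0^\infty p\,t^{p-1}\,\P(M_n>t)\,dt,
\]
and apply the union bound together with the i.i.d.\ assumption to obtain $\P(M_n>t)\le\min\{1,\,n\,\P(|W_1|>t)\}$. By Markov's inequality applied to the $(p+\epsilon)$-th moment, $\P(|W_1|>t)\le C\,t^{-(p+\epsilon)}$ with $C:=\E[|W_1|^{p+\epsilon}]<\infty$.

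Next I would fix a threshold $a_n>0$ and split the integral at $a_n$. On $[0,a_n]$ I use $\P(M_n>t)\le 1$, which contributes $a_n^p$. On $[a_n,\infty)$ I use $\P(M_n>t)\le nC\,t^{-(p+\epsilon)}$, and since $\epsilon>0$ the integral $\int_{a_n}^\infty p\,t^{p-1}\cdot nC\,t^{-(p+\epsilon)}\,dt$ converges and equals $\tfrac{pC}{\epsilon}\,n\,a_n^{-\epsilon}$. Combining and dividing by $n$,
\[
\frac{\E[M_n^p]}{n}\le \frac{a_n^p}{n}+\frac{pC}{\epsilon}\,a_n^{-\epsilon}.
\]
Choosing $a_n=n^{1/(p+\epsilon)}$ makes both terms a constant multiple of $n^{-\epsilon/(p+\epsilon)}$, which tends to $0$ as $n\to\infty$; this yields the claim. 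Any cutoff $a_n=n^{\beta}$ with $\beta\in(0,1/p)$ also works, so the balance is not delicate.

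The one point worth emphasizing is why the extra $\epsilon$ is essential and why the naive bound is insufficient: the obvious estimate $M_n^p\le\sum_{i=1}^n|W_i|^p$ only gives $\E[M_n^p]/n\le\E[|W_1|^p]$, which does not vanish. The improvement comes precisely from exploiting the slightly higher moment to control the upper tail, which forces the maximum to be $o(n^{1/p})$ in the $L^p$ sense; with only $\E[|W_1|^p]<\infty$ the conclusion can fail in general. Thus the main (and only mild) subtlety is the truncation argument that trades the trivial bulk bound against the polynomial tail decay, rather than any delicate optimization of the cutoff.
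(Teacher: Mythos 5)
Your main argument is correct: the tail-integral representation of $\E[M_n^p]$, the union bound combined with Markov's inequality at order $p+\epsilon$, and the split at $a_n=n^{1/(p+\epsilon)}$ give $\E[M_n^p]/n\le (1+pC/\epsilon)\,n^{-\epsilon/(p+\epsilon)}\to 0$. The paper treats this proposition as an elementary exercise and supplies no proof, so there is nothing to compare against; your route is the natural one, and it has the added benefit of producing the quantitative rate $\E[M_n^p]=\mathcal{O}(n^{p/(p+\epsilon)})$, which is essentially the form in which the bound is invoked in the proof of \cref{theo:consis} (the $o\big(n^{2/(2+\epsilon/2)}\big)$ steps there).

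However, your closing claim --- that with only $\E[|W_1|^p]<\infty$ ``the conclusion can fail in general'' --- is false. The stated conclusion $\E[M_n^p]/n\to 0$ holds under the bare $p$-th moment assumption: for any fixed $K>0$,
\[
M_n^p \;\le\; K+\sum_{i=1}^n |W_i|^p\,\mathbf{1}\big(|W_i|^p>K\big),
\]
so that $\E[M_n^p]/n \le K/n + \E\big[|W_1|^p\,\mathbf{1}(|W_1|^p>K)\big]$; letting $n\to\infty$ and then $K\to\infty$ (dominated convergence, since $\E[|W_1|^p]<\infty$) gives the result. What the extra $\epsilon$ buys is therefore not the qualitative statement but the polynomial rate $n^{-\epsilon/(p+\epsilon)}$ in place of an unquantified $o(1)$; it is a convenience (and a rate requirement for the paper's later estimates), not a necessity for the proposition as stated. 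Your proof of the proposition itself is unaffected by this misstatement.
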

	
	\begin{prop}\label{prop:basicmom}
		Suppose there exists a sequence of real valued random variables $A_n$ and $B_n$, a deterministic real-valued sequence $a_n$, $b\in \R^+$ and two sequences $v_{n,1}$ and $v_{n,2}$ diverging to $\infty$, such that $v_{n,1}|A_n-a_n|=\mathcal{O}_{\P}(1)$ and $v_{n,2}|B_n-b|=\mathcal{O}_{\P}(1)$. Also assume $A:=\sup_{n\geq 1} a_n<\infty$. Then the following holds:
		$$ (v_{n,1}\wedge v_{n,2})\left(\frac{A_n}{B_n}-\frac{a_n}{b}\right)=\mathcal{O}_{\P}(1).$$
	\end{prop}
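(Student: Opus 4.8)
The plan is to reduce the statement to elementary manipulations of $\mathcal{O}_{\P}(1)$ and $o_{\P}(1)$ sequences after one exact algebraic splitting of $\tfrac{A_n}{B_n}-\tfrac{a_n}{b}$. The one genuinely probabilistic preliminary I would establish first is that $1/B_n=\mathcal{O}_{\P}(1)$. Indeed, since $v_{n,2}\to\infty$ and $v_{n,2}|B_n-b|=\mathcal{O}_{\P}(1)$, dividing by $v_{n,2}$ gives $|B_n-b|=o_{\P}(1)$, so $B_n\overset{\P}{\longrightarrow}b$; as $b>0$, the continuous mapping theorem yields $1/B_n\overset{\P}{\longrightarrow}1/b$, and in particular $1/B_n=\mathcal{O}_{\P}(1)$. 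This is the step one must not skip, and it is the only place where $v_{n,2}\to\infty$ is used in an essential way.

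Next I would write the identity, valid whenever $B_n,b\neq 0$,
\[
\frac{A_n}{B_n}-\frac{a_n}{b}=\frac{A_n-a_n}{B_n}+a_n\,\frac{b-B_n}{b\,B_n},
\]
multiply through by $v_{n,1}\wedge v_{n,2}$, and bound the two resulting terms separately, in each case extracting the factor that carries the correct rate. For the first term I would rewrite
\[
(v_{n,1}\wedge v_{n,2})\,\frac{A_n-a_n}{B_n}=\frac{v_{n,1}\wedge v_{n,2}}{v_{n,1}}\cdot\big(v_{n,1}(A_n-a_n)\big)\cdot\frac{1}{B_n},
\]
and note that the first factor lies in $[0,1]$, the second is $\mathcal{O}_{\P}(1)$ by hypothesis, and the third is $\mathcal{O}_{\P}(1)$ by the preliminary fact; a product of $\mathcal{O}_{\P}(1)$ sequences is $\mathcal{O}_{\P}(1)$.

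For the second term I would analogously write
\[
(v_{n,1}\wedge v_{n,2})\,a_n\,\frac{b-B_n}{b\,B_n}=\frac{v_{n,1}\wedge v_{n,2}}{v_{n,2}}\cdot\frac{a_n}{b}\cdot\frac{1}{B_n}\cdot\big(v_{n,2}(b-B_n)\big),
\]
where the first factor is again in $[0,1]$, the term $a_n/b$ is uniformly bounded (this is exactly where the finiteness hypothesis $\sup_n|a_n|\le A<\infty$ enters), $1/B_n=\mathcal{O}_{\P}(1)$, and $v_{n,2}(b-B_n)=\mathcal{O}_{\P}(1)$ by hypothesis, so the product is $\mathcal{O}_{\P}(1)$. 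Summing the two bounds and using that a finite sum of $\mathcal{O}_{\P}(1)$ sequences is $\mathcal{O}_{\P}(1)$ completes the argument. There is no serious obstacle: this is a routine delta-method-style statement, and the rate $v_{n,1}\wedge v_{n,2}$ appears automatically as the slower of the two individual rates once the harmless factors $(v_{n,1}\wedge v_{n,2})/v_{n,i}\le 1$ have been pulled out. The only points meriting a moment's care are the uniform boundedness of $a_n/b$ and the passage from a rate statement on $B_n-b$ to boundedness in probability of $1/B_n$.
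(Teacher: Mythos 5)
Your proof is correct. Note that the paper itself supplies no proof of this proposition: it sits in the auxiliary-results section, which the paper prefaces by saying such statements are elementary exercises left to the reader, so there is nothing to compare against, and your argument (first upgrade $v_{n,2}|B_n-b|=\mathcal{O}_{\P}(1)$ to $B_n\overset{\P}{\longrightarrow}b>0$ and hence $1/B_n=\mathcal{O}_{\P}(1)$, then split $\frac{A_n}{B_n}-\frac{a_n}{b}=\frac{A_n-a_n}{B_n}+a_n\frac{b-B_n}{bB_n}$ and absorb the harmless factors $(v_{n,1}\wedge v_{n,2})/v_{n,i}\leq 1$) is precisely the standard argument that fills this gap. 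One remark worth making: the paper's hypothesis literally reads $\sup_{n\geq 1}a_n<\infty$, a one-sided bound, whereas your proof (correctly) invokes $\sup_{n\geq 1}|a_n|<\infty$; the two-sided bound is genuinely needed, since with, say, $a_n\to-\infty$, $A_n=a_n$ and $B_n=b+v_{n,2}^{-1}$ the second term in the decomposition is of order $|a_n|$ and the conclusion fails, so your reading is the intended one and the paper's stated hypothesis is imprecise.
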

	
	\begin{prop}[Lusin's Theorem,~\cite{Loeb2004}]\label{prop:lusin}
		Let $(\Omega_1,\mathcal{A}_1,\mu_1)$ be a Radon measure space of finite measure and $\Omega_2$ be a second countable topological space. Then given any Borel measurable function $f:\Omega_1\to\Omega_2$ and $\epsilon>0$, there exists a compact set $K$, such that $f$ restricted to $K$ is continuous and $\mu_1(\Omega_1\setminus K)<\epsilon$.
	\end{prop}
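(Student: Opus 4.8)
The plan is to reduce the continuity requirement to a countable collection of relative-openness conditions, exploit outer regularity of the finite Radon measure to make the measurable preimages coincide with genuinely open sets off a small exceptional set, and finally invoke inner regularity to pass to a compact subset. The hypotheses are used in exactly these three places: second countability of $\Omega_2$ for the first reduction, outer regularity for the second step, and inner regularity by compacta for the last.

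First I would fix, using second countability of $\Omega_2$, a countable base $\{U_n\}_{n\ge 1}$ for its topology, and set $A_n := f^{-1}(U_n)$, which lies in $\mathcal{A}_1$ because $f$ is Borel measurable. The key reduction is that a restriction $f|_S$ is continuous as soon as $f^{-1}(U_n)\cap S$ is relatively open in $S$ for every $n$: any open $U\subseteq\Omega_2$ is a union of basic sets $U_n$, and since preimages commute with unions, $f^{-1}(U)\cap S$ is then a union of relatively open sets, hence relatively open.

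Next I would apply outer regularity. Because $\mu_1$ is a finite Radon measure, every Borel set admits an open superset of nearly equal measure; applying this to each $A_n$, choose open $V_n\supseteq A_n$ with $\mu_1(V_n\setminus A_n) < \epsilon\, 2^{-(n+2)}$. Put $E := \bigcup_{n\ge 1}(V_n\setminus A_n)$, so that $\mu_1(E) < \epsilon/2$, and let $\Omega_1' := \Omega_1\setminus E$. The conceptual heart of the argument is the squeezing identity $A_n\cap\Omega_1' = V_n\cap\Omega_1'$ for every $n$: a point of $\Omega_1'$ lies outside $V_n\setminus A_n$, so if it belongs to $V_n$ it already belongs to $A_n$, while $A_n\subseteq V_n$ gives the reverse inclusion. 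Thus $f^{-1}(U_n)\cap\Omega_1'$ equals the relatively open set $V_n\cap\Omega_1'$, and by the reduction above $f|_{\Omega_1'}$ is continuous.

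Finally I would promote $\Omega_1'$ to a compact set. Since $\mu_1$ is a finite Radon measure it is inner regular by compact sets on every Borel set, so there exists a compact $K\subseteq\Omega_1'$ with $\mu_1(\Omega_1'\setminus K)<\epsilon/2$. Then $f|_K$ is continuous as a restriction of the continuous map $f|_{\Omega_1'}$, and $\mu_1(\Omega_1\setminus K) = \mu_1(E) + \mu_1(\Omega_1'\setminus K) < \epsilon$, as required. The only genuinely delicate points are bookkeeping: verifying that testing relative openness on the countable base $\{U_n\}$ truly characterizes continuity of the restriction, and invoking the correct regularity property in each step (outer regularity for the approximations $V_n$, inner regularity by compacta for the final $K$). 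No structure on $\Omega_2$ beyond second countability enters anywhere.
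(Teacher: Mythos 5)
Your proof is correct and is essentially the standard argument for Lusin's theorem at this level of generality: the paper itself gives no proof, deferring entirely to the cited reference \cite{Loeb2004}, whose argument follows the same three steps you use (test continuity of the restriction on a countable base, approximate each $A_n=f^{-1}(U_n)$ from outside by an open $V_n$ and discard the small exceptional set $E=\bigcup_n (V_n\setminus A_n)$ so that $A_n\cap\Omega_1'=V_n\cap\Omega_1'$, then pass to a compact subset by inner regularity). The one point worth making explicit is that for a \emph{finite} Radon measure on a Hausdorff space the outer regularity you invoke in the second step follows from inner regularity by compacta applied to complements, so both regularity properties are indeed available under the stated hypotheses.
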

	
	\begin{prop}[CLT on dependency graphs, {see~\cite[Theorem 2.7]{Chen2004}}]\label{prop:cltdep}
		Suppose $\{X_i\}_{i\in \mathcal{V}}$ be random variables indexed by vertices in a dependence graph. Set $W=\sum_{i\in\mathcal{V}} X_i$. If $\E X_i=0$ for all $i$ and $\mbox{Var}(W)=1$, then we have:
		$$\sup_{z\in \R}|\mathbb{P}(W\leq z)-\Phi (z)|\leq 75D^{10}\sum_{i\in \mathcal{V}}\E |X_i|^3$$
		where $D$ is the maximum degree of the dependency graph and $\Phi(\cdot)$ is the standard Gaussian cumulative distribution function.
	\end{prop}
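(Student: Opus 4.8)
The statement is the Berry--Esseen bound of Chen and Shao for sums under local dependence, and the natural route is Stein's method; the plan is to reduce the Kolmogorov distance to a single Stein functional and then exploit the dependency-graph structure to bound it. First I would set up Stein's equation: for each fixed $z \in \R$ let $f_z$ be the bounded solution of $f'(w) - w f(w) = \mathbbm{1}(w \le z) - \Phi(z)$, and recall the standard regularity estimates $0 < f_z(w) \le \sqrt{2\pi}/4$ and $|f_z'(w)| \le 1$, together with the known increment bounds for the map $w \mapsto w f_z(w)$. By the defining equation, $\sup_z |\P(W \le z) - \Phi(z)| = \sup_z |\E[f_z'(W) - W f_z(W)]|$, so everything reduces to estimating this functional uniformly in $z$.

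Next I would introduce the two-layer neighborhood decomposition that the dependency graph provides. For each vertex $i$ write $N_i$ for the set consisting of $i$ and its graph-neighbors, and $N_i^{(2)}$ for the vertices within graph-distance two; set $\eta_i := \sum_{j \in N_i} X_j$, $\tau_i := \sum_{j \in N_i^{(2)}} X_j$, and $\widehat W_i := W - \eta_i$. The crucial structural facts are that $X_i$ is independent of $\widehat W_i$ (no edges join $i$ to $\mathcal{V}\setminus N_i$) and that $\eta_i$ is independent of $W - \tau_i$ (no edges join $N_i$ to $\mathcal{V}\setminus N_i^{(2)}$). Using $\E X_i = 0$ and the first independence, $\E[X_i f_z(\widehat W_i)] = 0$, so $\E[W f_z(W)] = \sum_i \E[X_i (f_z(W) - f_z(\widehat W_i))]$. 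Meanwhile the normalization $\Var(W) = 1$ reads $\sum_i \E[X_i \eta_i] = 1$ (cross terms over non-adjacent distinct pairs vanish), which lets me write $\E[f_z'(W)] = \sum_i \E[X_i\eta_i]\,\E[f_z'(W)]$. Subtracting,
\[
\E[f_z'(W) - W f_z(W)] = \sum_i \Big( \E[X_i\eta_i]\,\E[f_z'(W)] - \E\big[X_i (f_z(\widehat W_i + \eta_i) - f_z(\widehat W_i))\big]\Big),
\]
and a fundamental-theorem-of-calculus expansion $f_z(\widehat W_i + \eta_i) - f_z(\widehat W_i) = \int_{\R} f_z'(\widehat W_i + t)\,\widehat K_i(t)\,dt$, with $\widehat K_i$ the signed indicator kernel satisfying $\int_\R \widehat K_i = \eta_i$, converts each summand into a comparison of $f_z'$ evaluated near $\widehat W_i$ against $\E[f_z'(W)]$.

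I would then split each term into a \emph{main} part, where the second independence $\eta_i \perp (W - \tau_i)$ produces an exact cancellation of the leading contribution, and an \emph{error} part. The error splits further: one piece is controlled crudely using $|f_z'| \le 1$ together with the third-moment sums and the increment bounds, each neighborhood summation costing a factor of order $(D+1)$; the genuinely hard piece comes from the fact that $f_z'$ has a jump discontinuity at $z$ (the Stein solution for the Kolmogorov metric is not $C^1$), so Taylor expansion of $f_z'$ is unavailable and one must instead bound terms of the form $\E\big[\,|\eta_i|\,\mathbbm{1}(z - |\eta_i| \le W \le z + |\eta_i|)\,\big]$. The main obstacle is precisely this: establishing a concentration inequality showing that $\P(a \le W \le b)$ is bounded by $(b-a)$ plus a controlled remainder, uniformly in $a,b$, in terms of $D$ and $\sum_i \E|X_i|^3$. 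This concentration bound is itself proved by a recursive Stein-type argument on the dependency graph and is the source of the large power of $D$.

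Finally I would collect all the neighborhood summations --- each introducing factors of $(D+1)$ through the sizes $|N_i| \le D+1$ and $|N_i^{(2)}| \le D^2 + 1$ --- and combine them with the concentration estimate. The careful bookkeeping of these powers, applied to both the crude error term and the concentration-controlled term, yields the explicit exponent $D^{10}$ and the numerical constant $75$, giving $\sup_z |\P(W \le z) - \Phi(z)| \le 75 D^{10} \sum_{i \in \mathcal{V}} \E|X_i|^3$ and completing the bound.
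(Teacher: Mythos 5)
You should note first that the paper contains no proof of this proposition: it is imported as a known auxiliary result, with the proof delegated entirely to the citation (Chen and Shao, 2004, Theorem 2.7). So there is no internal argument to compare against, and what you have written is an outline of the argument in the cited source itself. As an outline it is faithful and structurally sound: the Stein equation $f_z'(w)-wf_z(w)=\mathbbm{1}(w\le z)-\Phi(z)$ with the standard bounds on $f_z$ and $f_z'$; the first- and second-neighborhood quantities $\eta_i$, $\tau_i$, $\hat{W}_i=W-\eta_i$ with the two independence properties $X_i\independent \hat{W}_i$ and $\eta_i\independent (W-\tau_i)$; the identity $\sum_i\E[X_i\eta_i]=\Var(W)=1$; the kernel representation $f_z(\hat{W}_i+\eta_i)-f_z(\hat{W}_i)=\int f_z'(\hat{W}_i+t)\hat{K}_i(t)\,dt$; and the reduction of the non-smooth part to terms of the form $\E\big[|\eta_i|\,\mathbbm{1}(z-|\eta_i|\le W\le z+|\eta_i|)\big]$, to be handled by a concentration inequality. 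None of these structural steps would fail, and you correctly identify the jump discontinuity of $f_z'$ at $z$ as the reason a smooth Taylor expansion is unavailable in the Kolmogorov metric.

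The genuine gap is that the proposition is purely quantitative --- its entire content is the explicit bound $75\,D^{10}\sum_{i}\E|X_i|^3$ --- and the two steps that produce those constants are asserted rather than carried out. First, the concentration inequality $\P(a\le W\le b)\le C(b-a)+\mathrm{remainder}$ is named and attributed to ``a recursive Stein-type argument,'' but you never state it precisely: without specifying the remainder, its uniformity in $a,b$, and its exact dependence on $D$ and $\sum_i\E|X_i|^3$, the non-smooth error term cannot be closed, and this randomized concentration lemma is the single hardest ingredient of the Chen--Shao proof. Second, your final paragraph claims that ``careful bookkeeping'' of factors $|N_i|\le D+1$ and $|N_i^{(2)}|\le D^2+1$ yields the exponent $10$ and the constant $75$, but the naive neighborhood counting you indicate produces only low powers of $D$; the large power arises from the way the concentration argument is applied recursively and combined with moment inequalities, and none of that arithmetic appears in your text. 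So your proposal is a correct road map of the known proof, but not a self-contained proof of the stated bound; for the purposes of this paper the honest treatment is exactly what the authors do, namely to quote the result from the literature.
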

	
\end{document}